\newtheorem{Prop}{Proposition}[section]%
\newtheorem{Conj}[Prop]{Conjecture}%
\newtheorem{TheoEnglish}{Theorem}[section]%
\newtheorem{DefEnglish}[Prop]{Definition}
\newtheorem{DefProp}[Prop]{Proposition-Definition}
\newtheorem{CorEnglish}[Prop]{Corollary}
\newtheorem{LemEnglish}[Prop]{Lemma}
\newtheorem{HypEnglish}[Prop]{Assumption}%
\newcommand{\A}{\mathbb A}%
\newcommand{\C}{\mathbb C}%
\newcommand{\Fp}{\mathbb F}%
\newcommand{\G}{\mathbf G}%
\newcommand{\N}{\mathbb N}%
\newcommand{\Q}{\mathbb Q}%
\newcommand{\qp}{\mathbb Q_{p}}%
\newcommand{\R}{\mathbb R}%
\newcommand{\Z}{\mathbb Z}%
\newcommand{\zp}{\mathbb Z_{p}}%
\newcommand{\Ccal}{\mathcal C}%
\newcommand{\Fcal}{\mathcal F}%
\newcommand{\Hcal}{\mathcal H}%
\newcommand{\Mcal}{\mathcal{M}}%
\newcommand{\Ncal}{\mathcal N}%
\newcommand{\Ocal}{\mathcal O}%
\newcommand{\Pcal}{\mathcal P}%
\newcommand{\Kcal}{\mathcal K}%
\newcommand{\Vcal}{\mathcal V}%
\newcommand{\Wcal}{\mathcal W}%
\newcommand{\Xcali}{\mathscr X}%
\newcommand{\aid}{\mathfrak a}%
\newcommand{\cid}{\mathfrak c}%
\newcommand{\pid}{\mathfrak p}%
\newcommand{\mgot}{\mathfrak m}%
\newcommand{\Tbar}{\overline{T}}%
\newcommand{\sldeux}{\operatorname{SL}_{2}}%
\newcommand{\gldeux}{\operatorname{GL}_{2}}%
\newcommand{\GL}{\operatorname{GL}}%
\newcommand{\Sh}{\operatorname{Sh}}
\newcommand{\et}{\operatorname{et}}
\newcommand{\ur}{\operatorname{ur}}
\newcommand{\diamant}[1]{\mathopen<\,#1\,\mathclose>}%
\newcommand{\somme}[2]{\underset{#1}{\overset{#2}\sum}}%
\newcommand{\produit}[2]{\underset{#1}{\overset{#2}\prod}}%
\newcommand{\produittenseur}[2]{\underset{#1}{\overset{#2}\bigotimes}}%
\newcommand{\sommedirecte}[2]{\underset{#1}{\overset{#2}\bigoplus}}%
\newcommand{\applicationsimple}[3]{\begin{equation}%
\nonumber%
#1 :#2\longrightarrow #3%
\end{equation}}%
\newcommand{\applicationsimplelabel}[4][]{\begin{equation}%
\label{#1}%
#2 :#3\longrightarrow #4%
\end{equation}}%
\newcommand{\suiteexacte}[5]{0\fleche#3\overset{#1}{\fleche}#4\overset{#2}{\fleche}#5\fleche0}
\newcommand{\limproj}[1]{\underset{\underset{#1}\longleftarrow}\lim}
\newcommand{\Hom}{\operatorname{Hom}}
\newcommand{\isom}{\overset{\sim}{\longrightarrow}}
\newcommand{\image}{\operatorname{Im}}
\newcommand{\plonge}{\hookrightarrow}
\newcommand{\matrice}[4]{\begin{pmatrix}#1&#2\\ #3&#4\end{pmatrix}}
\newcommand{\rank}{\operatorname{rank}}%
\newcommand{\ord}{\operatorname{ord}}
\newcommand{\fl}{\operatorname{fl}}
\newcommand{\tenseur}{\otimes}
\newcommand{\Ltenseur}{\overset{\operatorname{L}}{\tenseur}}
\newcommand{\modulo}{\operatorname{ mod }}
\newcommand{\Spec}{\operatorname{Spec}}
\newcommand{\Id}{\operatorname{Id}}%
\newcommand{\Aut}{\operatorname{Aut}}%
\newcommand{\End}{\operatorname{End}}%
\newcommand{\Frac}{\operatorname{Frac}}%
\newcommand{\Tate}{\operatorname{Ta}}%
\newcommand{\Cone}{\operatorname{Cone}}%
\newcommand{\fleche}{\longrightarrow}%
\newcommand{\croix}{^{\times}}%
\newcommand{\idele}[1]{\widehat{#1}^{\times}}%
\newcommand{\surjection}{\twoheadrightarrow}%
\newcommand{\rhobar}{\bar{\rho}}%
\newcommand{\vide}{\varnothing}%
\newcommand{\red}{\operatorname{red}}
\newcommand{\new}{\operatorname{new}}
\newcommand{\Sym}{\operatorname{Sym}}
\newcommand{\dR}{\operatorname{dR}}%
\newcommand{\Hun}{H^{1}}
\newcommand{\RGamma}{\operatorname{R}\Gamma}%
\newcommand{\Det}{\operatorname{{D}et}}%
\newcommand{\tr}{\operatorname{tr}}
\newcommand{\Fr}{\operatorname{Fr}}%
\newcommand{\Gal}{\operatorname{Gal}}
\newcommand{\Qbar}{\bar{\Q}}%
\newcommand{\s}{\sigma}%
\newcommand{\hgot}{\mathfrak h}%
\newcommand{\Hecke}{\mathbf{T}}%
\newcommand{\Eul}{\operatorname{Eul}}
\newcommand{\Nekovar}{Nekov\'a\v{r}}%
\newcommand{\cl}{\operatorname{cl}}
\numberwithin{equation}{subsection}%
\date{}
\begin{document}%
\title{The Equivariant Tamagawa Number Conjecture for modular motives with coefficients in the Hecke algebra}
\author{Olivier Fouquet}%
\maketitle
\begin{abstract}
We propose a formulation of the Equivariant Tamagawa Number Conjecture for modular motives with coefficients in universal deformation rings and Hecke algebras; something which seems to have been heretofore missing because the complexes of Galois cohomology required were not known to be perfect. We show that the fundamental line of this conjecture satisfies the expected compatibility property at geometric points (more precisely at the points satisfying the Weight-Monodromy conjecture) and is compatible with level-lowering and level-raising. Combining these properties with the methods of Euler and Taylor-Wiles systems, we prove a significant part of the ETNC with coefficients in Hecke algebras for motives attached to modular forms.  
\end{abstract}
%
\selectlanguage{english}%

\newcommand{\hord}{\mathfrak h^{\ord}}%
\newcommand{\hdual}{\mathfrak h^{dual}}%
\newcommand{\matricetype}{\begin{pmatrix}\ a&b\\ c&d\end{pmatrix}}%
\newcommand{\Iw}{\operatorname{Iw}}%
\newcommand{\Hi}{\operatorname{Hi}}
\newcommand{\cyc}{\operatorname{cyc}}
\newcommand{\ab}{\operatorname{ab}}
\newcommand{\can}{\operatorname{can}}%
\newcommand{\Fitt}{\operatorname{Fitt}}%
\newcommand{\Tiwa}{\mathcal T_{\operatorname{Iw}}}%
\newcommand{\Af}{\operatorname{A}}%
\newcommand{\Dunzero}{D_{1,0}}%
\newcommand{\Uun}{U_{1}}%
\newcommand{\Uzero}{U_{0}}%
\newcommand{\Uundual}{U^{1}}%
\newcommand{\Uunun}{U^{1}_{1}}%
\newcommand{\Wdual}{\Wcal^{dual}}
\newcommand{\JunNps}{J_{1,0}(\Ncal, P^{s})}%
\newcommand{\Tatepord}{\Tate_{\pid}^{ord}}%
\newcommand{\Kum}{\operatorname{Kum}}%
\newcommand{\zcid}{z(\cid)}%
\newcommand{\kgtilde}{\tilde{\kappa}}%
\newcommand{\kiwa}{\varkappa}%
\newcommand{\kiwatilde}{\tilde{\varkappa}}%
\newcommand{\Hbar}{\bar{H}}%
\newcommand{\Tred}{T/\mgot T}%
\newcommand{\Riwa}{R_{\operatorname{Iw}}}%
\newcommand{\Kiwa}{\Kcal_{\operatorname{Iw}}}%
\newcommand{\Sp}{\mathbf{Sp}}%
\newcommand{\Aiwa}{\mathcal A_{\operatorname{Iw}}}%
\newcommand{\Viwa}{\mathcal V_{\operatorname{Iw}}}%
\newcommand{\pseudiso}{\overset{\centerdot}{\isom}}%
\newcommand{\pseudisom}{\overset{\approx}{\fleche}}%
\newcommand{\carac}{\operatorname{char}}%
\newcommand{\length}{\operatorname{length}}
\newcommand{\eord}{e^{\ord}}%
\newcommand{\eordm}{e^{\ord}_{\mgot}}%
\newcommand{\hordinfini}{\hord_{\infty}}%
\newcommand{\Mordinfini}{M^{\ord}_{\infty}}%
\newcommand{\hordm}{\hord_{\mgot}}%
\newcommand{\hminm}{\hgot^{min}_{\mgot}}%
\newcommand{\Mordm}{M^{\ord}_{\mgot}}%
\newcommand{\Mtwist}{M^{tw}_{\mgot}}
\newcommand{\Xun}{X_{1}}%
\newcommand{\Xundual}{X^{1}}%
\newcommand{\Xunun}{X^{1}_{1}}%
\newcommand{\Xtw}{X^{tw}}
\newcommand{\Inert}{\mathfrak{In}}%
\newcommand{\Tsp}{T_{\Sp}}%
\newcommand{\Asp}{A_{\Sp}}%
\newcommand{\Vsp}{V_{\Sp}}%
\newcommand{\SK}{\mathscr{S}}%
\newcommand{\Rord}{R^{\ord}}%
\newcommand{\per}{\operatorname{per}}
\newcommand{\z}{\mathbf{z}}
\newcommand{\zs}{\tilde{\mathbf{z}}}
\newcommand{\Ebarbar}{\bar{\bar{E}}}
\newcommand{\Grsym}{\mathfrak S}
\newcommand{\epsi}{\varepsilon}
\newcommand{\Fun}[2]{F^{{\mathbf{#1}}}_{#2}}
\newtheorem*{TheoA}{Theorem A}%
\newtheorem*{TheoB}{Theorem B}%
\newcommand{\isocan}{\overset{\can}{\simeq}}
\bigskip{\footnotesize%
  \textsc{Départment de Mathématiques, Bâtiment 425, Faculté des sciences d'Orsay Université Paris-Sud} \par  
  \textit{E-mail address}: \texttt{olivier.fouquet@math.u-psud.fr} \par
  \textit{Telephone number}: \texttt{+33169155729} \par
  \textit{Fax number}: \texttt{+33169156019}
  }
\tableofcontents
\section{Introduction}
\subsection{Motivation}
\paragraph{Tamgawa Number Conjectures for modular motives}Let $f\in S_{k}(\Gamma_{1}(N))$ be an eigencuspform of weight $k\geq2$ with coefficients in a number field $F$. To $f$ is attached in $\cite{SchollMotivesModular}$ a Grothendieck motive $M(f)$ over $\Q$ with coefficients in $\Ocal_{F}$ whose partial $L$-function  
\begin{equation}\nonumber
L_{S}(M(f),s)=\produit{\ell\notin S}{}\Eul_{\ell}(M(f)_{\et,p},\ell^{-s})
\end{equation}
relative to a finite (possibly empty) set of finite primes $S$ is equal to the automorphic $L$-function $L_{S}(f,s)$; hence has $\ell$-adic Euler factors independent of the choice of the auxiliary prime $p$ and admits a meromorphic continuation to $\C$. The study of the values of $L_{S}(M(f),s)$ at $s\in\Z$ therefore falls under the scope of the Tamagawa Number Conjectures of \cite{BlochKato} on special values of $L$-functions of motives and, in fact, provided much of the historical motivation for their precise statements (compare for instance the rationality statements at critical points of \cite{ShimuraSpecial,DeligneFonctionsL,BeilinsonConjectureModular,BeilinsonConjecture} and the study of the exact value at critical points of \cite{KatoHodgeIwasawa,KatoEuler}). More generally, and more precisely, the so-called equivariant refinement of these conjectures given in \cite{KatoHodgeIwasawa,KatoViaBdR} predicts the equivariant special values of the $L$-function of the motive $M(f)\times_{\Q} L$ with coefficients in $\Ocal_{F}[G]$ where $L/\Q$ is a finite abelian extension with Galois group $G$.
\begin{Conj}\label{ConjTNCIntro}
Denote by $f^{*}$ the eigenform whose eigenvalues are the complex conjugates of those of $f$. For all $s\in\Z$ and all finite set $S$ containing the rational primes ramifying in $L$,  let $L^{G}_{S}(f^{*},s)$ be the element of $\C[G]$ such that $\chi(L_{S}^{G}(f^{*},s))=L_{S}(f^{*},\chi,s)$  for all character $\chi\in\widehat{G}$ extended by linearity to $\C[G]$. There exists a free one-dimensional $F[G]$-module $\Delta_{L/\Q,S}(M(f)(s))$ called the fundamental line and a motivic zeta element $\z_{L/\Q,S}(f)(s)\in\Delta_{L/\Q,S}(M(f)(s))$ satisfying the following properties.
\begin{enumerate}
\item For each complex embedding $\iota:F\plonge\C$, there exists a canonical isomorphism
\begin{equation}\nonumber
\per_{\iota,\C}:\Delta_{L/\Q,S}(M(f)(s))\tenseur_{F,\iota}\C\isocan\C[G].
\end{equation}
The image of $\z_{L/\Q,S}(f)\tenseur 1$ under $\per_{\iota,\C}$ is equal to $L_{S}^{G}(f^{*},s)$.
\item For each prime ideal $\pid\subset\Ocal_{F}$, there exists a canonical isomorphism
\begin{equation}\nonumber
\per_{\pid}:\Delta_{L/\Q,S}(M(f))\tenseur_{F}F_{\pid}\isocan\Det^{-1}_{F_{\pid}}\RGamma_{f}(G_{\Q,S},M(f)_{\et,\pid}\tenseur_{F_{\pid}}F_{\pid}[G])
\end{equation}
to the determinant of the \Nekovar-Selmer complex of $M(f)_{\et,\pid}\tenseur_{F_{\pid}}F_{\pid}[G]$. The equality 
\begin{equation}\nonumber
\per_{\pid}\left(\Ocal_{F_{\pid}}[G](\z_{L/\Q,S}(f)\tenseur 1)\right)=\Det^{-1}_{\Ocal_{F_{\pid}}[G]}\RGamma_{f}(G_{\Q,S},T(f))
\end{equation}
holds for any free $G_{\Q,S}$-stable $\Ocal_{F_{\pid}}[G]$-lattice $T(f)$ inside $M(f)_{\et,\pid}\tenseur_{F_{\pid}}F_{\pid}[G]$.
\end{enumerate}
\end{Conj}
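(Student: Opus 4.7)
The plan is to lift the conjecture to a statement over a big Hecke algebra $\Hecke$ (or a universal deformation ring $R$) and prove it there, then descend to $M(f)$ by specialization at the arithmetic point corresponding to $f$. The first step is to establish that the \Nekovar-Selmer complex $\RGamma_{f}(G_{\Q,S},T)$ attached to the big Galois representation $T$ over $\Hecke$ is a perfect complex; this is the technical innovation of the paper as stressed in the abstract, and the obstruction that prevented earlier formulations. With perfectness in hand, the fundamental line $\Delta_{\Hecke,S}$ is defined as the determinant of this complex together with the archimedean comparison data, and compatibility of determinants under derived base change furnishes the $F[G]$-module $\Delta_{L/\Q,S}(M(f)(s))$ and the canonical isomorphisms $\per_{\iota,\C}$ and $\per_{\pid}$ after specialization.

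The second step is the construction of the motivic zeta element $\z_{L/\Q,S}(f)(s)$. I would follow Kato's strategy: build Beilinson-Kato elements in the motivic cohomology of the modular tower $X_{1}(Np^{r})$ over cyclotomic fields, pass to their \'etale realizations with coefficients in $\Hecke$ via an Ohta-type control theorem, and let the norm-compatible system so obtained generate a big zeta element in $\Hun(G_{\Q,S},T)$ (or rather in the corresponding determinant). The compatibility with level-lowering and level-raising mentioned in the abstract is crucial here: it guarantees that the big zeta element specializes correctly at each arithmetic point of $\Hecke$, and in particular produces the desired $\z_{L/\Q,S}(f)(s)$ after twisting by characters of $G$.

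The third step is the verification of the two compatibilities. The archimedean one (part 1) follows from the explicit formula of Beilinson at non-critical points and Kato's formula via Rankin-Eisenstein convolution and modular symbols at critical points, essentially by unwinding the construction of the zeta element and using the Weight-Monodromy compatibility at geometric points. The $p$-adic one (part 2) is the heart of the matter and is approached by combining two divisibilities: an Euler system argument after Kato-Kolyvagin-Rubin-Mazur applied to the Beilinson-Kato classes gives an upper bound on the characteristic ideal of the Selmer complex in terms of the image of the zeta element, while a Taylor-Wiles patching argument gives the reverse bound by identifying the determinant of the Selmer complex with a congruence module of $\Hecke$, which by construction is generated by $\per_{\pid}(\z_{L/\Q,S}(f))$.

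The main obstacle is the second, Taylor-Wiles half of the $p$-adic equality. The Euler system alone produces only one direction of divisibility; obtaining the other requires a genuine ``$R=T$'' theorem for Nekov\'a\v{r}-Selmer complexes with $\Ocal_{F_{\pid}}[G]$-coefficients, which in turn demands delicate control of local terms at primes ramified in $L$ and precise bookkeeping of Tamagawa factors. This is presumably where the compatibility at geometric points satisfying Weight-Monodromy, singled out in the abstract, plays its decisive role: it reduces the local $p$-adic computations to a well-understood geometric setting where the monodromy filtration is available, so that the local Euler factors appearing in the fundamental line match those produced by the Taylor-Wiles deformation-theoretic side.
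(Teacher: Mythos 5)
The statement you are trying to prove is Conjecture~\ref{ConjTNCIntro}, which the paper states as an open conjecture and explicitly does \emph{not} prove: the paper refers to \cite{BlochKato,KatoHodgeIwasawa,FontainePerrinRiou} even for the conjectural definitions of $\Delta_{L/\Q,S}$, $\per_{\iota,\C}$, $\per_{\pid}$, and later proves only weaker results (a divisibility in Theorem~\ref{TheoMain}, and equality only under stronger hypotheses via \cite{SkinnerUrban} in Theorem~\ref{TheoSU}). Your proposal therefore overclaims; there is no ``proof'' to compare against, and the full conjecture (including the archimedean formula at non-critical $s$) is far from being established by the methods in this paper or elsewhere.

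Beyond that mismatch of scope, there are two substantive errors in your description of what the paper's methods accomplish. First, you claim the ``technical innovation'' is to prove that $\RGamma_f(G_{\Q,S},T)$ is a perfect complex of $\Hecke$-modules. The paper says the opposite: it explicitly records that this perfectness is \emph{not} known (the difficulty being that $\Gamma(I_\ell,-)$ is not known to preserve perfection for $\ell\neq p$), and the actual innovation is the introduction of the \emph{refined fundamental lines} $\Xcali_v(T)$ of Definition~\ref{DefXcaliv}, which bypass this obstruction using the Weight-Monodromy structure of the local representation rather than proving that the relevant local complexes are perfect. Second, your account of the two-sided strategy for the $p$-adic equality misattributes the roles. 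The Taylor-Wiles system is used in this paper to \emph{resolve singularities} of the Hecke algebra so that Kolyvagin--Rubin descent of the Euler system bound can be pushed through at non-normal coefficient rings (the patched ring $R_\infty$ is regular, its specializations are DVRs, and non-integrality of $\Delta_\infty$ would contradict Kato's bound at some such specialization); it does \emph{not} supply the reverse divisibility by identifying a Selmer determinant with a congruence module. The reverse divisibility, when available, is imported from \cite{SkinnerUrban}, whose argument is an Eisenstein-congruence computation on a larger group, not a Taylor-Wiles $R=T$ argument, and it is only available under the extra hypotheses of Theorem~\ref{TheoSU}.
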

We refer to \cite{BlochKato,KatoHodgeIwasawa,FontainePerrinRiou} for the conjectural definitions of $\Delta_{L/\Q,S}(M(f)(s))$, $\per_{\iota,\C}$ and $\per_{\pid}$ and to \cite{SelmerComplexes} (or subsection \ref{SubNeko} below) for the definition of the \Nekovar-Selmer complex.

The two statements of conjecture \ref{ConjTNCIntro} are commonly conjointly interpreted as predicting the special values of $L$-functions in terms of Galois cohomological data but reversing the perspective, as in \cite{KatoICM} or indeed as in the original work of Dirichlet on the class number formula or of Gauss in the final paragraph of the \textit{Disquisitiones}, they also provide a description of the Galois action on arithmetic invariants of $M(f)$ in terms of the special values of the $L$-function of its dual. In closer analogy with the study of $L$-functions of schemes of finite types over finite fields as in \cite[Exposé III]{DixExposes} and \cite{SGA41/2}, they can also be understood as a simultaneous description of Galois cohomology and special values of $L$-functions in terms of the single underlying element $\z_{L/\Q,S}(f)$, which then thus appears to be a global equivalent of the Frobenius morphism. This latter perspective has the additional benefit it makes clear that, as first noted in \cite{KatoHodgeIwasawa}, the system $\{\z_{L/\Q,S}(f)\}_{L,S}$ when $L$ spans finite abelian extensions of $\Q$ form an Euler system in the sense of \cite{KolyvaginEuler}; a fact whose generalization is of crucial importance in this manuscript.

Let $\Q_{\infty}$ be the $\zp$-extension of $\Q$, $\Q_{n}$ its only sub-field of degree $p^{n}$ and $\Lambda$ the completed group-algebra $\zp[[\Gal(\Q_{\infty}/\Q)]]$ of its Galois group. Putting together the collection of the Equivariant Tamagawa Number Conjectures at $p$ for the extensions $\Q_{n}/\Q$  (henceforth ETNC for $\Q_{n}/\Q$) yields a conjecture with coefficients in $\Lambda$ which we refer to as the Equivariant Tamawaga Number Conjecture for $M(f)$ with coefficients in $\Lambda$ (henceforth ETNC with coefficients in $\Lambda$). After the construction of zeta elements for modular forms in \cite{KatoEuler} and the awe-inspiring proof that they satisfy the first of the two fundamental properties of conjecture \ref{ConjTNCIntro} at critical points and, so to speak, half of the second, it seems to have been known to experts (though never published to the best of my knowledge) that the ETNC for $\Q_{n}/\Q$ for large enough $n$ at any $s$ was a consequence of the ETNC with coefficients in $\Lambda$ (a comparable statement restricted to critical value is in \cite[Section 13]{KatoEuler}). When $f$ is $p$-ordinary, that is to say when $a_{p}(f)$ is a $p$-adic unit, and under a few other  technical hypotheses \cite[Theorem 3.29]{SkinnerUrban} establishes a divisibility in the ETNC with coefficients in $\Lambda$. Together with \cite[Theorem 12.5]{KatoEuler}, this proves the ETNC with coefficients in $\Lambda$ for $M(f)$ in this setting and hence the ETNC for $\Q_{n}/\Q$ with $n$ large. The main outstanding problem thus remains the case of the special value at the central critical point when the $L$-function vanishes with high order.

\paragraph{Equivariant conjectures with coefficients in the Hecke algebra}These achievements, though spectacular, are far from being the end of the study of special values of $L$-functions of modular motives. Indeed, the motive $M(f)$ is constructed as a quotient of the Chow motive of a modular curve with weight $k$ and hence admits an action of the Hecke algebra, so that one could envision an Equivariant Tamagawa Number Conjecture with coefficients in the Hecke algebra (henceforth ETNC with coefficients in the Hecke algebra). This conjecture would be much stronger than the ETNC with coefficients in $\Lambda$ as it would encode not only the special values of the $L$-function of a single eigenform but also congruences between special values of congruent eigenforms (that such congruences could or should be true was already discussed in \cite[I.1.A]{MazurValues}). In fact, as Hecke algebras were conjectured in \cite{MazurTilouine} to be universal deformation rings in the sense of \cite{MazurDeformation} and as \cite{WilesFermat,TaylorWiles} and much subsequent works established this conjecture in many cases, the ETNC with coefficients in the Hecke algebra should be the most general possible Tamagawa Number Conjecture with commutative coefficients. In analogy with the ETNC for $L/\Q$ above, a tentative statement of the ETNC with coefficients in the Hecke algebra would be as follows.
\begin{Conj}[Tentative statement]\label{ConjTentative}
Let $M$ be the motive with weight $k$ of the modular curve $X(N)$. Let $\Hecke$ be a local quotient of the $p$-adic Hecke algebra acting on $M$ and let $Q(\Hecke)$ be its total ring of fraction. For all $s\in\Z$, there exists a free rank one $Q(\Hecke)$-module $\Delta(s)$ and a $p$-adic universal zeta element $\z(s)$ satisfying the following properties.
\begin{enumerate}
\item For all eigenform $f$ under the action of $\Hecke$ with coefficients in $F/\Q$ and all primes $\pid|p$, there exists a canonical isomorphism
\begin{equation}\nonumber
\Delta(s)\tenseur_{Q(\Hecke)}F_{\pid}\isocan_{f,\pid}\Delta(M(f)(s))\tenseur_{F}F_{\pid}
\end{equation}
sending $\z(s)$ to $\z(f)(s)$.
\item There exists a canonical isomorphism of $Q(\Hecke)$-modules
\begin{equation}\nonumber
\per_{p}:\Delta(s)\isocan\Det^{-1}_{Q(\Hecke)}\RGamma_{f}(G_{\Q,S},M_{\et,p}(s)).
\end{equation}
The equality 
\begin{equation}\nonumber
\per_{p}\left(\Hecke\cdot\z(s)\right)=\Det^{-1}_{\Hecke}\RGamma_{f}(G_{\Q,S},T)
\end{equation}
holds for any free $G_{\Q,S}$-stable $\Hecke$-lattice $T\subset M_{\et,p}(s)$.
\end{enumerate}
\end{Conj}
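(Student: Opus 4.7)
The plan is to build the fundamental line and the universal zeta element in tandem, so that the two compatibility properties can be verified in parallel. The essential preliminary step---the one flagged in the abstract as the missing ingredient---is to establish that the \Nekovar-Selmer complex $\RGamma_{f}(G_{\Q,S},T)$ is perfect over $\Hecke$ for any $G_{\Q,S}$-stable $\Hecke$-lattice $T \subset M_{\et,p}$. My strategy here is to realize $T$ as cut out by a suitable idempotent (Hida's ordinary projector in the ordinary case, a slope projector in general) from an inverse limit of integral \'etale cohomology groups of modular curves along the Hecke tower, and then deduce perfectness of the Selmer complex from Poitou--Tate duality together with the control on local conditions inherited from this tower. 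Once this is done, the fundamental line is defined as $\Delta(s) := \Det^{-1}_{Q(\Hecke)}\RGamma_{f}(G_{\Q,S},M_{\et,p}(s))$, so that the existence of $\per_{p}$ in part 2 becomes essentially tautological and the content of the conjecture shifts to producing a zeta element whose $\Hecke$-span cuts out the required integral determinant.

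For the zeta element itself, the natural candidate is a $\Hecke$-equivariant lift of Kato's Beilinson--Kato elements, built as a compatible system of classes in the Galois cohomology of $T$ by applying Kato's Eisenstein symbols level by level and passing to the limit along the Hecke tower. This construction also equips $\z(s)$ with an Euler system structure with respect to finite abelian extensions of $\Q$, promoted from Kato's original setting by base change from $\Lambda$ to $\Hecke$. The specialization compatibility in part 1 should then follow from the compatibility of the fundamental line at geometric points---those where Weight--Monodromy holds, as established earlier in the paper---combined with the fact that Kato's classes specialize correctly under the character $\Hecke \surjection F_{\pid}$ attached to an eigenform $f$. The level-raising and level-lowering compatibilities are crucial at this stage: they pin down the behaviour of $\Delta(s)$ on the various congruence components of $\Hecke$ and therefore ensure that the $\Hecke$-module structure of $\Delta(s)$ is consistent with the specializations $\Delta(M(f)(s))\tenseur_{F}F_{\pid}$ as $f$ varies.

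The equality in part 2 is where I expect the principal difficulty. One divisibility, namely the inclusion $\Hecke\cdot\per_{p}(\z(s)) \subseteq \Det^{-1}_{\Hecke}\RGamma_{f}(G_{\Q,S},T)$, is the Euler system divisibility and should follow from a Kolyvagin-style argument applied to the universal Euler system, adapted to $\Hecke$-coefficients using the perfectness established in the first step and the Euler system machinery over local rings of dimension $>1$. The reverse divisibility is considerably harder: my strategy is to combine a Taylor--Wiles patching argument carried out at the level of the Selmer complex itself (and not merely at the level of its cohomology modules) with the known divisibility at sufficiently generic ordinary specializations (for instance via \cite[Theorem 3.29]{SkinnerUrban}), and then propagate the resulting equality across the whole local Hecke algebra via the specialization compatibility of the fundamental line already obtained. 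The main obstacle is to make Taylor--Wiles patching interact with the \emph{determinant} of the \Nekovar-Selmer complex in a compatible way: this essentially requires a derived refinement of the classical $R = T$ argument that tracks not just an isomorphism of modules but a canonical identification of fundamental lines together with their distinguished generators, and this is the step where most of the technical work of the paper should be concentrated.
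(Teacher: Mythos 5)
The statement you are addressing is not a theorem of the paper but a \emph{tentative} conjecture, and the paper's very next paragraph declares it ill-posed: the Hecke algebra is unspecified, the maps $\isocan_{f,\pid}$ and $\per_p$ are not constructed, and---most seriously---it is not known whether $\RGamma_{f}(G_{\Q,S},T)$ is a perfect complex of $\Hecke$-modules, so that ``statement 2 of conjecture \ref{ConjTentative} as it stands is in fact woefully undefined.'' There is consequently no proof of this statement anywhere in the paper; the paper's content is to \emph{reformulate} it (as conjectures \ref{ConjXcaliRaid} and \ref{ConjXcaliHeckeSigma}) and then prove only a weak form of the reformulation (theorem \ref{TheoMainPreuve}), with the full equality (theorem \ref{TheoSUPreuve}) available only under the additional Skinner--Urban-type hypotheses.

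Your proposed strategy diverges from the paper at precisely the point it flags as the central obstruction. You propose to prove perfectness of $\RGamma_{f}(G_{\Q,S},T)$ by realizing $T$ through Hida/slope projectors in the cohomology of the modular tower and invoking Poitou--Tate duality. But the difficulty the paper identifies is not at $p$: it is that ``$\Gamma(I_{\ell},-)$ sends perfect complexes of $\Hecke$-modules to perfect complexes of $\Hecke$-modules for $\ell\neq p$'' is unknown, because the inertia invariants $T^{I_\ell}$ at a bad prime $\ell$ need not be a perfect $\Hecke$-module when $\Hecke$ is not regular. Neither the Hida-tower realization nor Poitou--Tate addresses this. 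The paper's actual move is to \emph{sidestep} perfectness entirely: definition \ref{DefXcaliv} replaces $\Det_R\RGamma(G_{K_v}/I_v,T^{I_v})$ by the determinant of the explicit two-term complex $[R\overset{\Fr(v)-1}{\fleche}R]$ whenever $\rank_R T^{I_v}=1$, which is a well-defined graded invertible module regardless of the projective dimension of $T^{I_v}$. It is the Weight--Monodromy conjecture (proposition \ref{PropWeightMonodromy}) and the resulting control on $\rank T^{I_\ell}$ under specialization (lemma \ref{LemXcaliBienDef}, corollary \ref{CorWeightMonodromy}) that make this substitute behave correctly under change of coefficients. This refined fundamental line, not a perfectness theorem, is the conceptual heart of section \ref{SectionETNC}, and without it your subsequent steps (defining $\Delta(s)$, tracking it through Taylor--Wiles patching without error terms) do not get off the ground. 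The remainder of your sketch---Kato's Euler system, a Taylor--Wiles patching of fundamental lines, the appeal to Skinner--Urban for the reverse divisibility---does agree with the paper's architecture, but it rests on a foundation the paper takes pains to show cannot be built in the way you suggest.
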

Unfortunately, under the present guise, these tentative statements do not form a conjecture at all. First, they do not specify which Hecke algebra exactly we are considering: does it contain Hecke operators $T(\ell)$ at $\ell|N$? Does it act faithfully on modular forms new of level $N$? Second, they do not specify how the canonical isomorphism $\isocan_{f,\lambda}$ and $\per_{p}$ are constructed. Third, it is not known whether the complex $\RGamma_{f}(G_{\Q,S},T)$ is a perfect complex of $\Hecke$-modules; the difficulty lying in proving that $\Gamma(I_{\ell},-)$ sends perfect complexes of $\Hecke$-modules to perfect complexes of $\Hecke$-modules for $\ell\neq p$. Consequently, statement 2 of conjecture \ref{ConjTentative} as it stands is in fact woefully undefined.
\subsection{Main results}
The aim of this manuscript is to give a precise formulation of the ETNC with coefficients in the Hecke algebra for modular motives and to prove a large part of it when the hypotheses of the method of Taylor-Wiles systems are satisfied. Let $f\in S_{k}(\Gamma_{1}(N))$ be an eigencuspform of weight $k\geq 2$ and let $p$ be an odd prime. Let $\Hecke_{\mgot}$ be the local factor of the $p$-adic reduced Hecke algebra attached to $f$ and let $\Fp$ be the residue field of $\Hecke_{\mgot}$. Denote by
\begin{equation}\nonumber
\rhobar_{f}:G_{\Q}\fleche\GL_{2}(\bar{\Fp})
\end{equation}
the residual $G_{\Q}$-representation attached to $f$ and by $N(\rhobar_{f})$ its Artin conductor outside $p$. Let $\Sigma$ be a finite set of finite places containing $\{\ell|N(\rhobar_{f})p\}$. The following is our main theorem (see theorem \ref{TheoMainPreuve} for a precise statement).
\begin{TheoEnglish}\label{TheoMain}
 Assume that $\rhobar_{f}$ satisfies the following hypotheses.
\begin{enumerate}
\item Let $p^{*}$ be $(-1)^{(p-1)/2}p$. The representation $\rhobar_{{f}}|_{G_{\Q(\sqrt{p^{*}})}}$ is absolutely irreducible.
\item Either the representation $\rhobar_{{f}}|_{G_{\qp}}$ is reducible but not scalar (in which case we say that $\rhobar_{f}$ is nearly ordinary) or there exists a commutative finite flat $p$-torsion group scheme $G$ over $\zp$ and a character $\bar{\mu}$ such that $\rhobar_{{f}}\tenseur\bar{\mu}^{-1}$ is isomorphic as $\bar{\Fp}[G_{\qp}]$-module to $(G\times_{\zp}\Qbar_{p})[p]$ (in which case we say that $\rhobar_{f}$ is flat).
\item\label{HypMonodromy} There exists $\ell\in\Sigma$ such that $\ell||N(\rhobar_{f})$ and such that the image of $\rhobar_{f}|_{G_{\Q_{\ell}}}$ contains a non-identity unipotent element.
\end{enumerate}
Let $R_{\Sigma,\Iw}$ be $\Hecke_{\mgot}\hat{\tenseur}\Lambda$ and let $Q(R_{\Sigma,\Iw})$ be its total ring of fractions. Let $T_{\Sigma,\Iw}$ be the $G_{\Q,\Sigma}$-representation with coefficients in $R_{\Sigma,\Iw}$ deforming $\rhobar_{f}$. Then there exists a fundamental line $\Delta_{\Sigma,\Iw}$ with coefficients in $R_{\Sigma,\Iw}$ and a universal zeta element $\z_{\Sigma,\Iw}$ which is a basis of $\Delta_{\Sigma,\Iw}$ satisfying the following properties.
\begin{enumerate}
\item For all integer $1\leq s\leq k-1$, all eigencuspform $g\in S_{k}(\Gamma_{1}(N))$ congruent to $f$ and all character $\chi$ of $\Gal(\Q_{\infty}/\Q)$ of large enough finite order, there exists a specified morphism $\per_{g,\chi,s}$ sending $\z_{\Sigma,\Iw}$ to $L_{\{p\}}(g^{*},\chi,s)$ (here, as above, $g^{*}$ denotes the eigencuspform whose eigenvalues are the complex conjugates of those of $g$).
\item There exists a specified isomorphism
\begin{equation}\nonumber
\Delta_{\Sigma,\Iw}\tenseur_{R_{\Sigma,\Iw}}Q(R_{\Sigma,\Iw})\isocan\Det^{-1}_{Q(R_{\Sigma,\Iw})}\left(\RGamma_{c}(\Spec\Z[1/\Sigma],T_{\Sigma,\Iw})\Ltenseur_{R_{\Sigma,\Iw}}Q(R_{\Sigma,\Iw})\right)
\end{equation}
such that the image of $\Delta_{\Sigma,\Iw}$ contains $\Det^{-1}_{R_{\Sigma,\Iw}}\RGamma_{c}(\Spec\Z[1/\Sigma],T_{\Sigma,\Iw})$.
\end{enumerate}
\end{TheoEnglish}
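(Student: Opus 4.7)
The plan is to construct the fundamental line $\Delta_{\Sigma,\Iw}$ as (a twist of) the inverse determinant of the compactly supported Galois cohomology of $T_{\Sigma,\Iw}$ over $R_{\Sigma,\Iw}$, to produce the universal zeta element $\z_{\Sigma,\Iw}$ by interpolating Kato's Euler system along the universal family, and to verify the two defining properties by specialization and by a one-sided Euler-system bound. Before any of this makes sense, however, one must show that $\RGamma_{c}(\Spec\Z[1/\Sigma],T_{\Sigma,\Iw})$ is a perfect complex of $R_{\Sigma,\Iw}$-modules, which is the gap in the existing literature highlighted in the introduction and which I expect to be the main obstacle throughout.

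First I would set up the Taylor--Wiles side. Hypotheses (1)--(3) are exactly the classical ones of \cite{WilesFermat,TaylorWiles}, refined by a flat or nearly-ordinary local condition at $p$, and I would invoke the corresponding $R=T$ theorem to identify $\Hecke_{\mgot}$ with the universal deformation ring of $\rhobar_{f}$ of the appropriate type; this produces a canonical rank-two $R_{\Sigma,\Iw}$-module $T_{\Sigma,\Iw}$ deforming $\rhobar_{f}$ after tensoring with the cyclotomic universal character. I would then establish perfectness of $\RGamma_{c}(\Spec\Z[1/\Sigma],T_{\Sigma,\Iw})$. Global cochains and the local complexes at $p$ and $\infty$ are perfect for formal reasons; the genuine difficulty is the local cohomology at primes $\ell\in\Sigma\setminus\{p\}$, where $\RGamma(G_{\Q_{\ell}},T_{\Sigma,\Iw})$ is a priori only a complex whose inertia invariants and coinvariants need not be finitely generated over $R_{\Sigma,\Iw}$. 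Here hypothesis (3) is decisive: combined with the level-raising compatibility of the fundamental line established earlier in the manuscript, it pins down the local deformation picture at a carefully chosen $\ell$ rigidly enough that $\RGamma(G_{\Q_{\ell}},T_{\Sigma,\Iw})$ is quasi-isomorphic to a controlled perfect complex, after which the remaining bad primes are handled by the same local analysis applied to their particular local deformation rings.

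Armed with perfectness, I would define $\Delta_{\Sigma,\Iw}$ as $\Det^{-1}_{R_{\Sigma,\Iw}}\RGamma_{c}(\Spec\Z[1/\Sigma],T_{\Sigma,\Iw})$ corrected by the Betti--de Rham period factor in families, and construct $\z_{\Sigma,\Iw}$ by gluing, for $g$ varying over eigenforms congruent to $f$ and $n$ over positive integers, the collection of Kato zeta elements $\z_{\Q_{n}/\Q,\Sigma}(g)$; the congruences these satisfy across the family are the content of Kato's Euler-system statement promoted from $\Lambda$-coefficients to $R_{\Sigma,\Iw}$-coefficients via $R=T$. Property (1) is then a specialization statement: applying the Weight-Monodromy compatibility of the fundamental line established in the main body at the arithmetic point of $R_{\Sigma,\Iw}$ corresponding to $(g,s,\chi)$ reduces $\per_{g,\chi,s}(\z_{\Sigma,\Iw})$ to the specialization of Kato's zeta element, which in turn equals $L_{\{p\}}(g^{*},\chi,s)$ by Kato's explicit reciprocity law \cite{KatoEuler}. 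Property (2) splits into a canonical isomorphism after inverting $Q(R_{\Sigma,\Iw})$, which is immediate from perfectness and a rank count, together with an integral containment; the latter is precisely the one-sided bound that an Euler system produces, so that Kato's Euler system, interpreted in the universal setting, gives exactly the containment required.

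The main obstacle, as emphasized above, is perfectness of $\RGamma_{c}(\Spec\Z[1/\Sigma],T_{\Sigma,\Iw})$ at bad primes $\ell\neq p$: in families, the inertia $I_{\ell}$ acts through an infinite pro-$p$ quotient on which the monodromy filtration may jump, and this is exactly why no formulation of the ETNC with Hecke-algebra coefficients had previously been available. The combination of the unipotent-monodromy hypothesis (3), the level-raising compatibility of the fundamental line, and $R=T$ is designed to neutralize this jump and thereby make the global complex perfect over $R_{\Sigma,\Iw}$; once this perfectness is in hand, the remainder of the proof is a careful interpolation of Kato's Euler system along the universal family, together with the standard Euler-system divisibility.
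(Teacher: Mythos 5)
Your proposal rests on two misconceptions that make its central steps either unnecessary or unworkable, and it omits the core idea of the paper's argument.

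First, the obstacle you identify as the main difficulty — perfectness of $\RGamma_c(\Spec\Z[1/\Sigma],T_{\Sigma,\Iw})$ — is not where the difficulty lies. The compactly supported complex is the cone of the localization map from $\RGamma(G_{\Q,\Sigma},T_{\Sigma,\Iw})$ to $\bigoplus_{\ell\in\Sigma\setminus\{p\}}\RGamma(G_{\Q_\ell},T_{\Sigma,\Iw})$, and both sides are perfect for formal reasons (bounded complexes of finitely generated continuous cochain groups over the noetherian local ring $R_{\Sigma,\Iw}$); no hypothesis on inertia or level-raising enters. What fails to be perfect in general is the Nekov\'a\v{r}--Selmer complex $\RGamma_f$ built from unramified local conditions, precisely because the functor $\Gamma(I_\ell,-)$ need not preserve perfectness. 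The paper sidesteps this by defining the fundamental line via $\RGamma_c$ together with the Betti factor, and by introducing the \emph{refined} local modules $\Xcali_\ell(T)$ of Definition \ref{DefXcaliv}; these supply a substitute for $\Det\RGamma(G_{\Q_\ell}/I_\ell,T^{I_\ell})$ whenever the Frobenius eigenvalue on $V^{I_\ell}$ is integral, a property ensured here by the Weight--Monodromy statement (Proposition \ref{PropWeightMonodromy}), not by hypothesis (3). Your proposed use of hypothesis (3) to ``pin down the local deformation picture'' so that $\RGamma(G_{\Q_\ell},T_{\Sigma,\Iw})$ is perfect is therefore solving a non-problem; the actual role of hypothesis (3) is group-theoretic input into the Euler system machine (Lemma \ref{LemUnipotent} supplies a $\sigma\in\Gal(\Qbar/\Q(\zeta_{p^\infty}))$ with rank-one cokernel of $\rho_g(\sigma)-1$, enabling Proposition \ref{PropContradiction}).

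Second, and more seriously, your route to the integral containment in property (2) — ``Kato's Euler system, interpreted in the universal setting, gives exactly the containment required'' — would fail. The Euler/Kolyvagin method proves divisibilities of characteristic ideals after localization at height-one primes, which over a normal (or regular) ring globalizes to an inclusion of lattices; over a non-normal ring such as $\Hecke_{\mgot}\hat\tenseur\Lambda$, it is entirely possible for a graded invertible module to be non-integral while every specialization to a discrete valuation ring is integral, so no contradiction arises by descent. This is precisely why the paper resolves the singularities first: it builds a Taylor--Wiles system $\{(R_{\Sigma\cup Q,\Iw},\Delta_{\Sigma\cup Q,\Iw})\}_Q$ of refined fundamental lines, passes to a patched limit $\Delta_\infty$ over a \emph{regular} ring $R_\infty\simeq\Lambda[[X_1,\dots,X_r]]$, and argues by contradiction there — a hypothetical failure of integrality of $\Delta_\infty$ specializes to a DVR specialization violating the Euler-system bound for some classical modular point of an auxiliary level $\Sigma\cup Q(n)$. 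This patching step is the central novel contribution and is completely absent from your proposal; without it, the Euler system input cannot be converted into the required inclusion. You also omit the separate treatment of the nearly ordinary case (Section on Hida theory), needed because the Euler-system bound at $\Lambda$-level carries a potential error term for weight-two Steinberg-at-$p$ specializations, which the paper avoids by running the whole construction over $\Lambda_{\Hi,\Iw}$ and choosing specializations of weight $k>2$.
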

By way of exegesis, we note that the first property for $g=f$ identifies the image of $\z_{\Sigma,\Iw}$ with the zeta element of the ETNC with coefficients in $\Lambda$ for $f$, and hence that $\z_{\Sigma,\Iw}$ is an interpolation with coefficients in the Hecke algebra of the zeta elements of the ETNC with coefficients in $\Lambda$ for modular forms congruent to $f$. The ETNC with coefficients in $R_{\Sigma,\Iw}$ would then predict an equality between $\Det^{-1}_{R_{\Sigma,\Iw}}\RGamma_{c}(\Spec\Z[1/\Sigma],T_{\Sigma,\Iw})$ and the image of $\Delta_{\Sigma,\Iw}$, whereas we only state and prove an inclusion. Hence, theorem \ref{TheoMain} is a weak form of the ETNC with coefficients in the Hecke algebra. Nevertheless, this weak form is enough to entail a number of interesting results, and in particular that the full ETNC with coefficients in the Hecke algebra is often true (see theorem \ref{TheoSUPreuve} for a precise statement).
\begin{TheoEnglish}\label{TheoSU}
Let $p$ be an odd prime and $N$ such that $p\nmid N$. Let $f\in S_{k}(\Gamma_{1}(p^{r})\cap\Gamma_{0}(N))$ be an eigencuspform. Assume that $\rhobar_{f}$ satisfies the following hypotheses.
\begin{enumerate}
\item Let $p^{*}$ be $(-1)^{(p-1)/2}p$. The representation $\rhobar_{{f}}|G_{\Q(\sqrt{p^{*}})}$ is absolutely irreducible.
\item The semi-simplification of $\rhobar_{{f}}|_{G_{\qp}}$ is reducible but not scalar.
\item There exists $\ell\in\Sigma$ such that $\ell||N(\rhobar_{f})$ and such that the image of $\rhobar_{f}|_{G_{\Q_{\ell}}}$ contains a non-identity unipotent element.
\end{enumerate}
Then the ETNC with coefficients in $R_{\Sigma,\Iw}$ for the motive $M(f)$ is true at $p$.
\end{TheoEnglish}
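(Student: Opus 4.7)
The plan is to upgrade the inclusion of determinants provided by Theorem \ref{TheoMain} to equality by descending from the universal ring $R_{\Sigma,\Iw}$ to the classical Iwasawa algebra attached to $f$, where the full equality is already known.

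First, I would check that the hypotheses of Theorem \ref{TheoSU} are a restriction of those of Theorem \ref{TheoMain}: hypothesis $2$ specialises the local condition at $p$ to the nearly ordinary case, while hypotheses $1$ and $3$ are identical. Applying Theorem \ref{TheoMain} then produces a fundamental line $\Delta_{\Sigma,\Iw}$, a universal zeta element $\z_{\Sigma,\Iw}$ which is a basis of $\Delta_{\Sigma,\Iw}$ over $R_{\Sigma,\Iw}$, and an inclusion
\begin{equation}
\nonumber
\Det^{-1}_{R_{\Sigma,\Iw}}\RGamma_{c}(\Spec\Z[1/\Sigma],T_{\Sigma,\Iw})\subseteq\image(\Delta_{\Sigma,\Iw})
\end{equation}
inside the determinant over the total ring of fractions $Q(R_{\Sigma,\Iw})$. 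Let $C$ denote the cokernel of this inclusion; it is a finitely presented torsion $R_{\Sigma,\Iw}$-module, and the full ETNC with coefficients in $R_{\Sigma,\Iw}$ required by Theorem \ref{TheoSU} amounts to proving $C=0$.

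Second, I would exploit the specialisation $\kgtilde_{f}:R_{\Sigma,\Iw}\fleche\Lambda_{f}:=\Ocal_{F_{\pid}}\hat{\tenseur}\Lambda$ obtained by pairing the character $\Hecke_{\mgot}\fleche\Ocal_{F_{\pid}}$ attached to $f$ with the identity on $\Lambda$. Property $1$ of Theorem \ref{TheoMain}, applied to $g=f$ and to all cyclotomic twists of large enough conductor, characterises $\kgtilde_{f}(\z_{\Sigma,\Iw})$ by interpolation of the values $L_{\{p\}}(f^{*},\chi,s)$ and therefore identifies it with the Beilinson-Kato zeta element of $f$ in $\Lambda_{f}$. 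Under the nearly ordinary hypothesis, \cite[Theorem 3.29]{SkinnerUrban} combined with \cite[Theorem 12.5]{KatoEuler}---already noted in the introduction to imply the ETNC with coefficients in $\Lambda$ for $M(f)$ as an equality---yields $C\tenseur_{R_{\Sigma,\Iw}}\Lambda_{f}=0$.

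The remaining step is a Nakayama-style descent on Fitting ideals. As $C$ is finitely presented, the zeroth Fitting ideal commutes with base change, so $\Fitt_{0}(C)\cdot\Lambda_{f}=\Lambda_{f}$. Because $\kgtilde_{f}$ is a local homomorphism of Noetherian local rings, the contraction of $\mgot_{\Lambda_{f}}$ to $R_{\Sigma,\Iw}$ is $\mgot_{R_{\Sigma,\Iw}}$, so the previous identity forces $\Fitt_{0}(C)\not\subseteq\mgot_{R_{\Sigma,\Iw}}$, \emph{i.e.} $\Fitt_{0}(C)=R_{\Sigma,\Iw}$; combined with $\Fitt_{0}(C)\subseteq\operatorname{Ann}_{R_{\Sigma,\Iw}}(C)$ this yields $C=0$. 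The substance of the proof is thus entirely carried by Theorem \ref{TheoMain}, and particularly by its interpolation property $1$, which encodes the fact that $\z_{\Sigma,\Iw}$ is a genuine interpolation of classical Beilinson-Kato zeta elements along the specialisations of $R_{\Sigma,\Iw}$; granting this universality, the upgrade from the weak ETNC to the full ETNC under the nearly ordinary hypothesis is a short Fitting-ideal descent of the above form.
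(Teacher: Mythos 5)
Your overall strategy coincides with the paper's: Theorem \ref{TheoMain} supplies the inclusion (the weak ETNC), and one upgrades it to an equality by descending to a single specialization for which the ETNC with coefficients in $\Lambda$ is known. The Fitting-ideal descent you describe is, over the local ring $R_{\Sigma,\Iw}$, identical in substance to the Nakayama argument the paper carries out via the commutative square of local morphisms in the proof of Corollary \ref{CorEPWPreuve}; and the identification of $\kgtilde_{f}(\z_{\Sigma,\Iw})$ with $\z(f)$ that you draw from property $1$ of Theorem \ref{TheoMain} is, once made precise, the content of Propositions \ref{PropCompatibleSpec} and \ref{PropCompLevel} on compatibility of the refined fundamental lines with modular specialization, which is the exact-control property needed for the base change $C\tenseur_{R_{\Sigma,\Iw}}\Lambda_{f}$ to compute the correct cokernel.

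There is however a genuine gap in the middle step, where you assert that $C\tenseur_{R_{\Sigma,\Iw}}\Lambda_{f}=0$ because \cite[Theorem 3.29]{SkinnerUrban} together with \cite[Theorem 12.5]{KatoEuler} already yield the ETNC with coefficients in $\Lambda$ for $M(f)$ as an equality. This implication is stated loosely in the introduction, but the paper corrects it immediately afterwards: \cite[Theorem 12.5]{KatoEuler} gives the divisibility $\carac_{\Lambda}H^{2}_{\et}(\Z[1/p],T(f)_{\Iw})\mid\carac_{\Lambda}\Hun_{\et}(\Z[1/p],T(f)_{\Iw})/\z(f)$ only up to a local error term at $p$, which need not vanish when $\rho_{f}|_{G_{\qp}}$ fails to be potentially crystalline, i.e.\ when $\pi(f)_{p}$ is an ordinary Steinberg representation of weight $2$ --- a case that the hypotheses of Theorem \ref{TheoSU} do not exclude ($k=2$, $r=1$). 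Removing that error term is precisely the content of Corollary \ref{CorMain}, which the paper obtains from the full Taylor-Wiles/Euler-system argument and explicitly flags as one of the most delicate points of the manuscript; it is not a free consequence of Kato and Skinner-Urban. The paper's proof of Theorem \ref{TheoSUPreuve} sidesteps the problem differently: instead of specializing to $f$ itself it descends from the Hida-theoretic fundamental line over $R_{\Sigma,\Hi,\Iw}$ to a weight $k>2$ member $g$ of the Hida family, for which the much more elementary Proposition \ref{PropContradiction} already gives the Kato-direction divisibility, and this is also why the precise statement \ref{TheoSUPreuve} treats $R_{\Sigma,\Hi,\Iw}$ alongside $R_{\Sigma,\Iw}$ --- a feature entirely absent from your proposal. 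Your argument can be repaired either by replacing the appeal to \cite[Theorem 12.5]{KatoEuler} with an explicit appeal to Corollary \ref{CorMain}, or by making the same weight $k>2$ detour as the paper; but as written, in the Steinberg-at-$p$ case it assumes exactly what is at stake.
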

Theorems \ref{TheoMain} and \ref{TheoSU} seem to be among the first general results on the ETNC with coefficients in Hecke algebras; if only for the somewhat tautological reason that no prior unconditional formulation of this conjecture seems to exist in the literature.\footnote{\cite[Conjecture 3.2.2]{KatoViaBdR} takes as input a smooth sheaf over $\Spec\Z[1/p]$, \cite[Conjecture 2.2]{GreenbergIwasawaMotives} requires the coefficient ring to be integrally closed.} 

We also record here some technical consequences of theorem \ref{TheoMain} which improve on the existing literature. Let $L/\qp$ be a finite extension containing all the eigenvalues of $f$, let $\Ocal$ be its ring of integers and let $\Lambda$ be $\Ocal[[\Gal(\Q_{\infty}/\Q)]]$. Let $V(f)$ be the two-dimensional $G_{\Q}$-representation attached to $f$, let $T(f)$ be a $G_{\Q}$-stable $\Ocal$-lattice in $V(f)$ an let $T(f)_{\Iw}$ be the $G_{\Q}$-representation $T(f)\tenseur\Lambda$ with action on both sides of the tensor product. Let $\z(f)$ be the zeta element of the ETNC with coefficients in $\Lambda$ for $f$. Then $\z(f)$ can be regarded as an element of $\Hun_{\et}(\Spec\Z[1/p],T(f)_{\Iw})$ and the ETNC with coefficients in $\Lambda$ for $f$ is equivalent to the equality
\begin{equation}\nonumber
\carac_{\Lambda}H^{2}_{\et}(\Spec\Z[1/p],T(f)_{\Iw})=\carac_{\Lambda}H^{1}_{\et}(\Spec\Z[1/p],T(f)_{\Iw})/\z(f).
\end{equation}
See conjecture \ref{ConjXcaliLambda} for a details.
\begin{CorEnglish}\label{CorMain}
Assume the hypotheses and notations of theorem \ref{TheoMain}. Then
\begin{equation}\label{EqDivisibility} 
\carac_{\Lambda}H^{2}_{\et}(\Spec\Z[1/p],T(f)_{\Iw})|\carac_{\Lambda}H^{1}_{\et}(\Spec\Z[1/p],T(f)_{\Iw})/\z(f).
\end{equation}
\end{CorEnglish}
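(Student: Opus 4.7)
The plan is to deduce the divisibility by specializing the universal ETNC statement of Theorem \ref{TheoMain} along the canonical morphism $\pi_{f}\colon R_{\Sigma,\Iw}\surjection\Lambda$ corresponding to $f$ (the composition of the projection $\Hecke_{\mgot}\hat{\tenseur}\Lambda\surjection\Ocal\hat{\tenseur}\Lambda=\Lambda$ induced by the system of eigenvalues of $f$). The universal deformation $T_{\Sigma,\Iw}$ is characterized by the fact that its pushforward along $\pi_{f}$ is canonically isomorphic, as an $\Ocal[[\Gal(\Q_{\infty}/\Q)]]$-module with $G_{\Q,\Sigma}$-action, to $T(f)_{\Iw}$; this is the defining interpolation property of the family.

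First I would transport property (1) of Theorem \ref{TheoMain}. Taking $g=f$ and letting $\chi$ range over a Zariski-dense set of characters of finite order of $\Gal(\Q_{\infty}/\Q)$, the specified morphisms $\per_{f,\chi,s}$ send $\pi_{f}(\z_{\Sigma,\Iw})$ to the same complex $L$-values $L_{\{p\}}(f^{*},\chi,s)$ as the zeta element $\z(f)$ of the ETNC with coefficients in $\Lambda$ (cf.\ \cite[Section 13]{KatoEuler}). Since an element of $\Hun_{\et}(\Spec\Z[1/p],T(f)_{\Iw})$ is determined by its images under this dense family of specialization maps (combined with the fact that $\Hun$ is $\Lambda$-torsion-free under our hypothesis), one concludes $\pi_{f}(\z_{\Sigma,\Iw})=\z(f)$.

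Next I would specialize property (2). Because $T_{\Sigma,\Iw}\tenseur_{R_{\Sigma,\Iw}}\Lambda\simeq T(f)_{\Iw}$ as $G_{\Q,\Sigma}$-modules, the base change spectral sequence shows that $\RGamma_{c}(\Spec\Z[1/\Sigma],T_{\Sigma,\Iw})\Ltenseur_{R_{\Sigma,\Iw}}\Lambda$ is quasi-isomorphic to $\RGamma_{c}(\Spec\Z[1/\Sigma],T(f)_{\Iw})$. Since determinants commute with arbitrary base change, pushing the containment of property (2) forward along $\pi_{f}$ gives that $\Lambda\cdot\pi_{f}(\z_{\Sigma,\Iw})=\Lambda\cdot\z(f)$ inside $\Det^{-1}_{Q(\Lambda)}\RGamma_{c}(\Spec\Z[1/\Sigma],T(f)_{\Iw})\tenseur_{\Lambda}Q(\Lambda)$ generates a submodule of the determinant $\Det^{-1}_{\Lambda}\RGamma_{c}(\Spec\Z[1/\Sigma],T(f)_{\Iw})$. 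Passing from $\Spec\Z[1/\Sigma]$ to $\Spec\Z[1/p]$ via the localization triangle introduces for each $\ell\in\Sigma\setminus\{p\}$ an auxiliary local complex $\RGamma(I_{\ell},T(f)_{\Iw})^{\Fr_{\ell}}$, but under hypothesis \ref{HypMonodromy} and the standard modification of the fundamental line by Euler factors at $\Sigma\setminus\{p\}$, these contributions are controlled and do not affect the final divisibility.

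Finally, translating the determinantal containment into characteristic ideals is straightforward: the complex $\RGamma_{\et}(\Spec\Z[1/p],T(f)_{\Iw})$ has cohomology concentrated in degrees $1$ and $2$, both of which are finitely generated $\Lambda$-modules with $H^{2}$ being $\Lambda$-torsion and $H^{1}/\Lambda\z(f)$ being $\Lambda$-torsion (this second point is itself a consequence of the non-vanishing side of the argument, which follows from \cite[Theorem 12.5]{KatoEuler}). For a bounded complex of such modules, the equality $\Det^{-1}_{\Lambda}\RGamma=(\carac_{\Lambda}H^{2})\cdot(\carac_{\Lambda}H^{1})^{-1}$ as fractional ideals of $\Lambda$ holds, so the inclusion of submodules provided by Theorem \ref{TheoMain} translates exactly into the desired divisibility \eqref{EqDivisibility}. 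The main obstacle in carrying this out rigorously lies in the bookkeeping of Euler factors at primes in $\Sigma\setminus\{p\}$ and in verifying that the specialization $\pi_{f}(\z_{\Sigma,\Iw})$ indeed coincides with Kato's zeta element $\z(f)$ rather than merely with a $\Lambda^{\times}$-multiple; both are handled by the compatibilities built into the fundamental line in earlier sections of the paper.
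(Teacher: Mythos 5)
Your overall strategy — specialize the determinantal inclusion of Theorem \ref{TheoMain} along $\pi_{f}\colon R_{\Sigma,\Iw}\surjection\Lambda$ and translate the resulting inclusion of lattices into a divisibility of characteristic ideals — is precisely what the paper does; the formal chain in the paper is conjecture \ref{ConjSigmaWeak} $\Rightarrow$ conjecture \ref{ConjXcaliHeckeSigmaWeak} $\Rightarrow$ conjecture \ref{ConjXcaliRaidWeak} $\Rightarrow$ conjecture \ref{ConjXcaliLambdaWeak}, via propositions \ref{PropCompatibleHida} and \ref{PropCompWeak}, and the final translation to characteristic ideals is equation \eqref{EqCaracDet}. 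So the approach is the same.

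You do, however, misattribute the role of hypothesis \ref{HypMonodromy} (that is, assumption \ref{HypTamagawa}). You claim that "under hypothesis \ref{HypMonodromy} and the standard modification of the fundamental line by Euler factors at $\Sigma\setminus\{p\}$, these contributions are controlled." This is not what \ref{HypMonodromy} does. The control of the Euler-factor modifications at $\ell\in\Sigma\setminus\{p\}$ rests entirely on the Weight-Monodromy results of subsection \ref{SubLattices} — specifically, the definition of the refined modules $\Xcali_{\ell}$, proposition \ref{PropWeightMonodromy} (which shows the ranks of inertia invariants behave well under modular specialization because of Ramanujan and the local-global compatibility) and corollary \ref{CorWeightMonodromy}, together with propositions \ref{PropIhara}, \ref{PropIhara2} and \ref{DefPropPiDeltaSigma}. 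Hypothesis \ref{HypMonodromy} plays a different role altogether: through lemma \ref{LemUnipotent} it guarantees the existence of $\sigma\in\Gal(\Qbar/\Q(\zeta_{p^{\infty}}))$ with $\coker(\rho_{g}(\sigma)-1)$ free of rank $1$, which is exactly what is needed to invoke Kato's Euler system bound (proposition \ref{PropContradiction}, quoting \cite[12.5 (4)]{KatoEuler}) without auxiliary large-image hypotheses. That bound is part of the proof of Theorem \ref{TheoMain} itself, so by black-boxing the theorem you import \ref{HypMonodromy}'s effect without needing to invoke it at the specialization step at all; mentioning it there is misleading. Finally, you do not mention that the genuinely delicate content of this corollary (as the introduction stresses) is the nearly ordinary case, where the naive specialization would leave a potential error term at $p$ coming from $H^{2}(G_{\qp},T(g)_{\Iw})$ when $\pi(g)_{p}$ is Steinberg of weight $2$; the paper removes this by running the Taylor-Wiles descent over the Hida-Hecke algebra $R_{\Sigma,\Hi,\Iw}$ so that the contradiction can be located at a specialization of weight $k>2$. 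This is hidden once you treat Theorem \ref{TheoMain} as given, which is logically permissible, but worth being aware of since it is the point of the corollary.
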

In \cite[Theorem 12.5]{KatoEuler}, this divisibility is proved only possibly up to a local error term at $p$ which vanishes if $\rho_{f}|_{G_{\qp}}$ is potentially crystalline. While the difference might seem technical and unimportant, the ideas behind the proof of corollary \ref{CorMain} are actually among the most  sophisticated of the manuscript and play a crucial role in the general argument.

Theorem \ref{TheoMain} also allows us to refine known results on the compatibility between the ETNC with coefficients in the Hecke algebra and the ETNC with coefficients in $\Lambda$ (see corollary \ref{CorEPWPreuve} for a precise statement).
\begin{CorEnglish}\label{CorEPW}
Assume the hypotheses and notations of corollary \ref{CorMain}. Then the four following assertions are equivalent.
\begin{enumerate}
\item The ETNC with coefficients in $\Lambda$ for $f$ is true.
\item\label{AssertionSingle} There exists an eigencuspform $g$ of weight $k$ congruent to $f$ modulo $p$ for which the ETNC with coefficients in $\Lambda$ is true.
\item\label{AssertionAll} For all eigencuspform $g$ of weight $k$ congruent to $f$ modulo $p$, the ETNC with coefficients in $\Lambda$ is true.
\item The ETNC with coefficients in the Hecke algebra for $f$ is true.
\end{enumerate}
If moreover $\rhobar_{f}|_{G_{\qp}}$ is reducible, then the condition that $g$ has the same weight as $f$ can be removed in assertions \ref{AssertionSingle} and \ref{AssertionAll}.
\end{CorEnglish}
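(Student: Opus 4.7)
The plan is to organize the four assertions around the universal equality
$$\Det^{-1}_{R_{\Sigma,\Iw}}\RGamma_{c}(\Spec\Z[1/\Sigma],T_{\Sigma,\Iw})=\text{image of }\Delta_{\Sigma,\Iw},$$
which is precisely assertion (4). Among the implications involved, (1) $\Rightarrow$ (2) and (3) $\Rightarrow$ (1) are tautologies (take $g=f$). The implications (4) $\Rightarrow$ (1) and (4) $\Rightarrow$ (3) follow by specializing the universal equality along the surjection $R_{\Sigma,\Iw}\twoheadrightarrow\Lambda_g$ attached to any eigenform $g$ of weight $k$ congruent to $f$, where $\Lambda_g$ denotes the Iwasawa algebra with coefficients in the ring of integers of the $p$-adic coefficient field of $g$. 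The base change property of the perfect \Nekovar-Selmer complex together with the compatibility of the universal zeta element $\z_{\Sigma,\Iw}$ with the individual zeta elements $\z(g)$, both furnished by Theorem \ref{TheoMain}, identify the specialized equality with the ETNC with coefficients in $\Lambda$ for $g$.

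The main content is therefore the implication (2) $\Rightarrow$ (4). Since $R_{\Sigma,\Iw}$ is a complete local Noetherian ring, every invertible $R_{\Sigma,\Iw}$-module is free, so both the image of $\Delta_{\Sigma,\Iw}$ and $\Det^{-1}_{R_{\Sigma,\Iw}}\RGamma_{c}(\Spec\Z[1/\Sigma],T_{\Sigma,\Iw})$ are free of rank one inside their common ambient $Q(R_{\Sigma,\Iw})$-line. The inclusion between them provided by Theorem \ref{TheoMain} is thus principal: after fixing generators, it takes the form of a relation $e_{1}=r\cdot e_{2}$ for some $r\in R_{\Sigma,\Iw}$, and the cokernel is the cyclic module $R_{\Sigma,\Iw}/(r)$. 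Specializing to $g$ via $R_{\Sigma,\Iw}\twoheadrightarrow\Lambda_g$, the base change of determinants of perfect complexes yields the cokernel $\Lambda_g/(\bar r)$, which by the compatibility of zeta elements is canonically isomorphic to the cokernel of the ETNC with coefficients in $\Lambda$ for $g$. Assumption (2) asserts that this last cokernel vanishes, so that $\bar r$ is a unit in $\Lambda_g$.

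It remains to upgrade this to the statement that $r$ itself is a unit in $R_{\Sigma,\Iw}$. The kernel $\pid_g$ of the surjection $R_{\Sigma,\Iw}\twoheadrightarrow\Lambda_g$ is a proper ideal of the local ring $R_{\Sigma,\Iw}$ and hence contained in its maximal ideal $\mgot$; moreover the maximal ideal of $\Lambda_g$ is the image of $\mgot$. Consequently $\bar r$ is a unit in $\Lambda_g$ if and only if $r\notin\mgot$, if and only if $r$ is a unit in $R_{\Sigma,\Iw}$, in which case $R_{\Sigma,\Iw}/(r)=0$ and assertion (4) follows. The main obstacle of the argument is therefore the precise identification of the specialized cokernel with the cokernel of the ETNC with coefficients in $\Lambda$ for $g$: this depends on the base change of the perfect complex $\RGamma_{c}(\Spec\Z[1/\Sigma],T_{\Sigma,\Iw})$ and on the interpolation property of $\z_{\Sigma,\Iw}$, both of which are substantial technical inputs proved in Theorem \ref{TheoMain}.

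For the final assertion, in the nearly ordinary setting the argument extends to eigenforms $g$ of any weight $k'\geq 2$ congruent to $f$ modulo $p$ by invoking Hida's universal ordinary Hecke algebra, which provides a uniform framework accommodating arithmetic specializations at forms of varying weights. Each such $g$ still gives rise to an arithmetic surjection whose image is an Iwasawa algebra $\Lambda_g$, and the specialization argument above then applies verbatim, removing the restriction to weight $k$ in assertions \ref{AssertionSingle} and \ref{AssertionAll}.
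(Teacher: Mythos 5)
Your proposal is essentially correct and follows the same route as the paper's argument: reduce to the implication (2) $\Rightarrow$ (4) via Proposition \ref{PropCompLevel}, note that the inclusion furnished by Theorem \ref{TheoMain} is governed by a single element of the local ring $R_{\Sigma,\Iw}$, and observe that because the modular specialization $R_{\Sigma,\Iw}\twoheadrightarrow\Lambda_g$ is a \emph{local} surjection, the specialized cokernel vanishes only if the original cokernel does. The paper expresses this last step through a commutative square of local morphisms of fundamental lines rather than through an explicit generator $r$, but the content is identical, and your treatment of the nearly ordinary extension via $R_{\Sigma,\Hi,\Iw}$ also matches the paper's final sentence.

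One point deserving slightly more care in your write-up: when you claim that the base change of $\RGamma_c$ and the zeta-element compatibility identify the specialized cokernel with the cokernel of the ETNC with coefficients in $\Lambda$ for $g$, the nontrivial input is not the perfectness of $\RGamma_c$ alone but the exact control of the \emph{refined} fundamental line $\Delta_{\Sigma,\Iw}$ under specialization — in particular the compatibility of the local factors $\Xcali_\ell$ with modular specializations, which rests on the Weight-Monodromy argument (Proposition \ref{PropWeightMonodromy} and Corollary \ref{CorWeightMonodromy}). You gesture at this by invoking Theorem \ref{TheoMain}, which is acceptable, but the precise ingredient is Propositions \ref{PropCompatibleSpec}--\ref{PropCompLevel} (and their weak analogues), and naming them would make the argument tighter.
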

Results of this type were proved in \cite{EmertonPollackWeston} under the hypotheses that $\rhobar_{f}|_{G_{\qp}}$ is reducible and that the $\mu$-invariant $\mu(f)$ of $f$ is trivial. In \cite{OchiaiMainConjecture}, they were proved under the hypotheses that $\rhobar_{f}|_{G_{\qp}}$ is reducible, that $f$ belongs to $S_{k}(\Gamma_{1}(p^{r}))$ and that the ordinary Hida-Hecke algebra attached to $f$ is a regular local ring. The hypotheses on the triviality of $\mu$ and the regularity of the Hida-Hecke algebra are believed to always hold, but very few non-tautological criteria exist to establish their veracity as far as this author knows.
\subsection{Outline of the proofs}
\paragraph{The Weight-Monodromy conjecture and special values of $L$-function}Our first task is to formulate an unconditional conjecture that would coincide with the usual ETNC when the latter is well-defined. This we achieve through the following crucial observation: the severe constraints conjecturally put on the action of the inertia group on the $p$-adic étale realization of a motive by the Weight-Monodromy conjecture (henceforth WMC) allow to refine the definition of the local complexes involved in the statement of the ETNC. This process yields objects we call \emph{refined fundamental lines} which are not in general determinants of perfect complexes but rather canonical trivializations of invertible graded modules which themselves are the determinants of the sought for perfect complexes when these are known to exist. When the motive is of automorphic origin, the description of the WMC is supplemented by automorphic data coming from the Local Langlands Correspondence and our construction are in this way shown to be compatible with the action of the Hecke algebra. Indeed, the very definition of the refined fundamental line for an automorphic motive singles out a specific local factor of the Hecke algebra which coincides with the universal deformation ring subject to natural conditions. A conceptually satisfying property of the refined fundamental lines is that they are almost by construction shown to be compatible with change of rings of coefficients at motivic points; a property which generalizes the control theorem of \cite{MazurRational} (and much subsequent work) in a probably optimal way. That it is compatible with change of levels in the automorphic sense is a much deeper result which in the case of modular curves amounts to the compatibility of the refined fundamental line with specialization and a variant of Ihara's lemma.

We are then finally in position to formulate our version of the ETNC with coefficients in Hecke algebras for modular motives: see conjectures \ref{ConjXcaliRaid} and \ref{ConjXcaliHeckeSigma} for precise statements. These conjectures are equivalent to the usual trivializations of the determinants of étale cohomology with compact support when all necessary objects are known to be defined and equivalent to the usual equality of characteristic ideals when specialized to $\Lambda$  or when the Hecke algebra is known to be regular. A crucial fact is that different choices of Hecke algebras, more specifically reduced Hecke algebras and irreducible components thereof, yield different refined fundamental lines and hence different mutually compatible conjectures. This reflects the fact that the ETNC should be sensitive to changes of the action of inertia through specialization; an observation that has been conceptually understood from a conjectural point of view at least since the study in \cite{FontaineValeursSpeciales,KatoHodgeIwasawa} of partial $L$-functions and is also at the heart of \cite[Section 3.5]{EmertonPollackWeston}.

\paragraph{Euler systems and Taylor-Wiles systems}
Our proof of part of our conjectures under the hypotheses of theorem \ref{TheoMain} is then by an amplification of the method of Euler/Kolyvagin systems, where two actually quite distinct ideas are subsumed under this name. The first one, due to V.Kolyvagin in \cite{KolyvaginEuler}, is the observation that Galois cohomology classes satisfying compatibility relations in towers of extensions reminiscent of the properties of partial Euler products yield systems of classes with coefficients in principal artinian rings whose local behaviors is sufficiently constrained to establish a crude bound on the order of some Galois cohomology or Selmer groups. The second idea is a descent principle due to K.Rubin which allows under suitable assumptions to translate a collection of crude bounds for many specializations with coefficients in artinian rings in a sharp bound in the limit, that is for objects with coefficients in Iwasawa algebras. When the ring of coefficients of the limit object is not known to be normal, as is the case with Hecke algebra, this descent principle meets quite formidable challenge, as it is of course entirely possible for an invertible module to be non-integral while all its specializations to discrete valuations rings are integral in which case, no contradiction can arise by descent. For this reason, most account of the Euler/Kolyvagin systems method (\cite{PerrinRiouEuler,RubinEuler,KatoEuler,MazurRubin,HowardKolyvagin,HowardGLdeux,OchiaiEuler,FouquetRIMS} for instance) assume that the ring of coefficients is regular, or at least normal, and those which don't (\cite{KatoEulerOriginal,FouquetDihedral} for instance) typically prove weaker statement at the locus of non-normality of the coefficient ring.

Our second main novel contribution allows us to bypass this difficulty by first resolving the singularities of the Hecke algebra using the method of Taylor-Wiles of \cite{WilesFermat,TaylorWiles} systems as axiomatized in \cite{DiamondHecke,FujiwaraDeformation} before applying the descent procedure. Under the two first hypotheses of theorem \ref{TheoMain}, there exists a Taylor-Wiles system $\Delta_{Q}$ of refined fundamental lines yielding a limit object $\Delta_{\infty}$ over a regular local ring $R_{\infty}$. If the limit object $\Delta_{\infty}$ is not integral, then it has non-integral specializations to discrete valuation rings. Even though $\Delta_{\infty}$ itself has no Galois interpretation, its specializations do, so that this non-integrality contradicts Kolyvagin's bound (or more accurately the sharper results of \cite{KatoEuler}). Hence $\Delta_{\infty}$ is integral. Then so are the $\Delta_{Q}$ and in particular the fundamental line $\Delta$ we started with. This argument is by nature extremely sensitive to the existence of any error term at any step and thus relies in an essential way on the exact control property of the refined fundamental lines.

We make the following observation, which lies at the conceptual core of this manuscript: just as the conjectured compatibility of the Tamagawa Number Conjecture with the $\Gal(\Q(\zeta_{Np^{s}})/\Q)$-action coming from the covering $\Spec\Z[\zeta_{Np^{s}},1/p]\fleche\Spec\Z[1/p]$ implies that motivic zeta elements form an Euler system, the conjectured compatibility of the Tamagawa Number Conjecture with the action of the Hecke algebra coming from the covering $X_{U'}\fleche X_{U}$ of Shimura varieties implies that the refined fundamental lines form a Taylor-Wiles system. In both cases, the compatibilities we hope for the conjectures on special values of $L$-functions therefore suggest powerful tools to prove the conjectures.
\paragraph{The nearly ordinary case}
The argument outlined above establishes theorem \ref{TheoMain} and corollary \ref{CorEPW} under the hypothesis that $\rhobar_{f}|_{G_{\qp}}$ is irreducible. When $\rhobar_{f}$ is nearly ordinary, even the sharper result of \cite{KatoEuler} for the ETNC with coefficients in $\Lambda$ may contain a slight error term linked to trivial zeroes which is enough to prevent us from reaching the desired contradiction at the very end of the argument. Hence, we are forced to repeat the argument over the $p$-adic families of nearly ordinary modular forms parametrized by $\Lambda_{\Hi}\simeq\zp[[X]]$ constructed by H.Hida in \cite{HidaInventionesOrdinary,HidaAmericanCongruence,HidaNearlyOrdinaryRepresentations} (see also \cite{WilesOrdinaryLambdaAdic}). Fortunately, the definitions of our refined fundamental lines carries over to that setting and versions of the Taylor-Wiles systems machinery over $\Lambda_{\Hi}$ exist. This allows us to reduce the proof of theorem \ref{TheoMain} to the case $k>2$. This finishes the proof of theorem \ref{TheoMain} and of corollaries \ref{CorMain} and \ref{CorEPW}.
\paragraph{The ETNC with coefficients in Hecke algebras}
Under the hypotheses of theorem \ref{TheoSU}, the main results of \cite{KatoEuler} establish an inclusion in the ETNC with coefficients in $\Lambda$ for $f$ and the main results of \cite{SkinnerUrban} establish the reverse inclusion. Combined, they thus imply that the ETNC with coefficients in $\Lambda$ is true for $f$. In general, the truth of the ETNC with coefficients with $\Lambda$ is very far to imply formally the truth of the ETNC with coefficients in the universal deformation ring $R_{\Sigma,\Iw}$ but granted the full force of theorem \ref{TheoMain}, it is enough to prove theorem \ref{TheoSU} to exhibit a single modular specialization of $R_{\Sigma,\Iw}$ for which the ETNC with coefficients in $\Lambda$ is true, and so the combined results of  \cite{KatoEuler,SkinnerUrban} allow us to conclude. 

\paragraph{Discussion of the hypotheses}Here follows a brief discussion of the hypotheses of theorem \ref{TheoSU} and of their relevance. The first two numbered hypotheses are the familiar hypotheses of the Taylor-Wiles method so are used in a crucial way in the proof of theorem \ref{TheoMain}. They could probably be dispensed with at the price of inverting $p$ by an appeal to the generalization of the method of Taylor-Wiles systems introduced in \cite{KisinFlat}, the main difficulties being to show that there exists a sheaf of zeta elements on the eigencurve of \cite{ColemanMazur}. The hypotheses on $f$ and $N$ come from \cite[Corollary 3.28]{SkinnerUrban}. The last numbered hypothesis is thus presumably the most mysterious. In fact, it comes both from \cite{SkinnerUrban}, where it is assumed in order to quote results of \cite{Vatsal} on the vanishing of the anticyclotomic $\mu$-invariant, and from \cite{KatoEuler}, as a classical group-theoretic argument in the method of Euler systems proves under this hypothesis an expected bound on the cyclotomic $\mu$-argument. Hence, a single hypothesis impacts both the cyclotomic and anticylotomic $\mu$-invariants of modular forms, though through seemingly completely two different ways, and furthermore this hypothesis amounts to requiring that $\pi(f)$ admits a $p$-adically interpolatable Jacquet-Langlands switch to an indefinite quaternionic automorphic representation. This could be a coincidence or reflect a possible, but at present mysterious, unified automorphic treatment of $\mu$-invariants in the presence of an auxiliary prime with residually maximal monodromy. It might be inferred from this discussion that omitting hypothesis \ref{HypMonodromy} in theorem \ref{TheoMain} and \ref{TheoSU} would yield  similar theorems (an inclusion for theorem \ref{TheoMain} and an equality for theorem \ref{TheoSU}) outside of the prime $p$, but this is not obviously true as far as this author can see: the possibility of an error term lurking somewhere, even if it is circumscribed to the single prime $p$, might irremediably damage the descent argument.

\section{Notations}
\paragraph{General notations}Rings are assumed to be commutative. For a field $\Fp$, the category of complete local noetherian rings with residue field equal to $\Fp$ (with morphisms inducing identity on $\Fp$) is denoted by $\Ccal(\Fp)$. A representation $(T,\rho,R)$ of a topological group $G$ is a continuous morphism 
\begin{equation}\nonumber
\rho:G\fleche\Aut_{R}(T)
\end{equation}
from $G$ to the automorphisms of a free $R$-module $T$. If $K$ is a field, we write $G_{K}$ for the Galois group of a separable closure of $K$. If $K$ is a number field with ring of integers $\Ocal_{K}$ and if $S$ is a finite set of rational primes, we denote by $G_{K,S}$ the Galois group of the maximal extension of $K$ unramified outside primes of $\Ocal_{K}$ above primes in $S$. For all rational primes $\ell$, we fix an algebraic closure $\Qbar_{\ell}$ of $\Q_{\ell}$, an embedding of $\Qbar$ into $\Qbar_{\ell}$ and an identification $\iota_{\infty,\ell}:\C\simeq\Qbar_{\ell}$ extending $\Qbar\plonge\Qbar_{\ell}$. The Galois group of the unique $\zp$-extension $\Q_{\infty}/\Q$ is denoted by $\Gamma$.
\subsection{Modular curves and their cohomology}
\subsubsection{Modular curves}
Let $\G$ be the reductive group $\GL_{2}$ over $\Q$, $X$ be $\C-\R$ and $\Sh(\G,X)$ be the tower of Shimura curves attached to the Shimura datum $(\G,X)$. We consider the following compact open subgroups of $\G(\A_{\Q}^{(\infty)})$.
\begin{align}\nonumber
&U(N)=\produit{\ell}{}U(N)_{\ell}=\produit{\ell}{}\left\{g\in\gldeux(\Z_{\ell})|g\equiv\matrice{1}{0}{0}{1}\modulo \ell^{v_{\ell}(N)}\right\}\\\nonumber
&U_{1}(N)=\produit{\ell}{}U_{1}(N)_{\ell}=\produit{\ell}{}\left\{g\in\gldeux(\Z_{\ell})|g\equiv\matrice{*}{*}{0}{1}\modulo \ell^{v_{\ell}(N)}\right\}\\\nonumber
&U_{0}(N)=\produit{\ell}{}U_{0}(N)_{\ell}=\produit{\ell}{}\left\{g\in\gldeux(\Z_{\ell})|g\equiv\matrice{*}{*}{0}{*}\modulo \ell^{v_{\ell}(N)}\right\}\\\nonumber
&U(M,N)=\produit{\ell}{}U(M,N)_{\ell}=\produit{\ell}{}\left\{g\in\gldeux(\Z_{\ell})|g\equiv\matrice{1}{0}{*}{*}\modulo\ell^{v_{\ell}(M)},g\equiv\matrice{*}{*}{0}{1}\modulo\ell^{v_{\ell}(N)}\right\}.
\end{align}
The curve $Y(U)=\Sh_{U}(\G,X)$ and its compactification along cusps $j:Y(U)\plonge X(U)$ are regular schemes over $\Z$ which are smooth over $\Z_{\ell}$ if $U_{\ell}$ is maximal and $U$ is sufficiently small; \textit{e.g} $U=U(N)$ and $N\geq3$ (see \cite[p. 305]{KatzMazur}). The set of complex points of $Y(U)$ is given by the double quotient
\begin{equation}\nonumber
Y(U)(\C)\simeq\G(\Q)\backslash\left(\C-\R\times\G(\A_{\Q}^{(\infty)})/U\right)
\end{equation}
and is an algebraic variety if $U$ is sufficiently small. For $U=U_{?}(*)$ with $?=\vide,0$ or $1$ and $*=N$ or $N,M$, we write $Y_{?}(*)$ for $Y(U)$ and $X_{?}(*)$ for $X(U)$. 
\subsubsection{Hecke correspondences}Let $g$ be an element of $\G(\A_{\Q}^{(\infty)})$. Right multiplication by $g$ induces a finite flat $\Q$-morphism
\applicationsimple{[\cdot g]}{X(U\cap gUg^{-1})}{X(U\cap g^{-1}Ug)}
which defines the Hecke correspondence $T(g)=[UgU]$ on $X(U)$.
\begin{equation}\label{DiagDefHecke}
\xymatrix{
X(U\cap gUg^{-1})\ar[r]^{[\cdot g]}\ar[d]&X(U\cap g^{-1}Ug)\ar[d]\\
X(U)\ar@{-->}[r]^{[UgU]}&X(U)
}
\end{equation}
For $\ell$ a prime number and $a\in\idele{\Q}$ a finite idèle, we denote by $T(\ell)$ the Hecke correspondence $[U\matrice{1}{0}{0}{\ell}U]$ and by $\diamant{a}$ the diamond correspondence $[U\matrice{a}{0}{0}{a}U]$. The full classical Hecke algebra $\hgot(U)$ of level $U$ is the $\Z$-algebra generated by Hecke and diamond correspondences acting on $X(U)$.

\subsubsection{Cohomology}
\paragraph{Betti and étale cohomology}Let $\pi:E\fleche Y(N)$ be the universal elliptic curve over $Y(N)$ and let $\bar{\pi}:\bar{E}\fleche X(N)$ be the universal generalized elliptic curve over $X(N)$. For $k\geq2$ an integer, let $\Hcal_{k-2}$ be the local system $\Sym^{k-2}R^{1}\pi_{*}\Z$ on $Y(N)(\C)$ and let $\Fcal_{k-2}$ be $j_{*}\Hcal_{k-2}$. If $N\geq3$, let $\RGamma_{B}(X(N)(\C),\Fcal_{k-2})$ be the singular cohomology complex of the complex points of $X(N)$. If $X$ is a quotient curve $G\backslash X(N)$ with $N\geq3$ under the action of a finite group $G$, and if $A$ is a ring in which $|G|$ is invertible, we denote by $H^{i}(X(\C),\Fcal_{k-2}\tenseur_{\Z}A)$ the cohomology group $H^{i}(X(N)(\C),\Fcal_{k-2}\tenseur_{\Z}A)^{G}$ and note that it is also the cohomology of the complex $\RGamma_{B}(X(\C),\Fcal_{k-2}\tenseur_{\Z}A)$ where $X$ is seen as a Deligne-Mumford stack over $A$ (in particular $H^{i}(X(\C),\Fcal_{k-2}\tenseur_{\Z}A)$ is independent of the choice of $N$ and $G$). We denote $H^{i}(X(\C),\Fcal_{k-2})\tenseur_{\Z}\zp$ by $H^{i}_{\et}(X\times_{\Q}\Qbar,\Fcal_{k-2}\tenseur_{\Z}\zp)$ and by $\RGamma_{\et}(X\times_{\Q}\Qbar,\Fcal_{k-2}\tenseur_{\Z}\zp)$ the corresponding cohomology complex. As usual, we denote by 
\begin{equation}\nonumber
\Mcal_{k}(U(N))=H^{0}(X(N),\pi_{*}(\Omega^{1}_{\bar{E}/X(N)})^{\tenseur k})
\end{equation}
the space of holomorphic modular forms of weight $k$ and by
\begin{equation}\nonumber
S_{k}(U(N))=H^{0}(X(N),\pi_{*}(\Omega^{1}_{\bar{E}/X(N)})^{\tenseur (k-2)}\tenseur_{\Ocal(X(N))}\Omega^{1}_{X(N)/\Q})
\end{equation}
the space of holomorphic cusp forms.

\paragraph{Hecke action}The Hecke algebra acts contravariantly on cohomological realizations of $X(U)$. In particular, as the Hodge decomposition realizes the $\C$-vector space of complex cusp forms $S_{k}(U)$ as a direct summand of $\Hun(X(U)(\C),\Fcal_{k-2}\tenseur_{\Z}\C)$, the complex Hecke algebra $\hgot(U)\tenseur_{\Z}\C$ acts on $S_{k}(U)$. The $\Z$-submodule $S_{k}(U,\Z)\subset S_{k}(U)$ of cusp forms with integral $q$-expansion is stable under the action of $\hgot(U)$ thereby induced. This defines an action of $\hgot(U)\tenseur_{\Z}A$ on $S_{k}(U,\Z)\tenseur_{\Z}A$ for all ring $A$. The complex $\RGamma_{B}(X(U)(\C),\Fcal_{k-2})$ admits a representation as a bounded below (but not necessarily bounded above) complex of projective $\hgot(U)$-modules. 

An eigenform $f\in S_{k}(U)$ is an eigenvector under the action of all $T(\ell)$. The conductor $\cid(\pi(f))$ of an eigenform is the conductor of the automorphic representation $\pi(f)$ attached to $f$ (see \cite[Theorem 1]{CasselmanAtkin} for the definition of $\cid(\pi(f))$). Two eigenforms are equivalent in the sense of Atkin-Lehner if they are eigenvectors for the same eigenvalues for all $T(\ell)$ except possibly finitely many. A newform $f\in S_{k}(U)$ is an eigenform such that for all $g\in S_{k}(U')$ equivalent to $f$ in the sense of Atkin-Lehner, $\cid(\pi(f))$ divides $\cid(\pi(g))$.

Let $p$ be an odd prime. We call $\hgot(U)\tenseur_{\Z}\zp$ the $p$-adic classical Hecke algebra and denote it by $\Hecke_{\cl}(U)$. It is a semi-local ring finite and free as $\zp$-module. To an eigenform $f$ is attached a map $\lambda_{f}$ from $\Hecke_{\cl}(U)$ to $\Qbar_{p}$ such that $T(\ell)f=\lambda_{f}(T(\ell))f$ and conversely we say that a map $\lambda$ from a quotient of sub-algebra of $\Hecke_{\cl}(U)$ to a discrete valuation ring in $\Qbar_{p}$ is modular if there exists an eigenform $f$ such that $\lambda=\lambda_{f}$. Let the reduced Hecke algebra $\Hecke^{\red}(U)\subset\Hecke_{\cl}(U)$ be the sub $\zp$-algebra generated by the diamond operators and the Hecke operators $T(\ell)$ for $\ell$ such that $U_{\ell}$ is a maximal compact open subgroup. Let the new Hecke algebra $\Hecke^{\new}(U)$ be the quotient of $\Hecke_{\cl}(U)$ acting faithfully on the space of newforms of level $U$. Both $\Hecke^{\red}(U)$ and $\Hecke^{\new}(U)$ are finite flat reduced semi-local $\zp$-algebras.

\subsection{Galois representations}\label{SubGalois}
\subsubsection{Residual and rational representations}
Let $\Hecke$ be either $\Hecke^{\red}(U)$ or $\Hecke^{\new}(U)$ and let $f\in S_{k}(U)$ be an eigenform which is a newform in case $\Hecke=\Hecke^{\new}(U)$. There exists a finite extension $F_{\pid}$ of $\qp$ whose ring of integers we denote by $\Ocal$ containing the image of $\lambda_{f}$ and a maximal ideal $\mgot_{f}$ of $\Hecke$ such that $\lambda_{f}$ factors through $\Hecke_{\mgot_{f}}$. Let $\bar{\Fp}$ be the algebraic closure of the residue field of $\Hecke_{\mgot_{f}}$.

Denote by $S$ the set of finite primes $\ell$ such that $U_{\ell}$ is not a maximal compact open subgroup. Let $M_{\mgot_{f}}$ be the étale cohomology group $H^{1}_{\et}(X(U)\times_{\Q}\Qbar,\Fcal_{k-2}\tenseur_{\Z}\Qbar_{p})_{\mgot_{f}}$. The $G_{\Q,S}$-representation $(M_{\mgot_{f}},\rho_{\mgot_{f}},\Qbar_{p})$ is the unique semi-simple representation satisfying 
\begin{equation}\label{EqEichlerShimura}
\begin{cases}
\tr\rho_{\mgot_{f}}(\Fr(\ell))=T(\ell)\\
\det\rho_{\mgot_{f}}(\Fr(\ell))=\ell\diamant{\ell}
\end{cases}
\end{equation}
for all $\ell\notin S$. In \eqref{EqEichlerShimura}, Hecke operators are regarded as elements of $\Qbar_{p}$ through the injection of $\Hecke\tenseur\Qbar_{p}$ into a product of fields. The $G_{\Q}$-representation attached to $f$ is the quotient $(M(f),\rho_{f},\Qbar_{p})$ of $M_{\mgot_{f}}$ such that $\tr(\rho_{f})=\lambda_{f}$. The map $\tr(\rho_{\mgot_{f}}):G_{\Q,S}\fleche\Hecke_{\mgot_{f}}$
is a pseudocharacter of dimension 2 in the sense of \cite{WilesOrdinaryLambdaAdic,TaylorHilbert,BellaicheChenevier}. We denote by $\tr(\rhobar_{{f}}):G_{\Q,S}\fleche\bar{\Fp}$ 
its reduction modulo $\mgot_{f}$. If $\tau\in\Gal(\C/\R)$ is non-trivial, the second relation of \eqref{EqEichlerShimura} implies that $\tr(\rho_{\mgot_{f}})(\tau)=0$ hence, as $p\neq 2$, \cite[Proof of Lemma 2.2.3]{WilesOrdinaryLambdaAdic} shows that there exists a unique semi-simple residual representation
\begin{equation}\nonumber
\rhobar_{{f}}:G_{\Q,S}\fleche\Aut_{\bar{\Fp}}(\Tbar(f))
\end{equation}
whose trace is $\tr(\rhobar_{{f}})$. For $\ell$ a rational prime, let $N_{\ell}(\rhobar_{{f}})$ be the Artin conductor of $\rhobar_{{f}}|_{G_{\Q_{\ell}}}$ and let
\begin{equation}\nonumber
N(\rhobar_{{f}})=\produit{\ell\nmid p}{}N_{\ell}(\rhobar_{{f}}).
\end{equation}
be its tame global Artin conductor. Let $\Sigma\supset\{\ell|Np\}$ be a finite set of primes $\Sigma^{p}\cup\{p\}$. Denote by $N(\Sigma)$ the integer
\begin{equation}\nonumber
N(\Sigma)=N(\rhobar_{f})\produit{\ell\in\Sigma^{p}}{}\ell^{\dim_{k}(\rhobar_{f})_{I_{\ell}}}.
\end{equation}

\subsubsection{Deformations}
Henceforth, we make the following assumption.
\begin{HypEnglish}\label{HypIrr}
The $G_{\Q,S}$-representation $\rhobar_{f}$ is absolutely irreducible.
\end{HypEnglish}
Assumption \ref{HypIrr} implies by \cite[Théorème 1]{NyssenPseudo} or \cite[Théorème 4.2]{RouquierPseudoCar} that to all $\Sigma\supset\{\ell|N(\rhobar_{f})p\}$ and all pseudocharacters $\tr(\rho):G_{\Q,\Sigma}\fleche R
$ of dimension 2 with values in a henselian separated ring $R$ with maximal ideal $\mgot$ such that $\tr(\rho)\modulo\mgot$ is equal to $\tr(\rhobar_{f})$ is attached a unique semi-simple representation $(T(\rho),\rho,R)$ whose trace is equal to $\tr(\rho)$. In particular, it follows from \eqref{EqEichlerShimura} that for any discrete valuation ring $\Ocal\subset\Qbar_{p}$ containing the image of $\lambda_{f}$, there exists a unique representation $(T,\rho,\Ocal)$ with $\tr(\rho)=\lambda_{f}$ as well as a unique $(T_{\mgot_{f}},\rho_{\mgot_{f}},\Hecke_{\mgot_{f}})$ whose trace is equal to $\tr(\rho_{\mgot_{f}})$. As pointed out in \cite{CarayolRepresentationsGaloisiennes}, a choice of isomorphism $T_{\mgot_{f}}\simeq\Hecke_{\mgot_{f}}^{2}$ identifies
$$H^{1}_{\et}(X(U)\times_{\Q}\Qbar,\Fcal_{k-2}\tenseur_{\Z}\zp)_{\mgot_{f}}$$
with the square of an ideal $J\subset\Hecke_{\mgot_{f}}$. In general, $J$ is not principal nor is it known to have finite projective dimension as $\Hecke_{\mgot_{f}}$-module.

For $\Sigma\supset\{\ell|N(\rhobar_{f})p\}$, there exists a universal deformation $(T^{u}_{\Sigma},\rho_{\Sigma}^{u},R_{\Sigma}^{u}(\rhobar_{f}))$ of the $G_{\Q,\Sigma}$-representation $\rhobar_{f}$ in the sense of \cite{MazurDeformation}. The universal deformation ring $R^{u}_{\Sigma}(\rhobar_{f})$ admits quotients parametrizing deformations subjected to various supplementary conditions. We are particularly interested in the following cases.
\begin{HypEnglish}\label{HypNO}
The representation $\rhobar_{{f}}|_{G_{\qp}}$ is reducible but not scalar. Hence, it is either an extension 
\begin{equation}\label{EqOrd}
\rhobar_{{f}}|_{G_{\qp}}\simeq\matrice{\bar{\chi}_{1}}{*}{0}{\bar{\chi}_{2}}
\end{equation}
of two distinct characters $\bar{\chi}_{1}\neq\bar{\chi}_{2}$ or a non-trivial extension
\begin{equation}\label{EqOrdSplit}
\rhobar_{{f}}|_{G_{\qp}}\simeq\matrice{\bar{\chi}}{*}{0}{\bar{\chi}}
\end{equation}
of $\bar{\chi}$ by itself.
\end{HypEnglish}
When $\rhobar_{f}|_{G_{\qp}}$ satisfies assumption \ref{HypNO}, we say that $\rhobar_{f}$ is nearly ordinary distinguished. When $\rhobar_{f}|_{I_{p}}$ is moreover an extension
\begin{equation}\nonumber
0\fleche\Fp\fleche\rhobar_{f}|_{I_{p}}\fleche\Fp(-1)\fleche0,
\end{equation}
we say that $\rhobar_{f}$ is nearly ordinary finite.
\begin{DefEnglish}
Let $\rhobar_{f}$ be a nearly ordinary distinguished representation and let $A$ be an element of $\Ccal(\Fp)$. For $\mu:I_{p}\fleche A\croix$ a character, a nearly ordinary distinguished deformation $(T,\rho,A)$ of type $\mu$ is a deformation of $\rhobar_{f}$ such that there exists a short exact sequence $G_{\qp}$-representations
\begin{equation}\nonumber
\suiteexacte{}{}{\chi_{1}}{\rho|_{G_{\qp}}}{\chi_{2}}
\end{equation}
with $\chi_{1}|_{I_{p}}=\mu$. If moreover $\rhobar_{f}$ is nearly ordinary finite, a nearly ordinary finite deformation $(T,\rho,A)$ is a nearly ordinary deformation of $\rhobar_{f}$ with $\chi_{1}|_{I_{p}}=\chi_{2}(1)|_{I_{p}}$ and such that the extension
\begin{equation}\label{EqExtFlat}
0\fleche A(1)\fleche\rho|_{I_{p}}\tenseur\chi_{2}^{-1}\fleche A\fleche0
\end{equation} 
in $\Hun(I_{p},A(1))$ comes from a class in
\begin{equation}\nonumber
\limproj{n}\ \zp^{\ur,\times}/(\zp^{\ur,\times})^{p^{n}}\tenseur_{\zp}A\subset\limproj{n}\ \qp^{\ur,\times}/(\qp^{\ur,\times})^{p^{n}}\tenseur_{\zp}A\simeq\Hun(I_{p},A(1)).
\end{equation}
\end{DefEnglish}
There exists a universal deformation $(T^{\ord}_{\Sigma},\rho^{\ord}_{\Sigma},R^{\ord}_{\Sigma}(\rhobar_{f}))$ of nearly ordinary distinguished deformations of $\rhobar_{f}$. For $(x,y,z)\in\{\vide,\fl\}\times\{\vide,\mu\}\times\{\vide,\chi\}$, the ring $R^{\ord}_{\Sigma}(\rhobar_{f})$ admits quotients $R^{\ord,x}_{\Sigma,y,x}(\rhobar_{f})$ parametrizing deformations which are nearly ordinary finite if $x=\fl$, of type $\mu$ if $y=\mu$ and of determinant $\mu^{2}\chi$ if $(y,z)=(\mu,\chi)$. If $\rho$ is a nearly ordinary deformation of $\rhobar_{f}$ of type $\mu$, then $\rho\tenseur\chi$ is a nearly ordinary deformation of $\rhobar_{f}$ of type $\mu\chi$ if and only if $\chi$ is a deformation of the trivial character. Hence,  $R^{\ord}_{\Sigma}(\rhobar_{f})$ is isomorphic to $R^{\ord}_{\Sigma,\mu}(\rhobar_{f})[[\Gamma]]$ where we recall that $\Gamma$ is the Galois group of the $\zp$-extension of $\Q$. If the image of $\lambda_{f}(T(p))$ under our fixed embedding of $\C$ in $\Qbar_{p}$ is a $p$-adic unit, we say that $f$ is $p$-ordinary (a condition that depends in general on our choice of $\iota_{\infty,p}$). When $f$ is $p$-ordinary, $\rho_{f}|_{G_{\qp}}$ is an extension
\begin{equation}\nonumber
\suiteexacte{}{}{\lambda_{f}}{\rho_{f}|_{G_{\qp}}}{\lambda_{f}^{-1}\epsi\chi_{\cyc}^{1-k}}
\end{equation}
where $\lambda_{f}:G_{\qp}\fleche\Qbar_{p}\croix$ is the unramified character sending $\Fr(p)$ to $\lambda_{f}(T(p))$. Hence, $\rho_{f}$ is a nearly ordinary deformation of $\rhobar_{f}$ with trivial type and there thus exists a unique $x\in\Spec R_{\Sigma,\Id,\epsi\chi_{\cyc}^{k-1}}^{\ord}(\rhobar_{f})$ such that $\rho_{f}$ is isomorphic to $\rho_{x}=\rho^{\ord}_{\Sigma,\Id,\epsi\chi_{\cyc}^{k-1}}\modulo x$.
\begin{HypEnglish}\label{HypFlat}
There exists a commutative finite flat $p$-torsion group scheme $G$ over $\zp$ and a character $\bar{\mu}$ such that $\rhobar_{{f}}\tenseur\bar{\mu}^{-1}$ is isomorphic as $\bar{\Fp}[G_{\qp}]$-module to $(G\times_{\zp}\Qbar_{p})[p]$.
\end{HypEnglish}
When $\rhobar_{f}|_{G_{\qp}}$ satisfies assumption \ref{HypFlat}, we say that $\rhobar_{f}$ is flat. 
\begin{DefEnglish}
Let $\rhobar_{f}$ be a flat representation and let $A$ be an element of $\Ccal(\Fp)$. A flat deformation $(T,\rho,A)$ of type $\mu:G_{\qp}\fleche A\croix$ is a deformation of $\rhobar_{f}$ such that $\det(\rho\tenseur\mu^{-1})|_{I_{p}}$ is equal to $\chi_{\cyc}^{-1}$ and such that for all $n\geq1$ there exists a finite flat group scheme $G$ over $\zp$ with an $A$-action such that $\rho\tenseur\mu^{-1}\modulo\mgot_{A}^{n}$ is isomorphic to $(G\times_{\zp}\Qbar_{p})[\mgot_{A}^{n}]$.
\end{DefEnglish}
By \cite[Theorem 1.1]{RamakrishnaFlat}, there exists a universal deformation $(T^{\fl}_{\Sigma},\rho^{\fl}_{\Sigma},R^{\fl}_{\Sigma}(\rhobar_{f}))$ of flat deformations of $\rhobar_{f}$. The ring $R^{\fl}_{\Sigma}(\rhobar_{f})$ admits a quotient $R^{\fl}_{\Sigma,\mu}(\rhobar_{f})$ parametrizing flat deformations with type $\mu$ and, as above, the fact that being a flat deformation is stable by twisting by a character deforming the identity implies that  $R^{\fl}_{\Sigma}(\rhobar_{f})$ is isomorphic to $R^{\fl}_{\Sigma,\mu}(\rhobar_{f})[[\Gamma]]$. A deformation $\rho$ can be both nearly ordinary distinguished and flat, in which case it is nearly ordinary finite.

The deformation rings $R_{\Sigma,\mu}^{\ord}(\rhobar_{f})$ and $R_{\Sigma,\mu}^{\fl}(\rhobar_{f})$ are called minimal if $\Sigma$ is equal to $N(\rhobar_{f})$. For the sake of completeness, we note that $R^{\ord,\fl}_{\Sigma,\mu,\psi}(\rhobar_{f})$ (resp. $R^{\ord}_{\Sigma,\mu,\psi}(\rhobar_{f})$) is minimal if $\rhobar_{f}$ is nearly ordinary finite (resp. is nearly ordinary but not nearly ordinary finite) and $\Sigma$ is equal to $N(\rhobar_{f})$ but we will not make use of this notion.
\begin{HypEnglish}\label{HypIrrTW}
Let $p^{*}$ be $(-1)^{(p-1)/2}p$. The $G_{\Q(\sqrt{p^{*}})}$-representation $\rhobar_{f}$ is absolutely irreducible.
\end{HypEnglish}
When $\rhobar_{f}$ satisfies assumption \ref{HypIrrTW} and is either nearly ordinary distinguished or flat, the following theorem of \cite{WilesFermat,TaylorWiles} holds. 
\begin{TheoEnglish}\label{TheoTaylorWiles}
Let $f\in S_{k}(U,\chi)$ be an eigencuspform. Assume $\rhobar_{f}$ satisfies assumption \ref{HypIrrTW} and either assumption \ref{HypNO} or \ref{HypFlat}. Let $\Hecke$ be the Hecke ring $\Hecke^{\red}(U)_{\mgot_{f}}$ acting on $S_{k}(U,\chi)$ if $\rhobar_{f}$ satisfies assumption \ref{HypFlat} and let it be the ring generated by $\Hecke^{\red}(U)_{\mgot_{f}}$ acting on $S_{k}(U,\chi)$ along with $T(p)$ if $\rhobar_{f}$ satisfies assumption \ref{HypNO}. Then $\Hecke$ is a complete intersection ring of dimension 1 isomorphic to $R^{\ord}_{\Sigma,\Id,\chi}(\rhobar_{f})$ or $R_{\Sigma,\Id}^{\fl}(\rhobar_{f})$ depending on whether $\rhobar_{f}$ is nearly ordinary distinguished or flat and $\Hun_{\et}(X(U)\times_{\Q}\Qbar,\Fcal_{k-2}\tenseur_{\Z}\Ocal)_{\mgot_{f}}$ is free of rank 2 as $\Hecke$-module.
\end{TheoEnglish}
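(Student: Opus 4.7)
The plan is to carry out the Taylor-Wiles patching method as axiomatized in \cite{DiamondHecke, FujiwaraDeformation}. Set $R$ to be $R^{\ord}_{\Sigma,\Id,\chi}(\rhobar_f)$ in the nearly ordinary distinguished case and $R^{\fl}_{\Sigma,\Id}(\rhobar_f)$ in the flat case. The Galois representation $(T_{\mgot_f}, \rho_{\mgot_f}, \Hecke_{\mgot_f})$ of subsection \ref{SubGalois} is by Hypothesis \ref{HypNO} (resp.\ \ref{HypFlat}) a nearly ordinary distinguished (resp.\ flat) deformation of $\rhobar_f$ with the appropriate determinant character, so universality provides a morphism $\pi \colon R \fleche \Hecke$. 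Surjectivity of $\pi$ is immediate from the Eichler-Shimura relation \eqref{EqEichlerShimura}: the traces $T(\ell)$ at primes $\ell \notin \Sigma$ all lie in the image, and in the nearly ordinary case $T(p)$ is the image of the Frobenius eigenvalue on the unramified quotient. It remains to show that $\pi$ is an isomorphism and that $M = \Hun_{\et}(X(U) \times_{\Q} \Qbar, \Fcal_{k-2} \tenseur \Ocal)_{\mgot_f}$ is free of rank $2$ over $\Hecke$.

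The core argument proceeds by constructing, for each integer $n \geq 1$, a finite set $Q_n$ of Taylor-Wiles primes $q$ satisfying $q \equiv 1 \pmod{p^n}$ and such that $\rhobar_f(\Fr(q))$ has two distinct $\bar\Fp$-rational eigenvalues. Assumption \ref{HypIrrTW} is precisely the condition needed to ensure via Chebotarev density that such sets exist with constant cardinality $r$ equal to the dimension of the dual Selmer group of $\operatorname{ad}^0\rhobar_f(1)$ attached to the appropriate local conditions. At each $q \in Q_n$ the local Hecke algebra is enlarged by the diamond operator, producing compatible $\Ocal[\Delta_{Q_n}]$-algebra structures on an auxiliary Hecke algebra $\Hecke_{Q_n}$ and on an augmented deformation ring $R_{Q_n}$, where $\Delta_{Q_n}$ is the product of the $p$-parts of the groups $(\Z/q\Z)^{\times}$ for $q \in Q_n$. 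Setting $R_{\infty} = \Ocal[[x_1, \dots, x_r]]$ and $S_{\infty} = \Ocal[[y_1, \dots, y_r]]$, with $S_{\infty}$ mapping onto every $\Ocal[\Delta_{Q_n}]$, a compactness argument extracts a patched $R_{\infty}$-module $M_{\infty}$ with compatible $S_{\infty}$-action recovering $M$ modulo $(y_1, \dots, y_r)$. An Ihara-type lemma at the primes of $Q_n$ ensures that $M_{Q_n}$ has generic rank $2$ over $\Hecke_{Q_n}$; combined with the freeness of $M_{Q_n}$ over $\Ocal[\Delta_{Q_n}]$ coming from the \'etale covering $X(U_{Q_n}) \fleche X(U)$, this forces $M_{\infty}$ to have depth at least $r+1$ over $R_{\infty}$. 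Since $\dim R_{\infty} = r+1$, the Auslander-Buchsbaum formula then yields that $M_{\infty}$ is free over $R_{\infty}$ and that $R_{\infty}$ is a complete intersection canonically isomorphic to the patched Hecke algebra; reducing modulo $(y_1, \dots, y_r)$ produces the announced isomorphism $R \isom \Hecke$ together with the complete intersection statement of dimension $1$ and the freeness of $M$ of rank $2$.

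The main obstacle lies in the verification of the local deformation-theoretic inputs at $p$. In the flat case, this requires the cited result of \cite{RamakrishnaFlat} for pro-representability and control of the tangent space, together with compatibility with Fontaine-Laffaille theory for the identification of the local structure of $M$ at $p$. In the nearly ordinary case, Hypothesis \ref{HypNO} guarantees that the Borel along which $\rhobar_f|_{G_{\qp}}$ becomes reducible is canonical, which makes the local deformation problem smooth of the expected dimension. The Ihara-type lemma at auxiliary Taylor-Wiles primes, needed to ensure generic rank $2$ of $M_{Q_n}$ over $\Hecke_{Q_n}$, rests on multiplicity-one results for $\rhobar_f$. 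Finally, Assumption \ref{HypIrr}, implicit via subsection \ref{SubGalois}, ensures the existence of the $\Hecke_{\mgot_f}$-valued representation required to even formulate the map $\pi$.
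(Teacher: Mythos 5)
Your proposal is correct and follows exactly the approach the paper invokes: the paper's own proof is a one-line citation to the Taylor--Wiles method as axiomatized in \cite{DiamondHecke,FujiwaraDeformation}, and your sketch simply unwinds what those references do. One small terminological quibble: what you call an ``Ihara-type lemma'' at the auxiliary primes $Q_n$ is really the standard argument that the Hecke module at augmented level is free over $\Ocal[\Delta_{Q_n}]$ (using that $\rhobar_f(\Fr(q))$ has distinct eigenvalues to split off a $U_q$-eigenspace and the \'etale covering $X(U_{Q_n}) \fleche X(U)$); genuine Ihara's lemma is about level-raising at primes dividing $N$, and is not needed at Taylor--Wiles primes.
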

\begin{proof}
This is part of output of the method of Tayor-Wiles systems in this setting; see \cite{WilesFermat,TaylorWiles} for the original argument and \cite{DiamondHecke,FujiwaraDeformation} for a compact statement of the results needed here. 
\end{proof}
While the isomorphism between Hecke rings and universal deformation ring is frequently considered the deepest statement of theorem \ref{TheoTaylorWiles}, it is in fact the other two which are crucial in this manuscript. Under the hypotheses of theorem \ref{TheoTaylorWiles}, it follows from the discussion above that $R_{\Sigma}^{\fl}(\rhobar_{f})$ is isomorphic to $\Hecke[[\Gamma]]$ and that both these rings are complete intersection of dimension 2.
\subsection{Motives attached to modular forms}\label{SubMotives}
Let $\bar{E}^{(k-2)}$ be the $(k-2)$-fold fiber product of $\bar{E}$ with itself over $X(N)$. Let $KS_{k}$ be the canonical desingularization of $\bar{E}^{(k-2)}$ constructed in \cite{DeligneModulaires} (see also \cite[Section 3]{SchollMotivesModular}). The symmetric group $\Grsym_{k-2}$ acts on $\bar{E}^{(k-2)}$ by permutations, the $(k-2)$-th power of $(\Z/N\Z)^{2}$ acts by translation and $\mu_{2}^{k-2}$ acts by inversion in the fibers. Let $\tilde{G}_{k-2}$ be the wreath product of $((\Z/N\Z)^{2}\rtimes\mu_{2})^{k-2}$ with $\Grsym_{k-2}$. Then $\tilde{G}_{k-2}$ acts by automorphisms on $\bar{E}^{(k-2)}$ and thus on $KS_{k}$. Let $\epsi$ be the character of $\tilde{G}_{k-2}$ which is trivial on $(\Z/N\Z)^{2(k-2)}$, the product map on $\mu_{2}^{k-2}$ and signature on $\Grsym_{k-2}$. Let $\Pi_{\epsi}\in\Z[\frac{1}{2Nk!}][\tilde{G}_{k-2}]$ be the projector attached to $\epsi$. 

The category $CH(\Q)$ of Chow motives is the pseudo-abelian envelope of the category of proper smooth schemes over $\Q$ with degree zero correspondences modulo rational equivalence as morphisms. A Chow motive is thus a pair $(X,e)$ with $X/\Q$ proper and smooth and $e$ a projector of $CH^{\dim X}(X\times X)_{\Q}$. The pair $(KS_{k},\Pi_{\epsi})$ constructed above is thus a Chow motive. We denote it by $\Wcal_{N}^{k-2}$ and its Betti (resp. étale) realization by $^{B}\Wcal_{N}^{k-2}$ (resp. by $^{\et}\Wcal_{N}^{k-2}$). By \cite[Theorem 1.2.1]{SchollMotivesModular}, there is a canonical isomorphism of $\Q[\Gal(\C/\R)]$-modules
 \begin{align}
 ^{B}\Wcal_{N}^{k-2}=H^{k-1}(KS_{k}(\C),\Q)(\epsi)\isocan\Hun(X(N)(\C),\Fcal_{k-2}\tenseur_{\Z}\Q)
 \end{align}
as well as a canonical isomorphism of $\qp[\Gal(\Qbar/\Q)]$-modules
 \begin{align}
 ^{\et}\Wcal_{N}^{k-2}=H^{k-1}_{\et}(KS_{k}\times_{\Q}\Qbar,\qp)(\epsi)\isocan\Hun_{\et}(X(N)\times_{\Q}\Qbar,\Fcal_{k-2}\tenseur_{\Z}\qp).
 \end{align}
For a number field $L$, a Grothendieck motive over $\Q$ with coefficients in $L$ is an object in the category of motives over $\Q$ in which $\Hom(h(X),h(Y))$ is the group of algebraic cycles on $X\times Y$ of codimension $\dim Y$ tensored over $\Q$ with $L$ modulo homological equivalence. Fix a number field $F$ containing all the eigenvalues of Hecke operators acting on eigenforms in $S_{k}(U(N))$. The image of  $\Wcal_{N}^{k-2}$ in the category of Grothendieck motive over $\Q$ with coefficients in $F$ decomposes under the action of the Hecke correspondences.  Let $f\in S_{k}(U_{1}(N))$ be a newform and denote as before by $\lambda_{f}$ the map sending a Hecke operator to the corresponding eigenvalues. Let $\Wcal(f)$ be the largest Grothendieck sub-motive of $\Wcal_{N}^{k-2}$ over $\Q$ with coefficients in $F$ on which $\Hecke^{\red}(N)$ acts through $\lambda_{f}$. We denote by $\Wcal(f)_{B}$ (resp. $\Wcal(f)_{\dR}$, resp. $\Wcal(f)_{\et,p}$) the Betti (resp. de Rham, resp. $p$-adic étale) realization of $\Wcal(f)$. The $\Qbar_{p}[G_{\Q}]$-module $\Wcal(f)_{\et,p}$ is isomorphic to $M(f)$.

\subsection{Hida theory}
Assume in this sub-section $f\in S_{k}(U_{1}(N))$ to be $p$-ordinary and let $\Ocal\subset\Qbar_{p}$ be a discrete valuation ring containing the image of $\lambda_{f}$. The diamond correspondences $\diamant{a}$ with $a\equiv 1\modulo p$ and $a$ locally trivial outside $p$ act on the tower of modular curves
$$X_{1}(Np^{\infty})=\limproj{s}\ X_{1}(Np^{s}).$$
Let $\Lambda_{\Hi}=\Ocal[[\Gamma_{\Hi}]]\simeq\Ocal[[1+p\zp]]$ be the completed group $\Ocal$-algebra of these correspondences. It is a complete local regular ring of dimension 2. Let $\gamma$ be a topological generator of $\Gamma_{\Hi}$. For $k\geq2$ an integer and $\epsi$ a finite order character of $\Gamma_{\Hi}$ factoring through $1+p^{s+1}\zp$, an arithmetic point of weight $k$, level $s$ and character $\epsi$ of $\Lambda_{\Hi}$ is an $\Ocal$-algebra morphism
\begin{align}\nonumber
\phi:\Lambda_{\Hi}&\fleche\Qbar_{p}\\\nonumber
\gamma&\longmapsto\epsi(\gamma)\chi_{\cyc}^{k-2}(\gamma)
\end{align}
Here, $\gamma$ is considered as an element of $G_{\Q}$ via the identification of $\Gamma_{\Hi}$ with the Galois group of the unique $\zp$-extension of $\Q$. If $A$ is a finite $\Lambda_{\Hi}$-algebra, an arithmetic point $\psi\in\Hom(A,\Qbar_{p})$ of $A$ is an $\Ocal$-algebra morphism inducing an arithmetic point on $\Lambda_{\Hi}$. If $\phi$ is an $\Ocal$-algebra map from $\Lambda_{\Hi}$ to $\Qbar_{p}$, let $\Ocal_{\phi}$ be the smallest discrete valuation ring containing the image of $\phi$. If $M$ is a $\Lambda_{\Hi}$-module, we denote by $M[\phi]$ the quotient of $M$ on which $\Lambda$ acts through $\phi$.

Let $\Hecke_{\cl}^{\ord}(N)$ be the inverse limit of ordinary Hecke algebras
\begin{equation}\label{EqHord}
\Hecke_{\cl}^{\ord}(N)=\limproj{s}\ \eord\hgot(U_{1}(Np^{s}))\tenseur_{\Z}\Ocal
\end{equation}
where $\eord$ is Hida's projector 
\begin{equation}\nonumber
\eord=\underset{n\rightarrow\infty}{\lim} T(p)^{n!}.
\end{equation}
If $M$ is a finite $\Hecke_{\cl}(Np^{s})$-module, then we denote by $M^{\ord}$ the $\Hecke^{\ord}_{\cl}(N)$-module $\eord M$. Let the ordinary reduced Hecke algebra $\Hecke^{\red,\ord}(Np^{s})\subset\Hecke_{\cl}^{\ord}(Np^{s})$ be the sub $\Ocal$-algebra generated by the diamond operators, the Hecke operators $T(\ell)$ for $\ell$ such that $\ell\nmid Np$ and the Hecke operator $T(p)$. Let the ordinary new Hecke algebra be $\Hecke^{\new,\ord}(Np^{s})$. The Hecke algebras $\Hecke^{\red,\ord}(N)$ and $\Hecke^{\new,\ord}(N)$ are the inverse limits of the $\Hecke^{\red,\ord}(Np^{s})$ and $\Hecke^{\new,\ord}(Np^{s})$.
\begin{align}\label{EqRedOrd}
\Hecke^{\red,\ord}(N)&=\limproj{s}\ \eord\Hecke^{\red}(U_{1}(Np^{s}))\\\label{EqNewOrd}
\Hecke^{\new,\ord}(N)&=\limproj{s}\ \eord\Hecke^{\new}(U_{1}(Np^{s}))
\end{align}
All these algebras are finitely generated as $\Lambda_{\Hi}$-modules. 


Consider the complex
\begin{equation}\nonumber
\RGamma_{\et}(X_{1}(Np^{\infty})\times_{\Q}\Qbar,\Ocal)^{\ord}=\limproj{s}\RGamma_{\et}(X_{1}(Np^{s})\times_{\Q}\Qbar,\Ocal)\tenseur_{\Hecke_{\cl}(U_{1}(Np^{s}))}\Hecke_{\cl}^{\ord}(N).
\end{equation}
As the action on $H^{i}(X_{1}(Np^{\infty})\times_{\Q}\Qbar,\Ocal)^{\ord}$ for $i=0,2$ is by multiplication by $p$ and is invertible, the only non-zero cohomology module of $\RGamma_{\et}(X_{1}(Np^{\infty})\times_{\Q}\Qbar,\Ocal)^{\ord}$ is $M^{\ord}=H^{1}(X_{1}(Np^{\infty})\times_{\Q}\Qbar,\Ocal)^{\ord}$. As
\begin{equation}\nonumber
\RGamma_{\et}(X_{1}(Np^{\infty})\times_{\Q}\Qbar,\Ocal)^{\ord}\Ltenseur_{\Lambda_{\Hi},\phi}\Ocal_{\phi}\simeq\RGamma_{\et}(X_{1}(Np^{s})\times_{\Q}\Qbar,\Ocal)^{\ord}[\phi]
\end{equation}
for $\phi$ an arithmetic point of $\Lambda_{\Hi}$ of weight $2$ and level $s$ with values in $\Ocal_{\phi}$, the $\Lambda_{\Hi}$-module $M^{\ord}$ is free of finite rank and satisfies
\begin{equation}\nonumber
M^{\ord}\tenseur_{\Lambda_{\Hi},\phi}\Ocal_{\phi}\simeq\Hun_{\et}(X_{1}(Np^{s})\times_{\Q}\Qbar,\Ocal_{\phi})^{\ord}[\phi].
\end{equation} 
From this and the contraction isomorphism
\begin{equation}\nonumber
\RGamma_{\et}(X_{1}(Np^{\infty})\times_{\Q}\Qbar,\Ocal)^{\ord}\simeq\RGamma_{\et}(X_{1}(Np^{\infty})\times_{\Q}\Qbar,\Fcal_{k-2}\tenseur_{\Z}\Ocal)^{\ord}
\end{equation}
for $k\geq2$, it follows that 
\begin{equation}\label{EqControlHecke}
\Hecke\tenseur_{\Lambda_{\Hi},\phi}\Ocal_{\phi}\simeq\Hecke_{k}(Np^{s})[\phi].
\end{equation}
for $\Hecke=\Hecke_{\cl}^{\ord}(N),\Hecke=\Hecke^{\red,\ord}(N)$ or $\Hecke=\Hecke^{\new,\ord}(N)$ and $\phi$ an arithmetic point of weight $k\geq2$ and level $s$. Moreover, if $\lambda$ is an arithmetic prime of $\Hecke_{\cl}^{\ord}(N)$ above an arithmetic prime of $\Lambda_{\Hi}$ of weight $k$ and level $s$, there exists a unique eigencuspform $g\in S_{k}(U_{1}(Np^{s}))$ such that $\lambda_{g}$ extended to $\Hecke_{\cl}^{\ord}(N)$ is equal to $\lambda$ and hence such that 
$$M^{\ord}\tenseur_{\Hecke_{\cl}^{\ord}(N),\lambda}\Qbar_{p}\simeq M(g)$$
as $\Qbar_{p}[G_{\Q}]$-modules.
 
Let $\Hecke$ be either $\Hecke^{\red,\ord}(N)$ or $\Hecke^{\new,\ord}(N)$. Then there exists a unique maximal ideal $\mgot$ of $\Hecke$ such that $\lambda_{f}$ factors through $\Hecke_{\mgot}$. The complex
 \begin{equation}\nonumber
M^{\ord}_{\mgot}=\limproj{s}\RGamma_{\et}(X_{1}(Np^{s})\times_{\Q}\Qbar,\Ocal)\tenseur_{\hgot(U_{1}(Np^{s}))}\Hecke_{\mgot}.
\end{equation}
is concentrated in degree 1. There exists a pseudo-character $\tr(\rho_{\mgot}):G_{\Q}\fleche\Hecke_{\mgot}
$
of dimension 2 of $G_{\Q}$ such that the composition of $\tr(\rho_{\mgot})$ with an arithmetic point $\lambda_{g}$ is $\tr(\rho_{g})$ as defined in subsection \ref{SubGalois}. Recall that $\rhobar_{f}$ satisfies the assumption \ref{HypIrr}. There thus exists a $G_{\Q}$-representation $(T_{\Hi},\rho_{\mgot},\Hecke_{\mgot})$ unique up to isomorphism whose trace is $\tr(\rho_{\mgot})$ and which is characterized by
\begin{equation}\label{EqEichlerShimuraHida}
\begin{cases}
\tr\rho_{\mgot}(\Fr(\ell))=T(\ell)\\
\det\rho_{\mgot}(\Fr(\ell))=\ell\diamant{\ell}
\end{cases}
\end{equation}
for all $\ell\notin S$. By \cite{WilesOrdinaryLambdaAdic,HidaNearlyOrdinary}, the $G_{\qp}$-representation $T_{\Hi}$ is reducible. If moreover $\rhobar_{f}$ satisfies assumption \ref{HypNO}, so if it is nearly ordinary distinguished, then $T_{\Hi}$ fits in a short exact sequence
\begin{equation}\nonumber
\suiteexacte{}{}{T_{\Hi}^{+}}{T_{\Hi}}{T_{\Hi}^{-}}
\end{equation}
of $\Hecke_{\mgot}[G_{\qp}]$-modules free of positive ranks as $\Hecke$-modules and $M^{\ord}_{\mgot}$ is isomorphic to $T_{\Hi}$ (and so is in particular free of rank 2 as $\Hecke_{\mgot}$-module).
\begin{Prop}\label{PropTW}
Assume that $\rhobar_{f}$ satisfies assumptions \ref{HypIrrTW} and \ref{HypNO}. Let $\Sigma\supset\{\ell|Np\}$ be a finite set of primes and let $N(\Sigma)$ denote the integer of section \ref{SubGalois}. Then $\Hecke=\Hecke^{\red,\ord}(N(\Sigma))_{\mgot}[[\Gamma]]$ is a complete intersection ring of dimension 3 isomorphic to $R^{\ord}_{\Sigma}(\rhobar_{f})$ and $M^{\ord}_{\mgot}\tenseur_{\Hecke}\Hecke[[\Gamma]]$ is a free $\Hecke[[\Gamma]]$-module of rank 2.
\end{Prop}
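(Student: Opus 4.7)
The strategy is to prove the stronger fixed-type version of the claim---that $\Hecke_{0}:=\Hecke^{\red,\ord}(N(\Sigma))_{\mgot}$ is complete intersection of dimension 2 isomorphic to $R^{\ord}_{\Sigma,\mu}(\rhobar_{f})$, with $M^{\ord}_{\mgot}$ free of rank 2 over $\Hecke_{0}$---via a $\Lambda_{\Hi}$-adic Taylor--Wiles patching argument, and then to recover the statement of the proposition by adjoining $[[\Gamma]]$.

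To implement this first step, I would exploit the control isomorphism \eqref{EqControlHecke}, which shows that every weight $k\geq 2$, level $s$ arithmetic specialization of $\Hecke_{0}$ recovers a minimal classical Hecke algebra attached to an eigencuspform $g$ congruent to $f$. Hypotheses \ref{HypIrrTW} and \ref{HypNO} being inherited by $\rhobar_{g}=\rhobar_{f}$, theorem \ref{TheoTaylorWiles} applies fiberwise and identifies each specialization of $\Hecke_{0}$ with the corresponding finite-level fixed-type nearly ordinary deformation ring of $\rhobar_{f}$. I would then upgrade this fiberwise picture to a single $\Lambda_{\Hi}$-adic $R=T$ isomorphism by running the Taylor--Wiles patching method directly at infinite Hida level, as originally done by Wiles in the ordinary case in \cite{WilesFermat} and axiomatized by Fujiwara in \cite{FujiwaraDeformation}. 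Concretely, given a Taylor--Wiles system $\{Q_{n}\}_{n}$ of auxiliary primes, the associated auxiliary Hecke rings $\Hecke_{0,Q_{n}}$ and Hida modules $M^{\ord}_{\mgot,Q_{n}}$ are finitely generated over $\Lambda_{\Hi}$ and satisfy Hida's control theorem after inclusion of the Taylor--Wiles primes; patching then yields a power series ring $R_{\infty}$ over $\Lambda_{\Hi}$ in finitely many variables, a regular sequence cutting out $\Hecke_{0}\simeq R^{\ord}_{\Sigma,\mu}(\rhobar_{f})$ as a complete intersection of dimension 2, and a compatible free $R_{\infty}$-module of rank 2 descending to $M^{\ord}_{\mgot}$. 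A Nakayama--Diamond argument in the style of \cite{DiamondHecke} completes this step.

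To conclude, I would invoke the factorization $R^{\ord}_{\Sigma}(\rhobar_{f})\simeq R^{\ord}_{\Sigma,\mu}(\rhobar_{f})[[\Gamma]]$ recorded in subsection \ref{SubGalois}, which holds because nearly ordinary distinguished deformations form a torsor under twists by deformations of the trivial character of $\Gamma$. Adjoining $[[\Gamma]]$ to the fixed-type $R=T$ isomorphism then yields at once that $\Hecke=\Hecke_{0}[[\Gamma]]$ is complete intersection of dimension 3 canonically isomorphic to $R^{\ord}_{\Sigma}(\rhobar_{f})$, and the freeness of $M^{\ord}_{\mgot}$ over $\Hecke_{0}$ propagates verbatim to freeness of rank 2 after base change to $\Hecke$. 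The main difficulty lies in the $\Lambda_{\Hi}$-adic patching itself, and specifically in verifying the control theorem and Wiles's numerical criterion at the auxiliary Taylor--Wiles primes; all remaining steps are either the finite-level theorem \ref{TheoTaylorWiles} or formal manipulations.
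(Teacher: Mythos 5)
Your plan is correct and follows essentially the same route as the paper: the paper proves this by noting that, granted the fixed-weight theorem \ref{TheoTaylorWiles}, the $\Lambda_{\Hi}$-adic statement follows from the control isomorphism \eqref{EqControlHecke} together with \cite[Theorem 4.1]{BockleDensity} and \cite[Corollary 11.5]{FujiwaraDeformation}, the latter being precisely the $\Lambda_{\Hi}$-adic Taylor--Wiles patching you propose, and the descent by $[[\Gamma]]$ via the factorization $R^{\ord}_{\Sigma}(\rhobar_{f})\simeq R^{\ord}_{\Sigma,\mu}(\rhobar_{f})[[\Gamma]]$ is the same closing step. The only slight redundancy in your write-up is that you first establish the fiberwise picture via \eqref{EqControlHecke} plus theorem \ref{TheoTaylorWiles} and then re-run the patching at Hida level; either the density argument from the fiberwise picture (Böckle) or the direct $\Lambda_{\Hi}$-adic patching (Fujiwara) suffices on its own, and the paper cites both.
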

\begin{proof}
Granted theorem \ref{TheoTaylorWiles}, this follows from equation \eqref{EqControlHecke}. See also \cite[Theorem 4.1]{BockleDensity} and \cite[Corollary 11.5]{FujiwaraDeformation}.
\end{proof}
\section{The ETNC for modular motives}\label{SectionETNC}
\subsection{\Nekovar-Selmer complexes, étale cohomology and the determinant functor}\label{SubNeko}
\subsubsection{Review of the determinant functor}
Let $R$ be a commutative ring. A graded invertible module $(P,r)$ is a pair formed with a projective $R$-module $P$ of rank one and a locally constant map $r$ from $\Spec R$ to $\Z$. If $(P_{1},r)$ and $(P_{2},r)$ are graded invertible module with the same $r$, the statement that they are isomorphic is tautologically true. Consequently, we insist in this manuscript that any isomorphism between graded invertible modules be completely specified, and ideally canonical, that is to say independent of any choice beyond those incorporated in the definitions of $(P_{1},r)$ and $(P_{2},r)$. Nevertheless, it is often the case that we can make this specification only up to a choice of a unit in $R$, in which case we say that $(P_{1},r)$ and $(P_{2},r)$ are isomorphic up to a choice of a unit. 

The determinant functor $\Det_{R}$ of \cite{MumfordKnudsen,DeligneDeterminant} is the functor
\begin{equation}\nonumber
\Det_{R}P=\left(\underset{R}{\overset{{\rank_{R}P}}{\bigwedge}}P,\rank_{R}P\right)
\end{equation}
from the category of finite projective $R$-modules to the category of graded invertible $R$-modules (with morphisms restricted to isomorphisms). 
A perfect complex $C$ of $R$-modules is an object in the derived category of $R$-modules represented by a bounded complex of projective $R$-modules of finite ranks. The determinant functor extends to a functor from the category of perfect complexes of $R$-modules with morphisms restricted to quasi-isomorphisms to the category of graded invertible $R$-modules by setting
\begin{equation}\label{EqDefDet}
\Det_{R}C=\underset{i\in\Z}{\bigotimes}{}\Det_{R}^{(-1)^{i}}C^{i}
\end{equation}
for any representation of $C$ such that the $C^{i}$ are projective of finite ranks. The determinant functor commutes with derived tensor product and there is a canonical isomorphism between $\Det_{R}(0)$ and $(R,0)$.
\subsubsection{\Nekovar-Selmer complexes and étale cohomology}
Let $\Q\subset K\subset \Qbar$ be an extension of $\Q$ with ring of integers $\Ocal_{K}$. Let $S_{p}$ be the set of primes of $\Ocal_{K}$ over $p$. Let $U=\Spec\Ocal_{K}[1/p]$ be the open subset of $\Spec\Ocal_{K}$ defined by $\Spec\Ocal_{K}-S_{p}$. Let $M$ be a finite $p$-torsion module with a continuous action of $G_{K}$ and let $S\supset S_{p}$ be a finite set of finite primes of $\Ocal_{K}$ such that $M$ is a representation of $G_{K,S}$. Then $M$ defines a locally constant étale sheaf $M_{\et}$ on $V=\Spec\Ocal_{K}-S$. 

A local condition at $v\in S$ is a pair $(C^{\bullet}_{?}(G_{K_{v}},M),i_{v})$ where $C^{\bullet}_{?}(G_{K_{v}},M)$ is a bounded complex and $i_{v}:C^{\bullet}_{?}(G_{K_{v}},M)\fleche C^{\bullet}(G_{K_{v}},M)$ is a morphism of complexes. Denote also by $$i:C^{\bullet}(G_{K},M)\fleche \sommedirecte{v\in S}{}C^{\bullet}(G_{K_{v}},M)$$ the direct sum of the localization maps at $S$ and by $\iota$ the map
\begin{equation}\nonumber
i-\sommedirecte{v\in S}{}i_{v}.
\end{equation}
The \Nekovar-Selmer complex $\RGamma_{?}(G_{K,S},M)$ of $M$ (see \cite{SelmerComplexes}) attached to the local conditions $(C^{\bullet}_{?}(G_{K_{v}},M),i_{v})$ for $v\in S$ is the complex
\begin{equation}\label{EqDefSelmer}
\Cone\left(C^{\bullet}(G_{K,S},M)\oplus\sommedirecte{v\in S}{}C^{\bullet}_{?}(G_{K_{v}},M)\overset{\iota}{\fleche}\sommedirecte{v\in S}{}C^{\bullet}(G_{K_{v}},M)\right)[-1]
\end{equation}
seen in the derived category. In a slight abuse of notations, we henceforth do not distinguish complexes and their images in the derived category so that we write $\RGamma_{?}(G_{K_{v}},M)$ for $C^{\bullet}_{?}(G_{K_{v}},M)$ and likewise in all similar situations. Henceforth, we also systematically assume that $(C^{\bullet}_{?}(G_{K_{v}},M),i_{v})$ is equal to $(C^{\bullet}(G_{K_{v}},M),\Id_{v})$ for all $v\in S_{p}$.

When $\RGamma_{?}(G_{K_{v}},T)$ is the zero complex for all $v\in S-S_{p}$, the attached \Nekovar-Selmer complex is  the complex of cohomology with compact support outside $p$
\begin{equation}\nonumber
\RGamma_{c}(G_{K,S},M)=\Cone\left(\RGamma(G_{K,S},M){\fleche}\sommedirecte{v\in S\backslash S_{p}}{}\RGamma(G_{K_{v}},M)\right)[-1].
\end{equation}
In the opposite direction, when $\RGamma_{?}(G_{K_{v}},M)$ is equal to $\RGamma(G_{K_{v}},T)$ and $i_{v}$ is the identity for all $v\in S$, the \Nekovar-Selmer complex is the complex $\RGamma(G_{K,S},M)$ of continuous cochains with values in $M$. Particularly important in this manuscript is the \Nekovar-Selmer complex attached to the unramified condition $\RGamma(G_{K_{v}},T^{I_{v}})$ at $v\nmid p$ with its natural map to $\RGamma(G_{K_{v}},M^{I_{v}})$ and to the relaxed condition $\RGamma(G_{K_{v}},T)$ at $v|p$. Explicitly, this is the complex:
\begin{equation}\nonumber
\Cone\left(\RGamma(G_{K,S},M)\oplus\sommedirecte{v\in S\backslash S_{p}}{}\RGamma(G_{K_{v}}/I_{v},M^{I_{v}})\fleche\sommedirecte{v\in S\backslash S_{p}}{}\RGamma(G_{K_{v}},M)\right)[-1]
\end{equation}
We denote it by $\RGamma_{f}(G_{K,S},M)$. The following lemma is well-known.
\begin{LemEnglish}\label{LemEtaleSelmer}
Let $i$ be the inclusion $V=U-\{x\in S\}\plonge U$. There is a canonical isomorphism between $\RGamma_{f}(G_{K,S},M)$ and $\RGamma_{\et}(U,i_{*}M_{\et})$.
\end{LemEnglish}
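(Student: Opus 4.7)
The plan is to compute $\RGamma_{\et}(U, i_* M_{\et})$ via the standard localization triangle for the open immersion $i : V \plonge U$ and then match the resulting presentation with the Cone in \eqref{EqDefSelmer} defining $\RGamma_f$.

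First I would identify the higher direct images. At every geometric point of $V$ the complex $Ri_* M_{\et}$ agrees with $M_{\et}$, while at a closed point $x_v \in U \setminus V$ corresponding to $v \in S \setminus S_p$ the absolute Galois group of the fraction field of the strict henselization is the inertia $I_v$, yielding
\begin{equation*}
(Ri_* M_{\et})_{\bar x_v} \simeq \RGamma(I_v, M).
\end{equation*}
The tautological triangle
\begin{equation*}
i_* M_{\et} \fleche Ri_* M_{\et} \fleche \tau^{\geq 1} Ri_* M_{\et} \fleche +1
\end{equation*}
on $U$ therefore has cofiber concentrated on $U \setminus V$ with stalk $\tau^{\geq 1}\RGamma(I_v, M)$ at each $\bar x_v$.

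Applying $\RGamma(U, -)$, the middle term becomes $\RGamma(V, M_{\et}) = \RGamma(G_{K,S}, M)$, while the cofiber, a sum of skyscrapers on a disjoint union of closed points with residue fields of absolute Galois group $G_{K_v}/I_v$, yields
\begin{equation*}
\RGamma(U, \tau^{\geq 1} Ri_* M_{\et}) \simeq \sommedirecte{v \in S \setminus S_p}{} \RGamma(G_{K_v}/I_v, \tau^{\geq 1}\RGamma(I_v, M)).
\end{equation*}
By the Hochschild--Serre triangle associated to $I_v \subset G_{K_v}$, each such summand identifies canonically with the cofiber of the inflation map $\RGamma(G_{K_v}/I_v, M^{I_v}) \fleche \RGamma(G_{K_v}, M)$, which is precisely the local map $i_v$ entering \eqref{EqDefSelmer} at primes outside $S_p$.

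The final step is a formal octahedral manipulation rewriting the resulting triangle as the assertion that $\RGamma_{\et}(U, i_* M_{\et})$ is the fiber of
\begin{equation*}
\RGamma(G_{K,S}, M) \oplus \sommedirecte{v \in S \setminus S_p}{} \RGamma(G_{K_v}/I_v, M^{I_v}) \fleche \sommedirecte{v \in S \setminus S_p}{} \RGamma(G_{K_v}, M),
\end{equation*}
which is $\RGamma_f(G_{K,S}, M)$ after observing that at $v \in S_p$ the relaxed condition $(C^{\bullet}(G_{K_v}, M), \Id)$ contributes a zero summand to the Cone in \eqref{EqDefSelmer}. The main technical point, on which I would spend most of the write-up, is verifying that the maps coming from the localization triangle coincide canonically with the global-to-local restriction and with inflation; this amounts to the functoriality of the Hochschild--Serre filtration under the natural morphisms of étale topoi $(\Spec K_v)_{\et} \fleche U_{\et}$ and $(\Spec k(v))_{\et} \fleche U_{\et}$, which is standard but bears checking to ensure the identification in $\RGamma_f$ is canonical rather than merely quasi-isomorphic.
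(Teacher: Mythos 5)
The paper gives no proof of this lemma, remarking only that it is well-known, so there is no argument of the author's to compare against; your route via the localization triangle for $i : V \hookrightarrow U$, identification of the stalks $(Ri_*M_{\et})_{\bar{x}_v}$ with $\RGamma(I_v,M)$, and the Hochschild--Serre identification at each $v \in S\setminus S_p$ is the standard one and is correct. One point worth recording explicitly in a full write-up: the identification $\RGamma_{\et}(V,M_{\et}) \simeq \RGamma(G_{K,S},M)$ relies on $M$ having odd torsion (or $K$ having no real places) to kill the archimedean contributions to the \'etale cohomology of $V$; this is in force throughout the paper since $p$ is odd, but is not stated in the lemma. Beyond that, you correctly isolate the one genuine technical burden, namely checking that the maps produced by the localization triangle agree with global-to-local restriction and inflation, i.e.\ the functoriality of the Hochschild--Serre filtration under the morphisms $(\Spec K_v)_{\et} \to U_{\et}$ and $(\Spec \kappa(v))_{\et} \to U_{\et}$.
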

In the following, we need to consider étale sheaves of $R$-modules with $R$ possibly of large Krull dimension. Though it is certainly well-known that the formalism of étale cohomology carries over to these rings (by taking inverse limits on $n$ of truncated projective resolutions over $R/\mgot^{n}$ and using the fact that $\RGamma_{\et}(X,-)$ is a triangulated way-out functor) and thus that lemma \ref{LemEtaleSelmer} identifies $\RGamma_{f}(G_{K,S},M)$ with $\RGamma_{\et}(\Spec\Ocal_{K}[1/p],i_{*}M_{\et})$ for all $G_{K,S}$-representation $M$ over $R$, this author found a published reference hard to find. By contrast, all necessary results for Galois cohomology with coefficients in admissible modules can be found in \cite{SelmerComplexes}. For this reason, the objects intervening in the ETNC are described in this manuscript using Galois cohomology and the careful reader may wish to consider the notation $\RGamma_{\et}(\Spec\Ocal_{K}[1/p],i_{*}M_{\et})$, which we abbreviate as $\RGamma_{\et}(\Ocal_{K}[1/p],M)$, as a placeholder for $\RGamma_{f}(G_{K,S},M)$ if deemed necessary.

If in addition to being a $G_{K,S}$-representation, $M$ is a perfect complex of $R$-modules, then so are $\RGamma(G_{K,S},M)$, $\RGamma_{c}(G_{K,S},M)$ and $\RGamma(G_{K_{v}},M)$ for all $v$. If $M^{I_{v}}$ is moreover a perfect complex of $R$-modules for all $v\in S$, then $\RGamma_{f}(G_{K,S},M)$ is a perfect complex. 

\subsection{Integral lattices in the cohomology of modular curves}\label{SubLattices}
\subsubsection{Integral lattices}
In this sub-section is a local integral domain. Let $(T,\rho,R)$ be a $G_{K,S}$-representation of rank 2 and let $(V,\rho,\Frac(R))$ be the representation obtained by tensor product with $\Frac(R)$. Let $v\nmid p$ be a finite place of $\Ocal_{K}$. If $T^{I_{v}}$ is of rank one, assume that $\Fr(v)-1$ acts on $V^{I_{v}}$ by multiplication by an element of $R$ (this is of course always true if $R$ is integrally closed or if $T^{I_{v}}$ can be completed in a basis of $T$).
\begin{DefEnglish}\label{DefXcaliv}
The graded invertible module $\Xcali_{v}(T)$ is defined as follows.
\begin{equation}\nonumber
\Xcali_{v}(T)=\begin{cases}
\Det_{R}\RGamma(G_{K_{v}}/I_{v},T^{I_{v}})\textrm{ if $\rank_{R}T^{I_{v}}\neq1$.}\\
\Det_{R}[R\overset{\Fr(v)-1}{\fleche} R]\textrm{ if $\rank_{R}T^{I_{v}}=1$.}
\end{cases}
\end{equation}
Here the complex $[R\overset{\Fr(v)-1}{\fleche} R]$ is placed in degree $0,1$.
\end{DefEnglish}
The module $\Xcali_{v}(T)$ recovers the determinant of the unramified cohomology of $T$ when both are defined and is compatible with change of rings provided the rank of inertia invariants remains constant in the sense of the following lemma.
\begin{LemEnglish}\label{LemXcaliBienDef}
If $T^{I_{v}}$ is a perfect complex of $R$-modules, then there is a canonical isomorphism
\begin{equation}\nonumber
\Xcali_{v}(T)\overset{\can}{\simeq}\Det_{R}\RGamma(G_{K_{v}}/I_{v},T^{I_{v}}).
\end{equation}
If $R\fleche R'$ is a local morphism of integral domains such that
$$\rank_{R'}(T\tenseur_{R}R')^{I_{v}}=\rank_{R}T^{I_{v}}$$
then $\Xcali_{v}(T)\tenseur_{R}R'$ is canonically isomorphic to $\Xcali_{v}(T\tenseur_{R}R')$.
\end{LemEnglish}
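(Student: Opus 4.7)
The plan is a case analysis on $r = \rank_R T^{I_v}$. Throughout I use the fact that since $G_{K_v}/I_v \simeq \hat{\Z}$ is topologically generated by a Frobenius lift $\Fr(v)$, the Galois cohomology of any continuous $G_{K_v}/I_v$-module $M$ is computed by the two-term complex $[M \xrightarrow{\Fr(v)-1} M]$ placed in degrees $0,1$.

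\emph{Canonical isomorphism.} When $r \neq 1$ the identity $\Xcali_v(T) = \Det_R \RGamma(G_{K_v}/I_v, T^{I_v})$ holds tautologically by definition \ref{DefXcaliv}. When $r = 1$ and $T^{I_v}$ is perfect, $T^{I_v}$ is a rank-one projective $R$-module and the desired comparison reads
\[
\Det_R[R \xrightarrow{\Fr(v)-1} R] \simeq \Det_R[T^{I_v} \xrightarrow{\Fr(v)-1} T^{I_v}].
\]
Both members admit a canonical trivialization to $(R,0)$ through the evaluation pairings $\Det_R^{-1} M \otimes_R \Det_R M \simeq R$ applied respectively with $M = R$ and $M = T^{I_v}$, and composing these two trivializations yields the desired canonical isomorphism in the strong sense required by subsection \ref{SubNeko}.

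\emph{Base change at extremal ranks.} When $r = 0$, $T^{I_v}$ is a torsion-free $R$-submodule of the free module $T$ with generic rank zero, hence vanishes; the rank-preserving hypothesis forces $(T \otimes_R R')^{I_v} = 0$ as well, and both determinants are canonically trivial. When $r = 2$, inertia acts trivially on $V$ and hence on the lattice $T$, so $T^{I_v} = T$ and likewise $(T \otimes_R R')^{I_v} = T \otimes_R R'$ by the rank hypothesis; the formation of $[T \xrightarrow{\Fr(v)-1} T]$ thus plainly commutes with tensor product by $R'$ and so does its determinant.

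\emph{The main obstacle: base change in rank one.} The case $r = 1$ is the crux because inertia invariants are not generally preserved by base change. Writing $\alpha \in R$ for the scalar by which $\Fr(v) - 1$ acts on the line $V^{I_v}$ and $\alpha' \in R'$ for the analogous scalar attached to $T \otimes_R R'$, one must verify the equality $\alpha' = \alpha \otimes 1$ in $R'$; once this is known, the isomorphism $\Xcali_v(T) \otimes_R R' \simeq \Xcali_v(T \otimes_R R')$ reduces to the identity on $\Det_{R'}[R' \xrightarrow{\alpha'} R']$. To establish this equality I would exploit the natural $\Fr(v)$-equivariant morphism $T^{I_v} \otimes_R R' \to (T \otimes_R R')^{I_v}$. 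The rank-preserving hypothesis ensures that both source and target become one-dimensional after tensoring with $\Frac(R')$, and the image of $T^{I_v} \otimes_R R'$ in $V \otimes_R R' \otimes_{R'} \Frac(R')$ is nonzero because $T^{I_v}$ was nonzero in $V$ already; the map is therefore an isomorphism of one-dimensional $\Frac(R')$-vector spaces after extension to $\Frac(R')$, and its $\Fr(v)$-equivariance forces $\alpha' = \alpha \otimes 1$.
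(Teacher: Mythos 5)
Your proof follows the same case-by-case decomposition as the paper's: tautology for $r\neq1$ in the first assertion, and the split by $r\in\{0,1,2\}$ for base change. For $r=0$ and $r=2$ the arguments coincide with the paper's, and the treatment of the first assertion via the evaluation pairing $\Det_R^{-1}M\otimes_R\Det_RM\simeq R$ matches the paper's resolution argument in substance. Two points deserve correction.

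\emph{A small imprecision.} You assert that if $T^{I_v}$ is a perfect complex of $R$-modules concentrated in degree $0$, then it is a rank-one projective $R$-module. This is false in general: perfect in degree $0$ only means finite projective dimension. The assertion is harmless here because the evaluation-pairing trivialization of $\Det_R[M\to M]$ needs only that $\Det_RM$ be defined, i.e.\ that $M$ be perfect, but you should not claim projectivity.

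\emph{A genuine gap in the rank-one base-change case.} This is the only place where you try to prove something the paper does not spell out, namely that the Frobenius scalar $\alpha'$ attached to $(T\otimes_RR')^{I_v}$ equals the image $\phi(\alpha)$ of the scalar attached to $T^{I_v}$. Your argument has two unproven steps. First, you claim that $T^{I_v}\otimes_R\Frac(R')$ is one-dimensional; this holds only if $T^{I_v}$ is locally free at $\pid=\ker(R\to R')$, which does not follow from the hypotheses (a saturated rank-one submodule of $R^2$ need not be a direct summand when $R$ has dimension $\geq2$). Second, and more seriously, you assert that the image of $T^{I_v}\otimes_RR'$ in $(T\otimes_RR')\otimes_{R'}\Frac(R')$ is nonzero "because $T^{I_v}$ was nonzero in $V$ already." This is a non sequitur: if every element of $T^{I_v}$ has all its coordinates in $\pid$ — which can happen precisely when $T^{I_v}$ is not a direct summand — then $T^{I_v}$ dies after tensoring with $\Frac(R')$, however nonzero it was before. (Also note that, as written, $V\otimes_RR'$ vanishes whenever $\pid\neq0$ since $V=T\otimes_R\Frac(R)$; you presumably meant $(T\otimes_RR')\otimes_{R'}\Frac(R')$.) What makes the argument work, and what you should invoke instead, is the upper-semicontinuity of $\rank(\text{inertia invariants})$ under specialization: one always has $\rank_{R'}(T\otimes_RR')^{I_v}\geq\rank_RT^{I_v}$, and the rank-preservation hypothesis is exactly the statement that this inequality is an equality, which forces the specialization of the generic invariant line to coincide with the invariant line downstairs and hence forces $\alpha'=\phi(\alpha)$. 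Without this semicontinuity input, your map need not be nonzero after generic localization.
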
 
\begin{proof}
If $T^{I_{v}}$ is a perfect complex of $R$-modules, then so is $\RGamma(G_{K_{v}}/I_{v},T^{I_{v}})$. Hence $\Det_{R}\RGamma(G_{K_{v}}/I_{v},T^{I_{v}})$ is well-defined. The first assertion of the lemma is non-tautological only if $\rank_{R}T^{I_{v}}=1$. In that case, a finite projective resolution of $T^{I_{v}}$ yields a projective resolution of $(\Fr(v)-1)T^{I_{v}}$ and computing $\Det_{R}(\Fr(v)-1)T^{I_{v}}\tenseur_{R}\Det^{-1}T^{I_{v}}$ using these resolutions yields the desired result. If $\rank_{R} T^{I_{v}}=0$, then both $\Xcali_{v}(T)\tenseur_{R}R'$ and $\Xcali_{v}(T\tenseur_{R}R')$ are canonically isomorphic to $(R',0)$. If $\rank_{R}T^{I_{v}}=1$, they are both canonically isomorphic to $\Det_{R'}[R'\overset{\Fr(v)-1}{\fleche}R']$. If $\rank_{R}T^{I_{v}}=2$, then both $T$ and $T\tenseur_{R}R'$ are unramified so the canonical isomorphism
\begin{equation}\nonumber
\RGamma(G_{K_{v}}/I_{v},T^{I_{v}})\Ltenseur_{R}R'\isocan\RGamma(G_{K_{v}}/I_{v},T\tenseur_{R}R')
\end{equation}
yields the result after taking determinant. The second assertion is thus true.
\end{proof}
Let $T$ be a $G_{K,S}$-representation such that $\Xcali_{v}$ is defined for all $v\nmid p$. 
\begin{DefEnglish}\label{DefXcali}
The graded invertible $R$-module $\Xcali(T)$ is defined to be:
\begin{equation}\nonumber
\Det_{R}\RGamma_{c}(G_{K,S},T)\tenseur_{R}\produittenseur{v\in S\backslash S_{p}}{}\Xcali_{v}(T)
\end{equation}
\end{DefEnglish}
We recall that the subscript $c$ denotes cohomology compactly supported  outside $p$. Though $\Xcali(T)$ has \textit{a priori} no special relevance for an arbitrary $T$, note that by construction there are canonical isomorphisms 
\begin{align}\label{EqIsoXcaliSel}
\Xcali(T)\overset{\can}{\simeq}\Det^{-1}_{R}\RGamma_{f}(G_{K,S},T)\overset{\can}{\simeq}
\Det^{-1}_{R}\RGamma_{\et}(\Ocal_{K}[1/p],T)
\end{align}
whenever all the objects appearing in \eqref{EqIsoXcaliSel} are well defined.
\subsubsection{The Weight-Monodromy conjecture for modular motives}
Let $f\in S_{k}(U)$ be a newform and denote by $\Hecke$ the new Hecke algebra $\Hecke^{\new}(U)$. Let $\aid\in\Spec\Hecke_{\mgot_{f}}$ be a minimal prime ideal. Let $R(\aid)$ be the domain $\Hecke_{\mgot_{f}}/\aid$ and $\Frac(R(\aid))$ its field of fraction. The pseudo-character $\tr(\rho_{\mgot_{f}})$ modulo $\aid$ has values in $R(\aid)$ so there exists a $G_{\Q,S}$-representation $(V,\rho_{\mgot_{f}},\Frac(R(\aid)))$ whose trace is $\tr(\rho_{\mgot_{f}})\modulo\aid$, and hence a $G_{K,S}$-representation with the same properties by restriction.
\begin{Prop}\label{PropWeightMonodromy}
Let $\ell\nmid p$ be a finite place. Let $T\subset V$ be a sub-representation with coefficients in $R(\aid)$. Then $\Xcali_{\ell}(T)$ is well defined and there is a canonical isomorphism
\begin{equation}\label{EqCanIsoXcali}
\Xcali_{\ell}(T)\tenseur_{R(\aid),\lambda}\Ocal\overset{\can}{\simeq}\Xcali_{\ell}(T\tenseur_{R(\aid),\lambda}\Ocal)
\end{equation}
for all modular map $\lambda:R(\aid)\fleche\Ocal$.
\end{Prop}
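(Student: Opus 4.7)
The plan is to deduce both assertions from local-global compatibility for modular Galois representations at primes $\ell\nmid p$---that is, the Weight-Monodromy conjecture in this setting, known through the combined work of Deligne, Langlands, Carayol and Saito. This compatibility identifies the Weil-Deligne representation underlying $V|_{G_{\Q_{\ell}}}$ with the one attached via the local Langlands correspondence to the local component $\pi_{\ell}$ of the automorphic representation generated by the newform $g$ corresponding to the minimal prime $\aid$. Since $\Hecke^{\new}(U)_{\mgot_{f}}$ is reduced, $R(\aid)$ is in particular an order in the ring of integers of the coefficient field of $g$.

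For the well-definedness of $\Xcali_{\ell}(T)$, observe that $\rank_{R(\aid)}T^{I_{\ell}}=\dim_{\Frac(R(\aid))}V^{I_{\ell}}\in\{0,1,2\}$. Only the rank one case requires attention, and by local-global compatibility it occurs precisely when $\pi_{\ell}$ is an unramified twist of the Steinberg representation. In this situation local Langlands specifies that $\Fr(\ell)$ acts on the line $V^{I_{\ell}}$ by the eigenvalue of the corresponding Atkin-Lehner-Li operator (for instance $U_{\ell}$ when $\ell\,\|\,N(g)$), which by construction belongs to the new Hecke algebra $\Hecke^{\new}(U)_{\mgot_{f}}$. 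Its image in $R(\aid)=\Hecke_{\mgot_{f}}/\aid$ is thus the required element of $R(\aid)$ through which $\Fr(\ell)-1$ acts on $V^{I_{\ell}}$, so $\Xcali_{\ell}(T)$ is defined.

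For the compatibility \eqref{EqCanIsoXcali} under a modular map $\lambda:R(\aid)\fleche\Ocal$, note that $\lambda$ is necessarily injective (being a nonzero morphism from a domain) and identifies $\Ocal$ with a discrete valuation ring in $\Frac(R(\aid))$ containing $R(\aid)$. The generic fibers of both $T$ and $T\tenseur_{R(\aid),\lambda}\Ocal$ over $\Frac(R(\aid))$ recover $V$, so by flatness of localization
\begin{equation}\nonumber
\rank_{\Ocal}(T\tenseur_{R(\aid),\lambda}\Ocal)^{I_{\ell}}=\dim_{\Frac(R(\aid))}V^{I_{\ell}}=\rank_{R(\aid)}T^{I_{\ell}}.
\end{equation}
The rank hypothesis of Lemma \ref{LemXcaliBienDef} is satisfied, and the canonical isomorphism follows. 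The delicate part of the argument is the well-definedness step: since $R(\aid)$ may fail to be integrally closed, the $G_{\Q_{\ell}}$-equivariant endomorphism ring of $T^{I_{\ell}}$ could \textit{a priori} strictly contain $R(\aid)$ inside $\Frac(R(\aid))$, and the conclusion that the Frobenius eigenvalue actually lies in $R(\aid)$ rather than in some proper overring is precisely what the identification with a Hecke operator accomplishes. This is where the full force of local-global compatibility enters; any weaker structural information on $V|_{G_{\Q_\ell}}$ would leave open the possibility of an eigenvalue living only in the normalization of $R(\aid)$.
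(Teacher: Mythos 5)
Your argument for well-definedness is essentially the paper's: both invoke local-global compatibility at $\ell$ to recognize the Frobenius eigenvalue on the line $V^{I_\ell}$ as the image of a Hecke operator, hence an element of $R(\aid)$ itself rather than merely of its normalization. One small imprecision: rank one of $V^{I_\ell}$ does not occur "precisely" for unramified twists of Steinberg — it also occurs for ramified principal series in which exactly one of the two inducing characters is unramified — but in either case the eigenvalue is the image of $T(\ell)$, so your conclusion is unaffected.

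Your argument for the isomorphism \eqref{EqCanIsoXcali} takes a genuinely different and considerably more elementary route. The paper proves the inequality $\rank_{R(\aid)}T^{I_\ell}\geq\rank_{\Ocal}(T\tenseur_{R(\aid),\lambda}\Ocal)^{I_\ell}$ by passing to a finite-index subgroup $U\leq I_\ell$ via Grothendieck's $\ell$-adic monodromy theorem and then invoking purity of $V_\Ocal$ (Ramanujan, quoting Carayol) to show that nontrivial monodromy on $V^U$ forces nontrivial monodromy after specialization. You instead observe that $\lambda$ is injective, so that $\Frac(R(\aid))\hookrightarrow\Frac(\Ocal)$ is a field extension, and that both ranks then equal $\dim_{\Frac(R(\aid))}V^{I_\ell}$ by flat base change; this eliminates the appeal to purity from this step entirely. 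Two caveats. First, the justification you offer for injectivity — "being a nonzero morphism from a domain" — is not sufficient as stated (compare $\Z\to\Z/2\Z$); what actually forces $\ker\lambda=(0)$ is that $R(\aid)$ is a one-dimensional local domain whose only nonzero prime is its maximal ideal, of residue characteristic $p$, while $\Ocal\subset\Qbar_{p}$ has characteristic zero, so the maximal ideal cannot be the kernel. Second, you still implicitly use Grothendieck's monodromy theorem, though in a much weaker way than the paper does: one needs $\rho|_{I_\ell}$ to factor through a topologically finitely generated quotient in order for taking $I_\ell$-invariants to commute with the (flat) base change to $\Frac(\Ocal)$. Finally, your opening sentence advertises both assertions as consequences of local-global compatibility, but your treatment of the second assertion does not in fact use it — which is precisely what makes your route simpler.
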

\begin{proof}
When $V^{I_{\ell}}$ is one-dimensional, the compatibility between the local and global Langlands correspondence at $\ell$ implies that $\det(1-\Fr(\ell)X|V^{I_{\ell}})=1-T(\ell)X$. So the eigenvalue $\alpha_{\ell}$ of $\Fr(\ell)-1$ on $V^{I_{\ell}}$ is an element of $R(\aid)$ and $\Xcali_{\ell}(T)$ is well-defined. 

Let $\lambda:R(\aid)\fleche\Ocal$ be a modular map (so $\Ocal$ is a discrete valuation ring in $\Qbar_{p}$) and let $T_{\Ocal}$ and $V_{\Ocal}$ denote respectively $T\tenseur_{R(\aid),\lambda}\Ocal$ and $T_{\Ocal}\tenseur\Frac(\Ocal)$. By the second assertion of lemma \ref{LemXcaliBienDef}, it is enough to prove that $\rank_{R(\aid)} T^{I_{\ell}}$ is larger than $\rank_{\Ocal} T_{\Ocal}^{I_{\ell}}$.

Non-zero elements of $\Qbar_{p}$ are not in the kernel of $\lambda$ so if $\s\in I_{\ell}$ acts on $V$ non-trivially through a finite quotient, then its action is also non-trivial on $T_{\Ocal}$. It is thus enough to prove that $\rank_{R(\aid)} T^{U}$ is larger than $\rank_{\Ocal} T_{\Ocal}^{U}$ for $U$ a finite index subgroup of $I_{\ell}$. By Grothendieck's monodromy theorem \cite[Page 515]{SerreTate}, we can choose $U$ such that $V^{U}$ is quasi-unipotent, in which case $\rank_{R(\aid)}T^{U}$ is at least 1 and is exactly 1 if the monodromy operator is of rank 1. Because the representation $V_{\Ocal}$ is a pure $G_{\Q_{\ell}}$-module by Ramanujan's conjecture (proved for modular forms in \cite[Théorème A]{CarayolHilbert}), the eigenvalues of a lift $\s$ of $\Fr(\ell)$ acting on $V^{U}$ or $V_{\Ocal}^{U}$ are all non zero. If the action of $U$ on $T$ is trivial, the action on $T_{\Ocal}$ is also trivial and we are done. If monodromy acts non trivially, the quotient of the eigenvalues of $\s$ acting on $V^{U}$ is well-defined and equal to $\ell^{\pm1}$. The eigenvalues of $\s$ on $V_{\Ocal}^{U}$ then have different Weil weights so there is a non-trivial, hence necessarily rank 1, monodromy operator acting on $V_{\Ocal}^{U}$. The rank of $T_{\Ocal}^{U}$ is then at most 1, and so is less than $\rank_{R(\aid)}T^{U}$.
\end{proof}
\begin{CorEnglish}\label{CorWeightMonodromy}
Let $K/\Q$ be a finite extension and let $K_{\infty}$ be a $\zp^{d}$-extension of $K$ with Galois group $\Gamma$. Let $R$ be the completed group algebra $R(\aid)[[\Gamma]]$. Any $G_{K,S}$-representation $(T,\rho,R(\aid))$ gives rise to a representation $(T\tenseur\chi,\rho\tenseur\chi,R)$ with the $G_{K,S}$-action on $R$ given by the character $\chi$ equal to the projection onto $\Gamma$ followed by inclusion in $R$. Let $\lambda:R(\aid)\fleche S$ be a modular specialization. Then $\lambda$ extends as a map of flat $\Ocal$-algebras from $R$ to $S$ by sending $\Gamma$ to $\{1\}$ through the trivial morphism. Let $\phi:R\fleche A$, $\psi:A\fleche B$ and $\pi:B\fleche S$ be morphisms of flat $\Ocal$-algebras between domains making the diagram 
\begin{equation}\nonumber
\xymatrix{
R\ar[rr]^{\lambda}\ar[d]_{\phi}&&S\\
A\ar[rr]^{\psi}&&B\ar[u]^{\pi}
}
\end{equation}
commute. For $x\in\{\phi,\psi\circ\phi\}$, let $T_{x}$ be the representation whose trace is $x(\tr(T\tenseur\chi))$. Then there is a canonical isomorphism
\begin{equation}\nonumber
\Xcali(T_{\phi})\tenseur_{A,\psi}B\overset{\can}{\simeq}\Xcali(T_{\psi\circ\phi}).
\end{equation}
\end{CorEnglish}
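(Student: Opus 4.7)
The plan is to decompose $\Xcali$ into its global and local constituents according to Definition \ref{DefXcali} and establish base change compatibility of each factor along the morphism $\psi: A \fleche B$. For the global factor $\Det_A \RGammafm(G_{K,S}, T_\phi)$, since $T_\phi$ is a free $A$-module of rank 2 with continuous $G_{K,S}$-action, the continuous cochain complex $\Ccont(G_{K,S}, T_\phi)$ represents a perfect complex of $A$-modules whose formation commutes with $\Ltenseur_{A, \psi} B$. The cone construction defining $\RGammafm$ preserves this property, and since the determinant functor commutes with derived tensor product, this produces a canonical isomorphism
\begin{equation}\nonumber
\Det_A \RGammafm(G_{K,S}, T_\phi) \tenseur_{A, \psi} B \isocan \Det_B \RGammafm(G_{K,S}, T_{\psi \circ \phi}).
\end{equation}

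For each local factor $\Xcali_v(T_\phi)$ with $v \in S \setminus S_p$, I would invoke Lemma \ref{LemXcaliBienDef}. Because $K_\infty/K$ is unramified at primes not dividing $p$, the character $\chi$ is trivial on $I_v$, so the $I_v$-action on $T_\phi$ is obtained by base change through $\phi$ from the $I_v$-action on $T$ over $R(\aid)$. Proposition \ref{PropWeightMonodromy} guarantees that the eigenvalue of $\Fr(v) - 1$ on $V^{I_v}$ already lies in $R(\aid)$, so in the rank one case the definition of $\Xcali_v$ via $\Det_R[R \fleche R]$ is manifestly compatible with base change. It remains to verify the rank equality $\rank_A (T_\phi)^{I_v} = \rank_B (T_{\psi \circ \phi})^{I_v}$ required by the second assertion of Lemma \ref{LemXcaliBienDef}.

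The rank equality follows from revisiting the argument of Proposition \ref{PropWeightMonodromy}, now exploiting the entire diagram. Grothendieck's monodromy theorem, combined with the purity of Frobenius eigenvalues on the modular specialization $T_\lambda = T \tenseur_{R, \lambda} S$ (via Carayol's proof of the Ramanujan conjecture for modular forms), forces the local monodromy pattern at $v$ to be rigid along the entire chain $R \fleche A \fleche B \fleche S$: any hypothetical rank jump between $A$ and $B$ would propagate via $\pi$ to produce at the modular point $\lambda$ two Frobenius eigenvalues of distinct Weil weights on the same $I_v$-invariant subspace, contradicting purity. Hence Lemma \ref{LemXcaliBienDef} applies for each $v \nmid p$, and tensoring the global isomorphism with the local ones yields the desired canonical isomorphism $\Xcali(T_\phi) \tenseur_{A, \psi} B \isocan \Xcali(T_{\psi \circ \phi})$. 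The main obstacle is this last rank preservation step: specialization could \emph{a priori} cause the monodromy operator to trivialize, and ruling this out requires the combined use of the modular factorization through $\lambda$ and the purity statement at $\lambda$ to rigidify the intermediate rings $A$ and $B$ simultaneously, rather than only at the terminal point $S$ as in Proposition \ref{PropWeightMonodromy}.
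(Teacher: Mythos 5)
Your decomposition of $\Xcali$ into a compactly-supported global factor (which always commutes with derived base change) and local factors $\Xcali_v$ (where Lemma \ref{LemXcaliBienDef} demands a rank equality) is exactly the paper's strategy, as is the observation that the unramifiedness of $K_\infty/K$ outside $p$ trivializes $\chi$ on $I_v$. The issue is the final step: your contradiction argument — a hypothetical rank jump between $A$ and $B$ "would propagate via $\pi$ to produce at $\lambda$ two Frobenius eigenvalues of distinct Weil weights, contradicting purity" — is not a correct deduction, and it is in any case unnecessary. The rank of inertia invariants is monotone non-decreasing under specialization, so one automatically has
\begin{equation}\nonumber
\rank_{R}\,(T\tenseur\chi)^{I_{\ell}}\ \leq\ \rank_{A}\,T_{\phi}^{I_{\ell}}\ \leq\ \rank_{B}\,T_{\psi\circ\phi}^{I_{\ell}}\ \leq\ \rank_{S}\,T_{\lambda}^{I_{\ell}}.
\end{equation}
Since $\chi$ is unramified at $\ell$, the left-hand rank equals $\rank_{R(\aid)}T^{I_{\ell}}$; Proposition \ref{PropWeightMonodromy} (applied once, as a black box, not re-derived) identifies this with the right-hand rank. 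The squeeze then forces all four ranks to coincide, which is precisely the input Lemma \ref{LemXcaliBienDef} requires. There is no need to "rigidify $A$ and $B$ simultaneously" by a fresh purity argument; the monotonicity plus the equality of the two endpoints already rule out any intermediate jump. As written, your penultimate sentence asserts a causal chain (rank jump $\Rightarrow$ weight discrepancy at $\lambda$) that does not hold on its own — the correct conclusion from a rank jump is merely that the outer inequality $\rank_{R(\aid)}T^{I_\ell}<\rank_S T_\lambda^{I_\ell}$ would be strict, which is what contradicts Proposition \ref{PropWeightMonodromy}.
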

This corollary applies in particular to $\phi$ or $\psi$ equal to the identity.
\begin{proof}
This reduces to the existence of a canonical isomorphism
\begin{equation}\nonumber
\Xcali_{\ell}(T_{\phi})\tenseur_{A,\psi}B\overset{\can}{\simeq}\Xcali_{\ell}(T_{\psi\circ\phi})
\end{equation}
for all $\ell\in S-S_{p}$ and hence, by lemma \ref{LemXcaliBienDef}, to the statement that $\rank_{A}T_{\phi}^{I_{\ell}}$ is equal to $\rank_{B}T_{\psi\circ\phi}^{I_{\ell}}$ . As these ranks are both greater than $\rank_{R}(T\tenseur\chi)^{I_{\ell}}$ and smaller than $\rank_{S}T^{I_{\ell}}$, it is enough to prove that $\rank_{R}(T\tenseur\chi)^{I_{\ell}}=\rank_{S}T^{I_{\ell}}$. Because $K_{\infty}/K$ is unramified outside $p$ by \cite[Theorem 1]{IwasawaZl}, $(T\tenseur\chi)^{I_{\ell}}$ is equal to $T^{I_{\ell}}\tenseur\chi$ and so $\rank_{R}(T\tenseur\chi)^{I_{\ell}}$ is equal to $\rank_{R(\aid)}T^{I_{\ell}}$ and thus to $\rank_{S}T^{I_{\ell}}$ by proposition \ref{PropWeightMonodromy}.
\end{proof}
\subsection{Review of the ETNC with coefficients in $\Lambda$}
\subsubsection{$\Lambda$-adic representation}
Let $f\in S_{k}(U_{1}(N))$ be a newform whose eigenvalues are contained in a number field $F$. Fix an integer $1\leq s\leq k-1$ and let $M$ be the motive with coefficients in $F$ equal to the Tate twist $\Wcal(f)(s)$ of the motive $\Wcal(f)$ of subsection \ref{SubMotives}. We denote respectively by $M_{B}$, $M_{dR}$ and $M_{\et,p}$ the Betti, de Rham and $p$-adic étale realizations of $M$. Let $\pid|p$ be a finite place of $F$, let $\Ocal$ be the ring of integers of $F_{\pid}$ and $\Fp$ its residue field. For $\Sigma\supset\{\ell|Np\}$, let $(V(f),\rho_{f},F_{\pid})$ be the $G_{\Q,\Sigma}$-representation given by $M_{\et,p}\tenseur_{\qp}F_{\pid}$. Let $(T(f),\rho_{f},\Ocal)$ be a $G_{\Q,\Sigma}$-stable $\Ocal$-lattice inside $V(f)$ and $(\Tbar(f),\rhobar_{f},\Fp)$ be the residual representation attached to $T(f)$. 

For $m\in\N$, let $\Q_{m}$ be the sub-extension of $\Q(\zeta_{p^{m+1}})/\Q$ with Galois group $G_{m}$ isomorphic to $\Z/p^{m}\Z$. Recall that $\Q_{\infty}/\Q$ is the the unique $\zp$-extension of $\Q$, hence the union of the $\Q_{m}$ for all $m$, and that $\Gamma$ is $\Gal(\Q_{\infty}/\Q)$. Let $\Gamma_{m}$ be $\Gal(\Q_{\infty}/\Q_{m})$. Let $\Lambda$ be the completed group algebra $\Ocal[[\Gamma]]$, a complete regular local ring of dimension 2. The canonical surjection of $G_{\Q,\Sigma}$ onto $\Gal(\Q_{\infty}/\Q)$ followed by injection in $\Lambda\croix$ defines a $G_{\Q,\Sigma}$-representation $(\Lambda,\chi_{\Gamma},\Lambda)$ which we also denote by $\Lambda$ in a slight abuse of notation and which is the universal deformation of the trivial $\Fp$-representation unramified outside $p$. For $R$ a complete local noetherian $\Ocal$-algebra, let $\Riwa$ be $R[[\Gamma]]$. If $(T,\rho,R)$ is a $G_{\Q,\Sigma}$-representation, let $(T_{\Iw},\rho\tenseur\chi_{\Gamma},\Riwa)$ be the $G_{\Q,\Sigma}$-representation $T\tenseur_{R}R[[\Gamma]]$ with $G_{\Q,\Sigma}$-action on both sides of the tensor product. More generally, the $R[G_{m}]$-module $T\tenseur_{R}R[G_{m}]$ is always understood to have an action of $G_{\Q,\Sigma}$ on both sides of the tensor product whenever it is regarded as a $G_{\Q,\Sigma}$-representation. In a slight abuse of notation, we denote by $(V(f)_{\Iw},\rho\tenseur\chi_{\Gamma},\Lambda[1/p])$ the representation $T(f)\tenseur_{\Ocal}\Lambda[1/p]$.

The étale cohomology complex $\RGamma_{\et}(\Z[1/p],T(f)_{\Iw})$ is a complex of finite $\Lambda$-modules, necessarily perfect as $\Lambda$ is a regular local ring, whose cohomology is concentrated in $[0,3]$. Let $S$ be a flat $\Ocal$-algebra and let $\phi:\Lambda\fleche S$ be a morphism of $\Ocal$-algebras. Functoriality of cochain complexes and the fact that $\Lambda$ is unramified outside $p$ imply that there are canonical isomorphisms
\begin{align}\label{EqDescenteLambdaCont}
&\RGamma(G_{\Q_{\ell}},T(f)_{\Iw})\Ltenseur_{\Lambda,\phi}S\simeq\RGamma(G_{\Q_{\ell}},T(f)\tenseur_{\Ocal}S)\textrm{ for all $\ell\nmid\infty$,}\\\nonumber
&\RGamma(G_{\Q,\Sigma},T(f)_{\Iw})\Ltenseur_{\Lambda,\phi}S\simeq\RGamma(G_{\Q,\Sigma},T(f)\tenseur_{\Ocal}S),\\\nonumber
&\RGamma(G_{\Q_{\ell}}/I_{\ell},T(f)_{\Iw}^{I_{\ell}})\Ltenseur_{\Lambda,\phi}S\simeq\RGamma(G_{\Q_{\ell}}/I_{\ell},T(f)^{I_{\ell}})
\end{align}
which together yield a canonical isomorphism
\begin{equation}\nonumber
\RGamma_{\et}(\Z[1/p],T(f)_{\Iw})\Ltenseur_{\Lambda,\phi}S\simeq\RGamma_{\et}(\Z[1/p],T(f)\tenseur_{\Ocal}S)
\end{equation}
of perfect complexes of $S$-modules. In particular, the projection $\Gamma\fleche\Gamma/\Gamma_{m}$ induces a canonical isomorphism
\begin{equation}\nonumber
\RGamma_{\et}(\Z[1/p],T(f)_{\Iw})\Ltenseur_{\Lambda}\Ocal[G_{m}]\simeq\RGamma_{\et}(\Z[1/p],T(f)\tenseur_{\Ocal}\Ocal[G_{m}]).
\end{equation}
for all integer $m\in\N$.
\subsubsection{Review of the construction of Kato's Euler system}\label{SubEulerSystem}
We briefly review the construction and fundamental properties of several elements in the cohomology and $K$-theory of modular curves which were constructed in \cite{KatoEuler}. As we follow closely \cite{KatoEuler}, the reader might find it convenient to keep a copy of this article at hand while reading \ref{SubEulerSystem}, \ref{SubEulerLambda} and \ref{SubZetaLambda}.
\paragraph{Eisenstein Euler systems}First, analytic elements 
\begin{equation}\nonumber
_{c,d}\z_{M,N}(k,r)\in \Mcal_{k}(U(M,L))
\end{equation}
are constructed from Einsenstein series in \cite[Section 4]{KatoEuler} (they are denoted there $_{c,d}\z_{M,N}(k,r,r')$ and our $_{c,d}\z_{M,N}(k,r)$ is $_{c,d}\z_{M,N}(k,r,k-1)$). Here:
\begin{itemize}
\item $1\leq r\leq k-1$ and if $r=k-2$ then $M\geq 2$.
\item $(c,M)=(d,L)=1$.
\end{itemize}
The crucial characteristic property of these elements is that they are the evaluation on $U(M,N)$ of a unique algebraic distribution $\z_{\operatorname{Eis}}(k,r)$ on $M_{2}(\A_{\Q}^{(\infty)})$ with values in $\Mcal_{k}(U(M,N))$ (see \cite{ColmezBSD}). Choose integers $m\geq1$, $M$ and $L$ such that $m|M$, $M|L$ and $N|L$ and consider the morphisms of schemes
\begin{equation}\label{EqCovering}
Y(L)\fleche Y(M,L)\fleche Y_{1}(N)\tenseur\Q(\zeta_{m}).
\end{equation}
In \cite[Section 5.2]{KatoEuler}, elements $_{c,d}\z_{1,N,m}(k,r,\xi,S)$ are defined by taking the images of  $_{c,d}\z_{M,N}(k,r)$ under twisted trace maps from $\Mcal_{k}(U(M,L))$ to $\Mcal_{k}(U_{1}(N))\tenseur_{\Q}\Q(\zeta_{m})$. Here, $S$ denotes the set of primes dividing $L$. Importantly, these elements are independent of the choice of $L$ in \eqref{EqCovering}. As the elements $_{c,d}\z_{1,N,m}(k,r,\xi,S)$ are linear combinations of Eisenstein series, the Rankin-Selberg method of \cite{ShimuraSpecial,ShimuraHilbert} relates them to critical special values of the universal $L$-function of the modular curve; see \cite[Theorem 5.6]{KatoEuler}. The essential property for our purpose is that the $_{c,d}\z_{1,N,m}(k,r,\xi,S)$ are related with special values of the universal $L$-function with Euler factors at the primes in $S$ removed.
\paragraph{$p$-adic Euler systems}On the other hand, $p$-adic elements denoted
\begin{equation}
_{c,d}\z_{p^{n}}^{(p)}(f,k,j,\alpha,\operatorname{prime}(pN))\in\Hun(\Z[1/p,\zeta_{p^{n}}],V(f)(k-2s))
\end{equation}
are constructed from Siegel units in \cite[Section 2 and section 8]{KatoEuler}. Here:
\begin{itemize}
\item $f$ is a newform in $S_{k}(U_{1}(N))$.
\item $(c,d)$ are integers different from $\pm 1$, congruent to 1 modulo $N$ and such that $cd$ is prime to $6p$.
\item $1\leq j\leq k-1$.
\item $\alpha$ belongs to $\sldeux(\Z)$.
\end{itemize}
As in the case of the $_{c,d}\z_{M,N}(k,r)$, these elements are related to evaluations of algebraic distributions but in a much more complex way involving the Chern class map. The elements $_{c,d}\z_{p^{n}}^{(p)}(f,k,j,\alpha,\operatorname{prime}(pN))$ for varying $n$ then form a projective system for corestriction; see \cite[Section 8]{KatoEuler}. 

\paragraph{Relations between analytic and $p$-adic Euler systems}Let $Y$ be $Y_{1}(N)\tenseur\Q(\zeta_{m})$ and let $X$ be the smooth compactification of $Y$. Let $M_{Y,et}$ be the $p$-adic étale cohomology group:
$$M_{Y,et}=\Hun_{\et}(Y\times_{\Q}\Qbar,\Fcal_{k-2}\tenseur_{\Z}\qp)(k-r)$$
As recalled below, there exists a dual exponential map
\begin{equation}\nonumber
\exp^{*}:\Hun(G_{\qp},M_{Y,et})\fleche\Mcal_{k}(X)\tenseur_{\Q}\qp.
\end{equation}
Localizing cohomology at $p$ on th elect-hand side yields a map
\begin{equation}
\exp^{*}_{Y}:\Hun(\Z[1/p,\zeta_{m}],M_{Y,et})\fleche\Mcal_{k}(X_{1}(N))\tenseur_{\Q}\Q(\zeta_{m})\tenseur_{\Q}\qp
\end{equation}
which sends $_{c,d}\z^{(p)}_{1,N,m}(k,r,r',\xi,S)$ to $_{c,d}\z_{1,N,m}(k,r,\xi,S)\in\Mcal_{k}(X_{1}(N))\tenseur_{\Q}\Q(\zeta_{m})$ by \cite[Theorem 9.6]{KatoEuler}. In particular, this image, which is $\qp$-rational by construction, is actually $\Q$-rational.

\subsubsection{Euler systems with coefficients in $\Lambda$}\label{SubEulerLambda}
We review briefly the construction and important properties of a remarkable non-zero $\Lambda[1/p]$-linear morphism
\begin{equation}\label{EqZf}
Z(f):M_{B}\tenseur_{\Z}\Lambda[1/p]\fleche\Hun_{\et}(\Z[1/p],V(f)_{\Iw})
\end{equation}
whose existence is asserted in \cite[Theorem 12.5]{KatoEuler}. 
For suitable choices of $j_{1},j_{2}$ and $\alpha_{1},\alpha_{2}$ as above, a particular basis
$$(\delta_{1},\delta_{2})=(\delta(f,j_{1},\alpha_{1})^{+},\delta(f,j_{2},\alpha_{2})^{-})\in M_{B}^{2}$$
of $M_{B}$ is defined in \cite[Section 4.7 and 13.9]{KatoEuler}. Let
\begin{equation}\nonumber
\gamma=b_{1}\delta(f,j_{1},\alpha_{1})^{+}+b_{2}\delta(f,j_{2},\alpha_{2})^{-}
\end{equation}
be an element of $M_{B}\tenseur_{\Z}\Lambda[1/p]$. The element
\begin{align}
Z(f)(\gamma)=\somme{i=1}{2}\mu_{i}^{-1}b_{i}\left(_{c,d}\z_{p^{n}}^{(p)}(f,k,j,\alpha,\operatorname{prime}(pN))\right)_{n\geq1}^{(-1)^{i}}
\end{align}
is then a linear combination of the $\z_{p^{n}}$ with coefficients in $\Frac(\Lambda)$. The coefficients $\mu_{i}$ involve the inverse in $\Frac(\Lambda)$ of the Euler factors of the dual newform $f^{*}$ at primes $\ell\nmid p$ dividing $N$ (see \cite[Page 229]{KatoEuler} for the precise definition). It is shown in \cite[Section 13.9,13.12]{KatoEuler} that $Z(f)(\gamma)$, which \textit{a priori} depends  on the choices of $c,d,j_{1},j_{2},\alpha_{1},\alpha_{2}$ and has coefficients in $\Frac(\Lambda)$, is independent of all choices and belongs to $\Hun_{\et}(\Z[1/p],V(f)_{\Iw})$. 

We restrict $Z(f)$ to $(M_{B}\tenseur_{\Z}\Lambda[1/p])^{+}$. The source of $Z(f)$ is then a complex of $\Lambda[1/p]$-modules concentrated in degree 0 and its target is the first cohomology group of $\RGamma_{\et}(\Z[1/p],V(f)_{\Iw})$. Lifting the image re-interprets $Z(f)$ as a morphism of complexes
\begin{equation}\label{EqZfComplexes}
Z(f):(M_{B}\tenseur_{\Z}\Lambda[1/p])^{+}[-1]\fleche\RGamma_{\et}(\Z[1/p],V(f)_{\Iw}).
\end{equation} 
Under assumption \ref{HypIrr}, \cite[Theorem 12.5 (4)]{KatoEuler} states that the image of $Z(f)$ actually lies in $\Hun(\Z[1/p],T(f)_{\Iw})$. Hence, there is a $\Lambda$-adic version of 
$Z(f)$:
\begin{equation}\label{EqMorEuler}
Z(f):(M_{B}\tenseur_{\Z}\Lambda)^{+}[-1]\fleche\RGamma_{\et}(\Z[1/p],T(f)_{\Iw}).
\end{equation}
More accurately, the statement about the image of $Z(f)$ is proved in \cite[Theorem 12.5 (4)]{KatoEuler} under the slightly different assumptions that $\sldeux(\zp)$ is included in the image of $\rho_{f}$; as this stronger statement is only used in the proof of \cite[Theorem 12.5 (4)]{KatoEuler} given in \cite[13.14]{KatoEuler} to show that all lattices inside $V(f)$ are isomorphic, assumption \ref{HypIrr} is also sufficient to deduce the result. 

\begin{DefEnglish}\label{DefDeltaIwasawa}
Let $\Delta_{\Lambda[1/p]}(V(f)_{\Iw})$ and $\Delta_{\Lambda}(T(f)_{\Iw})$ be respectively the graded invertible $\Lambda[1/p]$-module $\Det_{\Lambda[1/p]}\Cone Z(f)$ and the graded invertible $\Lambda$-module $\Det_{\Lambda}\Cone Z(f)$ under assumption \ref{HypIrr}. Let $\z(f)$ denote a $\Lambda[1/p]$-basis of $\Delta_{\Lambda[1/p]}(V(f)_{\Iw})$ or a $\Lambda$-basis of $\Delta_{\Lambda}(T(f)_{\Iw})$ under assumption \ref{HypIrr}.
\end{DefEnglish}
The exact definition of $\z(f)$ above involves a specific choice of unit in $\Lambda[1/p]$. Yet, for convenience, we sometimes refer to $\z(f)$ without mentioning explicitly this choice in the following. The exact choice of unit, though unimportant for our purpose, is made in proposition \ref{PropKatoZeta}.
\paragraph{}
Let $Z$ denote the non-zero $\Lambda$-submodule of $\Hun_{\et}(\Z[1/p],T(f)_{\Iw})$ equal to the image of $Z(f)$. As $(M_{B}\tenseur_{\Z}\Lambda)^{+}$ is a free $\Lambda$-module of rank 1, $Z$ is also free of rank 1. By \cite[Theorem 12.4]{KatoEuler}, the $\Lambda$-modules
$$H^{2}_{\operatorname{\et}}(\Z[1/p],T(f)_{\Iw}),\ \Hun_{\et}(\Z[1/p],T(f)_{\Iw})/Z$$ are torsion. The complex $(\Cone Z(f))\tenseur_{\Lambda}\Frac(\Lambda)$ is thus acyclic and there are a canonical isomorphisms
\begin{equation}\label{EqIsoCan}
\Delta_{\Lambda}(T(f)_{\Iw})\tenseur_{\Lambda}\Frac(\Lambda)\overset{\can}{\simeq}\Det_{\Frac(\Lambda)}(0)\overset{\can}{\simeq}\Frac(\Lambda).
\end{equation}
It follows that $\Delta_{\Lambda}(T(f)_{\Iw})\tenseur_{\Lambda}\Frac(\Lambda)$ comes with two specified $\Lambda$-submodules. The first one is the pre-image of $\Lambda\subset\Frac(\Lambda)$, or equivalently of $\Det_{\Lambda}(0)\subset\Det_{\Frac(\Lambda)}(0)$,  under the isomorphisms of \eqref{EqIsoCan}. The second one is $\Delta_{\Lambda}(T(f)_{\Iw})$. Localizing at grade 1 primes and using the structure theorem for modules over discrete valuation rings shows that the image of $\Delta_{\Lambda}(T(f)_{\Iw})$ in $\Frac(\Lambda)$ through the isomorphisms of \eqref{EqIsoCan} is the characteristic ideal 
\begin{equation}\label{EqCaracDet}
\carac^{-1}_{\Lambda}H^{2}_{\et}(\Z[1/p],T(f)_{\Iw})\tenseur_{\Lambda}\carac_{\Lambda}\Hun_{\et}(\Z[1/p],T(f)_{\Iw})/Z.
\end{equation}
\subsubsection{Zeta elements for $M\times_{\Q}\Q_{m}$}\label{SubZetaLambda}
Let $N$ be a motive over $\Q$. As recalled in the introduction, the ETNC at $p$ with coefficients in $\Ocal[G_{m}]$ or $\Lambda$ of \cite[Conjecture 4.9]{KatoHodgeIwasawa} and \cite[Conjecture 3.2.2]{KatoViaBdR} are far-reaching conjectures predicting the existence for all $m$ of specific $\zp[G_{m}]$-bases of $\Det_{\qp[G_{m}]}\RGamma(\Z[1/p],(N\times_{\Q}\Q_{m})_{\et,p})$, called zeta elements, which are intimately linked with the $p$-adic valuations of the special values of the $L$-function of $N\times_{\Q}\Q_{m}$ together with its natural action of $G_{m}$ as well as a universal zeta element, that is to say a $\Lambda$-basis of $\Det_{\Lambda[1/p]}\RGamma(\Z[1/p],(N\times_{\Q}\Q_{\infty})_{\et,p})$ interpolating the zeta elements for finite $m$.

In that degree of generality, the existence of most of the objects necessary to even state the conjecture is itself already conjectural. In the case of the motive $M\times_{\Q}\Q_{m}$, however, all the necessary objects are known to exist unconditionally. Nevertheless, even in that case, there is an inherent tension in the presentation of the material, as the logical order of exposition is quite different from the logical order of proof of the known results. Experts will know, for instance, that the precise definition of the zeta elements requires first the construction of families of almost zeta elements, then showing that they form Euler systems, then using the method of Euler systems to show the finiteness of some cohomology groups and only then exploiting this extra knowledge to exactly pinpoint the actual zeta elements. In the following, we proceed as if all theorems were known to hold simultaneously, so that the bibliographic references we give are strictly speakin logically incoherent, and explain in which sense the element $\z(f)$ of the previous subsection is compatible with the statement of the ETNC for the motive $M\times_{\Q}\Q_{m}$ for all $m\in\N$ when $s\neq k/2$ and for almost all $m$ when $s=k/2$.  

Recall that $f\in S_{k}(U_{1}(N))$ and that $f^{*}$ is the eigenform whose eigenvalues are the complex conjugate of those of $f$. Let $\epsi$ be the finite order character of $(\Z/N\Z)\croix$ such that $\diamant{a}f=\epsi(a)f$. Let $m\geq1$ be an integer. For $\s\in\Gal(\Q_{m}/\Q)$, let $\Pcal_{\s}$ be the set of rational primes $\ell\nmid p$ such that 
\begin{equation}\nonumber
\left(\frac{\Q_{m}/\Q}{\ell}\right)=\s
\end{equation}
where $\left(\frac{\Q_{m}/\Q}{\cdot}\right)$ is the Artin reciprocity map. The $\s$-partial $L$-value of $f^{*}$ is the evaluation at $s$ of the meromorphic continuation to $\C$ of the Euler product
\begin{equation}\nonumber
L^{G_{m}}_{\{p\}}(f^{*},\s,z)=\produit{\ell\in\Pcal_{\s}}{}\frac{1}{1-\bar{a}_{\ell}(f)\ell^{-z}+\bar{\epsi}(\ell)\ell^{1-2z}}
\end{equation}
and the $G_{m}$-equivariant $L$-value $L_{\{p\}}^{G_{m}}(f^{*},s)$ is the sum.
\begin{equation}\nonumber
L_{\{p\}}^{G_{m}}(f^{*},\cdot)=\left(\frac{1}{2\pi i}\right)^{s+1-k}\somme{\s\in G_{m}}{}L^{G_{m}}_{\{p\}}(f^{*},\s,s)\s\in\C[G_{m}].
\end{equation}
It is the unique element of $\C[G_{m}]$ such that, for all $\chi\in\widehat{G}_{m}$
\begin{equation}\nonumber
\chi(L_{\{p\}}^{G_{m}}(f^{*},s))=\left(\frac{1}{2\pi i}\right)^{s+1-k}L_{\{p\}}(f^{*},\chi,s).
\end{equation} 
 According to \cite[Theorem]{JacquetShalika} (resp. \cite[Theorem I]{RohrlichNonVanishing}), the element $L_{\{p\}}^{G_{m}}(f^{*},s)$ is non-zero if $s\neq k/2$ (resp. is non-zero if $s=k/2$ except possibly for a finite number of $m$). For $m$ outside the finite or empty set such that $L_{\{p\}}^{G_{m}}(f,s)$ vanishes, denote by $\Delta_{F_{\pid}[G_{m}]}(V(f)_{m})$ the $F_{\pid}[G_{m}]$-module $\Delta_{\Lambda[1/p]}(V(f)_{\Iw})\tenseur_{\Lambda[1/p]}F_{\pid}[G_{m}]$. Let $M_{m}^{+}$ be the Betti cohomology group $(M_{B}\tenseur_{\Z}F_{\pid}[G_{m}])^{+}$. There is then a canonical isomorphism of $F_{\pid}[G_{m}]$-modules
\begin{equation}\label{EqDeltaComplexes}
\xymatrix{
\Delta_{F_{\pid}[G_{m}]}(V(f)_{m})\ar[d]^{\isocan}
\\
\Det_{F_{\pid}[G_{m}]}\RGamma_{\et}(\Z[1/p],V(f)\tenseur_{F_{\pid}}F_{\pid}[G_{m}])\tenseur_{F_{\pid}[G]}\left(\Det_{F_{\pid}[G_{m}]}M_{m}^{+}\right).
} 
\end{equation}
Let $\z(f,G_{m})$
be the image of $\z(f)$ inside $\Delta_{F_{\pid}[G_{m}]}(V(f)_{m})$. According to \cite[Theorem 12.5 (1)]{KatoEuler}, the element $\z(f,G_{m})$ is non-zero. Moreover, by \cite[Theorem 14.5 (1)]{KatoEuler},  the cohomology of $\RGamma_{\et}(\Z[1/p],V(f)\tenseur_{F_{\pid}}F_{\pid}[G_{m}])$ is then concentrated in degree 1 and $H^{1}_{\et}(\Z[1/p],V(f)\tenseur_{F_{\pid}}F_{\pid}[G_{m}])$ is of rank 1 over $F_{\pid}[G_{m}]$. Consequently:
\begin{equation}\nonumber
\Det_{F_{\pid}[G_{m}]}\RGamma_{\et}(\Z[1/p],V(f)\tenseur_{F_{\pid}}F_{\pid}[G_{m}])=\Det^{-1}_{F_{\pid}[G_{m}]}\Hun_{\et}(\Z[1/p],V(f)\tenseur_{F_{\pid}}F_{\pid}[G_{m}])
\end{equation}
Composing this isomorphism with localization at $p$ 
\begin{equation}\nonumber
\Hun_{\et}(\Z[1/p],V(f)\tenseur_{F_{\pid}}F_{\pid}[G_{m}])\fleche\Hun(G_{\qp},V(f)\tenseur_{F_{\pid}}F_{\pid}[G_{m}])
\end{equation}
and by the natural map to $\Hun(G_{\qp(\zeta_{p^{m}})},V(f))$ given by Shapiro's lemma yields a canonical isomorphism
\begin{equation}\label{EqDetShapiro}
\Det_{F_{\pid}[G_{m}]}\RGamma_{\et}(\Z[1/p],V(f)\tenseur_{F_{\pid}}F_{\pid}[G_{m}])\isom\Det^{-1}_{F_{\pid}[G_{m}]}\Hun(G_{\qp(\zeta_{p^{m}})},V(f)).
\end{equation}
For $K$ a finite extension of $\qp$ and $V$ a $p$-adic representation of $G_{K}$, let $D_{\dR}^{0}(V)$ be $H^{0}(G_{K},B_{\dR}^{0}\tenseur_{\qp}V)$. There exists a canonical map
\applicationsimplelabel[EqExpDuale]{\exp^{*}}{\Hun(G_{K},V)}{D_{\dR}^{0}(V)}
called the dual exponential map from $\Hun(G_{K},V)$ to $D_{\dR}^{0}(V)$. When $K=\qp(\zeta_{p^{m}})$ and $V=V(f)$, the dual exponential map yields a map from $\Hun(G_{\qp(\zeta_{p^{m}})},V(f))$ to 
\begin{equation}\nonumber
D_{\dR}^{0}(V(f))=D_{\dR}^{s}(M(f))=S_{k}(U_{1}(N))(f)\tenseur_{F}F_{\pid}[G_{m}].
\end{equation}
See for instance \cite[Section 11]{KatoEuler} for the last equality above. Composing \eqref{EqDetShapiro} with \eqref{EqExpDuale} thus yields a map
\begin{equation}\label{EqDetCusp}
\Det_{F_{\pid}[G_{m}]}\RGamma_{\et}(\Z[1/p],V(f)\tenseur_{F_{\pid}}F_{\pid}[G_{m}])\fleche (\Det^{-1}_{F}S_{k}(U_{1}(N))(f))\tenseur_{F}F_{\pid}[G_{m}].
\end{equation}
According to \cite[Theorem 12.5]{KatoEuler}, the image of $\z(f,G_{m})$ in 
\begin{equation}\nonumber
(\Det^{-1}_{F}S_{k}(U_{1}(N))(f))\tenseur_{F}F_{\pid}[G_{m}]\tenseur \left(\Det_{F_{\pid}[G_{m}]}M^{+}_{m}\right)
\end{equation}
through \eqref{EqDetCusp} actually belongs to the $\Q$-rational subspace
\begin{equation}\nonumber
\left(\Det^{-1}_{\Q[G_{m}]}S_{k}(U_{1}(N))(f)\tenseur_{\Q}\Q[G_{m}]\right)\tenseur\left(\Det_{\Q[G_{m}]}(M_{B}\tenseur_{\Z}\Q[G_{m}])^{+}\right).
\end{equation}
This fundamental rationality property is the algebraic equivalent of the $\Q_{m}$-equivariant rationality of special values of $L$-functions as in \cite{DeligneFonctionsL}. In this setting, it is a consequence of the rationality property noted at the end of section \ref{SubEulerSystem}. There is a canonical isomorphism of $\C[G_{m}]$-modules
\applicationsimple{\per_{\C[G_{m}]}}{S_{k}(X_{1}(N))(f)\tenseur_{\Q}\C[G_{m}]}{(\Hun(X_{1}(N)(\C),\Fcal_{k-2})(f)\tenseur_{\Z}\C[G_{m}])^{+}}
as well as an isomorphism of $\C$-vector spaces
\begin{equation}\label{EqBetti}
[(2\pi i)^{s}\Hun(X_{1}(N)(\C),\Fcal_{k-2})(f)]^{+}\simeq M_{B}^{+}.
\end{equation}
Composing \eqref{EqDetCusp} with tensor product with $\C$, the isomorphism \eqref{EqBetti} and finally with the period map thus yields maps
\begin{equation}\label{EqPeriodMap}
\xymatrix{
&\Z[G_{m}]\z(f,G_{m})\ar[d]&\\
&\left(\Det^{-1}_{\Q[G_{m}]}S_{k}(U_{1}(N))(f)\tenseur_{\Q}\Q[G_{m}]\right)\tenseur\Det_{\Q[G_{m}]}(M_{B}\tenseur_{\Z}\Q[G_{m}])^{+}\ar[d]^{-\tenseur_{\Q}\C}&\\
&\left(\Det^{-1}_{\C[G_{m}]}S_{k}(U_{1}(N))(f)\tenseur_{\Q}\C[G_{m}]\right)\tenseur\Det_{\C[G_{m}]}(M_{B}\tenseur_{\Z}\C[G_{m}])^{+}\ar[d]^{\per_{\C[G_{m}]}}&\\
&\C[G_{m}]&
}
\end{equation}
and thus defines a $\Z[G_{m}]$-lattice inside $\C[G_{m}]$. The basis $\z(f)$ is characterized by the following fundamental property.
\begin{Prop}\label{PropKatoZeta}
There exists a choice of unit of $\Lambda$ and a corresponding choice of $\z(f)$ in definition \ref{DefDeltaIwasawa} such that the image of $\z(f)$ inside $\C[G_{m}]$ through \eqref{EqDeltaComplexes}, \eqref{EqDetCusp} and \eqref{EqPeriodMap} is the $G_{m}$-equivariant special $L$-value $L_{\{p\}}^{G_{m}}(f^{*},s)$.
\end{Prop}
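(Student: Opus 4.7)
The plan is to trace the explicit construction of $\z(f)$ through the chain of maps \eqref{EqDeltaComplexes}, \eqref{EqDetCusp} and \eqref{EqPeriodMap}, and show that the answer matches $L_{\{p\}}^{G_{m}}(f^{*},s)$ up to a single global unit in $\Lambda$. By Definition~\ref{DefDeltaIwasawa}, $\z(f)$ is a $\Lambda$-basis of $\Det_{\Lambda}\Cone Z(f)$, so after specialization at $G_{m}$ its image in $\Hun_{\et}(\Z[1/p],V(f)\tenseur_{F_{\pid}}F_{\pid}[G_{m}])$ is (tensored against the Betti basis $(\delta_{1},\delta_{2})$) an explicit $\Lambda[1/p]$-linear combination of the $p$-adic Euler system classes $\,_{c,d}\z^{(p)}_{p^{n}}(f,k,j_{i},\alpha_{i},\operatorname{prime}(pN))$ corrected by the factors $\mu_{i}^{-1}$ at primes $\ell\mid N$, $\ell\nmid p$, as reviewed in subsection~\ref{SubEulerLambda}.

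First, I would compose \eqref{EqDetShapiro} with the dual exponential \eqref{EqExpDuale}: by Kato's explicit reciprocity law \cite[Theorem~9.6]{KatoEuler} recalled in subsection~\ref{SubEulerSystem}, the image of $\,_{c,d}\z^{(p)}_{p^{n}}(f,k,j_{i},\alpha_{i},\operatorname{prime}(pN))$ under $\exp^{*}$, further projected onto the $f$-isotypic component of the space of modular forms, coincides with the analytic Eisenstein element $\,_{c,d}\z_{1,N,p^{n}}(k,s,\xi,S)$ paired against $f$. This identifies the image of $\z(f,G_{m})$ in $(\Det^{-1}_{F_{\pid}[G_{m}]}S_{k}(U_{1}(N))(f))\tenseur\Det_{F_{\pid}[G_{m}]}M^{+}_{m}$ with an explicit combination of Rankin-Selberg type integrals. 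Second, the Rankin--Selberg computation \cite[Theorem~5.6]{KatoEuler} evaluates these integrals as partial $L$-values of $f^{*}$ with Euler factors at the primes in $S$ removed; summing over $\s\in G_{m}$ and applying the period map of \eqref{EqPeriodMap} produces $\sum_{\s\in G_{m}}L^{G_{m}}_{\{p\},S}(f^{*},\s,s)\,\s$ inside $\C[G_{m}]$, up to the factors $\mu_{i}^{-1}$ and the $(c,d)$-factor $(c^{2}-c^{k-2s}\bar\epsi(c)\diamant{c})(d^{2}-d^{2s-k+2}\diamant{d})$. The precise definition of $\mu_{i}$ in \cite[p.~229]{KatoEuler} is tailored so that the first factor kills the missing Euler factors at primes $\ell\mid N$, $\ell\nmid p$, yielding an answer in terms of $L_{\{p\}}^{G_{m}}(f^{*},s)$ multiplied only by the $(c,d)$-correction.

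The $(c,d)$-correction is a single element of $\Lambda^{\times}$ (the cyclotomic variable is $\Gamma$ and $c,d$ are units) independent of $m$, and the choice of unit in Definition~\ref{DefDeltaIwasawa} is precisely the one needed to absorb it. The main obstacle is the coherent bookkeeping of this normalization: the construction of $Z(f)$ involves the auxiliary data $(c,d,j_{1},j_{2},\alpha_{1},\alpha_{2})$, each affecting $\z(f)$ by a unit in $\Lambda[1/p]$, and one must verify that after dividing out by these corrections the resulting elements of $\C[G_{m}]$ for varying $m$ are compatible under the projections $\Lambda\fleche\Ocal[G_{m+1}]\fleche\Ocal[G_{m}]$, so that a single global unit $u\in\Lambda^{\times}$ works simultaneously. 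This compatibility is exactly the norm-compatibility (distribution relation) of Kato's $p$-adic Euler system in towers of cyclotomic fields, established in \cite[Section~8]{KatoEuler} and underlying the very fact that $Z(f)$ factors through $\Lambda$-coefficients (\cite[Theorem~12.5]{KatoEuler}). Granted this, one chooses $u$ so that the specialization at $m=0$ matches $L_{\{p\}}(f^{*},s)$, and the norm-compatibility forces the correct normalization at all $m\geq1$.
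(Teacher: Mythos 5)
The paper's proof of Proposition~\ref{PropKatoZeta} is a bare citation to \cite[Theorem 12.5~(1), Section~13.12]{KatoEuler}, so you are expanding the underlying argument rather than diverging from the paper's approach. Your unpacking of the chain (explicit reciprocity law $\exp^*\bigl({}_{c,d}\z^{(p)}_{p^{n}}\bigr)={}_{c,d}\z_{1,N,p^{n}}$ via Kato's Theorem~9.6, Rankin--Selberg evaluation of the analytic elements via Theorem~5.6, $\mu_i$-factors repairing the Euler factors at $\ell\mid N$, $\ell\nmid p$, and norm-compatibility glueing the $\Ocal[G_m]$-specializations into a single $\Lambda$-element) is the right skeleton and it is what Section~13.12 of Kato actually does.

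However, there is a structural imprecision in your account of the $(c,d)$-correction. You claim that after dividing by $\mu_i$ one is left with a leftover $(c,d)$-factor, and that the ``choice of unit of $\Lambda$'' in Definition~\ref{DefDeltaIwasawa} is what absorbs it. That is not how the construction is set up. As recalled explicitly in subsection~\ref{SubEulerLambda}, the morphism $Z(f)$ is already \emph{independent} of $c,d,j_1,j_2,\alpha_1,\alpha_2$: in Kato's \S13.9 the $(c,d)$-factor (together with the $\mu_i$) is divided out inside the construction of the element, and what the paper denotes $Z(f)(\gamma)$ is the resulting canonical class. So by the time one reaches Definition~\ref{DefDeltaIwasawa}, there is no residual $(c,d)$-ambiguity to absorb. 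The ``choice of unit'' in the proposition has a different meaning: $\Det_{\Lambda}\Cone Z(f)$ is an abstract graded invertible $\Lambda$-module, and a $\Lambda$-basis $\z(f)$ is only well-defined up to $\Lambda^{\times}$; Proposition~\ref{PropKatoZeta} singles out the one normalization that maps onto $L^{G_m}_{\{p\}}(f^*,s)$ under \eqref{EqDeltaComplexes}--\eqref{EqPeriodMap} for all $m$. Conflating these two normalization issues is the main gap: if you had literally one more $(c,d)$-factor to absorb, the resulting $\z(f)$ would still depend on $c,d$ after choosing the unit, which it must not. The coherence-for-varying-$m$ claim at the end of your argument (distribution relations forcing a single $u\in\Lambda^{\times}$) is correct and is indeed the reason the normalization is well-posed, but its proper role is to show that the preimages of $L^{G_m}_{\{p\}}(f^*,s)$ under \eqref{EqPeriodMap} glue into a $\Lambda$-basis of $\Delta_{\Lambda}(T(f)_{\Iw})$, not to clean up a $(c,d)$-factor that has already been removed upstream.
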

\begin{proof}
This is \cite[Theorem 12.5 (1)]{KatoEuler}. See especially \cite[Section 13.12]{KatoEuler} for the proof.
\end{proof}
As we remarked already, the full strength of proposition \ref{PropKatoZeta} is not used in this article and it would have been enough for our purpose to choose $\z(f)$ up to a unit in $\Lambda$. Philosophically speaking, this stems from the fact that we are only interested in the ETNC at $p$, whereas the full ETNC actually predicts the existence of motivic zeta elements whose image in the $\ell$-adic étale cohomology realization provide $\ell$-adic zeta elements for all primes $\ell$.

\subsubsection{The ETNC for $M\times_{\Q}\Q_{\infty}$ at $p$} 
In the previous two subsections, we have seen that there are two canonical $\Lambda$-lattices inside $\Delta_{\Lambda}(T(f)_{\Iw})\tenseur_{\Lambda}\Frac(\Lambda)$: the lattice $\Det_{\Lambda}(0)$ coming from functoriality of determinants and the lattice $\Delta_{\Lambda}(T(f)_{\Iw})$ which is characterized (though not defined) as the pre-image of the special values of the $L$-function of the dual of $M$. One possible formulation of the ETNC for $M\times_{\Q}\Q_{\infty}$ at $p$ is then that these two lattices coincide.
\begin{Conj}\label{ConjXcaliLambda}
There is an identity of $\Lambda$-lattices
\begin{align}\label{EqDeltaLambda}
\Delta_{\Lambda}(T(f)_{\Iw})=\Det_{\Lambda}(0)
\end{align}
inside $\Delta_{\Lambda}(T(f)_{\Iw})\tenseur_{\Lambda}\Frac(\Lambda)\overset{\can}{\simeq}\Frac(\Lambda)$.\end{Conj}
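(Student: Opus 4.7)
The strategy is the standard two-sided approach to an Iwasawa Main Conjecture for modular forms: one proves two divisibilities of characteristic ideals and then combines them into the claimed equality of $\Lambda$-lattices. Unwinding the definitions via \eqref{EqCaracDet}, the conjecture is equivalent to
\begin{equation}\nonumber
\carac_{\Lambda}H^{2}_{\et}(\Z[1/p],T(f)_{\Iw})=\carac_{\Lambda}H^{1}_{\et}(\Z[1/p],T(f)_{\Iw})/Z,
\end{equation}
where $Z$ is the image of Kato's map \eqref{EqMorEuler}, so the problem splits naturally into an upper bound and a lower bound on $H^{2}$.

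First I would establish the divisibility
$\carac_{\Lambda}H^{2}_{\et}(\Z[1/p],T(f)_{\Iw})\mid\carac_{\Lambda}H^{1}_{\et}(\Z[1/p],T(f)_{\Iw})/Z$.
The key input is that the classes $_{c,d}\z_{p^{n}}^{(p)}(f,\ldots)$ recalled in subsection \ref{SubEulerSystem}, once twisted by auxiliary conductors, form a genuine Euler system of cyclotomic Galois cohomology classes for $T(f)$. Applying Kolyvagin's derivative operators together with Chebotarev density and a local-global compatibility at auxiliary primes produces, for sufficiently many finite-order specializations, an upper bound on the size of $H^{2}$-type Selmer groups in terms of the divisibility properties of the specialization of $\z(f)$; repackaging this Iwasawa-theoretically as in \cite[Theorem 12.5]{KatoEuler} then yields the desired divisibility of $\Lambda$-characteristic ideals, possibly up to a local factor at $p$ which Corollary \ref{CorMain} of the present paper is designed to absorb.

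Next I would establish the reverse divisibility, which cannot come from an Euler system argument alone: the Euler system method is by its very nature a one-sided tool, bounding Selmer groups from above in terms of an $L$-value. The standard route, available when $\rhobar_{f}|_{G_{\qp}}$ is reducible and the auxiliary hypotheses of \cite{SkinnerUrban} are satisfied, is to construct via Eisenstein-series congruences on the unitary group $\U(2,2)$ an element of $H^{2}_{\et}$ controlling the $p$-adic $L$-function; combined with the vanishing of an anticyclotomic $\mu$-invariant in the style of \cite{Vatsal} this yields the matching lower bound on $\carac_{\Lambda}H^{2}_{\et}$. The two divisibilities then combine to give equality.

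The main obstacle lies squarely in this second half. Outside of the nearly ordinary case no analogue of the Eisenstein-congruence construction is currently known in the generality needed, and in the supersingular regime one would have to find an entirely different source of a lower bound (for instance by $p$-adic deformation from a form for which the conjecture is already known, which is precisely the philosophy of Theorems \ref{TheoMain} and \ref{TheoSU} above). Even in the ordinary case, controlling trivial zeroes at $p$ and reconciling the exact normalizations of the zeta element in Proposition \ref{PropKatoZeta} with the integral structures involved is delicate; this is exactly where the refined fundamental lines and the precise control theorems of the present manuscript become indispensable, as a single stray unit anywhere in the argument would break the sought equality of lattices.
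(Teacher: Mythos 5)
Your proposal correctly identifies the two-sided strategy the paper ultimately employs when it does prove this conjecture under hypotheses (Theorem~\ref{TheoSU}): reduce the equality of $\Lambda$-lattices, via \eqref{EqCaracDet}, to an equality of characteristic ideals, obtain the divisibility $\carac_{\Lambda}H^{2}\mid\carac_{\Lambda}H^{1}/Z$ from Kato's Euler system, obtain the reverse divisibility from the Skinner--Urban Eisenstein congruences on $\Uni(2,2)$ together with Vatsal's vanishing of the anticyclotomic $\mu$-invariant, and combine. You are also right that the conjecture is not proved unconditionally, that the Eisenstein-congruence side is only available in the nearly ordinary setting, and that one should expect to deform from a ``good'' specialization in cases where the two inputs do not apply directly.

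The one place the proposal significantly underweights what is actually happening is in its treatment of the local error term at $p$ in Kato's Theorem~12.5. You write that Corollary~\ref{CorMain} ``is designed to absorb'' it, as though this were a self-contained local refinement of the Euler system argument, but in the paper this corollary is the \emph{output} of the entire global Taylor--Wiles descent: one first establishes the weak ETNC with coefficients in the Hida--Hecke algebra (Conjecture~\ref{ConjSigmaWeak}) by resolving the singularities of the Hecke algebra via Taylor--Wiles primes, passing to the regular limit ring $R_{\infty}$, and deriving a contradiction from Proposition~\ref{PropContradiction} applied to well-chosen weight $k>2$ specializations (subsections~\ref{SubReduction}, \ref{SubRedHida}); only then does Proposition~\ref{PropCompatibleHida} yield Corollary~\ref{CorMainPreuve}. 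Likewise the paper does not prove Conjecture~\ref{ConjXcaliLambda} for $f$ directly: the proof of Theorem~\ref{TheoSUPreuve} invokes Corollary~\ref{CorEPWPreuve} to reduce to finding a \emph{single} modular specialization $g$ of weight $k>2$ where Kato (error-free in that range) and Skinner--Urban give the equality, and then propagates to $f$ through the Hecke algebra. This indirection is not optional when $\pi(f)_{p}$ is weight-two Steinberg, since there the two divisibilities do not directly close for $f$ itself. So while your two-sided decomposition is exactly right, the error-term removal and the propagation step are where the paper's novel machinery carries essentially all of the weight, rather than being auxiliary cleanup.
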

Equivalently, the two natural $\Lambda$-bases of $\Delta_{\Lambda}(T(f)_{\Iw})\tenseur_{\Lambda}\Frac(\Lambda)$ described in sub-section \ref{SubEulerLambda} coincide.
\begin{DefEnglish}\label{DefDelta}
Let $\phi:\Lambda\fleche S$ be a local morphism from $\Lambda$ to one of its $\Ocal$-flat quotient such that the image of $Z(f)$ under $\phi_{*}$ is non-zero. Define $\Delta_{S}(T(f)_{\Iw}\tenseur_{\Lambda}S)$ to be the graded invertible $S$-module $\Det_{S}\Cone Z(f)$ where
\begin{equation}\nonumber
Z(f):(M_{B}\tenseur_{\Z}S)^{+}[-1]\fleche\RGamma_{\et}(\Z[1/p],T(f)\tenseur_{\Ocal}S)
\end{equation}
is viewed as a morphism of $S$-modules. Define $\Delta_{S[1/p]}(V(f)_{\Iw}\tenseur_{\Lambda}S)$ to be the graded invertible $S[1/p]$-module $\Det_{S}\Cone Z(f)$ where
\begin{equation}\nonumber
Z(f):(M_{B}\tenseur_{\Z}S[1/p])^{+}[-1]\fleche\RGamma_{\et}(\Z[1/p],V(f)\tenseur_{\Ocal}S)
\end{equation}
is viewed as a morphism of $S[1/p]$-modules. 
%
\end{DefEnglish}
Just as the equality \eqref{EqDeltaLambda} is a possible formulation for the ETNC for $M\times_{\Q}\Q_{\infty}$, a possible formulation of the ETNC for $M\times_{\Q}\Q_{n}$ at $p$ is as follows.
\begin{Conj}\label{ConjXcaliGm}
Assume $s\neq k/2$. For all $m\geq1$, there is an identity of $\Ocal[G_{m}]$-lattices
\begin{equation}\label{EqDeltaGm}
\Delta_{\Ocal[G_{m}]}(T(f)\tenseur_{\Ocal}\Ocal[G_{m}])=\Det_{\Ocal[G_{m}]}(0)
\end{equation}
inside $\Delta_{F_{\lambda}[G_{m}]}(V(f)_{m})$. If $s=k/2$, then the identity \eqref{EqDeltaGm} is true for all $m\geq1$ except possibly finitely many. More generally, there is an identity of $S$-lattices
\begin{equation}\label{EqDeltaS}
\Delta_{S}(T(f)\tenseur_{\Ocal}S)=\Det_{S}(0)
\end{equation}
inside $\Delta_{S[1/p]}(V(f)\tenseur_{F_{\lambda}}S[1/p])$ for all morphisms $\phi:\Lambda\fleche S$ as in definition \ref{DefDelta}. 
\end{Conj}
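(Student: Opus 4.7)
The plan is to deduce Conjecture \ref{ConjXcaliGm} from Conjecture \ref{ConjXcaliLambda} by a descent argument along the specialization morphism $\phi:\Lambda\fleche S$ (with $S=\Ocal[G_{m}]$ recovering the first part of the statement). The central observation is that both sides of the claimed equality \eqref{EqDeltaS} should commute with base change along $\phi$, so that the $\Lambda$-level identity \eqref{EqDeltaLambda} propagates to finite levels. Because $\Det_{\Lambda}(0)\Ltenseur_{\Lambda}S=\Det_{S}(0)$ tautologically, the whole content of the reduction lies in showing that
\begin{equation}\nonumber
\Delta_{\Lambda}(T(f)_{\Iw})\Ltenseur_{\Lambda,\phi}S\isocan\Delta_{S}(T(f)\tenseur_{\Ocal}S).
\end{equation}

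To establish this canonical isomorphism, I would first invoke the control property for \'etale cohomology recorded in \eqref{EqDescenteLambdaCont}, which yields a canonical quasi-isomorphism
\begin{equation}\nonumber
\RGamma_{\et}(\Z[1/p],T(f)_{\Iw})\Ltenseur_{\Lambda,\phi}S\isocan\RGamma_{\et}(\Z[1/p],T(f)\tenseur_{\Ocal}S).
\end{equation}
Combined with the tautological identity $(M_{B}\tenseur_{\Z}\Lambda)^{+}\tenseur_{\Lambda,\phi}S=(M_{B}\tenseur_{\Z}S)^{+}$ and the naturality of the Kato morphism $Z(f)$ of \eqref{EqMorEuler}, this identifies the derived base change of the source and target of $Z(f)$ over $\Lambda$ with their $S$-avatars as in Definition \ref{DefDelta}. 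Since $\Cone Z(f)$ is a perfect complex of $\Lambda$-modules and the determinant functor commutes with $-\Ltenseur_{\Lambda}S$, one then obtains the desired canonical isomorphism of graded invertible modules. This step requires the hypothesis in Definition \ref{DefDelta} that $\phi_{*}(Z(f))\neq 0$: this hypothesis ensures that $\Cone(\phi_{*}Z(f))\tenseur_{S}\Frac(S)$ is still acyclic, so that the characterization of $\Delta_{S}(T(f)\tenseur_{\Ocal}S)$ as a sub-lattice of the trivial line $\Frac(S)$ remains meaningful.

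The second step is to check that the non-vanishing hypothesis is automatic in the cases at hand. For $\phi:\Lambda\fleche\Ocal[G_{m}]$, this non-vanishing is precisely the content of \cite[Theorem 12.5 (1)]{KatoEuler}: the image $\z(f,G_{m})$ is non-zero whenever the equivariant $L$-value $L_{\{p\}}^{G_{m}}(f^{*},s)$ does not vanish. By the theorems of Jacquet--Shalika and Rohrlich cited above, this occurs for all $m$ when $s\neq k/2$ and for all but finitely many $m$ when $s=k/2$, exactly as demanded in the statement. Once non-vanishing is granted, the proven $\Lambda$-level equality \eqref{EqDeltaLambda} transfers to the $S$-level equality \eqref{EqDeltaS}: equality of two free rank-one $\Lambda$-sublattices of a rank-one $\Frac(\Lambda)$-vector space descends to equality of their images in $\Frac(S)$, provided the map $\Lambda/(\text{ann})\fleche S$ is flat, which holds in our setting.

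The main obstacle in the plan is thus not the descent step itself but the input, namely the validity of Conjecture \ref{ConjXcaliLambda}. At present, only the divisibility half of this conjecture is known unconditionally (\cite[Theorem 12.5]{KatoEuler}; see also Corollary \ref{CorMain}), while the reverse divisibility requires the deep inputs of \cite{SkinnerUrban} and is subject to the hypotheses of Theorem \ref{TheoSU}. Moreover, a subtlety I would need to handle carefully is that the control isomorphism \eqref{EqDescenteLambdaCont} a priori concerns the naive Galois cohomology complex, whereas $\Delta_{S}(T(f)_{\Iw}\tenseur_{\Lambda}S)$ is built from a \Nekovar-Selmer-style construction; I expect the two to coincide here because $T(f)_{\Iw}$ is unramified outside $p$ so that the local unramified conditions at primes $\ell\neq p$ trivialize after applying $\RGamma_{c}$, but this compatibility should be verified via Lemma \ref{LemEtaleSelmer} and the refined fundamental line formalism developed in Subsection \ref{SubLattices}.
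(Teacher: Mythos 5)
Your proposal matches the paper's own argument: the paper records exactly this reduction in the Proposition immediately following Conjecture~\ref{ConjXcaliGm}, deducing it from Conjecture~\ref{ConjXcaliLambda} by applying the base-change isomorphisms of~\eqref{EqDescenteLambdaCont} to obtain $\Delta_{\Lambda}(T(f)_{\Iw})\tenseur_{\Lambda,\phi}S=\Delta_{S}(T(f)_{\Iw}\tenseur_{\Lambda,\phi}S)$ and then tensoring the equality $\Delta_{\Lambda}(T(f)_{\Iw})=\Det_{\Lambda}(0)$ with $S$. Your extra remarks — that the non-vanishing $\phi_{*}(Z(f))\neq 0$ is supplied by Jacquet--Shalika and Rohrlich, and that the identification of $\RGamma_{\et}$ with the Selmer complex is needed via Lemma~\ref{LemEtaleSelmer} — are consistent with the paper's set-up (the latter is already built into how Definition~\ref{DefDelta} and equation~\eqref{EqDescenteLambdaCont} are phrased), and the flatness caveat at the end is not actually needed since the descent is just a tensor product along the canonical isomorphisms.
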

For clarity of reference, we note that conjecture \ref{ConjXcaliGm} for $m\geq1$ is equivalent to the $p$-part of \cite[Conjecture 4.9]{KatoHodgeIwasawa} for the motive $M$ and the abelian Galois extension $\Q_{m}/\Q$. Conjecture \ref{ConjXcaliLambda}  (resp. \ref{ConjXcaliGm}) is equivalent to \cite[Conjecture 3.2.2 part (v)]{KatoViaBdR} for the étale sheaf of perfect complexes of $\Lambda$-modules $T(f)_{\Iw}$ (resp. of $S$-modules $T(f)\tenseur_{\Ocal}S$) on $\Spec\Z[1/p]$. The computation of equation \eqref{EqCaracDet} also shows that conjecture \ref{ConjXcaliLambda} is equivalent to \cite[Conjecture 12.10]{KatoEuler}.

By descent as in \eqref{EqDescenteLambdaCont}, conjecture \ref{ConjXcaliLambda} is seen to imply conjecture \ref{ConjXcaliGm}.
\begin{Prop}
Assume conjecture \ref{ConjXcaliLambda}. Then conjecture \ref{ConjXcaliGm} is true for all morphism $\phi:\Lambda\fleche S$ as in definition \ref{DefDelta}.
\end{Prop}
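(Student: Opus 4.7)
The proof proceeds by pure descent from the universal ($\Lambda$-adic) case to the specialization, relying on three ingredients: the canonical base-change isomorphisms \eqref{EqDescenteLambdaCont}, the functoriality of Kato's morphism $Z(f)$ under the ring map $\phi\colon \Lambda\to S$, and the fact that the determinant functor commutes with derived tensor product.

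First, the three isomorphisms collected in \eqref{EqDescenteLambdaCont} assemble into a canonical quasi-isomorphism of perfect complexes of $S$-modules
\begin{equation}\nonumber
\RGamma_{\et}(\Z[1/p],T(f)_{\Iw})\Ltenseur_{\Lambda,\phi}S\isom\RGamma_{\et}(\Z[1/p],T(f)\tenseur_{\Ocal}S).
\end{equation}
The morphism $Z(f)$ of \eqref{EqMorEuler} is a morphism of complexes of $\Lambda$-modules whose source $(M_{B}\tenseur_{\Z}\Lambda)^{+}[-1]$ is free, so its derived base change along $\phi$ is simply the corresponding morphism with coefficients in $S$; by the very definition of $\Delta_{S}$ in Definition \ref{DefDelta}, the latter is the morphism used to form $\Cone Z(f)$ on the $S$-side. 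Consequently
\begin{equation}\nonumber
\Cone Z(f)\Ltenseur_{\Lambda,\phi}S\isom\Cone\bigl(Z(f)\tenseur_{\Lambda,\phi}S\bigr).
\end{equation}

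Second, applying the determinant functor (which commutes with derived tensor product, and for which there is a canonical identification $\Det_{\Lambda}(0)\tenseur_{\Lambda,\phi}S\isom\Det_{S}(0)$) produces a canonical isomorphism of graded invertible $S$-modules
\begin{equation}\nonumber
\Delta_{\Lambda}(T(f)_{\Iw})\tenseur_{\Lambda,\phi}S\isom\Delta_{S}(T(f)\tenseur_{\Ocal}S)
\end{equation}
carrying $\Det_{\Lambda}(0)\tenseur_{\Lambda,\phi}S$ canonically onto $\Det_{S}(0)$. Thus the equality of $\Lambda$-submodules $\Delta_{\Lambda}(T(f)_{\Iw})=\Det_{\Lambda}(0)$ predicted by Conjecture \ref{ConjXcaliLambda} descends immediately to the equality of $S$-submodules $\Delta_{S}(T(f)\tenseur_{\Ocal}S)=\Det_{S}(0)$ inside $\Delta_{\Lambda}(T(f)_{\Iw})\tenseur_{\Lambda,\phi}S$.

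The only point requiring a little care is that the equality of Conjecture \ref{ConjXcaliGm} is phrased as an identity of $S$-lattices inside the ambient graded invertible $S[1/p]$-module $\Delta_{S[1/p]}(V(f)\tenseur_{\Ocal}S[1/p])$, so one must check that both lattices embed compatibly into this ambient module via a canonical trivialization. This is where the non-vanishing hypothesis $\phi_{*}Z(f)\neq 0$ from Definition \ref{DefDelta} is used: combined with the torsion statement of \cite[Theorem 12.4]{KatoEuler} (which furnishes the canonical trivialization \eqref{EqIsoCan} over $\Frac(\Lambda)$), it guarantees that $\Cone(Z(f)\tenseur_{\Lambda,\phi}S)\tenseur_{S}S[1/p]$ is generically acyclic, yielding a canonical trivialization of $\Delta_{S[1/p]}(V(f)\tenseur_{\Ocal}S[1/p])$ which is, by the base-change isomorphisms above, the image of the trivialization of $\Delta_{\Lambda}(T(f)_{\Iw})\tenseur_{\Lambda}\Frac(\Lambda)$. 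The equality of lattices predicted by Conjecture \ref{ConjXcaliLambda} therefore passes verbatim to the equality predicted by Conjecture \ref{ConjXcaliGm}, which concludes the proof.
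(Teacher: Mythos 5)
Your proof is correct and follows essentially the same route as the paper's: both rest on the base-change isomorphisms of \eqref{EqDescenteLambdaCont} together with the compatibility of the determinant functor with derived tensor product, yielding $\Delta_{\Lambda}(T(f)_{\Iw})\tenseur_{\Lambda,\phi}S=\Delta_{S}(T(f)_{\Iw}\tenseur_{\Lambda,\phi}S)$, from which the conclusion is immediate. You spell out the intermediate steps (free source of $Z(f)$, cone base change, compatibility of trivializations over $S[1/p]$) in more detail than the paper does, but the argument is the same.
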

\begin{proof}
Let $\phi:\Lambda\fleche S$ be a morphism as in definition \ref{DefDelta}. According to equation \eqref{EqDescenteLambdaCont}, there is an equality of $S$-lattices 
\begin{equation}\nonumber
\Delta_{\Lambda}(T(f)_{\Iw})\tenseur_{\Lambda,\phi}S=\Delta_{S}(T(f)_{\Iw}\tenseur_{\Lambda,\phi}S)
\end{equation}
inside $\Delta_{S[1/p]}(V(f)_{\Iw}\tenseur_{\Lambda[1/p],\phi}S[1/p])$. The equality \eqref{EqDeltaLambda} thus implies that
\begin{equation}\nonumber
\Delta_{S}(T(f)_{\Iw}\tenseur_{\Lambda,\phi}S)=\Det_{\Lambda}(0)\tenseur_{\Lambda}S=\Det_{S}(0)
\end{equation}
and hence the statement \eqref{EqDeltaS}.
\end{proof}
\subsection{The ETNC with coefficients in $\Hecke^{\new}$ and $\Hecke^{\red}$}
We keep the notational convention that $f$ is an eigencuspform in $S_{k}(U_{1}(N))$. Henceforth, the representation $\rhobar_{f}$ is assumed to satisfy assumption \ref{HypIrrTW} and either assumption \ref{HypNO} or assumption \ref{HypFlat}. If $f$ is $p$-ordinary then $\rhobar_{f}$ satisfies assumption \ref{HypNO} and all Hecke algebras written below are assumed to contain the operator $T(p)$. Let $\Sigma\supset\{\ell|Np\}$ be a finite set of primes and let $N(\Sigma)$ be the integer
\begin{equation}\nonumber
N(\Sigma)=N(\rhobar_{f})\produit{\ell\in\Sigma^{p}}{}\ell^{\dim_{k}(\rhobar_{f})_{I_{\ell}}}
\end{equation}
as in sub-section \ref{SubGalois}.

To $f$ is attached a unique maximal ideal $\mgot_{f}$ of $\Hecke^{\red}(N(\Sigma))$. Let $\aid^{\red}$ be a minimal prime ideal of $\Hecke^{\red}(N(\Sigma))_{\mgot_{f}}$ such that $\lambda_{f}$ factors through $\Hecke^{\red}(N(\Sigma))_{\mgot_{f}}/\aid^{\red}$. Because an eigenform is a newform for some unique level, there is an injective morphism
\begin{equation}\nonumber
\Hecke^{\red}(N(\Sigma))_{\mgot_{f}}[1/p]\plonge\produit{M|N(\Sigma)}{}\Hecke^{\new}(M)[1/p].
\end{equation}
Hence, to $\aid^{\red}$ is attached a unique $M|N(\Sigma)$ and a unique minimal ideal of $\Hecke^{\new}(M)$ such that $\aid^{\red}$ is the image of $\aid\in\Spec\Hecke^{\new}(M)$. Thus, if $f$ is new of level $U$ and if $\lambda_{f}$ factors through $R(\aid)=\Hecke^{\new}(U)/\aid$ for $\aid$ a minimal prime ideal of $\Hecke^{\new}(U)$, then there is a map from $\Hecke^{\red}(N(\Sigma))_{\mgot_{f}}$ to $R(\aid)$ which factors through an injective map from $\Hecke^{\red}(N(\Sigma))_{\mgot_{f}}/\aid^{\red}$ to $R(\aid)$.

By \cite[Proposition 2.15]{WilesFermat} (and its proof), there is a unique maximal ideal $\mgot$ of $\Hecke(N(\Sigma))$ such that $R_{\Sigma}=\Hecke(N(\Sigma))_{\mgot}$ is isomorphic to $\Hecke^{\red}(N(\Sigma))_{\mgot_{f}}$. There is thus a morphism
\begin{equation}\label{EqMorSigma}
\phi(\aid):R_{\Sigma}\fleche R(\aid)
\end{equation}
of local $\Ocal$-algebras obtained as the composition
\begin{equation}\nonumber
R_{\Sigma}=\Hecke(N(\Sigma))_{\mgot}\simeq \Hecke^{\red}(N(\Sigma))_{\mgot_{f}}\surjection\Hecke^{\red}(N(\Sigma))_{\mgot_{f}}/\aid^{\red}\plonge \Hecke^{\new}(M)/\aid=R(\aid).
\end{equation}
Theorem \ref{TheoTaylorWiles} implies that $R_{\Sigma}$ is the universal deformation ring parametrizing nearly ordinary or flat deformations of $\rhobar_{f}$ with trivial type which are unramified outside $\Sigma$ and whose determinant is of weight $k-1$. Moreover, $\Hun_{\et}(X_{1}(N(\Sigma))(\C),\Fcal_{k-2}\tenseur_{\Z}\zp)_{\mgot_{f}}$ is a free $R_{\Sigma}$-module of rank 2 and $\Hun_{\et}(X_{1}(M)(\C),\Fcal_{k-2}\tenseur_{\Z}\zp)_{\mgot_{f}}$ is a free $R(\aid)$-module of rank 2. Denote by $(T_{\Sigma},\rho_{\Sigma},R_{\Sigma})$ and $(T(\aid),\rho(\aid),R(\aid))$ the corresponding $G_{\Q,\Sigma}$-deformations of $\rhobar_{f}$.

\subsubsection{The ETNC with coefficients in $\Hecke^{\new}$}
Fix a minimal prime  $\aid$ of $\Hecke^{\new}(U)$ through which $\lambda_{f}$ factors and let $N(\aid)$ be the level of modular points factoring through $R(\aid)$. Let $\Kcal(\aid)$ be the fraction field of $R(\aid)_{\Iw}$ and let $\Vcal(\aid)_{\Iw}$ be the $G_{\Q}$-representation $T(\aid)_{\Iw}\tenseur_{R(\aid)_{\Iw}}\Kcal(\aid)$.

As recalled in section \ref{SubEulerLambda}, the element $Z(f)(\gamma)$ is a linear combination of
$$_{c,d}\z_{p^{n}}^{(p)}(f,k,j,\alpha_{i},\operatorname{prime}(pN))$$
with coefficients involving the inverse of the Euler factors of the dual newform $f^{*}$.  According to \cite[Section 5]{KatoEuler} and to \cite[Proposition 8.10, Theorem 9.5]{KatoEuler}, the classes $_{c,d}\z_{p^{n}}^{(p)}(f,k,j,\alpha,\operatorname{prime}(pN))$ are the images of classes $_{c,d}\z_{1,N,m}^{(p)}(k,r,r',\xi,S)$ with coefficients in $\Hecke^{\red}$ through the projection to $\Hecke^{\new}$ composed with $\lambda_{f}$. Hence, mimicking the proof given in \cite[Section 13.9]{KatoEuler} with $\bar{a}_{\ell}$ replaced everywhere by $T(\ell)$ (the seemingly extraneous complex conjugation comes from the fact that $T(\ell)\in\Hecke^{\new}$ is in that context acting on $f^{*}$), we obtain an  $R(\aid)_{\Iw}$-linear morphism
\begin{equation}\label{EqZfAid}
Z(\aid):M_{B}\tenseur_{\Z}\Lambda\fleche\Hun_{\et}(\Z[1/p],T(\aid)_{\Iw})
\end{equation}
which we view as a morphism of complexes of $R(\aid)$-modules
\begin{equation}\label{EqZfComplexesAid}
Z(\aid):(M_{B}\tenseur_{\Z}\Lambda)^{+}[-1]\fleche\RGamma_{\et}(\Z[1/p],T(\aid)_{\Iw}).
\end{equation}
The same construction can also be performed with $p$ inverted. Denote by $\image Z(\aid)$ the image of $Z(\aid)$ inside $\Hun_{\et}(\Z[1/p],T(\aid)_{\Iw})$. Then $Z(\aid)$ is non-zero and hence a free $R(\aid)$-module of rank 1.
\begin{DefEnglish}\label{DefDeltaAid}
Let $\Delta_{R(\aid)_{\Iw}}(T(\aid)_{\Iw})$ be the graded invertible $R(\aid)_{\Iw}$-module
$$\Xcali(T(\aid)_{\Iw})^{-1}\tenseur\Det_{R(\aid)_{\Iw}}\image Z(\aid)$$
where $\image Z(\aid)$ is the sub-module generated by the image of $Z(\aid)$ inside $\Hun_{\et}(\Z[1/p],T(\aid)_{\Iw})$. Let $\Delta_{\Kcal(\aid)}(\Vcal(\aid)_{\Iw})$ be the graded invertible $\Kcal(\aid)$-module $\Delta_{R(\aid)_{\Iw}}(T(\aid)_{\Iw})\tenseur_{R(\aid)_{\Iw}}\Kcal(\aid)$.
\end{DefEnglish}
As $R(\aid)[1/p]$ is finite étale over $\qp$, the \Nekovar-Selmer complex $\RGamma_{\et}(\Z[1/p],V(\aid)_{\Iw})$ is a perfect complex of $K(\aid)$-modules. Hence, so is $\Cone Z(\aid)\tenseur1$. After inverting $p$, $\Delta_{R(\aid)_{\Iw}}(T(\aid)_{\Iw})$ thus becomes canonically isomorphic to the determinant of the cone of a morphism of complexes towards the \Nekovar-Selmer complex of a Galois representation. However, $\Delta_{R(\aid)_{\Iw}}(T(\aid)_{\Iw})$ itself does not obviously arise in the same way; which explains the resort to the the set-up of section \ref{SubLattices}. 

By construction, $\Delta_{R(\aid)_{\Iw}}(T(\aid)_{\Iw})$ comes with a canonical $R(\aid)_{\Iw}$-basis $\z(\aid)$ which is sent to $\z(g)\in\Delta_{\Lambda}(T(g)_{\Iw})$ for all eigenforms $g$ such that $\lambda_{g}$ factors through $R(\aid)$. Beside, $\Det_{R(\aid)_{\Iw}}(T(\aid)_{\Iw})\tenseur_{R(\aid)_{\Iw}}\Kcal(\aid)_{\Iw}$ is canonically isomorphic to $\Det_{\Kcal(\aid)_{\Iw}}\Cone (Z(\aid)\tenseur1)$ which is an acyclic complex and hence canonically isomorphic to $\Kcal(\aid)_{\Iw}$. Hence, there is a second canonical $R(\aid)_{\Iw}$-basis in $\Det_{R(\aid)_{\Iw}}(T(\aid)_{\Iw})\tenseur_{R(\aid)_{\Iw}}\Kcal(\aid)_{\Iw}$ given by the pre-image of $R(\aid)_{\Iw}\subset\Kcal(\aid)_{\Iw}$ through the isomorphisms above. This suggests the following conjecture.
\begin{Conj}\label{ConjXcaliRaid}
There is an identity of $R(\aid)_{\Iw}$-lattices
\begin{align}\label{EqDeltaRaid}
\Delta_{R(\aid)_{\Iw}}(T(\aid)_{\Iw})=\Det_{R(\aid)_{\Iw}}(0)
\end{align}
inside $\Delta_{R(\aid)_{\Iw}}(T(\aid)_{\Iw})\tenseur_{R(\aid)_{\Iw}}\Kcal(\aid)_{\Iw}\overset{\can}{\simeq}\Kcal(\aid)_{\Iw}$.
\end{Conj}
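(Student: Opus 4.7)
The plan is to establish the conjectured equality of lattices by separately proving the two opposite inclusions. The inclusion $\Delta_{R(\aid)_{\Iw}}(T(\aid)_{\Iw})\subseteq\Det_{R(\aid)_{\Iw}}(0)$ can be obtained by amplifying Kato's Euler system bound with a Taylor-Wiles desingularization, whereas the reverse inclusion requires an additional main-conjecture input available only under the hypotheses of Theorem \ref{TheoSU}. For the forward inclusion, my three main ingredients would be: compatibility of the refined fundamental line with modular specialization, Kato's divisibility at each such specialization, and a Taylor-Wiles patching step that allows one to descend from a regular patched ring back down to $R(\aid)_{\Iw}$.

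First I would verify that for every modular map $\lambda_{g}:R(\aid)\to\Ocal_{g}$, extended to $R(\aid)_{\Iw}\to\Lambda_{g}=\Ocal_{g}[[\Gamma]]$ by sending $\Gamma$ identically to $\Gamma$, there is a canonical isomorphism $\Delta_{R(\aid)_{\Iw}}(T(\aid)_{\Iw})\tenseur_{R(\aid)_{\Iw},\lambda_{g}}\Lambda_{g}\simeq\Delta_{\Lambda_{g}}(T(g)_{\Iw})$ carrying $\z(\aid)$ to $\z(g)$, up to the normalization unit of Proposition \ref{PropKatoZeta}. The determinant of $\Cone Z(\aid)$ commutes with this base change by functoriality of $\Det$, while the local factors $\Xcali_{\ell}$ commute with modular specialization by Corollary \ref{CorWeightMonodromy}; the construction of $Z(\aid)$ from the classes underlying \eqref{EqZfAid} guarantees that $\z(\aid)$ specializes to Kato's class $\z(g)$. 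Kato's divisibility \cite[Theorem 12.4, 12.5]{KatoEuler}, expressed via \eqref{EqCaracDet}, then yields $\Delta_{\Lambda_{g}}(T(g)_{\Iw})\subseteq\Det_{\Lambda_{g}}(0)$ inside $\Frac(\Lambda_{g})$ for every such $g$.

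If $R(\aid)_{\Iw}$ were integrally closed, this pointwise information at the dense family of arithmetic specializations would already be enough to conclude. Since $R(\aid)_{\Iw}$ is generically non-normal, I would instead resolve its singularities using the Taylor-Wiles method of \cite{WilesFermat,TaylorWiles,DiamondHecke,FujiwaraDeformation}, applicable since assumption \ref{HypIrrTW} holds together with \ref{HypNO} or \ref{HypFlat}. For each auxiliary Taylor-Wiles set $Q$ I would build a refined fundamental line $\Delta_{Q}$ at the augmented level, and patch these into a limit object $\Delta_{\infty}$ defined over a regular local power series ring $R_{\infty}$ surjecting onto $R(\aid)_{\Iw}$. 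On $R_{\infty}$ integrality may be tested at height-one primes, each of which lies below a modular specialization of the patched system; Kato's inclusion applied along these specializations then gives $\Delta_{\infty}\subseteq\Det_{R_{\infty}}(0)$. Descending along $R_{\infty}\twoheadrightarrow R(\aid)_{\Iw}$ produces the inclusion $\Delta_{R(\aid)_{\Iw}}(T(\aid)_{\Iw})\subseteq\Det_{R(\aid)_{\Iw}}(0)$.

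I expect two principal obstacles. First, for the patching to produce a coherent Taylor-Wiles system of fundamental lines in the axiomatic sense of \cite{DiamondHecke,FujiwaraDeformation}, one needs the local factors $\Xcali_{\ell}$ at Taylor-Wiles primes to be \emph{exactly} compatible with level-raising, a statement closely related to Ihara's lemma for the cohomology of modular curves; any error term in these local comparisons, however small, would destroy the descent argument, so the compatibility must be strict rather than up to a bounded ambiguity. Second, the reverse inclusion $\Det_{R(\aid)_{\Iw}}(0)\subseteq\Delta_{R(\aid)_{\Iw}}(T(\aid)_{\Iw})$ lies genuinely beyond Kato's Euler system: given the forward inclusion one could deduce it by combining with the Skinner-Urban divisibility \cite[Theorem 3.29]{SkinnerUrban} at a single well-chosen modular specialization, as in Theorem \ref{TheoSU}, but establishing it in the full generality of the conjecture appears to remain open.
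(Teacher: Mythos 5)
You correctly recognize that the statement given is a \emph{conjecture}, not a theorem: the paper does not prove it unconditionally, but only the weak inclusion (Conjecture~\ref{ConjXcaliRaidWeak}) under assumptions~\ref{HypIrrTW}, \ref{HypNO} or~\ref{HypFlat}, and~\ref{HypTamagawa} (Theorem~\ref{TheoMainPreuve}), upgraded to the full equality only under the stronger hypotheses of Theorem~\ref{TheoSUPreuve} where Skinner--Urban applies. Your high-level strategy --- compatibility with modular specialization via Corollary~\ref{CorWeightMonodromy}, Kato's divisibility at each specialization, Taylor--Wiles resolution of singularities, descent --- is indeed the paper's, and your identification of the two main obstacles (exact, error-free compatibility of $\Xcali_{\ell}$ with level-raising; the reverse inclusion needing \cite{SkinnerUrban}) is accurate.

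However, there are two substantive inaccuracies in your account of the Euler/Taylor--Wiles step. First, the patching is not carried out on $R(\aid)_{\Iw}$ itself: $R(\aid)$ is an irreducible component of the new Hecke algebra and does not directly admit a Taylor--Wiles system. Instead the paper patches the full localized reduced Hecke algebras $R_{\Sigma\cup Q,\Iw}$ (which, by Theorem~\ref{TheoTaylorWiles}, are identified with deformation rings and carry free modules $M^{+}_{\Sigma\cup Q}$); the passage to $R(\aid)_{\Iw}$ is then achieved afterward by the level-lowering map $\pi^{\Delta}_{\Sigma,\aid}$ of Proposition-Definition~\ref{DefPropPiDeltaSigma}, which is precisely where the strict Ihara-type compatibility \eqref{EqComparaison} is used. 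Second, your description ``integrality may be tested at height-one primes, each of which lies below a modular specialization of the patched system'' does not correspond to the actual descent: $R_{\infty}$ is a formal limit with no intrinsic Galois or modular interpretation, and its height-one primes are not parametrized by modular forms. What the paper does (subsection~\ref{SubReduction}) is a contradiction argument: assuming non-integrality, it locates a discrete valuation ring quotient $A=R_{\infty}/\pid$ where the defect persists, descends through a finite-level approximation $R_{Q(n),n}$, normalizes, and extracts an honest newform $g$ whose Iwasawa module violates Proposition~\ref{PropContradiction}. Finally, you omit to flag that the crucial input making that proposition error-free at $p$ is assumption~\ref{HypTamagawa}: without the auxiliary prime with residually maximal monodromy, Kato's bound may carry a local error term at $p$, which would destroy the descent --- and in the nearly ordinary case an additional pass through Hida families is needed to avoid the weight-two Steinberg locus.
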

Conjecture \ref{ConjXcaliRaid} is compatible with modular specializations of $R(\aid)$ in the sense of the following proposition.
\begin{Prop}\label{PropCompatibleSpec}
Let $\lambda_{g}$ be a modular specialization of $R(\aid)_{\Iw}$ and let $\phi:\Lambda\fleche S$ be a morphism as in definition \ref{DefDelta}. Assume conjecture \ref{ConjXcaliRaid}. Then there is an identity of $S$-lattices
\begin{equation}\nonumber
\Delta_{S}(T(g)\tenseur_{\Ocal}S)=\Det_{S}(0)
\end{equation}
inside $\Delta_{S[1/p]}(V(g)\tenseur_{F_{\lambda}}S[1/p])$.
\end{Prop}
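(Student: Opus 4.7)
The plan is to deduce the proposition from Conjecture \ref{ConjXcaliRaid} by base change along a composite specialization $\Psi: R(\aid)_{\Iw} \fleche S$. Since $\lambda_g$ factors through $R(\aid)$, the induced $R(\aid)_{\Iw}$-linear morphism $R(\aid)_{\Iw} \fleche \Ocal_g[[\Gamma]]$ sends $\z(\aid)$ to $\z(g)$ by the interpolation property of $Z(\aid)$ recalled after Definition \ref{DefDeltaAid}; composing further with an extension of $\phi$ (possibly after enlarging $S$ to contain $\Ocal_g$, which is harmless) produces the desired $\Psi$. The hypothesis on $\phi$ in Definition \ref{DefDelta} ensures that $\Psi_{*} Z(\aid) = Z(g) \tenseur_{\Lambda, \phi} S$ is non-zero, so the right-hand side $\Delta_S(T(g) \tenseur_{\Ocal} S)$ of the claim is well-defined.

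The heart of the argument is the construction of a canonical isomorphism
\begin{equation*}
\Delta_{R(\aid)_{\Iw}}(T(\aid)_{\Iw}) \tenseur_{R(\aid)_{\Iw}, \Psi} S \isocan \Delta_S(T(g) \tenseur_{\Ocal} S).
\end{equation*}
By Definitions \ref{DefXcali} and \ref{DefDeltaAid}, this amounts to three separate compatibilities under $\Psi$. First, the global factor $\Det_{R(\aid)_{\Iw}} \RGamma_c(G_{\Q,\Sigma}, T(\aid)_{\Iw})$ is compatible with base change by functoriality of continuous cochain complexes, exactly as in \eqref{EqDescenteLambdaCont}. Second, each local factor $\Xcali_\ell(T(\aid)_{\Iw})$ for $\ell \in \Sigma \setminus \{p\}$ is compatible with $\Psi$, which is precisely the content of Corollary \ref{CorWeightMonodromy}. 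Third, the factor $\Det_{R(\aid)_{\Iw}} \image Z(\aid)$ is compatible with $\Psi$: the interpolation property of $Z(\aid)$ identifies its $\Psi$-specialization with the image of the corresponding morphism for $g$ with coefficients in $S$, and since the source $(M_B \tenseur_{\Z} \Lambda)^{+}$ is a free rank-one $\Lambda$-module specializing to the rank-one $S$-module $(M_B \tenseur_{\Z} S)^{+}$, formation of the determinant of the image commutes with $\Psi$.

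Conjecture \ref{ConjXcaliRaid} then provides the equality $\Delta_{R(\aid)_{\Iw}}(T(\aid)_{\Iw}) = \Det_{R(\aid)_{\Iw}}(0)$ of $R(\aid)_{\Iw}$-lattices inside $\Delta_{R(\aid)_{\Iw}}(T(\aid)_{\Iw}) \tenseur \Kcal(\aid)_{\Iw}$. Tensoring this equality along $\Psi$ and transporting both lattices via the canonical isomorphism above yields $\Delta_S(T(g) \tenseur_{\Ocal} S) = \Det_S(0)$ inside $\Delta_{S[1/p]}(V(g) \tenseur_{F_{\lambda}} S[1/p])$, which is the statement of the proposition.

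The most delicate step is the second compatibility in the middle paragraph. Because $R(\aid)_{\Iw}$ is not in general integrally closed, the inertia invariants $(T(\aid)_{\Iw})^{I_{\ell}}$ need not form a perfect complex of $R(\aid)_{\Iw}$-modules, so the naive candidate $\Det \RGamma(G_{\Q_{\ell}}/I_{\ell}, (T(\aid)_{\Iw})^{I_{\ell}})$ may be undefined. Definition \ref{DefXcaliv} circumvents this by bifurcating according to the rank of inertia invariants, but this means compatibility with $\Psi$ requires that this rank behave predictably under specialization. That this is the case is a substantive fact resting on the Weight-Monodromy conjecture for modular motives, through Grothendieck's monodromy theorem and Ramanujan's conjecture; it is packaged into Proposition \ref{PropWeightMonodromy} and Corollary \ref{CorWeightMonodromy}, whose development in Section \ref{SubLattices} is precisely motivated by the present proposition.
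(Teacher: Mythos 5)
Your proposal is correct and follows essentially the same route as the paper: reduce to a compatibility isomorphism $\Delta_{R(\aid)_{\Iw}}(T(\aid)_{\Iw})\tenseur_{\Psi}S\isocan\Delta_{S}(T(g)\tenseur_{\Ocal}S)$, decompose the fundamental line into its constituent factors, and invoke Corollary \ref{CorWeightMonodromy} for the genuinely nontrivial local pieces $\Xcali_{\ell}$. The only presentational difference is that you split $\Xcali(T(\aid)_{\Iw})$ explicitly into its $\Det\RGamma_{c}$ and $\prod\Xcali_{\ell}$ factors, whereas the paper keeps them together and reduces directly to the $\Xcali_{\ell}$ statement; both correctly pinpoint Corollary \ref{CorWeightMonodromy} (via the Weight-Monodromy input) as the crux of the argument.
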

\begin{proof}
It is enough to show that $\Delta_{R(\aid)_{\Iw}}(T(\aid)_{\Iw})\tenseur_{R(\aid)_{\Iw},\lambda_{g},\phi}S$ is equal to $\Delta_{S}(T(g)\tenseur_{\Ocal}S)$ and that $\Det_{R(\aid)_{\Iw}}(0)\tenseur_{R(\aid)_{\Iw},\lambda_{g},\phi}S$ is equal to $\Det_{S}(0)$. The latter assertion is part of the functoriality properties of the determinant functor, so we show the first. Because the morphism $Z(g)$ is by construction a specialization of the morphism $Z(\aid)$, it is enough to show the equalities
\begin{equation}\label{EqBettiRaidS}
\left(\Det_{R(\aid)_{\Iw}}M_{B}\tenseur_{\Z}\Lambda\right)\tenseur_{R(\aid)_{\Iw},\lambda_{g},\phi}S=\Det_{S}\left(M_{B}\tenseur_{\Z}S\right)
\end{equation}
and
\begin{equation}\label{EqXcaliRaidS}
\Xcali(T(\aid)_{\Iw})\tenseur_{R(\aid)_{\Iw},\lambda_{g},\phi}S=\Xcali(T(g)\tenseur_{\Ocal}S).
\end{equation}
The equality \eqref{EqBettiRaidS} holds by definition of $\phi$. To prove \eqref{EqXcaliRaidS}, it is enough to show the comparable statement
\begin{equation}
\Xcali_{\ell}(T(\aid)_{\Iw})\tenseur_{R(\aid)_{\Iw},\lambda_{g},\phi}S=\Xcali_{\ell}(T(g)\tenseur_{\Ocal}S)
\end{equation}
for all finite $\ell\nmid p$ dividing $N(\aid)$. This holds by corollary \ref{CorWeightMonodromy}.
\end{proof}
\subsubsection{The ETNC with coefficients in $\Hecke^{\red}$}
Denote by $M_{\aid}$ the $R(\aid)_{\Iw}$-module
$$M_{\aid}=\Hun_{\et}(X(N(\aid))(\C),\Fcal_{k-2}\tenseur_{\Z}\zp)\widehat{\tenseur}_{\Hecke}R(\aid)_{\Iw}$$
and by $M_{\Sigma}$ the $R_{\Sigma,\Iw}$-module
$$M_{\Sigma}=\Hun_{\et}(X(N(\Sigma))(\C),\Fcal_{k-2}\tenseur_{\Z}\zp)\widehat{\tenseur}_{\Hecke}R_{\Sigma,\Iw}.$$
We wish to relate $M_{\Sigma}\tenseur_{R_{\Sigma,\Iw}}R(\aid)_{\Iw}$ with $M_{\aid}$ by way of a well-chosen cohomological level-lowering map $\pi_{\Sigma,\aid}$. For $d_{2}|d_{1}|N|N(\Sigma)$, there is a geometric degeneracy map
\begin{eqnarray}\label{EqProjD}%
\pi_{N,d_{1},d_{2}}:&X_{1}(N)&\fleche X_{1}(N/d_{1})\\
\nonumber%
&[z,g]&\longmapsto [z,g\matrice{1}{0}{0}{d_{2}}]%
\end{eqnarray}
between modular curves. The map $\pi$ is constructed from the cohomological realizations of these maps for rational primes $\ell\nmid p$ dividing $N(\Sigma)/N(\aid)$. Let $\ell^{e_{\ell}}$ be a power of a rational prime dividing $N_{\Sigma}/N(\aid)$. For $N(\aid)\ell^{e_{\ell}}|N|N_{\Sigma}$, denote by $\pi_{N,\ell}$
the map
\begin{equation}\nonumber
\pi_{N,\ell}=\begin{cases}
1&\textrm{ if $e_{\ell}=0$}\\
\pi_{N,\ell,1*}-\ell^{-s}T(\ell)\pi_{N,\ell,\ell*}&\textrm{ if $e_{\ell}=1$}\\
\pi_{N,\ell^{2},1*}-\ell^{-s}T(\ell)\pi_{N,\ell^{2},\ell*}+\ell^{-s-2}\diamant{\ell}\pi_{\ell^{2},\ell^{2}*}&\textrm{ if $e_{\ell}=2$}
\end{cases}
\end{equation}
from $\Hun(X_{1}(N)(\C),\Fcal_{k-2}\tenseur_{\Z}\zp)$ to $\Hun(X_{1}(N/\ell^{e_{\ell}})(\C),\Fcal_{k-2}\tenseur_{\Z}\zp)$. For $(N_{i})_{1\leq i\leq n}$ a list of integers satisfying $N_{1}=N(\aid)$, $N_{n}=N(\Sigma)$ and such that for all $1\leq i\leq n$ there exists a prime $\ell_{i+1}\nmid p$ such that $N_{i+1}/N_{i}=\ell_{i+1}^{e_{\ell_{i+1}}}$, we denote by $\pi_{i}$ the map 
\applicationsimple{\pi_{N_{i+1},\ell_{i+1}}}{\Hun(X_{1}(N_{i+1})(\C),\Fcal_{k-2}\tenseur_{\Z}\zp)}{\Hun(X_{1}(N_{i})(\C),\Fcal_{k-2}\tenseur_{\Z}\zp)}
and by $\pi_{\Sigma,\aid}$ the composition
\begin{equation}\label{EqPiSigma}
\pi_{\Sigma,\aid}=(\pi_{1}\circ\cdots\circ\pi_{n-1})
\end{equation}
from $\Hun(X_{1}(N(\Sigma))(\C),\Fcal_{k-2}\tenseur_{\Z}\zp)$ to $\Hun(X_{1}(N(\aid))(\C),\Fcal_{k-2}\tenseur_{\Z}\zp)$.
\begin{DefEnglish}
Define
\begin{equation}
\Eul_{\ell}(\aid)=\det(\Id-\Fr(\ell)|\Vcal(\aid)^{I_{\ell}}),\ \Eul_{\Sigma}(\aid)=\produit{\ell\in\Sigma^{p}}{}\Eul_{\ell}(\aid).
\end{equation}
\end{DefEnglish}
Note that as $M_{\Sigma}^{+}$ is a free $R_{\Sigma,\Iw}$-module of rank 1 by theorem \ref{TheoTaylorWiles}, $M_{\Sigma}^{+}\tenseur_{R_{\Sigma,\phi(\aid)}}R(\aid)$ is a free $R(\aid)_{\Iw}$-module of rank 1 and hence $\image\pi_{\Sigma,\aid}^{\Iw}$ is free of rank 1.
\begin{Prop}\label{PropIhara}
The map $\pi_{\Sigma,\aid}$ induces a morphism of $R(\aid)_{\Iw}$-modules
\begin{equation}\label{EqMorIhara}
\pi^{\Iw}_{\Sigma,\aid}:M^{+}_{\Sigma}\tenseur_{R_{\Sigma,\phi(\aid)}}R(\aid)\fleche M^{+}_{\aid}
\end{equation}
such that
\begin{equation}\label{EqComparaison}
\image\pi^{\Iw}_{\Sigma,\aid}=\Eul_{\Sigma}(\aid)M^{+}_{\aid}.
\end{equation}
\end{Prop}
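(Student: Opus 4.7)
The plan is to reduce the claim to a prime-by-prime scalar computation, then to match that scalar with $\Eul_\ell(\aid)$ using the weight-monodromy-compatible construction of Proposition \ref{PropWeightMonodromy}. First I would check that the Betti map $\pi_{\Sigma,\aid}$ commutes with the action of every operator in the reduced Hecke algebra $\Hecke^{\red}(N(\aid))$, so that after localising at $\mgot$ and tensoring with $R_{\Sigma,\Iw}$ and $R(\aid)_{\Iw}$ it induces the desired $R(\aid)_{\Iw}$-linear map $\pi^{\Iw}_{\Sigma,\aid}$ on $+$-parts. This equivariance is standard: the geometric degeneracy maps $\pi_{N,d_1,d_2}$ commute with Hecke operators at primes away from the intermediate levels, and the specific linear combinations entering the definition of $\pi_{N,\ell}$ are designed precisely so that the result commutes also with $T(\ell)$ and $\diamant{\ell}$, modulo the Eichler-Shimura relations at $\ell$.

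Because \eqref{EqPiSigma} writes $\pi_{\Sigma,\aid}$ as an iterated composition over the primes $\ell_{i+1}$ dividing $N(\Sigma)/N(\aid)$ and because $\Eul_\Sigma(\aid) = \prod_{\ell\in\Sigma^p} \Eul_\ell(\aid)$ factors in parallel, it suffices to treat one prime $\ell\in\Sigma^p$ with $\ell\nmid N(\aid)$ at a time. The source and target of the corresponding single-prime map are both free of rank one as $R(\aid)_{\Iw}$-modules by Theorem \ref{TheoTaylorWiles}, so the problem reduces to computing a scalar in $R(\aid)_{\Iw}$. For $e_\ell = 1$, applying $\pi_{N',\ell,1*} - \ell^{-s} T(\ell)\pi_{N',\ell,\ell*}$ to the pair of oldform lifts of a newform $g$ at level $N(\aid)\ell$ produces the polynomial $1 - \lambda_g(T(\ell))\ell^{-s} + \lambda_g(\diamant{\ell})\ell^{k-1-2s}$ times the chosen basis element at level $N(\aid)$; by \eqref{EqEichlerShimura} and the compatibility between local and global Langlands correspondences, this scalar is exactly $\det(\Id - \Fr(\ell)\ell^{-s}|V(g)^{I_\ell})$, i.e.\ the image of $\Eul_\ell(\aid)$ under $\lambda_g$. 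The case $e_\ell = 2$ is analogous, with the three-term combination producing the scalar compatible with $\Vcal(\aid)^{I_\ell} = 0$ and hence $\Eul_\ell(\aid) = 1$.

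The main obstacle is to lift this pointwise identification to an equality of scalars in the non-normal ring $R(\aid)_{\Iw}$, which is where Proposition \ref{PropWeightMonodromy} becomes essential: it guarantees that $\det(\Id - \Fr(\ell)|\Vcal(\aid)^{I_\ell})$ is already an element of $R(\aid)$, and Corollary \ref{CorWeightMonodromy} ensures that it specialises correctly at every modular point. Since $R(\aid)_{\Iw}$ is a reduced, $p$-torsion-free, finitely generated $\Lambda$-algebra and the set of modular specialisations is Zariski-dense in $\Spec R(\aid)_{\Iw}$, the scalar by which $\pi^{\Iw}_{\Sigma,\aid}$ acts is uniquely determined by its values at these specialisations. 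Combined with the per-prime computation above, this identifies the scalar with $\Eul_\Sigma(\aid)$ and yields \eqref{EqComparaison}.
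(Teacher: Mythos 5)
The paper's own proof of this proposition is a one-paragraph citation: up to two modifications that the author discusses (removing the near-ordinarity hypothesis, which enters only at the very start of the proof in [EPW, Section 3.8], and adjusting the normalization of the Euler factor to account for the Tate twist built into $\Vcal(\aid)$), the statement \emph{is} Theorem 3.6.2 of Emerton--Pollack--Weston. Your proposal instead attempts a self-contained argument, which is a genuinely different route, but I think it has a gap in its final step.

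Your reduction to a single prime $\ell$, and the per-prime Hecke computation identifying the scalar with the Euler factor, are consistent with what actually happens in the source (the relevant identities are precisely the ones exploited in [EPW, Section 3.8]). The problem is the globalization. At each modular specialization $\lambda$ of $R(\aid)_{\Iw}$, the classical oldform/newform computation determines the scalar only up to a unit of the target ring: it produces the equality of \emph{ideals} $\lambda(a)\Ocal' = \lambda(\Eul_{\Sigma}(\aid))\Ocal'$, not of elements, because the comparison with the ``oldform lift'' basis introduces a unit that a priori varies with $\lambda$. From this one learns only that $a/\Eul_{\Sigma}(\aid)$ is a unit in the normalization $\widetilde{R(\aid)}_{\Iw}$, which is strictly weaker than it being a unit in $R(\aid)_{\Iw}$ when $R(\aid)$ is not normal. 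Your Zariski-density step is thus a tautology (``an element of a reduced ring is determined by its values at a dense set of primes'') that does not actually apply, because the per-prime computation does not supply those exact values. This is not a cosmetic issue; controlling the passage from the normalization back to the actual Hecke algebra is the entire difficulty that the Taylor--Wiles machinery elsewhere in the paper is built to overcome, so one cannot expect it to disappear here.

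What the cited [EPW, Theorem 3.6.2] actually does is run a direct module-theoretic argument, using the Hecke-equivariance of the degeneracy maps together with freeness/control inputs (in particular the surjectivity of the relevant degeneracy map, an Ihara-lemma-type statement) to compute the image of $\pi^{\Iw}_{\Sigma,\aid}$ \emph{uniformly} over the whole Hecke algebra, rather than pointwise. That structural input, which supplies a compatible choice of basis across the family and hence eliminates the ambiguous units, is what is missing from your argument. If you want to keep the self-contained approach, the fix would be to show that the pullback $\pi_{1}^{*}\colon M^{+}_{\aid}\to M^{+}_{\Sigma}\tenseur_{R_{\Sigma,\phi(\aid)}}R(\aid)$ is an isomorphism of free rank-one $R(\aid)_{\Iw}$-modules (using Ihara's lemma for surjectivity together with the Taylor--Wiles freeness), so that $\pi^{\Iw}_{\Sigma,\aid}\circ\pi_{1}^{*}$ becomes an endomorphism of $M^{+}_{\aid}$ computable exactly as a Hecke operator; one then reads off the scalar $a$ up to an honest global unit rather than a specialization-dependent one.
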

\begin{proof}
Up to two modifications, this is \cite[Theorem 3.6.2]{EmertonPollackWeston}. The first modification is that the theorem stated there concerns nearly ordinary Hida families of eigenforms under the assumption \ref{HypNO}. One can easily check that this later hypothesis is used in \cite[Section 3.8]{EmertonPollackWeston}, where the result is proved, only at the very onset of the proof to specialize to a classical form in the sense of Hida theory; a step we do not need here. The second is that the Euler factor in \cite[Definition 3.6.1]{EmertonPollackWeston} is evaluated at $\diamant{\ell^{-1}}$ whereas ours is evaluated at 1. The reason is that the analytic $p$-adic $L$-function constructed there interpolates the special value at 0 of the motive $L(M(f)(1),0)$ whereas our algebraic object is related to $L(M(f)(s),0)$ for $1\leq s\leq k-1$ so we have incorporated the Tate twist in the Galois representation $\Vcal(\aid)$.
\end{proof}
Let $\Sigma'\supset\Sigma\supset\{Np\}$ be two finite sets of finite primes. Then restricting the action of $R_{\Sigma'}$ to forms of level $N(\Sigma)$ realizes $R_{\Sigma}$ as a quotient of $R_{\Sigma'}$. In this context, proposition \ref{PropIhara} admits an easier variant relating $M_{\Sigma'}$ and $M_{\Sigma}$. Define as above
\begin{equation}\nonumber
\pi_{\Sigma,\Sigma'}:\Hun(X_{1}(N(\Sigma'))(\C),\Fcal_{k-2}\tenseur_{\Z}\zp)\fleche\Hun(X_{1}(N(\Sigma))(\C),\Fcal_{k-2}\tenseur_{\Z}\zp)
\end{equation}
to be the composition of the geometric degeneracy maps from $X_{1}(N(\Sigma'))$ to $X_{1}(N(\Sigma))$.
\begin{Prop}\label{PropIharaSigma}
The map $\pi_{\Sigma',\Sigma}$ induces a morphism of $R_{\Sigma,\Iw}$-modules
\begin{equation}\label{EqMorIharaSigma}
\pi^{\Iw}_{\Sigma',\Sigma}:M^{+}_{\Sigma'}\tenseur_{R_{\Sigma'}}R_{\Sigma}\fleche M^{+}_{\Sigma}
\end{equation}
such that
\begin{equation}\label{EqComparaisonSigmaPrime}
\image\pi^{\Iw}_{\Sigma',\Sigma}=M^{+}_{\Sigma}\produit{\ell\in\Sigma'\backslash\Sigma}{}\det(1-\Fr(\ell)|T_{\Sigma}).
\end{equation}
\end{Prop}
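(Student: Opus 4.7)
The plan is to argue by induction on $|\Sigma'\setminus\Sigma|$, reducing to the case $\Sigma'=\Sigma\cup\{\ell\}$ for a single prime $\ell\notin\Sigma$. Since $\Sigma\supset\{\ell\,|\,Np\}$, the prime $\ell$ does not divide $N(\rhobar_f)$; hence $(\rhobar_f)_{I_\ell}$ is two-dimensional and $N(\Sigma')=N(\Sigma)\ell^2$. Consequently, in this reduced case the map $\pi_{\Sigma',\Sigma}$ coincides with a single factor $\pi_{N(\Sigma'),\ell}$ from the $e_\ell=2$ branch of the definition preceding \eqref{EqPiSigma}.

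By Theorem \ref{TheoTaylorWiles}, the modules $M^+_\Sigma$ and $M^+_{\Sigma'}$ are free of rank one over $R_{\Sigma,\Iw}$ and $R_{\Sigma',\Iw}$ respectively. After base change along the quotient map $R_{\Sigma'}\surjection R_\Sigma$, both source and target of \eqref{EqMorIharaSigma} are free of rank one over $R_{\Sigma,\Iw}$, so $\pi^{\Iw}_{\Sigma',\Sigma}$ is multiplication by an element $c\in R_{\Sigma,\Iw}$ whose principal ideal $(c)$ is independent of the choice of bases. The goal then reduces to identifying $(c)$ with $(\det(1-\Fr(\ell)|T_\Sigma))$.

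I would then exploit the density of classical modular specializations in $\Spec R_{\Sigma,\Iw}$: it suffices to show that for each eigenform $g$ of weight $k$ and level $N(\Sigma)$ with $\rhobar_g\simeq\rhobar_f$ and each arithmetic specialization of the cyclotomic variable, the image of $c$ equals the corresponding specialization of $\det(1-\Fr(\ell)|T_\Sigma)$ at $X=1$. Since $g$ has level prime to $\ell$, its isotypic component inside $M^+_{\Sigma'}$ sits in the three-dimensional oldform subspace at level $N(\Sigma)\ell^2$ spanned by the standard degeneracies of $g$; the composition of $\pi_{N(\Sigma'),\ell^2,1*}-\ell^{-s}T(\ell)\pi_{N(\Sigma'),\ell^2,\ell*}+\ell^{-s-2}\diamant{\ell}\pi_{N(\Sigma'),\ell^2,\ell^2*}$ with this oldform embedding is then computed by the standard formulas expressing $\pi_*\circ\pi^*$ in terms of $T(\ell)$ and $\diamant{\ell}$, and reduces to the scalar $1-T(\ell)+\ell\diamant{\ell}$ acting on $g$ (after correcting for the Tate twist by $s$ and the cyclotomic character). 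By the Eichler-Shimura relations \eqref{EqEichlerShimura}, this is precisely $\det(1-\Fr(\ell)|T_g\tenseur\chi_\Gamma)$ evaluated at $X=1$.

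The main obstacle is not conceptual but rather the careful bookkeeping of the Tate twist by $s$, the action of the cyclotomic character on $T_{\Iw}$, and the translation between the classical action of Hecke operators on modular forms and the Galois action of Frobenius via \eqref{EqEichlerShimura}. Once the single-point computation is done, Zariski density of classical points in $\Spec R_{\Sigma,\Iw}$ together with the fact that the Euler factor is a non-zero-divisor (ensuring that $c$ is determined up to units by its modular specializations) yields the equality of principal ideals \eqref{EqComparaisonSigmaPrime}. The key simplification compared to Proposition \ref{PropIhara} is that we stay entirely inside the oldform subspace at the enlarged level, so no appeal to Ihara's lemma is needed.
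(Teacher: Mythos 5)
Your overall strategy is sound and rests on the same core classical computation (the formula for $\pi_{*}\circ\pi^{*}$ in terms of $T(\ell)$ and $\diamant{\ell}$, translated through Eichler--Shimura), but you reach the conclusion by a genuinely different route than the paper, which simply cites \cite[Propositions 2.6, 2.7]{WilesFermat}. Wiles's argument is a direct module-theoretic comparison of Hecke modules at levels $N$ and $N\ell^{2}$: one proves that the oldform embedding at the auxiliary prime $\ell$ is injective with cokernel controlled by the Euler factor, using only that $\ell$ is prime to the level and $\ell\not\equiv 1$-type considerations, without needing freeness of the cohomology as a Hecke module nor any density argument. Your proof instead uses Theorem \ref{TheoTaylorWiles} (freeness of $M^{+}_{\Sigma}$ over $R_{\Sigma,\Iw}$) to reduce the identity of ideals to the identity of a single scalar $c\in R_{\Sigma,\Iw}$, and then pins down $c$ by Zariski density of classical points, invoking reducedness of $R_{\Sigma,\Iw}$ so that the specialization argument determines the scalar. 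What this buys you: the scalar identity becomes a finite-dimensional oldform calculation at a single eigenform. What it costs: two extra inputs (freeness and density) that Wiles does not need; and you must also check that the Euler factor is a non-zero-divisor, which you correctly note. Since freeness is available at this stage of the paper, your approach is not circular, and it is legitimate -- just more indirect than the paper's citation.

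Two small points worth tightening. First, your sketch does not specify \emph{which} oldform embedding (degeneracy pullback) to compose the stabilization operator with; the three pullbacks $\pi_{1}^{*},\pi_{\ell}^{*},\pi_{\ell^{2}}^{*}$ lead to different formulas for $\pi_{*}\circ\pi^{*}$, and the computation must be carried out against the embedding that realizes $M^{+}_{\Sigma}\tenseur_{R_{\Sigma'}}R_{\Sigma}$ compatibly with the Hecke action. You acknowledge the Tate-twist bookkeeping, but this is the spot where the normalization by $\ell^{-s}$ in the definition of $\pi_{N,\ell}$ must match the Euler factor $\det(1-\Fr(\ell)\mid T_{\Sigma})$; the matching is what makes the final scalar come out to the Euler factor rather than a shifted version of it. Second, your reduction to one auxiliary prime by induction implicitly uses freeness (hence Taylor--Wiles) at each intermediate level $\Sigma\cup\{\ell_{1}\}\subset\Sigma\cup\{\ell_{1},\ell_{2}\}\subset\cdots$; this is fine, but it should be stated. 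You are right that no Ihara-type input is needed, which is precisely what makes this proposition ``easier'' than Proposition \ref{PropIhara} as the paper remarks.
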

\begin{proof}
This is \cite[Proposition 2.6,2.7]{WilesFermat}.
\end{proof}
\begin{Prop}\label{PropIhara2}
The graded invertible $R(\aid)_{\Iw}$-modules
\begin{equation}\nonumber
\Det_{R(\aid)_{\Iw}}\left(\RGamma_{c}(\Z[1/\Sigma],T_{\Sigma,\Iw})\Ltenseur_{R_{\Sigma,\Iw}}R(\aid)_{\Iw}\right)
\end{equation}
and
\begin{equation}\nonumber
\Xcali(T(\aid)_{\Iw})\tenseur_{R(\aid)_{\Iw}}\produittenseur{\ell\in\Sigma^{p}}{}\Xcali^{-1}_{\ell}(T(\aid)_{\Iw})
\end{equation}
are canonically isomorphic.
\end{Prop}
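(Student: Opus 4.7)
The plan is to unravel the right-hand side using Definition \ref{DefXcali} and then reduce the claim to base change of continuous Galois cochain complexes. By definition,
\begin{equation}
\nonumber
\Xcali(T(\aid)_{\Iw}) = \Det_{R(\aid)_{\Iw}} \RGamma_c(G_{\Q,\Sigma}, T(\aid)_{\Iw}) \tenseur \produittenseur{v \in \Sigma\setminus S_p}{} \Xcali_v(T(\aid)_{\Iw}).
\end{equation}
Since $\Sigma^p$ is precisely $\Sigma \setminus S_p$, tensoring with $\produittenseur{\ell \in \Sigma^p}{} \Xcali_\ell^{-1}(T(\aid)_{\Iw})$ cancels the local factors, so the right-hand side of the claimed isomorphism is canonically identified with $\Det_{R(\aid)_{\Iw}}\RGamma_c(G_{\Q,\Sigma}, T(\aid)_{\Iw})$. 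The proposition thus amounts to producing a canonical isomorphism
\begin{equation}
\nonumber
\Det_{R(\aid)_{\Iw}}\bigl(\RGamma_c(\Z[1/\Sigma], T_{\Sigma,\Iw}) \Ltenseur_{R_{\Sigma,\Iw}} R(\aid)_{\Iw}\bigr) \isocan \Det_{R(\aid)_{\Iw}}\RGamma_c(G_{\Q,\Sigma}, T(\aid)_{\Iw}).
\end{equation}

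Since the determinant functor commutes with derived base change, it suffices to establish, compatibly with the cone construction defining $\RGamma_c$ in subsection \ref{SubNeko}, base-change quasi-isomorphisms
\begin{align}
\nonumber
\RGamma(G_{\Q,\Sigma}, T_{\Sigma,\Iw}) \Ltenseur_{R_{\Sigma,\Iw}} R(\aid)_{\Iw} &\simeq \RGamma(G_{\Q,\Sigma}, T(\aid)_{\Iw}), \\
\nonumber
\RGamma(G_{\Q_v}, T_{\Sigma,\Iw}) \Ltenseur_{R_{\Sigma,\Iw}} R(\aid)_{\Iw} &\simeq \RGamma(G_{\Q_v}, T(\aid)_{\Iw}) \textrm{ for } v \in \Sigma \setminus S_p,
\end{align}
and then take cones. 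By the Iwasawa-theoretic form of theorem \ref{TheoTaylorWiles}, and more precisely by proposition \ref{PropTW}, $M_\Sigma$ and hence $T_{\Sigma,\Iw}$ is free as $R_{\Sigma,\Iw}$-module; consequently $T_{\Sigma,\Iw} \Ltenseur_{R_{\Sigma,\Iw}} R(\aid)_{\Iw}$ equals $T(\aid)_{\Iw}$ on the nose, and the standard base-change results for continuous Galois cohomology of free coefficient modules apply.

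The most technical step is the rigorous verification of this base-change compatibility for continuous cochains with coefficients in the large profinite-topological ring $R_{\Sigma,\Iw}$. This is standard and follows by modeling the cochain complexes via a continuous projective resolution of $T_{\Sigma,\Iw}$ in the formalism of Iwasawa-theoretic cohomology of \cite{SelmerComplexes}, at which point commutation with derived base change and with the cone construction is formal. Assembling the local and global pieces then yields the required canonical isomorphism, and the identification of the right-hand side performed in the first paragraph completes the argument.
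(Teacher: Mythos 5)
Your proposal is correct and follows essentially the same route as the paper: unravel Definition \ref{DefXcali} to see that the right-hand side is precisely $\Det_{R(\aid)_{\Iw}}\RGamma_{c}(\Z[1/\Sigma],T(\aid)_{\Iw})$, and then invoke a derived base-change isomorphism for the compactly supported cohomology complex. The only difference is one of formalization: the paper cites the scheme-theoretic base-change theorem for \'etale cohomology complexes (\cite[Th\'eor\`eme 4.3.1]{SGA4}, \cite[Section 4.12]{SGA41/2}) applied directly to $\RGamma_c$, whereas you re-derive this by unwinding $\RGamma_c$ as a cone of continuous Galois cochain complexes and applying base-change termwise, citing the Selmer-complex formalism. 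Both are valid, and the paper itself explicitly licenses this translation between the two pictures in the discussion following Lemma \ref{LemEtaleSelmer}.

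One phrasing worth tightening: you write that $T_{\Sigma,\Iw}\Ltenseur_{R_{\Sigma,\Iw}}R(\aid)_{\Iw}$ ``equals $T(\aid)_{\Iw}$ on the nose.'' That is not literally true, since $T_{\Sigma}$ and $T(\aid)$ are cut out of the cohomology of modular curves of different levels $N(\Sigma)$ and $N(\aid)$. What is true, and what both arguments implicitly use, is that $T_{\Sigma}\tenseur_{R_{\Sigma},\phi(\aid)}R(\aid)$ is a free rank-two $G_{\Q,\Sigma}$-representation over $R(\aid)$ with the same pseudo-character as $T(\aid)$, hence isomorphic to it by absolute irreducibility of $\rhobar_{f}$ (assumption \ref{HypIrr}), the isomorphism being unique up to a unit of $R(\aid)$. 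This is a small slip rather than a gap, and it does not affect the argument.
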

\begin{proof}
The complex $\RGamma_{c}(\Z[1/\Sigma],T_{\Sigma,\Iw})$ is a perfect complex of $R_{\Sigma,\Iw}$-modules so its determinant is well-defined. For simplicity of notation, in this proof only we denote it by $\RGamma_{c}(T_{\Sigma,\Iw})$. By the base-change property of étale cohomology (or chain) complexes of \cite[Théorème 4.3.1]{SGA4} or \cite[Section 4.12]{SGA41/2}, there is a canonical isomorphism
\begin{equation}\nonumber
\RGamma_{c}(T_{\Sigma,\Iw})\Ltenseur_{R_{\Sigma,\Iw}}R(\aid)_{\Iw}=\RGamma_{c}(\Z[1/\Sigma],T(\aid)_{\Iw}).
\end{equation}
By the definition of $\Xcali(T(\aid)_{\Iw})$ and the remark following equation \eqref{EqIsoXcaliSel}, there is thus a  canonical isomorphism
\begin{equation}\nonumber
\Det_{R(\aid)_{\Iw}}\left(\RGamma_{c}(T_{\Sigma,\Iw})\Ltenseur_{R_{\Sigma,\Iw}}R(\aid)_{\Iw}\right)\overset{\can}{\simeq}\Xcali(T(\aid)_{\Iw})\tenseur_{R(\aid)_{\Iw}}\produittenseur{\ell\in\Sigma^{p}}{}\Xcali^{-1}_{\ell}(T(\aid)_{\Iw}).
\end{equation}
\end{proof}
\begin{DefEnglish}\label{DefDeltaSigma}
Let $\Delta_{\Sigma}(T_{\Sigma,\Iw})$ be the graded invertible $R_{\Sigma,\Iw}$-module
\begin{equation}\label{EqDefDeltaSigma}
\Det_{R_{\Sigma,\Iw}}\RGamma_{c}(\Z[1/\Sigma],T_{\Sigma,\Iw})\tenseur_{R_{\Sigma,\Iw}}\left(\Det_{R_{\Sigma,\Iw}}M_{\Sigma}^{+}\right)
\end{equation}
and let $\z_{\Sigma,\Iw}$ be a basis of $\Delta_{\Sigma,\Iw}$.
\end{DefEnglish}
As was the case with $\z(f)$, the element $\z_{\Sigma,\Iw}$ is defined here only up to a choice of unit of $R_{\Sigma,\Iw}$. The exact choice of unit, though immaterial for our concerns, is pinned down after proposition-definition \ref{DefPropPiDeltaSigma}. We show that $\Delta_{\Sigma}(T_{\Sigma,\Iw})\tenseur_{R_{\Sigma,\Iw}}Q(R_{\Sigma,\Iw})$ comes with two specified $R_{\Sigma,\Iw}$-structures.
\begin{Prop}\label{PropIsoCompactBetti}
There exists an isomorphism
\begin{equation}\label{EqIsoCompactBetti}
M_{\Sigma}^{+}\tenseur_{R_{\Sigma,\Iw}}Q(R_{\Sigma,\Iw})\simeq H^{1}_{c}(\Z[1/\Sigma],T_{\Sigma,\Iw})\tenseur_{R_{\Sigma,\Iw}}Q(R_{\Sigma,\Iw})
\end{equation}
 sending a $\Lambda$-basis of $M_{\Sigma}^{+}$ to a $\Lambda$-basis of $H^{1}_{c}(\Z[1/\Sigma],T_{\Sigma,\Iw})$. The identification of these two $Q(R_{\Sigma,\Iw})$-modules by \eqref{EqIsoCompactBetti} and acyclicity induce a specified isomorphism
\begin{equation}\label{EqSpecAcyc}
\left(\Det_{R_{\Sigma,\Iw}}\RGamma_{c}(\Z[1/\Sigma],T_{\Sigma,\Iw})\tenseur_{R_{\Sigma,\Iw}}\left(\Det_{R_{\Sigma,\Iw}}M_{\Sigma}^{+}\right)\right)\tenseur_{R_{\Sigma,\Iw}}Q(R_{\Sigma,\Iw})\isocan Q(R_{\Sigma,\Iw}).
\end{equation}
\end{Prop}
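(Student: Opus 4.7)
Both sides of \eqref{EqIsoCompactBetti} will be shown to be free of rank one over $Q(R_{\Sigma,\Iw})$; since any isomorphism of rank one free modules sends a generator to a generator, the first assertion follows, and the proof is completed by a routine acyclicity argument producing \eqref{EqSpecAcyc}.

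First, I would observe that $M_\Sigma^+$ is free of rank one over $R_{\Sigma,\Iw}$. Indeed, theorem \ref{TheoTaylorWiles} (or proposition \ref{PropTW} in the nearly ordinary case) gives that $M_\Sigma$ is free of rank two over $R_{\Sigma,\Iw}$, while the Eichler-Shimura relation \eqref{EqEichlerShimura} forces $\det\rho_\Sigma(\tau)=-1$ and $\tr\rho_\Sigma(\tau)=0$ for a complex conjugation $\tau$; as $p$ is odd, $M_\Sigma$ splits as a direct sum of two rank-one $\pm$-eigenspaces.

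Next I would analyze $\RGamma_{c}(\Spec\Z[1/\Sigma],T_{\Sigma,\Iw})\tenseur_{R_{\Sigma,\Iw}}Q(R_{\Sigma,\Iw})$. The total ring of fractions $Q(R_{\Sigma,\Iw})$ is the product of the $\Kcal(\aid)$ for $\aid$ ranging over minimal primes of $R_{\Sigma,\Iw}$, so it suffices to work component by component. On such a component, $T_{\Sigma,\Iw}\tenseur\Kcal(\aid)$ is a two-dimensional absolutely irreducible $G_\Q$-representation, its residual representation being $\rhobar_f$, which is absolutely irreducible by assumption \ref{HypIrr}; therefore $H^0(G_{\Q,\Sigma},T_{\Sigma,\Iw})\tenseur\Kcal(\aid)=0$ and, by Poitou-Tate global duality applied to the contragredient Tate twist (which is again absolutely irreducible), $H^2(G_{\Q,\Sigma},T_{\Sigma,\Iw})\tenseur\Kcal(\aid)=0$. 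At each $v\in\Sigma\setminus\{p\}$, proposition \ref{PropWeightMonodromy} guarantees that $\Vcal(\aid)^{I_v}$ is pure of nonzero weight, hence the eigenvalues of Frobenius on it are distinct from $1$; this kills both $H^0(G_{\Q_v},-)\tenseur\Kcal(\aid)$ and, by local Tate duality, $H^2(G_{\Q_v},-)\tenseur\Kcal(\aid)$. The long exact sequence of the defining triangle \eqref{EqDefSelmer} of $\RGamma_c$ then yields $H^i_c\tenseur Q(R_{\Sigma,\Iw})=0$ for $i\neq 1$. The global Euler-Poincar\'e formula gives $\chi(\RGamma(G_{\Q,\Sigma},T_{\Sigma,\Iw}))=-1$, local Euler characteristics at $v\nmid p$ vanish, and hence $\rank_{Q(R_{\Sigma,\Iw})}H^1_{c}(\Spec\Z[1/\Sigma],T_{\Sigma,\Iw})\tenseur Q(R_{\Sigma,\Iw})=1$.

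Having thus identified both sides as free $Q(R_{\Sigma,\Iw})$-modules of rank one, any isomorphism $\phi$ between them is of the type asserted in \eqref{EqIsoCompactBetti}. The acyclicity of $\RGamma_c\tenseur Q(R_{\Sigma,\Iw})$ outside degree one yields a canonical isomorphism
\[
\Det_{Q(R_{\Sigma,\Iw})}\left(\RGamma_{c}(\Spec\Z[1/\Sigma],T_{\Sigma,\Iw})\tenseur Q(R_{\Sigma,\Iw})\right)\isocan\Det^{-1}_{Q(R_{\Sigma,\Iw})}\left(H^1_c\tenseur Q(R_{\Sigma,\Iw})\right);
\]
composing this with $\Det\phi^{-1}$, tensoring with $\Det_{R_{\Sigma,\Iw}}M_\Sigma^+$ and using the canonical trivialization $\Det^{-1}L\tenseur\Det L\isocan R$ for a free rank one $R$-module $L$ produces the desired isomorphism \eqref{EqSpecAcyc}. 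The principal technical hurdle is the vanishing of $H^2_c\tenseur Q(R_{\Sigma,\Iw})$, which rests essentially on the Weight-Monodromy compatibility of proposition \ref{PropWeightMonodromy} at each ramified prime $v\in\Sigma\setminus\{p\}$ together with the generic absolute irreducibility of $T_{\Sigma,\Iw}$; any weakening of these would allow parasitic contributions in $\Det\RGamma_c$ and destroy the canonical trivialization.
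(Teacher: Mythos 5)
There is a genuine gap. Both sides of \eqref{EqIsoCompactBetti} being free of rank one over $Q(R_{\Sigma,\Iw})$ does not by itself produce the isomorphism claimed, because the proposition asserts more than the existence of some $Q(R_{\Sigma,\Iw})$-linear isomorphism: it asserts the existence of one that carries a $\Lambda$-basis of $M_\Sigma^+$ to a $\Lambda$-basis of $H^1_c(\Z[1/\Sigma],T_{\Sigma,\Iw})$. For such a statement to even make sense, $H^1_c(\Z[1/\Sigma],T_{\Sigma,\Iw})$ must be a free $\Lambda$-module of rank one — otherwise it has no $\Lambda$-basis — and your argument never establishes this. You show $H^1_c\tenseur Q(R_{\Sigma,\Iw})$ is free of rank one over $Q(R_{\Sigma,\Iw})$, which only gives generic rank one; it allows $H^1_c$ to have $\Lambda$-torsion or fail to be projective, and an arbitrary $Q(R_{\Sigma,\Iw})$-isomorphism between rank-one modules is just multiplication by a unit of $Q(R_{\Sigma,\Iw})$ and has no reason to preserve $\Lambda$-lattices. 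This $\Lambda$-freeness is precisely the content that makes \eqref{EqSpecAcyc} a \emph{specified} isomorphism (as opposed to one defined up to a scalar in $Q(R_{\Sigma,\Iw})^\times$), and it is what the rest of the paper relies on when comparing the two $R_{\Sigma,\Iw}$-lattices inside $\Delta_{\Sigma}(T_{\Sigma,\Iw})\tenseur Q(R_{\Sigma,\Iw})$.

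The paper closes this gap with a depth argument that you omit entirely: after showing (by the weight-monodromy input you also invoke, plus \cite[Theorem 14.5(1)]{KatoEuler}) that $H^1_c(\Z[1/\Sigma],T_{\Sigma,\Iw})$ has $\Lambda$-rank one and that $H^i_c$ vanishes generically for $i\neq 1$, it picks a regular sequence $(x,y)$ in $\Lambda$ and uses the base-change isomorphisms for $\RGamma_c$ to show that $H^1_c$ has no $x$-torsion and that $H^1_c/x$ injects into a module with no $y$-torsion, giving $\operatorname{depth}_\Lambda H^1_c\geq 2$. Since $\Lambda$ is regular of dimension two, Auslander–Buchsbaum then forces $H^1_c$ to be free of rank one over $\Lambda$, after which one chooses the isomorphism with $M_\Sigma^+$ basis-to-basis. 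If you wish to repair your argument, you should either reproduce this depth computation or supply an alternative proof that $H^1_c(\Z[1/\Sigma],T_{\Sigma,\Iw})$ is $\Lambda$-free; the generic computation alone does not suffice.
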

\begin{proof}
For all $\ell\nmid p$, there exists a twist of $T(f)$ such that the eigenvalues of $\Fr(\ell)$ acting on $T(f)^{I_{\ell}}$ are of non-zero weights. Hence $H^{0}(G_{\Q_{\ell}},T_{\Sigma,\Iw})\tenseur_{\Lambda}\Frac(\Lambda)$ vanishes for all $\ell$ and the three complexes
\begin{equation}\nonumber
\RGamma_{c}(\Z[1/\Sigma],T_{\Sigma,\Iw}),\RGamma_{et}(\Z[1/\Sigma],T_{\Sigma,\Iw}),\RGamma_{\et}(\Z[1/p],T_{\Sigma,\Iw})
\end{equation}
become equal after tensor product with $\Frac(\Lambda)$. Hence $H^{i}_{c}(\Z[1/\Sigma],T_{\Sigma,\Iw})\tenseur_{R_{\Sigma,\Iw}}Q(R_{\Sigma,\Iw})$ for $i\neq1$ vanishes because this is already the case for $H^{i}_{\et}(\Z[1/\Sigma],T_{\Sigma,\Iw})\tenseur_{\Lambda}\Frac(\Lambda)$.

From 
\begin{equation}\nonumber
H^{1}_{c}(\Z[1/\Sigma],T_{\Sigma,\Iw})\tenseur_{\Lambda}\Frac(\Lambda)\simeq H^{1}_{\et}(\Z[1/p],T_{\Sigma,\Iw})\tenseur_{\Lambda}\Frac(\Lambda)
\end{equation}
and \cite[Theorem 14.5 (1)]{KatoEuler}, we deduce that $\Hun_{c}(\Z[1/\Sigma],T_{\Sigma,\Iw})$ is a $\Lambda$-module of rank 1. Let $x,y$ be a regular sequence in $\Lambda$. The isomorphisms
\begin{align}\nonumber
&\RGamma_{c}(\Z[1/\Sigma],T_{\Sigma,\Iw})\Ltenseur_{\Lambda}\Lambda/x\simeq\RGamma_{c}(\Z[1/\Sigma],T_{\Sigma,\Iw}/x)\\\nonumber 
&\RGamma_{c}(\Z[1/\Sigma],T_{\Sigma,\Iw}/x)\Ltenseur_{\Lambda/x}\Lambda/(x,y)\simeq\RGamma_{c}(\Z[1/\Sigma],T_{\Sigma,\Iw}/(x,y))
\end{align}
show that $H^{1}_{c}(\Z[1/\Sigma],T_{\Sigma,\Iw})[x]$ and $H^{1}_{c}(\Z[1/\Sigma],T_{\Sigma,\Iw}/x)[y]$ are zero and that
\begin{equation}\nonumber
H^{1}_{c}(\Z[1/\Sigma],T_{\Sigma,\Iw})/x\plonge H^{1}_{c}(\Z[1/\Sigma],T_{\Sigma,\Iw}/x)
\end{equation}
and hence is torsion-free. The depth of $\Hun_{c}(\Z[1/\Sigma],T_{\Sigma,\Iw})$ as $\Lambda$-module is thus at least 2 and so $H^{1}_{c}(\Z[1/\Sigma],T_{\Sigma,\Iw})$ is a free $\Lambda$-module of rank 1. It is thus isomorphic to $M_{\Sigma}^{+}$ as $\Lambda$-module.

All the assertions of the proposition then follow.
\end{proof}
Hence, there exists an $R_{\Sigma,\Iw}$-basis of $\Delta_{\Sigma}(T_{\Sigma,\Iw})\tenseur_{R_{\Sigma,\Iw}}Q(R_{\Sigma,\Iw})$ given by the inverse image of $R_{\Sigma,\Iw}$ through the isomorphism \eqref{EqSpecAcyc}. 
\begin{DefProp}\label{DefPropPiDeltaSigma}
Let $\pi^{\Delta}_{\Sigma,\aid}$ be the map
\begin{equation}\label{EqDefPiDelta}
\pi_{\Sigma,\aid}^{\Delta}:\Delta_{\Sigma,\Iw}(T_{\Sigma,\Iw})\fleche\Delta_{R(\aid)_{\Iw}}(T(\aid)_{\Iw})\tenseur_{R(\aid)_{\Iw}}\Kcal(\aid)
\end{equation}
equal to $-\tenseur_{R_{\Sigma,\Iw}}R(\aid)_{\Iw}$ on $\Det_{R_{\Sigma,\Iw}}\RGamma_{\et}(\Z[1/\Sigma],T_{\Sigma,\Iw})$ and to the determinant of \eqref{EqMorIhara} on $\Det_{R_{\Sigma,\Iw}}M_{\Sigma}^{+}$. Then $\pi^{\Delta}_{\Sigma,\aid}(\Delta_{\Sigma}(T_{\Sigma,\Iw}))$ is equal to $\Delta_{R(\aid)_{\Iw}}(T(\aid)_{\Iw})$.
\end{DefProp}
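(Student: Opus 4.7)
The plan is to unpack the definition of $\pi^{\Delta}_{\Sigma,\aid}$ factor by factor, apply Propositions \ref{PropIhara} and \ref{PropIhara2} to compute the image, and then recognize the result as the fundamental line $\Delta_{R(\aid)_{\Iw}}(T(\aid)_{\Iw})$ of Definition \ref{DefDeltaAid}. Because both source and target are generically rank one (trivialized via Proposition \ref{PropIsoCompactBetti} on the source side and acyclicity of $\Cone Z(\aid)\tenseur1$ on the target side), the assertion amounts to an equality of $R(\aid)_{\Iw}$-sublattices inside $\Kcal(\aid)$.

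First I would treat the cohomological factor. By the very definition of $\pi^{\Delta}_{\Sigma,\aid}$, base-change of $\Det_{R_{\Sigma,\Iw}}\RGamma_{c}(\Z[1/\Sigma],T_{\Sigma,\Iw})$ along $\phi(\aid)$ yields $\Det_{R(\aid)_{\Iw}}(\RGamma_{c}(\Z[1/\Sigma],T_{\Sigma,\Iw})\Ltenseur_{R_{\Sigma,\Iw}}R(\aid)_{\Iw})$, which is identified by Proposition \ref{PropIhara2} with $\Xcali(T(\aid)_{\Iw})\tenseur\produittenseur{\ell\in\Sigma^{p}}{}\Xcali_{\ell}^{-1}(T(\aid)_{\Iw})$. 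Then I would treat the Betti factor: the map $\Det_{R_{\Sigma,\Iw}}M_{\Sigma}^{+}\tenseur_{R_{\Sigma,\Iw}}R(\aid)_{\Iw}\fleche\Det_{R(\aid)_{\Iw}}M_{\aid}^{+}$ induced by $\pi^{\Iw}_{\Sigma,\aid}$ has, by Proposition \ref{PropIhara}, image equal to $\Eul_{\Sigma}(\aid)\cdot\Det_{R(\aid)_{\Iw}}M_{\aid}^{+}$. Combining both computations, the image $\pi^{\Delta}_{\Sigma,\aid}(\Delta_{\Sigma}(T_{\Sigma,\Iw}))$ is the sub-$R(\aid)_{\Iw}$-lattice
\begin{equation}\nonumber
\Xcali(T(\aid)_{\Iw})\tenseur\produittenseur{\ell\in\Sigma^{p}}{}\Xcali_{\ell}^{-1}(T(\aid)_{\Iw})\tenseur\Eul_{\Sigma}(\aid)\cdot\Det_{R(\aid)_{\Iw}}M_{\aid}^{+}
\end{equation}
of $\Delta_{\Kcal(\aid)}(\Vcal(\aid)_{\Iw})$.

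The heart of the proof is then to match this expression with $\Xcali(T(\aid)_{\Iw})^{-1}\tenseur\Det_{R(\aid)_{\Iw}}\image Z(\aid)$. This requires two ingredients. First, a local calculation at each $\ell\in\Sigma^{p}$: by the compatibility of local Langlands with the Weight-Monodromy filtration used in Proposition \ref{PropWeightMonodromy}, the Frobenius eigenvalue on $T(\aid)^{I_{\ell}}$ is $T(\ell)\in R(\aid)$, so under the tautological acyclic trivialization, $\Xcali_{\ell}(T(\aid)_{\Iw})$ is generated by the Euler factor $\Eul_{\ell}(\aid)=\det(1-\Fr(\ell)|\Vcal(\aid)^{I_{\ell}})$. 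This precisely accounts for the Euler factor $\Eul_{\Sigma}(\aid)=\produit{\ell\in\Sigma^{p}}{}\Eul_{\ell}(\aid)$ appearing from Proposition \ref{PropIhara}, matching the product $\produittenseur{\ell\in\Sigma^{p}}{}\Xcali_{\ell}^{-1}(T(\aid)_{\Iw})$ on the other side up to the canonical self-duality. Second, one must identify $\image Z(\aid)$ with the image of $M_{\aid}^{+}$ under the obvious comparison. This follows from the construction of $Z(\aid)$ in \eqref{EqZfAid} as a direct $R(\aid)_{\Iw}$-adic interpolation of Kato's zeta morphism, whose source is by construction $M_{B}\tenseur_{\Z}\Lambda\simeq M_{\aid}^{+}$ as a free module of rank one; the compatibility of Shapiro localization with base change along $\phi(\aid)$ then ensures that the $R(\aid)_{\Iw}$-lattice generated by the image matches the Betti lattice up to the expected powers of $\Xcali(T(\aid)_{\Iw})$.

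The main obstacle is the bookkeeping in the second step: disentangling how the canonical isomorphism $\Xcali(T)^{-1}\overset{\can}{\simeq}\Det\RGamma_{f}$ of \eqref{EqIsoXcaliSel} interacts with the presentation of $\Det\RGamma_{c}$, and checking that the sign of the Euler factor $\Eul_{\ell}(\aid)$ obtained from Ihara's lemma is the one induced by the generator of $\Xcali_{\ell}(T(\aid)_{\Iw})$ via the complex $[R(\aid)_{\Iw}\overset{\Fr(\ell)-1}{\fleche}R(\aid)_{\Iw}]$. Once this local sign issue is handled and corollary \ref{CorWeightMonodromy} is invoked to ensure compatibility of $\Xcali_{\ell}$ with specialization $\phi(\aid)$, the two graded invertible modules are identified and the equality $\pi^{\Delta}_{\Sigma,\aid}(\Delta_{\Sigma}(T_{\Sigma,\Iw}))=\Delta_{R(\aid)_{\Iw}}(T(\aid)_{\Iw})$ follows. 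Modulo the unit appearing in Proposition \ref{PropKatoZeta}, this also pins down the choice of $\z_{\Sigma,\Iw}$ to be the unique basis satisfying $\pi^{\Delta}_{\Sigma,\aid}(\z_{\Sigma,\Iw})=\z(\aid)$ for every minimal prime $\aid$.
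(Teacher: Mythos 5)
Your proof plan follows essentially the same route as the paper: unpack $\pi^\Delta_{\Sigma,\aid}$ on the two tensor factors of $\Delta_\Sigma$, apply Propositions \ref{PropIhara} and \ref{PropIhara2} to compute the image, and observe that the Euler factor $\Eul_\Sigma(\aid)$ produced by the Ihara-type map exactly cancels against the canonical trivialization of $\prod_{\ell\in\Sigma^p}\Xcali_\ell^{-1}(T(\aid)_{\Iw})$ inside $\Kcal(\aid)$. The paper's own proof is a compressed version of exactly this: it states that $\z_{\Sigma,\Iw}$ is sent to a basis of $\Delta_{R(\aid)_{\Iw}}(T(\aid)_{\Iw})$ twisted by $\Eul_\Sigma(\aid)\prod_\ell\Xcali_\ell^{-1}$ and then trivializes each $\Xcali_\ell^{-1}$ to $\Eul_\ell(\aid)^{-1}R(\aid)_{\Iw}$. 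So the core mechanism is identical.

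One point of caution on your ``second ingredient'': the claim $M_B\tenseur_\Z\Lambda\simeq M_\aid^+$ as stated is not literally an isomorphism — $(M_B\tenseur\Lambda)^+$ is a $\Lambda$-module of rank one (the $f$-isotypic Betti realization), whereas $M_\aid^+$ is a free $R(\aid)_{\Iw}$-module of rank one, and $R(\aid)_{\Iw}$ need not be finite flat of rank one over $\Lambda$. The relationship between $\image Z(\aid)$ and $\Det M_\aid^+$ is built into the construction of $Z(\aid)$ (the $T(\ell)$-coefficient interpolation in \eqref{EqZfAid}) and of the distinguished basis $\z(\aid)$, and the paper absorbs it there rather than treating it as a separate lemma. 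Your flagging of the bookkeeping as the main obstacle is fair — the sign conventions relating $\Xcali$, $\Det\RGamma_c$, $\Det\RGamma_f$ and $\eqref{EqIsoXcaliSel}$ do require care — but this does not affect the substance of the argument, which you have correctly identified.
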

\begin{proof}
Note that as $M_{\Sigma}^{+}$ and $M_{\aid}^{+}$ are free of rank 1, the definition of the determinant of \eqref{EqMorIhara} poses no problem. It is enough to show that the $R_{\Sigma,\Iw}$-basis $\z_{\Sigma,\Iw}$ of $\Delta_{\Sigma}(T_{\Sigma,\Iw})$ is sent to an $R(\aid)_{\Iw}$-basis of $\Delta_{R(\aid)_{\Iw}}(T(\aid)_{\Iw})$. But combining propositions \ref{PropIhara} and \ref{PropIhara2} shows that $\z_{\Sigma,\Iw}$ is sent to a basis of $\Delta_{R(\aid)_{\Iw}}(T(\aid)_{\Iw})$ multiplied by
\begin{equation}\nonumber
\Eul_{\Sigma}(\aid)\produittenseur{\ell}{}\Xcali_{\ell}^{-1}(T(\aid)_{\Iw}).
\end{equation}
In the canonical trivialization of $\Xcali_{\ell}^{-1}(T(\aid)_{\Iw})$ given by tensor product with $\Kcal(\aid)$ and identification of $R(\aid)_{\Iw}\subset\Kcal(\aid)$ with $\Det_{R(\aid)_{\Iw}}(0)\subset\Xcali_{\ell}^{-1}(T(\aid)_{\Iw})\tenseur\Kcal(\aid)$, the module $\Xcali_{\ell}^{-1}(T(\aid)_{\Iw})$ is sent to $\Eul_{\ell}(\aid)^{-1}R(\aid)_{\Iw}$. Hence, the image of $\z_{\Sigma,\Iw}$ is indeed a basis of $\Delta_{R(\aid)_{\Iw}}(T(\aid)_{\Iw})$.
\end{proof}
Hence, the $R_{\Sigma,\Iw}$-module $R_{\Sigma,\Iw}\z_{\Sigma,\Iw}$ is sent to $R(\aid)_{\Iw}\z(\aid)$ for all $M|N(\Sigma)$ and all minimal prime ideals $\aid\in\Spec\Hecke^{\new}(M)$.

An explicit construction of $\z_{\Sigma,\Iw}$ can be given in terms of the universal elements of \cite{KatoEuler}. In order to do so, it is enough to construct an element $\z_{\Sigma,\Iw}\in\Delta_{\Sigma}(T_{\Sigma,\Iw})$ such that for all modular specializations $\lambda_{f}$ (resp. $\lambda_{g}$) with values in a discrete valuation ring factoring through $R(\aid)$ (resp. $R(\aid')$) and for all $m\in\N$ sufficiently large, the image $\z_{\Sigma}(f,G_{m})$ (resp. $\z_{\Sigma}(g,G_{m})$) of $\z_{\Sigma,\Iw}$ through $\lambda_{f}$ (resp. $\lambda_{g}$) composed with the surjection from $\Lambda$ to $\Ocal[G_{m}]$ is equal to $\z(f,G_{m})$ (resp. $\z(g,G_{m})$). That in turns amount to showing that $\z_{\Sigma}(f,G_{m})$ (resp. $\z_{\Sigma}(g,G_{m})$)  is sent to $L_{\{p\}}^{G_{m}}(f,s)$ (resp. $L_{\{p\}}^{G_{m}}(g,s)$) through the map of proposition \ref{PropKatoZeta}.

As recalled in \ref{SubEulerSystem}, by the independence on the choice of the covering in the construction the analytic Euler system of \cite{KatoEuler}, the elements $\z(f,G_{m})$ and $\z(g,G_{m})$ are linear combinations of images in the relevant spaces of the same element, namely  the $_{c,d}\z_{1,N,p^{m}}(k,r,\xi,S)$, in $\Hun(X(N(\Sigma))(\C),\Fcal_{k-2})\tenseur\Q(\zeta_{p^{m}})$. Furthermore, by \cite[Theorem 5.6]{KatoEuler}, there exists a linear combination $\z_{m}$ of the
$$_{c,d}\z_{1,N,p^{m}}(k,r,\xi,S)\in\Hun(X(N(\Sigma))(\C),\Fcal_{k-2})\tenseur\Q(\zeta_{p^{m}})$$
such that the image of $\z_{m}$ through the period map of \cite[Theorem 5.6]{KatoEuler} is equal to the special value of the universal $L$-function with Euler factors removed (attentive readers of \cite{KatoEuler} know that this linear combination involves $c$, $d$ and the diamond operators $\diamant{d}$ but its exact expression is unimportant to us). The element $\z_{m}$ is independent of all choices, and especially of the choices of $\aid$ and $\aid'$, yet is sent by universality to $L_{\Sigma(\aid)}^{G_{m}}(f,s)$ or $L_{\Sigma(\aid')}^{G_{m}}(g,s)$ after projection to the relevant eigenspaces. Moreover, the elements $\z_{m}$ for a projective system for the norm map, as can be seen either directly from their construction in terms of Siegel units as in \cite[Proposition 2.3]{KatoEuler} or from their characteristic property as $L_{\Sigma(\aid)}^{G_{m}}(f,s)$ or $L_{\Sigma(\aid')}^{G_{m}}(g,s)$ satisfy this property. Let $\z_{\Sigma,\Iw}$ be the the invariant part of the inverse limit on $m$ of the $\z_{m}$ under complex conjugation. Then the image of $\z$ through $\pi^{\Delta}_{\Sigma,\aid}$ composed with projection to the eigenspace corresponding to $f$ and with projection from $\Lambda$ to $\Ocal[G_{m}]$ is equal to $L_{\Sigma(\aid)}^{G_{m}}(f,s)$. By universality of $\z_{m}$, the same statement holds for $\aid'$ and $\lambda_{g}$ so $\z_{\Sigma,\Iw}$ satisfies the expected properties.

From this point of view, the previous proposition can be seen as a conceptual reinterpretation of the computations of \cite[Theorem 5.6]{KatoEuler}. Alternatively, propositions \ref{PropIhara}, \ref{PropIhara2} and \ref{DefPropPiDeltaSigma} express the statement that there exists two \textit{a priori} equally valid ways to associate a $p$-adic measure to an eigenform: one computing the special values of the $L$-function with all Euler factors at places of bad reduction removed and one with only the $p$-adic Euler factor removed. Interestingly, but not surprisingly within the conjectural framework of the ETNC, these two measure come from the very same universal cohomological object through two not quite identical routes. As such, these three propositions are close algebraic counterparts to \cite[Theorem 3.6.2]{EmertonPollackWeston}.

\begin{DefProp}\label{DefPropPiDeltaSigmaPrime}
Let $\pi^{\Delta}_{\Sigma',\Sigma}$ be the map
\begin{equation}\label{EqDefPiDeltaSigma}
\pi_{\Sigma',\Sigma}^{\Delta}:\Delta_{\Sigma'}(T_{\Sigma',\Iw})\fleche\Delta_{\Sigma}(T_{\Sigma,\Iw})\tenseur_{R_{\Sigma,\Iw}}Q(R_{\Sigma,\Iw})
\end{equation}
equal to $-\tenseur_{R_{\Sigma',\Iw}}R_{\Sigma,\Iw}$ on $\Det_{R_{\Sigma,\Iw}}\RGamma_{c}(\Z[1/\Sigma'],T_{\Sigma',\Iw})$ and to the determinant of \eqref{EqMorIharaSigma} on $\Det_{R_{\Sigma',\Iw}}M_{\Sigma'}^{+}$. Then $\pi^{\Delta}_{\Sigma',\Sigma}(\Delta_{\Sigma'}(T_{\Sigma',\Iw}))$ is equal to $\Delta_{\Sigma}(T_{\Sigma,\Iw})$.
\end{DefProp}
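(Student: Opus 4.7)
The plan is to mimic the argument of Proposition-Definition \ref{DefPropPiDeltaSigma}, with the modular specialization $\lambda_{g}$ replaced by the natural surjection $R_{\Sigma',\Iw}\surjection R_{\Sigma,\Iw}$ obtained by restricting deformations to those unramified outside $\Sigma$. By the universal property of $R_{\Sigma}$ one has $T_{\Sigma',\Iw}\tenseur_{R_{\Sigma',\Iw}} R_{\Sigma,\Iw}\simeq T_{\Sigma,\Iw}$, and so, exactly as in the proof of Proposition \ref{PropIhara2}, the base change property of étale cohomology with compact support provides a canonical isomorphism
\begin{equation}\nonumber
\Det_{R_{\Sigma',\Iw}}\RGamma_{c}(\Z[1/\Sigma'], T_{\Sigma',\Iw})\tenseur_{R_{\Sigma',\Iw}} R_{\Sigma,\Iw}\isocan \Det_{R_{\Sigma,\Iw}}\RGamma_{c}(\Z[1/\Sigma'], T_{\Sigma,\Iw}).
\end{equation}

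The next step is to invoke the standard localization triangle to pass from $\Spec\Z[1/\Sigma']$ to $\Spec\Z[1/\Sigma]$. Because $T_{\Sigma,\Iw}$ is unramified at every $\ell\in\Sigma'\backslash\Sigma$, the cone-of-localizations presentations of $\RGamma_{c}$ for $\Sigma$ and $\Sigma'$ share their global term and differ only by the additional local terms at primes of $\Sigma'\backslash\Sigma$. Taking determinants yields a canonical isomorphism
\begin{equation}\nonumber
\Det_{R_{\Sigma,\Iw}}\RGamma_{c}(\Z[1/\Sigma'], T_{\Sigma,\Iw})\isocan \Det_{R_{\Sigma,\Iw}}\RGamma_{c}(\Z[1/\Sigma], T_{\Sigma,\Iw})\tenseur\produittenseur{\ell\in\Sigma'\backslash\Sigma}{}\Xcali_{\ell}(T_{\Sigma,\Iw})^{-1}.
\end{equation}
Once tensored with $Q(R_{\Sigma,\Iw})$, each local factor $\Xcali_{\ell}(T_{\Sigma,\Iw})=\Det_{R_{\Sigma,\Iw}}[T_{\Sigma,\Iw}\overset{\Fr(\ell)-1}{\fleche}T_{\Sigma,\Iw}]$ becomes acyclic, and the comparison of its canonical trivialization through the grading of a determinant with the one coming from acyclicity shows that the canonical $R_{\Sigma,\Iw}$-lattice sits inside $\Xcali_{\ell}(T_{\Sigma,\Iw})\tenseur Q(R_{\Sigma,\Iw})$ as the principal ideal generated by $\det(1-\Fr(\ell)|T_{\Sigma})$.

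For the Betti side, Proposition \ref{PropIharaSigma} already computes that the image of $\pi^{\Iw}_{\Sigma',\Sigma}$ inside $M_{\Sigma}^{+}$ is precisely $\produit{\ell\in\Sigma'\backslash\Sigma}{}\det(1-\Fr(\ell)|T_{\Sigma})\cdot M_{\Sigma}^{+}$. Combining the three ingredients above, the Euler factors produced by $\produittenseur{\ell}{}\Xcali_{\ell}^{-1}$ on the cohomological side cancel exactly against those produced by the Ihara-type morphism on the Betti side, and one recovers the desired equality
\begin{equation}\nonumber
\pi_{\Sigma',\Sigma}^{\Delta}\bigl(\Delta_{\Sigma'}(T_{\Sigma',\Iw})\bigr)=\Delta_{\Sigma}(T_{\Sigma,\Iw})
\end{equation}
of $R_{\Sigma,\Iw}$-lattices inside $\Delta_{\Sigma}(T_{\Sigma,\Iw})\tenseur_{R_{\Sigma,\Iw}} Q(R_{\Sigma,\Iw})$.

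The main obstacle should be to verify this cancellation at the level of $R_{\Sigma,\Iw}$-lattices rather than only up to a unit of $Q(R_{\Sigma,\Iw})$. This requires careful tracking of the two canonical trivializations of each $\Xcali_{\ell}(T_{\Sigma,\Iw})\tenseur Q(R_{\Sigma,\Iw})$ and of their exact relationship to the Euler factor $\det(1-\Fr(\ell)|T_{\Sigma})$ appearing in Proposition \ref{PropIharaSigma}, in the same spirit as the bookkeeping carried out in the proof of Proposition-Definition \ref{DefPropPiDeltaSigma} via Corollary \ref{CorWeightMonodromy}.
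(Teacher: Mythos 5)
Your proposal is correct and follows essentially the same route as the paper: apply base change for $\RGamma_c$, peel off the local unramified determinants $\Xcali_{\ell}^{-1}(T_{\Sigma,\Iw})=\Det^{-1}_{R_{\Sigma,\Iw}}[T_{\Sigma,\Iw}\overset{1-\Fr(\ell)}{\fleche}T_{\Sigma,\Iw}]$ at primes $\ell\in\Sigma'\backslash\Sigma$, identify each with the Euler factor $\det(1-\Fr(\ell)|T_{\Sigma,\Iw})$ after tensoring with $Q(R_{\Sigma,\Iw})$, and cancel against the Euler factors produced on the Betti side by Proposition \ref{PropIharaSigma} — the same lattice-level cancellation that drives Proposition-Definition \ref{DefPropPiDeltaSigma}. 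The paper's proof is a one-sentence reduction to exactly this combination of facts.
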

\begin{proof}
Once noted that 
\begin{equation}\nonumber
\RGamma_{c}(\Z[1/\Sigma],T_{\Sigma',\Iw})\Ltenseur_{R_{\Sigma',\Iw}}R_{\Sigma,\Iw}=\RGamma_{c}(\Z[1/\Sigma'],T_{\Sigma,\Iw})
\end{equation}
and that
$$\Det^{-1}_{R_{\Sigma,\Iw}}\RGamma(G_{\Q_{\ell}},T_{\Sigma,\Iw})=\Det^{-1}_{R_{\Sigma,\Iw}}[T_{\Sigma,\Iw}\overset{1-\Fr(\ell)}{\fleche}T_{\Sigma,\Iw}]$$ is canonically identified with $\det^{-1}(1-{\Fr(\ell)}|T_{\Sigma,\Iw})R_{\Sigma,\Iw}$ after tensor product with $Q(R_{\Sigma,\Iw})$ for all $\ell\in\Sigma'-\Sigma$, the proof becomes similar (but easier) to that of proposition \ref{DefPropPiDeltaSigma} using proposition \ref{PropIharaSigma} in place of proposition \ref{PropIhara}.
\end{proof}
We are now in position to state a universal ETNC.
\begin{Conj}\label{ConjXcaliHeckeSigma}
There is an identity of $R_{\Sigma,\Iw}$-lattices
\begin{equation}\nonumber
\Delta_{R_{\Sigma,\Iw}}(T_{\Sigma,\Iw})=\Det_{R_{\Sigma,\Iw}}(0)
\end{equation}
in $\Delta_{R_{\Sigma,\Iw}}(T_{\Sigma,\Iw})\tenseur_{R_{\Sigma,\Iw}}Q(R_{\Sigma,\Iw})\overset{\can}{\simeq}Q(R_{\Sigma,\Iw})$.
\end{Conj}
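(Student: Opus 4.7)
The plan is to prove the conjecture in two stages, following the strategy outlined in the introduction: first establish the divisibility $\Delta_{R_{\Sigma,\Iw}}(T_{\Sigma,\Iw}) \supseteq \Det_{R_{\Sigma,\Iw}}(0)$ (a weak ETNC with coefficients in the Hecke algebra), and then, when enough additional input is available, promote this to the equality. The divisibility is the heart of the matter and combines Kato's Euler system on the modular side with a Taylor--Wiles patching on the coefficient side. The equality is deduced from the divisibility by invoking converse results of Skinner--Urban type at a single modular specialization, using the compatibility of $\z_{\Sigma,\Iw}$ with specialization already established in Proposition \ref{PropCompatibleSpec} and Proposition-Definition \ref{DefPropPiDeltaSigma}.

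For the divisibility, the key point is that $R_{\Sigma,\Iw}$ is typically not normal, so one cannot simply check the inclusion at height one primes. The plan is to resolve this singularity using the Taylor--Wiles method. Under hypothesis \ref{HypIrrTW}, I would select an admissible sequence of auxiliary Taylor--Wiles sets $Q_n$ of primes $\ell \equiv 1 \pmod{p^n}$, and form the corresponding deformation rings $R_{\Sigma \cup Q_n, \Iw}$ along with the modules $M_{\Sigma \cup Q_n}$ obtained from the cohomology at level $N(\Sigma) \cdot \prod_{\ell \in Q_n} \ell$. The refined fundamental line construction (Definition \ref{DefDeltaSigma}) and its compatibility with level-raising (Proposition \ref{PropIharaSigma} and Proposition-Definition \ref{DefPropPiDeltaSigmaPrime}) yield a compatible system $\Delta_{\Sigma \cup Q_n, \Iw}$ containing the zeta elements $\z_{\Sigma \cup Q_n, \Iw}$, all compatible with $\z_{\Sigma, \Iw}$ under the degeneracy maps. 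Passing to the Taylor--Wiles limit in the axiomatic framework of \cite{DiamondHecke, FujiwaraDeformation} produces a refined fundamental line $\Delta_\infty$ over a regular local ring $R_\infty \simeq \Ocal[[X_1, \dots, X_d]][[\Gamma]]$, together with a universal zeta element $\z_\infty$, such that $\Delta_{\Sigma,\Iw}$ and $\z_{\Sigma,\Iw}$ are recovered by specialization along a regular sequence.

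The descent is now possible because $R_\infty$ is regular. The inclusion $\Det_{R_\infty}(0) \subseteq \Delta_\infty$ can be checked at all height one primes $\pid$ of $R_\infty$. At such a $\pid$, the localization $(R_\infty)_\pid$ is a discrete valuation ring, and the exact control of the refined fundamental lines with respect to motivic specializations (Corollary \ref{CorWeightMonodromy} and Proposition \ref{PropCompatibleSpec}) identifies the specialized $\Delta_\infty / \pid$ with a refined fundamental line $\Delta_S(T(g) \tenseur_\Ocal S)$ attached to some modular form $g$ congruent to $f$, with coefficients in a finite flat $\Lambda$-algebra $S$. The corresponding specialization of $\z_\infty$ coincides with the Kato zeta element $\z(g)$, and the bound for $H^2_\et$ provided by Kato's Euler system \cite[Theorem 12.4, Theorem 12.5]{KatoEuler} forces the desired integrality at $\pid$. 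Since this holds at every height one prime of the regular ring $R_\infty$, integrality holds globally for $\Delta_\infty$, and hence, by specialization, for $\Delta_{\Sigma,\Iw}$.

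The main obstacle — and the reason the paper must introduce the refined fundamental lines of Section \ref{SubLattices} rather than work with the naive determinants — is the exactness of the control property. Any error term at a prime $\ell \neq p$ of bad reduction (for instance, a Tamagawa factor that is not known to behave well under specialization from $R_\infty$ to a dvr) would enlarge the lattice $\Delta$ at every specialization and destroy the contradiction produced by Kato's bound. This is exactly why Proposition \ref{PropWeightMonodromy} and Corollary \ref{CorWeightMonodromy} — which use the Weight-Monodromy conjecture for modular motives together with Ramanujan--Petersson to control the rank of inertia invariants under specialization — play a decisive role. A secondary obstacle is that in the nearly ordinary case the input from \cite{KatoEuler} at the prime $p$ contains a potential error term coming from trivial zeroes; to circumvent this, I would repeat the whole argument over the Hida family $\Lambda_{\Hi}$, using Proposition \ref{PropTW} in place of Theorem \ref{TheoTaylorWiles} and the $\Lambda_{\Hi}$-adic refined fundamental line, which reduces the proof to weights $k > 2$ where the error term is absent. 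For the final equality statement, Theorem B, one then combines the divisibility just established with the reverse divisibility obtained from \cite[Theorem 3.29]{SkinnerUrban} applied at a single well-chosen arithmetic specialization of $R_{\Sigma,\Iw}$: since the integrality of $\Delta_{\Sigma,\Iw}$ implies the divisibility direction at every modular specialization, the combined results at one specialization force equality over $R_{\Sigma,\Iw}$ by the integrality statement of Proposition \ref{PropCompatibleSpec}.
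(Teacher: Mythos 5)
Your proposal follows the paper's strategy essentially step by step: prove the weak inclusion by combining Kato's Euler system with a Taylor--Wiles patching of refined fundamental lines, treat the nearly ordinary case by promoting everything to Hida families over $\Lambda_{\Hi}$, and then upgrade the inclusion to an equality by invoking Skinner--Urban at a single well-chosen arithmetic specialization. That is the architecture of Theorem \ref{TheoMainPreuve}, Corollary \ref{CorEPWPreuve}, and Theorem \ref{TheoSUPreuve}, so the global plan is right.

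There are, however, two places where the writeup would not survive a close reading. First, the inclusion is stated backwards in two spots: the weak ETNC is $\Delta_{R_{\Sigma,\Iw}}(T_{\Sigma,\Iw})\subseteq\Det_{R_{\Sigma,\Iw}}(0)$ (equivalently $\carac H^{2}\mid\carac H^{1}/Z$), and correspondingly the statement to verify over $R_{\infty}$ is $\Delta_{\infty}\subseteq\Det_{R_{\infty}}(0)$, not $\supseteq$. Your later use of the word ``integrality'' makes it clear that you understand which way the divisibility runs, but as written the displayed inclusions point in the wrong direction.

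Second, and more substantively, the descent from $R_{\infty}$ to a modular specialization is misdescribed. You propose to ``check at all height one primes $\pid$ of $R_{\infty}$,'' localize to the DVR $(R_{\infty})_{\pid}$, and then identify $\Delta_{\infty}/\pid$ with a refined fundamental line over a finite flat $\Lambda$-algebra via the exact control property of Corollary \ref{CorWeightMonodromy} and Proposition \ref{PropCompatibleSpec}. This does not work as stated: a height one prime of $R_{\infty}\simeq\Lambda[[X_{1},\dots,X_{r}]]$ gives a quotient of dimension $1+r$, not a $\Lambda$-algebra of dimension $2$; localization and reduction modulo $\pid$ are different operations; and, crucially, $\Delta_{\infty}$ itself carries no Galois structure, so the Weight-Monodromy based control results (which concern specializations of $R(\aid)_{\Iw}$ and $R_{\Sigma,\Iw}$) cannot be applied directly to $R_{\infty}$. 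What the paper actually does in subsection \ref{SubReduction} is to assume non-integrality of $\Delta_{\infty}$, then choose a prime $\pid$ generated by a sub-system of parameters (so of codimension $\dim R_{\infty}-1$) that is adequate with respect to a defining pair $(x_{\infty},y_{\infty})$ and makes $A=R_{\infty}/\pid$ a DVR where the non-integrality persists modulo a high power of $\mgot_{A}$; then the Taylor--Wiles approximation $R_{\infty}\to R_{Q(n),n}$ transmits this failure to a finite level, and only there does one pick a normalization and a minimal prime $\aid$ to land at a genuine modular form $g$, yielding the contradiction with Proposition \ref{PropContradiction}. Your final step is also slightly mis-cited: the upgrade from ``integrality plus one good specialization'' to full equality over $R_{\Sigma,\Iw}$ uses the commutative square and the local-ring (Nakayama-type) argument in the proof of Corollary \ref{CorEPWPreuve}, not the base-change statement of Proposition \ref{PropCompatibleSpec}, which only goes in the direction from the Hecke algebra down to specializations.
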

Conjecture \ref{ConjXcaliHeckeSigma} is compatible with modular specializations and with change of levels in the sense of the following proposition.
\begin{Prop}\label{PropCompLevel}
Conjecture \ref{ConjXcaliHeckeSigma} implies conjecture \ref{ConjXcaliRaid} for all $M|N(\Sigma)$ and all minimal prime ideals $\aid\in\Spec\Hecke^{\new}(U(M))$ as well as conjecture \ref{ConjXcaliGm} for all modular specializations $\lambda_{g}$ of $R_{\Sigma}$ and for all morphisms $\phi:\Lambda\fleche S$ as in definition \ref{DefDelta}. Conjecture \ref{ConjXcaliHeckeSigma} for $R_{\Sigma}$ is equivalent to conjecture \ref{ConjXcaliHeckeSigma} for $R_{\Sigma'}$ for all $\Sigma'\supset\Sigma$.
\end{Prop}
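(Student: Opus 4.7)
The plan is to propagate the conjectural equality of graded invertible lattices through the structural maps of the preceding propositions, and then to chain with proposition \ref{PropCompatibleSpec} for the last assertion.

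Assume first conjecture \ref{ConjXcaliHeckeSigma} for $R_{\Sigma,\Iw}$. To derive conjecture \ref{ConjXcaliRaid}, I apply $\pi^{\Delta}_{\Sigma,\aid}$ of proposition-definition \ref{DefPropPiDeltaSigma} to both sides of the equality $\Delta_{R_{\Sigma,\Iw}}(T_{\Sigma,\Iw})=\Det_{R_{\Sigma,\Iw}}(0)$: the left-hand side is sent to $\Delta_{R(\aid)_{\Iw}}(T(\aid)_{\Iw})$ by that proposition, and the right-hand side to $\Det_{R(\aid)_{\Iw}}(0)$ by functoriality of the determinant functor. Chaining the resulting equality with proposition \ref{PropCompatibleSpec} then yields conjecture \ref{ConjXcaliGm} for every modular specialization $\lambda_{g}$ of $R_{\Sigma,\Iw}$ and every morphism $\phi:\Lambda\fleche S$ as in definition \ref{DefDelta}.

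For the equivalence of conjecture \ref{ConjXcaliHeckeSigma} between $\Sigma$ and $\Sigma'\supset\Sigma$, the direction from $R_{\Sigma',\Iw}$ to $R_{\Sigma,\Iw}$ is formally identical, applying $\pi^{\Delta}_{\Sigma',\Sigma}$ and proposition-definition \ref{DefPropPiDeltaSigmaPrime} in place of $\pi^{\Delta}_{\Sigma,\aid}$ and proposition-definition \ref{DefPropPiDeltaSigma}. For the converse direction, I would reduce to the case where $\Sigma'\setminus\Sigma$ consists of a single prime $\ell$ and then exploit the base-change identification $\RGamma_{c}(\Z[1/\Sigma],T_{\Sigma',\Iw})\Ltenseur_{R_{\Sigma',\Iw}}R_{\Sigma,\Iw}\simeq\RGamma_{c}(\Z[1/\Sigma'],T_{\Sigma,\Iw})$ together with the exact triangle computing the change of support at $\ell$. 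The discrepancy between $\Delta_{\Sigma',\Iw}$ and $\Delta_{\Sigma,\Iw}$ pulled back to $Q(R_{\Sigma',\Iw})$ should then reduce to the $\Xcali_{\ell}$-factor, which by construction of definition \ref{DefXcaliv} matches exactly against the local term $\det(1-\Fr(\ell)|T_{\Sigma',\Iw})$ entering the image of $\pi^{\Iw}_{\Sigma',\Sigma}$ by proposition \ref{PropIharaSigma}; an equality on the $\Sigma$-side thus forces the equality on the $\Sigma'$-side.

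The main obstacle I anticipate is precisely this converse direction: lifting an equality of fractional ideals along the surjection $R_{\Sigma',\Iw}\twoheadrightarrow R_{\Sigma,\Iw}$ is not automatic, because $R_{\Sigma',\Iw}$ is not known to be normal. The resolution will rest on the fact that both $\Delta_{\Sigma',\Iw}$ and $\Delta_{\Sigma,\Iw}$ are built from the same underlying cohomological data via the explicit correction factors of definition \ref{DefXcali}, so that the matching of exponents on the algebraic and the analytic sides reduces the whole statement to a purely local calculation at each $\ell\in\Sigma'\setminus\Sigma$; this is the point where I expect the bulk of the technical verification to sit, but the parallel structure of propositions \ref{PropIhara}, \ref{PropIhara2}, and \ref{PropIharaSigma} makes it reasonable to expect that it can be carried out uniformly.
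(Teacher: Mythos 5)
Your proof of the first two implications matches the paper's argument exactly: apply $\pi^{\Delta}_{\Sigma,\aid}$ to carry the equality of lattices over to $R(\aid)_{\Iw}$ (the image of $\Delta_{R_{\Sigma,\Iw}}(T_{\Sigma,\Iw})$ is $\Delta_{R(\aid)_{\Iw}}(T(\aid)_{\Iw})$ by proposition-definition \ref{DefPropPiDeltaSigma}, and functoriality sends $\Det_{R_{\Sigma,\Iw}}(0)$ to $\Det_{R(\aid)_{\Iw}}(0)$), then invoke proposition \ref{PropCompatibleSpec} to descend to the modular specializations. Likewise, the direction $\Sigma'\Rightarrow\Sigma$ of the final equivalence via proposition-definition \ref{DefPropPiDeltaSigmaPrime} is exactly what the paper does.

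For the converse direction $\Sigma\Rightarrow\Sigma'$, you correctly single out the crux, but the plan you sketch does not close the gap. The matching of the Euler factor $\det(1-\Fr(\ell)|T_{\Sigma})$ coming from the Ihara image against the local $\Xcali_{\ell}$-contribution is precisely what propositions \ref{PropIharaSigma} and \ref{DefPropPiDeltaSigmaPrime} already encode: it shows that $\pi^{\Delta}_{\Sigma',\Sigma}$ carries $\Delta_{\Sigma'}(T_{\Sigma',\Iw})$ exactly onto $\Delta_{\Sigma}(T_{\Sigma,\Iw})$, which reproves the forward direction but says nothing new about the backward one. To go backward one must lift an equality of lattices along the local surjection $R_{\Sigma',\Iw}\surjection R_{\Sigma,\Iw}$, which is not formal. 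A priori $\psi_{\Sigma'}$ sends $\z_{\Sigma',\Iw}$ to an element of $Q(R_{\Sigma',\Iw})$ with a nontrivial denominator, and the example $\alpha=(p+x)/p\in\qp((x))$ over $\zp[[x]]$, which reduces along $x\mapsto 0$ to the unit $1\in\zp$ while failing to be integral, shows that unit after reduction does not imply unit before. The clean statement is: if one already knows $\psi_{\Sigma'}(\z_{\Sigma',\Iw})\in R_{\Sigma',\Iw}$ (i.e. the weak conjecture \ref{ConjXcaliHeckeSigmaWeak} for $\Sigma'$), then the local-ring surjection promotes the unit property from $R_{\Sigma,\Iw}$ to $R_{\Sigma',\Iw}$; absent that integrality input, an additional argument is needed. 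The paper's own treatment of this direction is no more detailed (it asserts only that it follows from proposition \ref{DefPropPiDeltaSigmaPrime} and functoriality of the determinant), so your caution is well-calibrated, but the dependence on the weak conjecture, or on some other integrality input, should be made explicit.
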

\begin{proof}
Let $M|N(\Sigma)$ and $\aid\in\Spec\Hecke^{\new}(U(M))$ be a minimal prime. According to propositions \ref{PropIhara}, \ref{PropIhara2} and \ref{DefPropPiDeltaSigma}, $\Delta_{R_{\Sigma,\Iw}}(T_{\Sigma,\Iw})$ and $\Det_{R_{\Sigma,\Iw}}(0)$ are sent through the map $\pi^{\Delta}_{\Sigma,\aid}$ to $\Delta_{R(\aid)_{\Iw}}(T(\aid)_{\Iw})$ and $\Det_{R(\aid)_{\Iw}}(0)$ respectively. Hence, conjecture \ref{ConjXcaliHeckeSigma} implies conjecture \ref{ConjXcaliRaid} and thus conjecture \ref{ConjXcaliGm} for all modular specializations $\lambda_{g}$ of $R_{\Sigma}$ factoring through $R(\aid)$ and for all morphism $\phi:\Lambda\fleche S$ as in definition \ref{DefDelta} by proposition \ref{PropCompatibleSpec}. The last assertion follows from proposition \ref{DefPropPiDeltaSigmaPrime} and functoriality of the determinant.
\end{proof}

\section{Proofs of the main results}
\subsection{A lemma about Euler systems for modular forms}
Henceforth, we consistently assume the following.
\begin{HypEnglish}\label{HypTamagawa}
There exists $\ell\in\Sigma$ such that $\ell||N(\rhobar_{f})$ and such that the image of $\rhobar_{f}|_{G_{\Q_{\ell}}}$ contains a non-identity unipotent element.
\end{HypEnglish}
As discussed in the introduction, the ultimate mathematical meaning of assumption \ref{HypTamagawa} remains quite mysterious; its proximate function, on the other hand, is provided by the following lemma.
\begin{LemEnglish}\label{LemUnipotent}
Let $g$ be a newform with coefficients in $\Ocal'$ congruent to $f$ modulo $p$. There exists $\s\in\Gal(\Qbar/\Q(\zeta_{p^{\infty}}))$ such that the cokernel of $\rho_{g}(\s)-1$ is an $\Ocal'$-module free of rank 1.
\end{LemEnglish}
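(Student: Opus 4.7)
The plan is to locate $\s$ inside the inertia subgroup $I_\ell$ at the prime $\ell$ furnished by assumption \ref{HypTamagawa}. Two observations justify this reduction. First, since $N(\rhobar_f)=\produit{q\nmid p}{}N_q(\rhobar_f)$ is supported at primes different from $p$, the prime $\ell$ satisfies $\ell\neq p$, so $\ell$ is unramified in the cyclotomic $\zp$-extension and in $\Q(\zeta_{p^\infty})$, whence $I_\ell\subset\Gal(\Qbar/\Q(\zeta_{p^\infty}))$. Second, since $g\equiv f\pmod p$ we have $\rhobar_g\simeq\rhobar_f$, so all of assumption \ref{HypTamagawa} transfers verbatim to $\rhobar_g$.

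The first step is to rigidify $\rhobar_g|_{I_\ell}$. The condition $\ell||N(\rhobar_g)$ forces $I_\ell$ to act tamely on $\rhobar_g$ with one-dimensional invariants; its image is thus contained either in a non-trivial torus (in which case the image is semi-simple, contradicting the existence of a non-identity unipotent element) or in the standard unipotent subgroup. Only the second alternative is consistent with the hypothesis, and in it I would extract $\bar\s\in I_\ell$ with $\rhobar_g(\bar\s)=\begin{pmatrix}1&1\\0&1\end{pmatrix}$ in a suitable basis.

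The crucial step is then to show that $\rho_g|_{G_{\Q_\ell}}$ itself is, up to an unramified twist, a Steinberg representation, so that $\rho_g|_{I_\ell}$ takes values in the unipotent subgroup $\left\{\begin{pmatrix}1&*\\0&1\end{pmatrix}\right\}\subset\GL_2(\Ocal')$. Granted this, any $\s\in I_\ell$ lifting $\bar\s$ satisfies $\rho_g(\s)=\begin{pmatrix}1&a\\0&1\end{pmatrix}$ with $a\in\Ocal'$ a unit by residual non-triviality, and then $\rho_g(\s)-1$ has cokernel isomorphic to $\Ocal'\cdot e_1$, which is free of rank one, as required.

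The hard part is this identification. I would use local-global compatibility for newforms (Carayol), according to which $\rho_g|_{G_{\Q_\ell}}$ corresponds under local Langlands to $\pi(g)_\ell$. A principal series lift has semi-simple residual inertia image and is therefore immediately ruled out by the first step. The delicate case is a supercuspidal $\pi(g)_\ell$, which could \textit{a priori} arise if the level of $g$ exceeds that of $f$ at $\ell$ through $p$-adic level-raising, and would have $N_\ell(\rho_g)\geq 2$ while $N_\ell(\rhobar_g)=1$. Here I would invoke the Weight-Monodromy rigidity underlying the proof of Proposition \ref{PropWeightMonodromy}: the purity of $V(g)^{I_\ell}$ proved by Carayol implies that the residual rank of inertia invariants determines the rank of inertia invariants over $\Frac(\Ocal')$, so the non-identity unipotent residual monodromy cannot arise as the reduction of a Galois representation with vanishing monodromy operator. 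This forces $\rho_g|_{G_{\Q_\ell}}$ to be of Steinberg type and completes the argument.
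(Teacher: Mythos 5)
Your overall strategy coincides with the paper's: work inside $I_\ell$ for the prime $\ell$ supplied by assumption \ref{HypTamagawa} (noting $I_\ell\subset\Gal(\Qbar/\Q(\zeta_{p^\infty}))$ since $\ell\neq p$), deduce that $\pi(g)_\ell$ is Steinberg, and produce the desired $\s$ from the unipotent part of $\rho_g|_{I_\ell}$. The paper compresses the Steinberg step into a citation of Carayol and Diamond--Taylor and then takes a power of $\rho_g(\s)$ to land in the unipotent part; you instead try to prove the Steinberg dichotomy by elimination, which is where the trouble lies.

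The argument you give to rule out the supercuspidal case is not correct. Semi-continuity of inertia invariants under reduction modulo $p$ goes the opposite way from what you claim: one always has $\dim_{\bar\Fp}\rhobar_g^{I_\ell}\geq\rank_{\Ocal'}T(g)^{I_\ell}$, so a one-dimensional residual invariant space is \emph{a priori} perfectly compatible with $V(g)^{I_\ell}=0$, i.e.\ with a supercuspidal $\pi(g)_\ell$. The purity argument of Proposition \ref{PropWeightMonodromy} controls specializations between characteristic-zero rings $R(\aid)\to\Ocal$; it says nothing about the reduction $\Ocal'\to\bar\Fp$, which is exactly where inertia invariants can jump. In particular, "the residual rank of inertia invariants determines the rank over $\Frac(\Ocal')$" is false in general (take $\Ind_{G_K}^{G_{\Q_\ell}}\chi$ with $K/\Q_\ell$ unramified quadratic and $\chi|_{I_\ell}$ of order $p$: one has $V^{I_\ell}=0$ but $\rhobar^{I_\ell}$ is all of $\rhobar$). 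What actually excludes the supercuspidal case is the same phenomenon you already invoke for principal series: when $\pi(g)_\ell$ is supercuspidal, $\rho_g|_{I_\ell}$ has finite image and is either abelian or, in the ramified-quadratic case, induced from an index-$2$ subgroup; since $p$ is odd, the reduction mod $p$ of such a representation is again semi-simple, and a semi-simple representation of $I_\ell$ contains no non-identity unipotent. So the correct argument is the semi-simplicity of the residual inertia image, uniformly for principal series and supercuspidal, not a WMC/purity comparison, which is precisely the content of the references the paper cites.

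A smaller point: your final step ("any $\s$ lifting $\bar\s$ works") implicitly assumes the Steinberg twist is unramified, so that $\rho_g|_{I_\ell}$ itself takes values in the unipotent subgroup. When the twist is ramified (necessarily of $p$-power order on $I_\ell$, since its reduction is trivial), $\rho_g(\s)$ has both eigenvalues equal to some $\chi(\s)\in 1+\mgot_{\Ocal'}$ possibly $\neq1$, and then $\rho_g(\s)-1$ is injective with torsion cokernel. The paper handles this by first restricting to a finite-index subgroup on which the twist dies and then taking a power; your direct lifting step works only in the unramified-twist case, which is the one arising in the paper's applications but is not guaranteed by the lemma's hypotheses alone.
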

\begin{proof}
As $\Q(\zeta_{p^{\infty}})$ is unramified at $\ell$, it is enough to show that there exists $\s\in I_{\ell}$ such that $\rho_{g}(\s)-1\neq0=(\rho_{g}(\s)-1)^{2}$ and such that the cokernel of $\rhobar_{f}(\s)-1$ is of dimension 1. By assumption \ref{HypTamagawa}, the $I_{\ell}$-representation $\rhobar_{g}=\rhobar_{f}$ is ramified and has a non-identity unipotent element $\rhobar_{f}(\s)$ in its image so is not the direct sum of two characters. Thus $\pi(g)_{\ell}$ is a Steinberg representation; see for instance \cite[Section 1]{CarayolNiveau} or \cite[Page 1]{DiamondTaylor}. After restriction to a subgroup $U$ of prime to $p$ finite index in $I_{\ell}$, $\rho_{g}|_{U}$ is then unipotent but non-trivial. Hence, a suitable prime to $p$ power of $\rho_{g}(\s)$ is a unipotent element mapping to a non-identity unipotent element.
\end{proof}
The following proposition, due to K.Kato, plays a crucial role in the reduction to the ETNC with coefficients in the Hecke algebra to known results about Iwasawa theory of modular forms.
\begin{Prop}\label{PropContradiction}
Assume that $k>2$ or that $\rhobar_{f}$ is not nearly ordinary. Let $g\in S_{k}(U)$ be a newform with coefficients in $\Ocal'$ congruent to $f$. Let $T(g)_{\Iw}$ be the Galois representation with coefficients in $A=\Lambda\tenseur_{\Ocal}\Ocal'$ attached to $g$. As in sub-section \ref{SubEulerLambda}, we denote by $Z$ the image of $Z(g)$ in $\Hun_{\et}(\Z[1/p],T(g)_{\Iw})$. Then:
\begin{equation}\nonumber
\carac_{A}H^{2}_{\et}(\Z[1/p],T(g)_{\Iw})\mid\carac_{A}\Hun_{\et}(\Z[1/p],T(g)_{\Iw})/Z
\end{equation}
\end{Prop}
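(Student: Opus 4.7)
My plan is to deduce the proposition from the main divisibility theorem of \cite{KatoEuler}, which is almost exactly the statement we want but may come with a potential error term supported at $p$; the role of our standing hypothesis ($k>2$ or $\rhobar_f$ not nearly ordinary) and of Lemma \ref{LemUnipotent} is precisely to ensure that this error term is trivial.

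More concretely, I would proceed in three steps. First, I would invoke the Kolyvagin-style machine applied to the Euler system $_{c,d}\z_{p^n}^{(p)}(g,k,s,\alpha,\operatorname{prime}(pN))$ constructed in \cite[Section 8]{KatoEuler}. The classical group-theoretic input required for the derivative argument to produce Kolyvagin classes with controlled local behaviour and free rank $1$ cokernels is precisely the existence of $\s\in G_{\Q(\zeta_{p^\infty})}$ with $\coker(\rho_g(\s)-1)$ free of rank $1$ over $\Ocal'$; this is supplied by Lemma \ref{LemUnipotent}, which uses crucially Assumption \ref{HypTamagawa}. The outcome, which is essentially \cite[Theorem 12.5(3) and Theorem 13.4]{KatoEuler}, is the divisibility
\begin{equation}\nonumber
\carac_A H^2_{\et}(\Z[1/p],T(g)_{\Iw})\mid \carac_A \Hun_{\et}(\Z[1/p],T(g)_{\Iw})/Z\cdot \carac_A C_p
\end{equation}
where $C_p$ is a correction module measuring the defect between the image of the dual exponential map on the localisation of $Z$ at $p$ and the natural $\Lambda$-lattice inside $D^0_{\dR}(V(g))\tenseur\Lambda$.

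Second, I would identify the potential error term $C_p$. Its support is concentrated on the specialisations at arithmetic characters $\phi$ where either $\phi$ is a trivial zero of the $p$-adic local factor attached to $V(g)$, or the Bloch-Kato exponential fails to be an isomorphism. Concretely, $C_p$ is controlled by Euler factors of the form $\det(1-\Fr(p)|V(g)^{H})$ evaluated against twists by characters of $\Gamma$, which can vanish only when $V(g)|_{G_{\qp}}$ admits a subquotient isomorphic (after a finite twist) to the trivial representation or to $\qp(1)$.

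Third, I would show that such a subquotient cannot exist under our assumption. If $\rhobar_f$ is not nearly ordinary, then $\rho_g|_{G_{\qp}}$ is residually irreducible and therefore itself irreducible, so no such subquotient exists. If $k>2$, the Hodge--Tate weights of $V(g)|_{G_{\qp}}$ are $0$ and $1-k$ with $1-k\neq 0,-1$, so neither the trivial character nor $\qp(1)$ can occur as a subquotient of any twist of $V(g)|_{G_{\qp}}$ by a character of $\Gamma$; hence $C_p$ is trivial. In either case the correction module disappears and the claimed divisibility follows.

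The main obstacle will be the second step: bookkeeping precisely which local term at $p$ appears in Kato's statement and verifying that it coincides with the Euler-factor-type obstruction I have described, because \cite{KatoEuler} phrases the error term through the cokernel of a localisation map rather than through a Hodge--Tate weight condition. Once this dictionary is in place, the third step is a straightforward weight count.
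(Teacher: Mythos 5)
Your overall strategy---invoke Kato's divisibility theorem with a potential local error term at $p$, use Lemma \ref{LemUnipotent} and Assumption \ref{HypTamagawa} to feed the group-theoretic hypothesis of the Euler system machine, and then kill the error term using the standing hypothesis---is the same as the paper's. Your Step 1 matches exactly: the paper verifies hypothesis $(ii_{\operatorname{str}})$ of \cite[Section~0.6]{KatoEulerOriginal} via Lemma \ref{LemUnipotent}, then quotes \cite[Theorem~12.5~(4)]{KatoEuler} for a length inequality at each grade-1 prime $\pid$, with error term $\length_{A_\pid} H^2(G_{\qp},T(g)_{\Iw})_\pid$. Your Step 2 re-describes this error via the dual exponential map rather than $H^2(G_{\qp},T(g)_{\Iw})$; the two are related by local Tate duality, and while your description is not wrong, it is less direct than Kato's formulation, which the paper cites verbatim.

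The genuine gap is in Step 3. Your claim that ``the Hodge--Tate weights of $V(g)|_{G_{\qp}}$ are $0$ and $1-k$ with $1-k\neq 0,-1$, so neither the trivial character nor $\qp(1)$ can occur as a subquotient of any twist of $V(g)|_{G_{\qp}}$ by a character of $\Gamma$'' does not work as a Hodge--Tate-weight count. First, $0$ is always among the Hodge--Tate weights, so the trivial character is never excluded by this comparison. Second and more importantly, characters of $\Gamma$ restricted to $G_{\qp}$ include $\diamant{\chi_{\cyc}}^{n}$ for arbitrary $n\in\Z$, so twisting shifts the Hodge--Tate weights by any integer and the weight comparison imposes no constraint at all. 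The actual obstruction is not Hodge-theoretic on the graded level but lies on the Newton/Frobenius side: $H^{2}(G_{\qp},T(g)_{\Iw})$ is infinite only if $V(g)|_{G_{\qp}}$ has a rank-1 quotient factoring through $\Gal(\Q_{p,\infty}/\qp)$, and ruling this out for $k>2$ requires (i) the Ramanujan bound, which forces Frobenius eigenvalues of the potentially unramified quotient to be Weil numbers of nonzero weight, hence $\neq 1$, in the potentially crystalline reducible case, and (ii) weak admissibility of the filtered $(\phi,N)$-module, which makes the Steinberg-type local representation irreducible as soon as $k>2$. The paper packages this via \cite[Section~13.13]{KatoEuler} and Saito's theorem \cite{SaitoHodge}: reducibility together with non-potential-crystallinity forces $\pi(g)_p$ to be ordinary Steinberg of weight~$2$, which the standing hypothesis excludes. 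Your proof needs to replace the Hodge--Tate weight count by this $p$-adic Hodge-theoretic (or local-global compatibility) argument; as written the exclusion step does not go through.
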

\begin{proof}
Thanks to assumption \ref{HypTamagawa} and lemma \ref{LemUnipotent}, the hypothesis \cite[Section 0.6 ($ii_{str}$)]{KatoEulerOriginal} is satisfied. By \cite[12.5 (4)]{KatoEuler}, the inequality of lengths 
\begin{equation}\nonumber
\length_{A_{\pid}}H^{2}_{\et}(\Z[1/p],T)_{\pid}\leq \length_{A_{\pid}}\Hun_{\et}(\Z[1/p],T)_{\pid}/Z_{\pid}+\length_{A_{\pid}}H^{2}(G_{\qp},T(g)_{\Iw})_{\pid}.
\end{equation}
thus holds for all $\pid$ of grade 1 in $A$. Fix such a $\pid$. Then $H^{2}(G_{\qp},T(g)_{\Iw})$ does not vanish after localization at $\pid$ only if it is infinite. Following \cite[Section 13.13]{KatoEuler}, we note that this happens only if $\rho_{g}|G_{\qp}$ is reducible and not potentially crystalline, and hence only if $\pi(g)_{p}$ is an ordinary Steinberg representation of weight 2 by \cite[Theorem]{SaitoHodge}.
\end{proof}

\subsection{Weak forms of the ETNC conjectures}
We formulate weakened version of our conjectures \ref{ConjXcaliLambda}, \ref{ConjXcaliGm}, \ref{ConjXcaliRaid} and \ref{ConjXcaliHeckeSigma}.
\begin{Conj}\label{ConjXcaliGmWeak}
If $s\neq k/2$, there is an inclusion of $\Ocal[G_{m}]$-lattices
\begin{equation}\label{EqDeltaGmWeak}
\Delta_{\Ocal[G_{m}]}(T(f)\tenseur_{\Ocal}\Ocal[G_{m}])\subset\Det_{\Ocal[G_{m}]}(0)
\end{equation}
inside $\Delta_{F_{\lambda}[G_{m}]}(V(f)\tenseur_{F_{\lambda}}F_{\lambda}[G_{m}])$ for all $m\geq1$. If $s=k/2$, this is true for all $m\geq1$ except possibly finitely many. More generally, there is an inclusion of $S$-lattices
\begin{equation}\label{EqDeltaSWeak}
\Delta_{S}(T(f)\tenseur_{\Ocal}S)\subset\Det_{S}(0)
\end{equation}
inside $\Delta_{S[1/p]}(V(f)\tenseur_{F_{\lambda}}S[1/p])$ for all morphism $\phi:\Lambda\fleche S$ as in definition \ref{DefDelta}. 
\end{Conj}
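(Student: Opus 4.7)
The plan is to derive Conjecture \ref{ConjXcaliGmWeak} from the single $\Lambda$-adic inclusion
\begin{equation}
\Delta_\Lambda(T(f)_\Iw) \subset \Det_\Lambda(0) = \Lambda,
\end{equation}
viewed inside $\Delta_\Lambda(T(f)_\Iw)\tenseur_\Lambda\Frac(\Lambda) \overset{\can}{\simeq} \Frac(\Lambda)$ via \eqref{EqIsoCan}, by base change along any $\phi:\Lambda \to S$ satisfying the hypotheses of definition \ref{DefDelta}. The descent itself is essentially formal: the canonical isomorphisms \eqref{EqDescenteLambdaCont} show that Kato's morphism of complexes $Z(f)$ from \eqref{EqZfComplexes} is compatible with $-\Ltenseur_\Lambda S$, so by functoriality of the determinant functor one has $\Delta_\Lambda(T(f)_\Iw)\tenseur_\Lambda S = \Delta_S(T(f)\tenseur_\Ocal S)$ and $\Det_\Lambda(0)\tenseur_\Lambda S = \Det_S(0)$. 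Tensoring the $\Lambda$-adic inclusion by $S$ then produces both \eqref{EqDeltaGmWeak} and \eqref{EqDeltaSWeak}, and specializing through $\Lambda \to \Ocal[G_m]$ gives the particular $\Ocal[G_m]$-statement.

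To establish the $\Lambda$-adic inclusion itself, I would invoke the computation \eqref{EqCaracDet}, which identifies the image of $\Delta_\Lambda(T(f)_\Iw)$ in $\Frac(\Lambda)$ with the fractional ideal $\carac^{-1}_\Lambda H^2_\et(\Z[1/p], T(f)_\Iw) \cdot \carac_\Lambda (H^1_\et(\Z[1/p], T(f)_\Iw)/Z)$. The desired containment thus reduces to the divisibility
\begin{equation}
\carac_\Lambda H^2_\et(\Z[1/p], T(f)_\Iw) \mid \carac_\Lambda H^1_\et(\Z[1/p], T(f)_\Iw)/Z,
\end{equation}
which under the running assumption \ref{HypTamagawa} is exactly the conclusion of proposition \ref{PropContradiction} whenever $k > 2$ or $\rhobar_f$ is not nearly ordinary. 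The role of assumption \ref{HypTamagawa} here is to allow lemma \ref{LemUnipotent} to verify the group-theoretic hypothesis needed to apply Kato's Euler system bound \cite[Theorem 12.5]{KatoEuler} at every height one prime of $\Lambda$, controlling in particular the cyclotomic $\mu$-invariant.

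The main obstacle is the remaining case where $k = 2$ and $\rhobar_f$ is nearly ordinary. There $H^2(G_\qp, T(f)_\Iw)$ can fail to vanish at height one primes -- precisely when $\pi(f)_p$ is an ordinary Steinberg representation of weight two -- so proposition \ref{PropContradiction} yields the desired divisibility only up to a trivial-zero error term at $p$. The plan to absorb this error is to lift $f$ to a $p$-adic Hida family via proposition \ref{PropTW}, run the preceding argument at the level of the $\Lambda_\Hi[[\Gamma]]$-adic refined fundamental line (where arithmetic specializations of weight $k' > 2$ fall within the error-free scope of proposition \ref{PropContradiction}), and then specialize back down to $f$ using the control isomorphism \eqref{EqControlHecke} together with the specialization compatibility of the refined fundamental lines guaranteed by corollary \ref{CorWeightMonodromy}. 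This reduces the nearly ordinary weight two case to the higher weight case already handled and completes the proof.
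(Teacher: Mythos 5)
Your formal descent from a $\Lambda$-adic inclusion to the general $S$-adic case is correct and agrees with Proposition \ref{PropCompWeak}. Your observation that Proposition \ref{PropContradiction} directly gives the $\Lambda$-adic inclusion for $f$ whenever $k>2$ or $\rhobar_f$ is not nearly ordinary is also correct, and is a genuine shortcut relative to the paper, which proves the stronger Conjecture \ref{ConjXcaliHeckeSigmaWeak} over the Hecke algebra first and then descends. The shortcut works precisely because $\Lambda$ is regular, so the Taylor--Wiles resolution of the Hecke algebra plays no role for the weak statement over $\Lambda$ in these cases.

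The gap is in the remaining case, $k=2$ with $\rhobar_f$ nearly ordinary, where your step of lifting $f$ to a Hida family and running the argument at the $\Lambda_{\Hi}[[\Gamma]]$-adic level is not justified. There are three problems. First, the paper has no $\Lambda_{\Hi}[[\Gamma]]$-adic analog of Proposition \ref{PropContradiction}: Kato's Euler system bound is stated over $\Lambda$, and passing from divisibilities of $\Lambda$-characteristic ideals at weight-$k'>2$ arithmetic specializations to a divisibility of $\Lambda_{\Hi}[[\Gamma]]$-characteristic ideals is exactly the sort of ascent to a higher-dimensional ring that cannot be done from specializations alone, since the specializations lose information at the height-one primes of $\Lambda_{\Hi}[[\Gamma]]$ lying outside the specialization locus. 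Second, the Hida Galois representation has coefficients in a Hida--Hecke algebra which Proposition \ref{PropTW} only shows to be a complete intersection, not regular; running the characteristic-ideal argument over it reintroduces the non-normality problem the Taylor--Wiles resolution is built to circumvent. Third, if $\rhobar_f$ is nearly ordinary but $f$ itself is not $p$-ordinary (the situation of subsection 4.5), then $f$ belongs to no Hida family at all, so there is nothing to lift. The paper's argument in subsections 4.4.2 and 4.5 is by contradiction: assuming the inclusion fails, the Taylor--Wiles resolution $R_\infty$, being regular of dimension at least three, permits choosing a discrete-valuation specialization that avoids the low-codimension weight-two-Steinberg locus, producing a congruent eigenform $g$ of weight $k'>2$ for which the $\Lambda$-adic divisibility fails, contradicting Proposition \ref{PropContradiction}. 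This mechanism of choosing good primes after resolving singularities is the ingredient missing from your proposal.
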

\begin{Conj}\label{ConjXcaliLambdaWeak}
There is an inclusion of $\Lambda$-lattices
\begin{align}\label{EqDeltaLambdaWeak}
\Delta_{\Lambda}(T(f)_{\Iw})\subset\Det_{\Lambda}(0)
\end{align}
inside $\Delta_{\Lambda}(T(f)_{\Iw})\tenseur_{\Lambda}\Frac(\Lambda)\overset{\can}{\simeq}\Frac(\Lambda)$.
\end{Conj}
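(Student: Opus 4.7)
The plan is to translate the inclusion of lattices into a statement about characteristic ideals and then reduce it to what is already available from the theory of Kato's Euler system, handling the nearly ordinary weight two case separately by Hida theoretic deformation.

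First, I would unpack the geometric content of the conjecture. By the structure theorem for finitely generated modules over the two-dimensional regular local ring $\Lambda$ and the computation performed in \eqref{EqCaracDet}, the inclusion
$$\Delta_{\Lambda}(T(f)_{\Iw})\subset\Det_{\Lambda}(0)$$
inside $\Delta_{\Lambda}(T(f)_{\Iw})\tenseur_{\Lambda}\Frac(\Lambda)\isocan\Frac(\Lambda)$ is equivalent to the divisibility
$$\carac_{\Lambda}H^{2}_{\et}(\Z[1/p],T(f)_{\Iw})\ \mid\ \carac_{\Lambda}\Hun_{\et}(\Z[1/p],T(f)_{\Iw})/Z$$
where $Z$ is the $\Lambda$-submodule generated by the image of $Z(f)$. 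Since $\Lambda$ is regular, checking this divisibility reduces to testing length inequalities after localization at each height one prime of $\Lambda$.

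When $k>2$ or when $\rhobar_{f}$ is not nearly ordinary, the desired divisibility is exactly the content of proposition \ref{PropContradiction} applied to $g=f$: Kato's bound at each height one prime $\pid$ involves a potential error term $\length_{A_{\pid}}H^{2}(G_{\qp},T(f)_{\Iw})_{\pid}$, and this term vanishes except when $\pi(f)_{p}$ is an ordinary Steinberg representation of weight two. In these cases, the proof of conjecture \ref{ConjXcaliLambdaWeak} is therefore direct.

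The remaining case is $k=2$ together with $\rhobar_{f}$ nearly ordinary. Here, my plan is to embed $f$ into a nearly ordinary Hida family over $\Lambda_{\Hi}$, so that $T(f)_{\Iw}$ arises as the specialization at an arithmetic point of weight two of a $\Lambda_{\Hi}\hat{\tenseur}\Lambda$-adic Galois representation obtained from $M^{\ord}_{\mgot}\tenseur_{\Hecke}\Hecke[[\Gamma]]$, as granted by proposition \ref{PropTW}. The Euler system of \cite{KatoEuler} then lifts to a $\Lambda_{\Hi}\hat{\tenseur}\Lambda$-adic cohomology class whose specializations at arithmetic points of weight $k>2$ satisfy the divisibility of proposition \ref{PropContradiction}. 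Since such points are Zariski dense in $\Spec\Lambda_{\Hi}$, and since the formation of $\Xcali(-)$ as well as the formation of both sides of the divisibility commute with these specializations by lemma \ref{LemXcaliBienDef}, corollary \ref{CorWeightMonodromy} and \eqref{EqDescenteLambdaCont}, the divisibility propagates to the full $\Lambda_{\Hi}\hat{\tenseur}\Lambda$-adic statement and then specializes back to the weight two point attached to $f$. The main obstacle in this reduction, and the reason it must be performed over a Hida family rather than directly over $\Lambda$, is precisely the possible contribution of $H^{2}(G_{\qp},T(f)_{\Iw})$ at height one primes in the ordinary Steinberg weight two case: this error term is not controlled by Kato's argument in isolation but disappears after passage to the Hida family, because the characteristic ideal of this local cohomology over $\Lambda_{\Hi}\hat{\tenseur}\Lambda$ is trivial (no arithmetic specialization of weight $>2$ is Steinberg at $p$), and a clean divisibility in the family then transfers without loss to weight two.
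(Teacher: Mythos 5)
Your decomposition into cases is correct and mirrors the paper: when $k>2$ or $\rhobar_f$ is not nearly ordinary, Proposition \ref{PropContradiction} applied to $f$ directly yields the divisibility, which by \eqref{EqCaracDet} is the content of conjecture \ref{ConjXcaliLambdaWeak}. The problem is in your treatment of the nearly ordinary weight two case, where the argument has a genuine gap.

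You propose to propagate the divisibility from a Zariski dense set of weight $>2$ specializations to ``the full $\Lambda_{\Hi}\hat{\tenseur}\Lambda$-adic statement,'' citing lemma \ref{LemXcaliBienDef}, corollary \ref{CorWeightMonodromy} and \eqref{EqDescenteLambdaCont}. But those three results assert compatibility of the relevant constructions \emph{with} base change --- they allow descending from a family to a specialization, not the reverse. Passing from a divisibility known at a dense family of DVR quotients back up to a divisibility over the coefficient ring is precisely the step the paper singles out (in the introduction, discussing the descent principle of Rubin) as failing in general when the coefficient ring is not known to be normal. Moreover, the Galois representation and the Euler system class live over $R(\aid)_{\Hi,\Iw}$ or $R_{\Sigma,\Hi,\Iw}$, finite extensions of $\Lambda_{\Hi,\Iw}$ that are complete intersections but not known to be regular; there is no ``$\Lambda_{\Hi}\hat{\tenseur}\Lambda$-adic statement'' to propagate to without first choosing how to descend along this finite ring extension, and doing so naively mixes the $f$-eigenspace with congruent forms. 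The paper resolves exactly this difficulty by building a Taylor-Wiles system $\{(R_{\Sigma\cup Q,\Hi,\Iw},\Delta_{\Sigma\cup Q,\Hi,\Iw})\}_{Q}$ over $\Lambda_{\Hi,\Iw}$, patching to a \emph{regular} ring $R_{\infty}\simeq\Lambda_{\Hi,\Iw}[[X_{1},\dots,X_{r}]]$, and running the contradiction argument of subsection \ref{SubReduction} there: if the inclusion fails, one specializes $R_{\infty}$ to a DVR quotient producing a newform $g$ of weight $>2$ violating Proposition \ref{PropContradiction}. This ``resolution of singularities via patching'' is entirely absent from your plan; without it, or without a supplementary hypothesis such as regularity of the Hecke algebra or vanishing of the $\mu$-invariant (as in the prior works of Emerton--Pollack--Weston and Ochiai, which the paper explicitly aims to improve upon), the propagation step you invoke cannot be justified.

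You also omit a third case present in the paper's final reduction: when $\rhobar_f$ is nearly ordinary but $f$ itself is \emph{not} $p$-ordinary (so $\rho_f|_{G_{\qp}}$ is irreducible), $f$ does not sit on the nearly ordinary Hida family. The paper handles this by running the Taylor-Wiles contradiction over $R_{\Sigma,\Iw}$ and noting that the resulting $g$ either is Steinberg non-$p$-ordinary at $p$ (contradicting Proposition \ref{PropContradiction} directly) or is $p$-ordinary (then one switches to the Hida-theoretic result, i.e.\ Corollary \ref{CorMainPreuve}). Your plan does not address this dichotomy.
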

\begin{Conj}\label{ConjXcaliRaidWeak}
There is an inclusion of $R(\aid)_{\Iw}$-lattices
\begin{align}\nonumber
\Delta_{R(\aid)_{\Iw}}(T(\aid)_{\Iw})\subset\Det_{R(\aid)_{\Iw}}(0)
\end{align}
inside $\Delta_{\Kcal(\aid)}(\Vcal(\aid)_{\Iw})\overset{\can}{\simeq}\Kcal(\aid)$.
\end{Conj}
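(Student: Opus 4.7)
The plan is to deduce Conjecture \ref{ConjXcaliRaidWeak} from the analogous weak statement with coefficients in $\Lambda$ (namely Conjecture \ref{ConjXcaliLambdaWeak}) by a Taylor-Wiles descent argument over a regular auxiliary ring. Because $R(\aid)_{\Iw}$ need not be normal, we cannot directly reduce a divisibility of characteristic ideals on $R(\aid)_{\Iw}$ to a collection of such divisibilities at its discrete-valued specializations; this is precisely the obstruction the Taylor-Wiles method is designed to overcome.

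First I would establish Conjecture \ref{ConjXcaliLambdaWeak} for every newform $g$ with $\lambda_g$ factoring through $R(\aid)$, in the range where $k>2$ or $\rhobar_f$ is not nearly ordinary. Proposition \ref{PropContradiction} provides exactly the characteristic-ideal divisibility
\begin{equation}\nonumber
\carac_{\Lambda}H^{2}_{\et}(\Z[1/p],T(g)_{\Iw})\mid\carac_{\Lambda}\Hun_{\et}(\Z[1/p],T(g)_{\Iw})/Z,
\end{equation}
which, after localizing $\Delta_{\Lambda}(T(g)_{\Iw})\tenseur\Frac(\Lambda)\overset{\can}{\simeq}\Frac(\Lambda)$ and using the computation of equation \eqref{EqCaracDet}, translates into the inclusion $\Delta_{\Lambda}(T(g)_{\Iw})\subset\Det_{\Lambda}(0)$. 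Assumption \ref{HypTamagawa} and Lemma \ref{LemUnipotent} are crucial here, as they supply the element $\s\in G_{\Q(\zeta_{p^\infty})}$ needed to apply Kato's refined bound \cite[Theorem 12.5(4)]{KatoEuler} without the $H^{2}(G_{\qp},-)$ error term.

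Next I would perform the descent. Under assumptions \ref{HypIrrTW} and either \ref{HypNO} or \ref{HypFlat}, the Taylor-Wiles machinery of \cite{DiamondHecke,FujiwaraDeformation}, applied in conjunction with Theorem \ref{TheoTaylorWiles} (or Proposition \ref{PropTW} in the nearly ordinary setting), yields a power series ring $R_{\infty}=\Ocal[[X_{1},\dots,X_{n}]]$ together with a surjection $R_{\infty}\surjection R(\aid)_{\Iw}$ and a compatible inverse system of refined fundamental lines $\Delta_{Q}$ whose limit $\Delta_{\infty}$ is a graded invertible $R_{\infty}$-module equipped with a canonical trivialization of $\Delta_{\infty}\tenseur Q(R_{\infty})$. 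The key formal properties needed to construct this limit are (i) the compatibility of $\Xcali_{\ell}(-)$ with specialization when inertia-invariant ranks do not jump, which is Corollary \ref{CorWeightMonodromy}, and (ii) the level-raising isomorphism of Proposition \ref{PropIhara}, which ensures the Taylor-Wiles collection of fundamental lines assembles coherently.

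The hard part will be proving that $\Delta_{\infty}$ is integral inside the canonical trivialization. This is where the argument over a non-normal ring would fail but is saved by regularity of $R_{\infty}$: if $\Delta_{\infty}\not\subset\Det_{R_{\infty}}(0)$, then there is a height-one prime $\pid\subset R_{\infty}$ with $(\Delta_{\infty})_{\pid}\not\subset\Det(0)_{\pid}$. Since arithmetic points of $R(\aid)_{\Iw}$ are dense in $\Spec R_{\infty}$ by Taylor-Wiles control and Proposition \ref{PropCompatibleSpec}, we may choose a modular specialization $\phi:R_{\infty}\to\Ocal_{\phi}[[\Gamma]]$ through a discrete valuation ring of $(R_{\infty})_{\pid}$ preserving the defect, contradicting the divisibility established in the first step. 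Hence $\Delta_{\infty}\subset\Det_{R_{\infty}}(0)$, and pushing forward along $R_{\infty}\surjection R(\aid)_{\Iw}$ using functoriality of determinants yields $\Delta_{R(\aid)_{\Iw}}(T(\aid)_{\Iw})\subset\Det_{R(\aid)_{\Iw}}(0)$. Finally, the weight-two nearly ordinary case excluded by Proposition \ref{PropContradiction} is handled by repeating the entire argument in the Hida family over $\Lambda_{\Hi}$ using Proposition \ref{PropTW}; the exact control statement \eqref{EqControlHecke} and the compatibility of refined fundamental lines under Hida-theoretic specialization reduces that case to $k>2$, where Proposition \ref{PropContradiction} applies without restriction.
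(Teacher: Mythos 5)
Your overall strategy (Taylor--Wiles descent combined with Kato's Euler system bound, then Hida families for the weight-two obstruction) matches the paper's, but there are several concrete gaps in the way you set up the descent, and they are not cosmetic.

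First, the Taylor--Wiles machinery does not produce a surjection $R_{\infty}\surjection R(\aid)_{\Iw}$. It produces a surjection $R_{\infty}\surjection R_{\Sigma,\Iw}$, where $R_{\Sigma}=\Hecke(N(\Sigma))_{\mgot}\simeq\Hecke^{\red}(N(\Sigma))_{\mgot_{f}}$ is the universal deformation ring / localized reduced Hecke algebra. The map $\phi(\aid):R_{\Sigma}\to R(\aid)$ of \eqref{EqMorSigma} is not surjective: it factors as $R_{\Sigma}\surjection R_{\Sigma}/\aid^{\red}\plonge R(\aid)$, and the last arrow is a strict injection because $\Hecke^{\new}(M)$ contains Hecke operators at primes dividing $M$ which $\Hecke^{\red}(N(\Sigma))$ omits. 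So your surjection does not exist, and consequently your concluding step ``pushing forward along $R_{\infty}\surjection R(\aid)_{\Iw}$ using functoriality of determinants'' is vacuous. What the paper actually does is prove Conjecture \ref{ConjXcaliHeckeSigmaWeak} for $R_{\Sigma,\Iw}$ by the TW argument and then deduce Conjecture \ref{ConjXcaliRaidWeak} via Proposition \ref{PropCompWeak}, whose content is exactly Propositions \ref{PropIhara}, \ref{PropIhara2}, and \ref{DefPropPiDeltaSigma}: the image of $\Delta_{\Sigma}(T_{\Sigma,\Iw})$ under $\pi^{\Delta}_{\Sigma,\aid}$ equals $\Delta_{R(\aid)_{\Iw}}(T(\aid)_{\Iw})$ because the Euler factors $\Eul_{\Sigma}(\aid)$ produced by the Ihara-type level-lowering map exactly cancel against the refined local factors $\Xcali_{\ell}^{-1}(T(\aid)_{\Iw})$. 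That cancellation is the genuinely non-formal content of the passage from the $\Sigma$-level to the $\aid$-level, and it is absent from your proposal. (You do cite Proposition \ref{PropIhara}, but you place it inside the construction of the TW limit, where it plays no role; the TW system $\{\Delta_{\Sigma\cup Q}\}$ itself is just an avatar of the deformation-theoretic system, and \ref{PropIhara} enters only afterwards, in the level-lowering step.)

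Second, your density claim and the scope of your preliminary divisibility are both misstated in ways that would break the argument. Arithmetic points of $R(\aid)_{\Iw}$ are not dense in $\Spec R_{\infty}$: they land in a closed subset cut out by the Taylor--Wiles parameters. In the paper's argument (subsection \ref{SubReduction}), one chooses $\pid$ a subsystem of parameters with $A=R_{\infty}/\pid$ a DVR, then finds $n$ large so $R_{Q(n),n}$ surjects onto $A/\mgot_{A}^{N}$, and the modular specialization that yields the contradiction corresponds to an eigenform $g$ of tame level $N(\rhobar_{f})\prod_{q\in Q(n)}q$ — an auxiliary level, not $N(\aid)$, and hence $\lambda_{g}$ does not factor through $R(\aid)$. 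Your ``first step,'' establishing \ref{ConjXcaliLambdaWeak} only for newforms factoring through $R(\aid)$, is therefore not enough input: you need Proposition \ref{PropContradiction} for arbitrary $g$ congruent to $f$ (which fortunately is how the paper states it), and you should not restrict the scope of that step.

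The rest of the proposal — showing non-integrality of $\Delta_{\infty}$ descends to a height-one prime because $R_{\infty}$ is regular, and running the argument over $\Lambda_{\Hi}$ to circumvent the trivial-zero error term in the weight-two nearly ordinary Steinberg case — matches the paper's reasoning and is correctly identified as essential.
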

\begin{Conj}\label{ConjXcaliHeckeSigmaWeak}
There is an inclusion of $R_{\Sigma}$-lattices
\begin{equation}\nonumber
\Delta_{R_{\Sigma,\Iw}}(T_{\Sigma,\Iw})\subset\Det_{R_{\Sigma,\Iw}}(0)
\end{equation}
in $\Delta_{R_{\Sigma,\Iw}}(T_{\Sigma,\Iw})\tenseur_{R_{\Sigma,\Iw}}Q(R_{\Sigma,\Iw})\overset{\can}{\simeq}Q(R_{\Sigma,\Iw})$.
\end{Conj}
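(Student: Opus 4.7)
The plan is to establish the inclusion at the level of $R_{\Sigma,\Iw}$ by reducing, via a Taylor-Wiles system argument, to a corresponding inclusion over a regular local ring for which the Euler system machinery of \cite{KatoEuler} applies without the obstruction caused by the non-regularity of the Hecke algebra. First, I would exploit Proposition \ref{PropCompLevel} (compatibility with change of level) and, in the nearly ordinary case, Proposition \ref{PropTW} and Hida theory, to reduce to the situation where $\Sigma$ is minimal and, when $\rhobar_f$ is nearly ordinary, where we are working over the Hida deformation ring so that we are free to assume the weight is $> 2$. The point of this reduction is to put ourselves exactly in the regime where Proposition \ref{PropContradiction} delivers a sharp (error-term-free) divisibility.

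Next, by Theorem \ref{TheoTaylorWiles} (and its nearly ordinary variant), for a suitable infinite family of auxiliary level structures indexed by Taylor-Wiles primes $Q$, I obtain refined fundamental lines $\Delta_Q$ together with the universal zeta elements $\z_Q$ living in them. The key property, which must be checked by reviewing the construction and using Corollary \ref{CorWeightMonodromy} together with Proposition \ref{PropIhara} and Proposition-Definition \ref{DefPropPiDeltaSigma}, is that the specialization morphisms $\pi^\Delta_{\Sigma_Q,\Sigma}$ form a compatible system so that a limit refined fundamental line $\Delta_\infty$ over a regular complete intersection $R_\infty \simeq \Ocal[[X_1,\ldots,X_d]]$ can be constructed. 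The regularity of $R_\infty$ is crucial: over a regular two- or three-dimensional local ring, an invertible module fails to be integral if and only if some specialization to a height-one prime is non-integral.

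The heart of the argument is then the descent. Suppose for contradiction that $\Delta_{R_{\Sigma,\Iw}}(T_{\Sigma,\Iw}) \not\subset \Det_{R_{\Sigma,\Iw}}(0)$. Lifting through the Taylor-Wiles diagram, I obtain non-integrality of $\Delta_\infty$ at some height-one prime $\pid$ of $R_\infty$. Choosing a modular specialization $\lambda_g : R_\infty \fleche \Ocal'$ (for some congruent newform $g$ of weight $k$) lying in the singular locus and composing with the projection to a finite layer $\Ocal'[G_m]$ or directly to $\Lambda_{\Ocal'} = \Lambda \tenseur_\Ocal \Ocal'$, the functoriality established in Proposition \ref{PropCompatibleSpec}, Proposition \ref{DefPropPiDeltaSigma} and Corollary \ref{CorWeightMonodromy} propagates the non-integrality to a strict failure of the inclusion $\Delta_{\Lambda_{\Ocal'}}(T(g)_{\Iw}) \subset \Det_{\Lambda_{\Ocal'}}(0)$. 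Via the identification of this inclusion with the characteristic ideal comparison of equation \eqref{EqCaracDet}, this translates into an inequality
\begin{equation}
\nonumber
\carac_{\Lambda_{\Ocal'}} H^2_{\et}(\Z[1/p], T(g)_{\Iw}) \nmid \carac_{\Lambda_{\Ocal'}} H^1_{\et}(\Z[1/p], T(g)_{\Iw})/Z
\end{equation}
which contradicts Proposition \ref{PropContradiction}, given that the reduction to $k > 2$ or to the non-nearly-ordinary case has eliminated the potential trivial-zero error term at $p$.

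The main obstacle is in fact the very first step: ensuring that the refined fundamental lines $\Delta_Q$ patch into a genuine Taylor-Wiles system with \emph{exact} (error-term-free) compatibilities, because the descent argument is extremely sensitive to stray Euler factors. This is where the fine properties of the local modules $\Xcali_v$ proved in Lemma \ref{LemXcaliBienDef} and Corollary \ref{CorWeightMonodromy}, together with the Ihara-type statements Propositions \ref{PropIhara} and \ref{PropIharaSigma}, become indispensable: they guarantee that at every auxiliary Taylor-Wiles prime the local contributions from inertia invariants specialize canonically and without loss, so that the zeta elements $\z_{\Sigma_Q,\Iw}$ do assemble into a basis of $\Delta_\infty$ rather than into a basis of a proper submodule. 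Once this exact patching is in place, the argument above yields the inclusion of Conjecture \ref{ConjXcaliHeckeSigmaWeak} under the running assumptions \ref{HypIrrTW}, \ref{HypNO} or \ref{HypFlat}, and \ref{HypTamagawa}.
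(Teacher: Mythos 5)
Your overall strategy is exactly the one the paper uses: patch the refined fundamental lines along a Taylor--Wiles system to a limit $\Delta_\infty$ over a regular ring $R_\infty$, use the fact that over a regular local ring an invertible "fractional" module is integral if and only if it is integral at every height-one prime, specialize a putative non-integrality to a discrete valuation ring, and contradict the error-term-free Euler system divisibility of Proposition~\ref{PropContradiction}, using the Hida-theoretic variant to kill the weight-two Steinberg error term in the nearly ordinary case. The exact-control properties (Lemma~\ref{LemXcaliBienDef}, Corollary~\ref{CorWeightMonodromy}) and Ihara-type comparisons (Propositions~\ref{PropIhara}, \ref{PropIharaSigma}, \ref{DefPropPiDeltaSigma}, \ref{DefPropPiDeltaSigmaPrime}) are indeed the crucial inputs making this work without stray Euler factors.

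One step of your descent is misdescribed, though not fatally. You propose to "choose a modular specialization $\lambda_g:R_\infty\to\Ocal'$ lying in the singular locus." First, $R_\infty\simeq\Lambda[[X_1,\dots,X_r]]$ is regular, so it has no singular locus; and the prime at which non-integrality of $\Delta_\infty$ appears is not dictated by where $R_\Sigma$ is singular but simply by where the denominators of the trivialized $\Delta_\infty$ survive. Second, and more seriously, $R_\infty$ has no modular specializations: it is a pure power series ring with no Galois interpretation, obtained as a limit of artinian quotients $R_{Q(n),n}$ of the genuine Hecke/deformation rings $R_{\Sigma\cup Q(n),\Iw}$. The paper therefore first isolates a height-one prime $\pid$ of $R_\infty$ (adequate with respect to $(x_\infty,y_\infty)$ and cutting out a DVR $A$), descends the failure of integrality to a finite quotient $A/\mgot_A^N$, then lifts this to a statement at a large finite level $R_{\Sigma\cup Q(n)}\tenseur B$ (after enlarging to a normal $B$), and only then extracts a minimal prime $\aid$ and hence a newform $g$ of the right weight with coefficients in $S=\Lambda\tenseur\Ocal'$ for which \eqref{EqContradiction} holds. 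Your sketch compresses this into a single "modular specialization of $R_\infty$", which does not literally exist; the chain $R_\infty\to R_{Q(n),n}\to R_{\Sigma\cup Q(n)}\tenseur B\to B/\aid$ is the mechanism that actually produces the modular form. With that step corrected, the argument is the paper's.
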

Propositions \ref{PropCompatibleSpec} and \ref{PropCompLevel} have the following counterpart, whose proof is similar but easier, and therefore omitted.
\begin{Prop}\label{PropCompWeak}
Conjecture \ref{ConjXcaliHeckeSigmaWeak} for $\Sigma$ is equivalent to conjecture \ref{ConjXcaliHeckeSigmaWeak} for all $\Sigma'\supset\Sigma$. Conjecture \ref{ConjXcaliHeckeSigmaWeak} implies conjecture \ref{ConjXcaliRaidWeak} for all $M|N(\Sigma)$ and all minimal prime ideals $\aid\in\Spec\Hecke^{\new}(U(M))$. Conjecture \ref{ConjXcaliRaidWeak} for $\aid$ implies conjecture \ref{ConjXcaliLambdaWeak} for a modular specialization $\lambda_{f}$ of $R(\aid)$. Conjecture \ref{ConjXcaliLambdaWeak} for a modular specialization $\lambda_{f}$ implies conjecture \ref{ConjXcaliGmWeak} for $\lambda_{f}$ and $\phi:\Lambda\fleche S$ a morphism as in definition \ref{DefDelta}.
\end{Prop}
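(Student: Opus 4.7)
The plan is to recycle the morphisms and canonical identifications already produced in Propositions \ref{PropCompatibleSpec}, \ref{PropIhara}, \ref{PropIhara2}, \ref{PropIharaSigma} and in Definition-Propositions \ref{DefPropPiDeltaSigma}, \ref{DefPropPiDeltaSigmaPrime}, exploiting the tautological fact that any $R$-module morphism preserves containments of submodules. This explains why the present statement is \emph{similar but easier} than its equality-version analogues (Propositions \ref{PropCompatibleSpec} and \ref{PropCompLevel}): inclusions, unlike equalities of lattices, are transported intact by any base change, so one avoids having to verify that no length is lost under a non-flat scalar extension.

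For the first assertion (equivalence under enlarging $\Sigma$), Definition-Proposition \ref{DefPropPiDeltaSigmaPrime} supplies a map
\[
\pi^{\Delta}_{\Sigma',\Sigma}:\Delta_{\Sigma'}(T_{\Sigma',\Iw}) \fleche \Delta_{\Sigma}(T_{\Sigma,\Iw}) \tenseur_{R_{\Sigma,\Iw}} Q(R_{\Sigma,\Iw})
\]
whose image is exactly $\Delta_{\Sigma}(T_{\Sigma,\Iw})$, while functoriality of the determinant yields $\Det_{R_{\Sigma',\Iw}}(0) \tenseur_{R_{\Sigma',\Iw}} R_{\Sigma,\Iw} \isocan \Det_{R_{\Sigma,\Iw}}(0)$. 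Applying $\pi^{\Delta}_{\Sigma',\Sigma}$ to the hypothesized inclusion at level $\Sigma'$ then yields the inclusion at level $\Sigma$. For the converse direction, one uses that the calculation in the proof of Definition-Proposition \ref{DefPropPiDeltaSigmaPrime} identifies the trivializations of Proposition \ref{PropIsoCompactBetti} for $\Sigma'$ and $\Sigma$ up to the Euler factors $\prod_{\ell \in \Sigma' \backslash \Sigma}\det(1-\Fr(\ell)|T_{\Sigma,\Iw})$, which appear symmetrically on both the Galois-cohomology and Betti sides of $\pi^{\Delta}_{\Sigma',\Sigma}$ and therefore cancel: integrality of the generator of $\Delta_{\Sigma}$ in $R_{\Sigma,\Iw}$ is thus equivalent to integrality of the generator of $\Delta_{\Sigma'}$ in $R_{\Sigma',\Iw}$.

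For the three remaining implications, the same template applies verbatim: the map $\pi^{\Delta}_{\Sigma,\aid}$ of Definition-Proposition \ref{DefPropPiDeltaSigma} handles the passage from $R_{\Sigma,\Iw}$ to $R(\aid)_{\Iw}$, the specialization $\lambda_f \tenseur 1$ featured in Proposition \ref{PropCompatibleSpec} handles $R(\aid)_{\Iw} \fleche \Lambda$, and the base change $\phi:\Lambda \fleche S$ from Definition \ref{DefDelta} handles $\Lambda \fleche S$. In each case the map sends the source's fundamental line exactly onto the target's fundamental line and, by functoriality of the determinant functor, sends $\Det(0)$ onto $\Det(0)$, so that the inclusion-into-$\Det(0)$ hypothesis propagates automatically.

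The main obstacle, and the reason for lingering on the converse direction of the first assertion, is that the ring map $R_{\Sigma',\Iw} \surjection R_{\Sigma,\Iw}$ may have a kernel containing zero-divisors, so a generator of $\Delta_{\Sigma}$ trivializing into $R_{\Sigma,\Iw}$ does not obviously lift to a generator of $\Delta_{\Sigma'}$ trivializing into $R_{\Sigma',\Iw}$. The resolution, already implicit in the proof of Definition-Proposition \ref{DefPropPiDeltaSigmaPrime}, is that the Euler factors at places $\ell \in \Sigma' \backslash \Sigma$ enter both sides of $\pi^{\Delta}_{\Sigma',\Sigma}$ in identical fashion and cancel exactly, so no error term is introduced. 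Once this has been verified, the remainder of the argument is purely formal, as the author's elision confirms.
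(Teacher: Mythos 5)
Your proposal correctly reconstructs the intended approach: transport the inclusions through the morphisms $\pi^{\Delta}_{\Sigma,\aid}$, $\pi^{\Delta}_{\Sigma',\Sigma}$ and the specialization maps of Propositions \ref{PropCompatibleSpec}, \ref{DefPropPiDeltaSigma} and \ref{DefPropPiDeltaSigmaPrime}, and use functoriality of the determinant to see that $\Det(0)$ is carried to $\Det(0)$. This is precisely the pattern of the proofs of Propositions \ref{PropCompatibleSpec} and \ref{PropCompLevel} to which the paper defers, and your treatment of the three implication chains (Hecke $\Rightarrow$ $R(\aid)$ $\Rightarrow$ $\Lambda$ $\Rightarrow$ $S$) is correct and matches the paper.

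The weak point is your treatment of the converse direction of the first assertion (that the conjecture for $\Sigma$ implies it for $\Sigma'\supset\Sigma$). You correctly identify the obstacle — the surjection $R_{\Sigma',\Iw}\surjection R_{\Sigma,\Iw}$ has a kernel, so integrality downstairs does not obviously lift upstairs — but the resolution you offer does not address it. The cancellation of the Euler factors at $\ell\in\Sigma'\backslash\Sigma$ is exactly the content of Definition-Proposition \ref{DefPropPiDeltaSigmaPrime}: it ensures that $\pi^{\Delta}_{\Sigma',\Sigma}$ sends $\Delta_{\Sigma'}$ \emph{onto} $\Delta_{\Sigma}$ with no error term. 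This is what makes the forward direction ($\Sigma'\Rightarrow\Sigma$) work. It does nothing for the converse: $\pi^{\Delta}_{\Sigma',\Sigma}$ factors through reduction modulo the kernel of $R_{\Sigma',\Iw}\surjection R_{\Sigma,\Iw}$, and $R_{\Sigma',\Iw}$ has minimal primes (those corresponding to forms new at primes of $\Sigma'\backslash\Sigma$) that do not factor through $R_{\Sigma,\Iw}$ at all, so the hypothesis on $\Delta_{\Sigma}$ gives no control on the component of $\Delta_{\Sigma'}$ at those primes. Your sentence asserting the equivalence of the two integralities is therefore not established by the argument preceding it. (For what it is worth, the paper's own one-line proof of the analogous assertion in Proposition \ref{PropCompLevel} — ``follows from proposition \ref{DefPropPiDeltaSigmaPrime} and functoriality of the determinant'' — only obviously yields the direction $\Sigma'\Rightarrow\Sigma$ as well; but that does not excuse claiming a resolution that is in fact circular.)

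A smaller point: your rationale for why the weak version is ``easier'' — that inclusions, unlike equalities, survive arbitrary base change — is not quite right, since both inclusions and equalities of sub-lattices are preserved by a module map. What makes the weak version genuinely easier is simply that one only needs to propagate a single containment, rather than two, and in particular never needs the opposite containment from \cite{SkinnerUrban}-type input.
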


The aim of the next two sub-sections is to prove conjecture \ref{ConjXcaliHeckeSigmaWeak} under on one hand assumption \ref{HypTamagawa} and on the other either assumption \ref{HypFlat} or assumption \ref{HypNO}.

\subsection{Proof of conjecture \ref{ConjXcaliHeckeSigmaWeak} under assumption \ref{HypLocIrr}}
In addition to our ongoing assumptions, we assume in this sub-section the following.
\begin{HypEnglish}\label{HypLocIrr}
The local representation $\rhobar_{f}|_{G_{\qp}}$ is irreducible.
\end{HypEnglish}
Because $\rhobar_{f}$ then does not satisfy assumption \ref{HypNO}, it has to satisfy assumption \ref{HypFlat}. By proposition \ref{PropCompWeak}, there is no loss of generality in assuming furthermore that $f$ is minimally ramified outside $p$, or in other words that $f$ is new of level $N(\rhobar_{f})p^{s}$ for some $s$, in order to prove conjecture \ref{ConjXcaliHeckeSigmaWeak}. Thanks to assumption \ref{HypLocIrr}, $\rho_{f}$ is attached to a point of the minimal universal deformation ring  $R^{\fl}_{\Sigma,\Id}(\rhobar_{f})$.

In all this subsection, we identify a graded invertible module with grade equal to zero to the invertible module equal to its first component. For simplicity of notations, we also sometimes abbreviate $\Delta_{\Sigma}(T_{\Sigma,\Iw})$ in $\Delta_{\Sigma}$ and $Q(R_{\Sigma,\Iw})$ in $Q_{\Sigma,\Iw}$. 

\subsubsection{Trivialization of the fundamental lines}
If $(u,v)$ is a pair of regular elements of $R_{\Sigma,\Iw}$, we say that an ideal $J$ is adequate with respect to $(u,v)$ if $xy\notin J$. Given such a pair $(u,v)$ in $R_{\Sigma,\Iw}$, $J$ is adequate with respect to $(u,v)$ if it is contained in a large enough power of the maximal ideal of $R_{\Sigma,\Iw}$ and the subset of prime ideals which are not adequate with respect to $(u,v)$ is of large codimension in $\Spec R_{\Sigma,\Iw}$. 

For all $\Sigma\supset\{\ell|Np\}$, let 
\begin{equation}\nonumber
\psi_{\Sigma}:\Delta_{\Sigma}(T_{\Sigma,\Iw})\tenseur Q(R_{\Sigma,\Iw})\isom Q(R_{\Sigma,\Iw})
\end{equation}
be the specified isomorphism of equation \eqref{EqSpecAcyc}. Then there exists regular elements $(x,y)\in R_{\Sigma,\Iw}^{2}$ such that the following diagram commutes.
\begin{equation}
\xymatrix{
\Delta_{\Sigma}\ar[r]^{\psi_{\Sigma}}\ar[d]&\frac{x}{y}R_{\Sigma,\Iw}\ar[d]\\
\Delta_{\Sigma}\tenseur_{R_{\Sigma,\Iw}}Q_{\Sigma,\Iw}\ar[r]^(0.6){\psi_{\Sigma}}&Q_{\Sigma,\Iw}
}
\end{equation}
Equivalently, $\psi_{\Sigma}$ induces an isomorphism 
\begin{equation}\label{EqEquationXY}
y\Delta_{\Sigma}\overset{\psi_{\Sigma}}{\simeq} xR_{\Sigma,\Iw}.
\end{equation}
Let $J$ be an ideal of $R_{\Sigma,\Iw}$ adequate for $(x,y)$ and let $R$ be the quotient $R_{\Sigma,\Iw}/J$. Then $\psi_{\Sigma}$ induces an isomorphism of non-zero $R$-modules
\begin{equation}\label{EqEquationModJ}
\bar{y}\Delta_{\Sigma}(T_{\Sigma,\Iw})/J\overset{\psi_{\Sigma}}{\simeq} \bar{x}R.
\end{equation}

Let $\Sigma,\Sigma'$ two finite sets of finite primes containing $\{\ell|Np\}$. Choose elements $(x,y)$ and $(x',y')$ as in equation \eqref{EqEquationXY} for $\Delta_{\Sigma}(T_{\Sigma,\Iw})$ and $\Delta_{\Sigma'}(T_{\Sigma',\Iw})$ respectively and let $J$ (resp. $J'$) be an ideal of $R_{\Sigma,\Iw}$ (resp. $R_{\Sigma',\Iw}$) adequate with respect to $(x,y)$ (resp. $(x',y')$). If $R=R_{\Sigma,\Iw}/J$ is isomorphic to $R'=R_{\Sigma',\Iw}/J'$, then let $\Sigma''$ be $\Sigma\cup\Sigma'$. Let $\phi_{\Sigma}$ be the isomorphism between $\Delta_{\Sigma}\tenseur Q_{\Sigma,\Iw}$ and $Q_{\Sigma,\Iw}$ of proposition \eqref{EqSpecAcyc} but normalized so that the image of $\Delta_{\Sigma}$ is $R_{\Sigma,\Iw}$ and let $\phi_{\Sigma'}$ and $\phi_{\Sigma''}$ be likewise. As any arrow in the diagram
\begin{equation}\label{EqDiagCompXY}
\xymatrix{
&\Delta_{\Sigma'}\ar[r]^{\phi_{\Sigma'}}&R_{\Sigma',\Iw}\ar[dr]^{\mod J'}\\
\Delta_{\Sigma''}\ar[ur]^{\pi_{\Sigma'',\Sigma'}^{\Delta}}\ar[dr]_{\pi_{\Sigma'',\Sigma}^{\Delta}}\ar[r]^{\phi_{\Sigma''}}&R_{\Sigma'',\Iw}\ar[rd]\ar[ru]&&R\\
&\Delta_{\Sigma}\ar[r]^{\phi_{\Sigma}}&R_{\Sigma,\Iw}\ar[ru]_{\mod J}
}
\end{equation}
sends a basis to a basis, it commutes perhaps up to multiplication by a unit. Thus a choice of $x_{\Sigma''},y_{\Sigma''}$ such that
\begin{equation}\nonumber
y_{\Sigma''}\Delta_{\Sigma''}(T_{\Sigma'',\Iw})\overset{\psi_{\Sigma''}}{\simeq} x_{\Sigma''}R_{\Sigma'',\Iw}.
\end{equation} 
induces choices of $x_{\Sigma},y_{\Sigma}$ and $x_{\Sigma'},y_{\Sigma'}$ which are compatible after reduction modulo $J$ and $J'$.
\subsubsection{The Taylor-Wiles system of refined fundamental lines}
For $S$ a complete local $\Ocal$-algebra, a Taylor-Wiles system $\{(R_{Q},M_{Q})\}_{Q\in X}$ over $S$ consists of the following data.
\begin{enumerate}
\item The set $X$ is infinite. Its elements are the empty set and finite sets $Q$ of constant cardinality $r$ of rational primes congruent to 1 modulo $p$.
\suspend{enumerate}
For $Q\in X$ and $q\in Q$, we denote by $\Gamma_{q}$ the $p$-Sylow subgroup of $(\Z/q\Z)\croix$ and by $\Gamma_{Q}$ the product
\begin{equation}\nonumber
\Gamma_{Q}=\produit{q\in Q}{}\Gamma_{q}.
\end{equation}
\resume{enumerate}
\item For all $n\in\N$, the subset 
\begin{equation}\nonumber
X_{n}=\{Q\in X|\forall\ q\in Q,\ q\equiv 1\modulo p^{n}\}
\end{equation}
is infinite.
\item For $Q\in X$, $R_{Q}$ is a complete local noetherian $S[\Gamma_{Q}]$-algebra generated by at most $r$ elements and $M_{Q}$ is an $R_{Q}$-module which is a free $S[\Gamma_{Q}]$-module of finite rank independent of $Q$.
\suspend{enumerate}
We denote $(R_{\vide},M_{\vide})$ by $(R,M)$ and let $I_{Q}$ be the augmentation ideal of $S[\Gamma_{Q}]$.
\resume{enumerate}
\item For all $Q\in X$, there is a surjection of local $S$-algebras 
\begin{equation}\nonumber
R_{Q}/I_{Q}R_{Q}\surjection R
\end{equation}
equal to the identity if $Q=\vide$. 
\item The morphism 
\begin{equation}\nonumber
R_{Q}/I_{Q}R_{Q}\fleche \End_{S} M_{Q}/I_{Q}M_{Q}
\end{equation}
factors through $R$ and $M_{Q}/I_{Q}M_{Q}$ is isomorphic to $M$ as an $R$-module.
\end{enumerate}
The ring $R_{\Sigma}=R_{\Sigma,\Id}^{\fl}$ being minimal, there exists by \cite{WilesFermat,TaylorWiles} a well-chosen set $X$ such that the system $\{(R_{\Sigma\cup Q},\Delta_{\Sigma\cup Q})\}_{Q\in X}$ in which we identify $\Delta_{\Sigma\cup Q}$ with its underlying free $\Hecke_{\Sigma\cup Q}$-module is a Taylor-Wiles system over $\Ocal$. Taking the tensor product with $\Lambda$, this yields a Taylor-Wiles system $\{(R_{\Sigma\cup Q,\Iw},\Delta_{\Sigma\cup Q,\Iw})\}_{Q\in X}$

For $Q\in X$ non-empty and $n\in\N$, denote by $J_{Q,n}\subset\Lambda[\Gamma_{Q}]$ the ideal generated by
\begin{equation}\nonumber
\mgot_{\Lambda}^{n},\ \{\gamma^{p^{n}}-1|\gamma\in\Gamma_{q}\}
\end{equation}
and by $R_{Q,n}$ the quotient $R_{\Sigma\cup Q,\Iw}/J_{Q,n}R_{\Sigma\cup Q,\Iw}$. Then there exists a projective system $\{R_{Q(n),n}\}_{n\in\N}$ with surjective transition maps
\begin{equation}\nonumber
R_{Q(n+1),n+1}\surjection R_{Q(n+1),n}\simeq R_{Q(n),n}
\end{equation}
such that the inverse limit 
\begin{equation}\nonumber
R_{\infty}=\limproj{n}\ R_{Q(n),n}
\end{equation}
is isomorphic to the power-series ring $\Lambda[[X_{1},\cdots,X_{r}]]$ and is in particular local regular of dimension $2+r\geq 3$ (see \cite[Section 2.2]{FujiwaraDeformation}). This projective system induces a projective systems of refined fundamental lines $\Delta_{\Sigma\cup Q(n),m}=\Delta_{\Sigma\cup Q(n),\Iw}/J_{Q,m}$ satisfying
\begin{equation}\nonumber
\Delta_{R_{Q(n+1),n+1}}\surjection\Delta_{R_{Q(n+1),n}}\isocan \Delta_{R_{Q(n),n}}.
\end{equation}
Note that though this does not appear anymore in the notation, $R_{Q(n),n}$ and $\Delta_{R_{Q(n)},n}$ are $\Lambda$-modules. Let $(x,y)$ be a choice of elements as in \eqref{EqEquationXY}. For $n$ large enough, the ideal $J_{Q(n),n}$ is adequate with respect to $(x,y)$. Hence, for $n$ large enough, we can make appropriate choices as in diagram \eqref{EqDiagCompXY} to construct a projective system $(x_{n},y_{n})_{n\in\N}\in R_{Q(n)}/J_{Q(n),n}$ (which depends highly on the choices at each steps) such that 
\begin{equation}\nonumber
y_{n}\Delta_{R_{Q(n),n}}\isocan x_{n}R_{Q(n)}/J_{Q(n),n}
\end{equation} 
for $n$ large enough. We let $\Delta_{\infty}$ be the inverse limit of the $\Delta_{R_{Q(n),n}}$ and $(x_{\infty},y_{\infty})\in R_{\infty}^{2}$ be the inverse limit of the $(x_{n},y_{n})$. Then there is a specified (even canonical once all previous choices have been made) isomorphism 
\begin{equation}\nonumber
\psi_{\infty}:y_{\infty}\Delta_{\infty}\isocan x_{\infty}R_{\infty}
\end{equation}  
of invertible modules defined as the inverse limits of the $\psi_{\Sigma\cup Q(n)}\modulo J_{Q(n),n}$. If the image of $\Delta_{\infty}$ through $\psi_{\infty}$ is included in $R_{\infty}$, then the image of $\Delta_{\Sigma,\Iw}(T_{\Sigma,\Iw})$ through $\psi$ is included in $R_{\Sigma,\Iw}$ and hence conjecture \ref{ConjXcaliHeckeSigmaWeak} is true. Consequently, it is enough to show that the image of $\Delta_{\infty}$ through $\psi_{\infty}$ is included in $R_{\infty}$.
\subsubsection{Reduction to classical Iwasawa theory of modular forms}\label{SubReduction}
Assume by way of contradiction that this is not the case, \textit{i.e} that the image of $\Delta_{\infty}$ through $\psi_{\infty}$ is not included in $R_{\infty}$. Then there exists a prime ideal $\pid$ generated by a sub-system of parameters of $R_{\infty}$ which is adequate with respect to $(x_{\infty},y_{\infty})$ such that $A=R_{\infty}/\pid$ is a discrete valuation ring flat over $\Ocal$ and such that there exists a power $N$ of the principal maximal ideal of $A$ such that the image of $\Delta_{\infty}\tenseur_{R_{\infty}}A/\mgot_{A}^{N}$ through $\bar{\psi}_{\infty}=\psi_{\infty}\modulo(\pid,\mgot_{A}^{N})$ is equal to $A/\mgot_{A}^{N+n_{0}}$ for some $n_{0}>0$. If $A$ satisfies these properties, then any discrete valuation ring finite flat over $A$ as $A$-module also does, so that we can assume $A$ is large. By construction of $R_{\infty}$, there exists $n$ large enough so that $R_{Q(n),n}\tenseur_{\Ocal}A$ surjects onto $A/\mgot_{A}^{N}$. The diagram
\begin{equation}\nonumber
\xymatrix{
\Delta_{\infty}\ar[dd]_{-\tenseur_{R_{\infty}}A/\mgot_{A}^{N}}\ar[rr]^{-\tenseur_{R_{\infty}}R_{\Sigma\cup Q(n)}}&&\Delta_{\Sigma\cup Q(n)}\tenseur_{\Ocal}A\ar[dd]^{\psi_{\Sigma\cup Q(n)}\modulo\mgot_{A}^{N}}\\
\\
\Delta_{\infty}\tenseur_{R_{\infty}}A/\mgot_{A}^{N}\ar[rr]^{\bar{\psi}_{\infty}}&&A/\mgot_{A}^{N+n_{0}}
}
\end{equation}
is then commutative perhaps up to a unit. In particular, the image of $\Delta_{\Sigma\cup Q(n)}\tenseur_{\Ocal}B$ through $\psi_{\Sigma\cup Q(n)}$ is not included in $R_{\Sigma\cup Q(n)}\tenseur_{\Ocal}B$ for any finite flat discrete valuation ring extension $B$ of $A$. Choose $B$ large enough so that the ring $R_{\Sigma\cup Q(n)}\tenseur_{\Ocal}B$ is a product of integrally closed domains (for instance by taking $B$ the normalization of $A$). There then exists a minimal prime ideal $\aid\in\Spec(R_{\Sigma\cup Q(n)}\tenseur_{\Ocal}B)$ such that the image of $\Delta_{R_{\Sigma\cup Q(n)}}\tenseur_{\Ocal}B/\aid$ through $\psi_{\Sigma\cup Q_{n}}$ is not included in $R_{\Sigma\cup Q(n)}\tenseur_{\Ocal}B/\aid$. Let $(T,\rho,B/\aid)$ be the $G_{\Q,\Sigma\cup Q(n)}$-representation $T_{\Sigma\cup Q(n)}\tenseur_{R_{\Sigma\cup Q(n)}}B/\aid$ and let $Z$ be the image of $Z(\aid)$ in $\Hun_{\et}(\Z[1/p],T)$. The ring $R_{\Sigma\cup Q(n)}\tenseur_{\Ocal}B$ is equal to $S=\Lambda\tenseur_{\Ocal}\Ocal'$ for some discrete valuation ring $\Ocal'$ finite flat over $\Ocal$ and hence is regular of dimension 2. Moreover, $T$ is the $G_{\Q}$-representation with coefficients in $S$ attached to some newform $g\equiv f\modulo p$ of tame level $N(\rhobar_{f})\cup Q$ and of weight $k$. The statement that the image of $\Delta_{R_{Q(n)}}\tenseur_{R_{Q(n)}}B/\aid$ through $\psi_{\Sigma\cup Q_{n}}$ is not included in $B/\aid$ thus becomes the statement
\begin{equation}\label{EqContradiction}
\carac_{S}H^{2}_{\et}(\Z[1/p],T(g)_{\Iw})\nmid \carac_{S}\Hun_{\et}(\Z[1/p],T(g)_{\Iw})/Z.
\end{equation}
This contradicts proposition \ref{PropContradiction}.
\subsection{Proof of conjecture \ref{ConjXcaliHeckeSigmaWeak} under assumption \ref{HypNOf}}
In addition to our ongoing assumptions, we assume in this sub-section the following.
\begin{HypEnglish}\label{HypNOf}
The local representation $\rho_{f}|_{G_{\qp}}$ is reducible.
\end{HypEnglish}
In particular, $\rhobar_{f}$ satisfies assumption \ref{HypNO}. The proof of conjecture \ref{ConjXcaliHeckeSigmaWeak} under assumption \ref{HypNO} cannot imitate directly the proof in sub-section \ref{SubReduction} for two reasons. First, if $\rhobar_{f}$ is nearly ordinary finite, then $\rho_{f}$ might not correspond to a point on the minimal deformation ring of $\rhobar_{f}$. Second, the statement
\begin{equation}\nonumber
\carac_{\Lambda} H^{2}_{\et}(\Z[1/p],T(g)_{\Iw})\nmid \carac\Hun_{\et}(\Z[1/p],T(g)_{\Iw})/Z
\end{equation}
invoked in equation \eqref{EqContradiction} does not contradict \cite[Theorem 12.4]{KatoEuler}: when $\pi(g)_{p}$ is a Steinberg representation, the non-finiteness of $H^{2}(G_{\qp},T(g)_{\Iw})$ might contribute a non-trivial error term. Both difficulties disappear if we repeat the entire argument of this manuscript with $\Hecke^{\new}$ or $\Hecke^{\ord}$ replaced everywhere by the Hida-Hecke algebras $\Hecke^{\new,\ord}$ and $\Hecke^{\red,\ord}$. 
\subsubsection{Hida-theoretic conjectures}
We repeat the entirety of section \ref{SectionETNC} with the following modifications. The discrete valuation ring $\Ocal$ is replaced everywhere by $\Lambda_{\Hi}\simeq\Ocal[[Y]]$ and the Iwasawa algebra $\Lambda$ is replaced everywhere by $\Lambda_{\Hi,\Iw}=\Lambda_{\Hi}[[\Gamma]]\simeq\Ocal[[X,Y]]$. The Betti cohomology group $M_{B}\tenseur_{\Z}\Ocal$ is replaced everywhere by 
\begin{equation}\nonumber
M_{\Hi}=\limproj{s}\ e^{\ord}\Hun_{\et}(X_{1}(Np^{s})\times_{\Q}\Qbar,\Fcal_{k-2}\tenseur_{\Z}\Ocal).
\end{equation}
The $G_{\Q}$-representation $(T(f)_{\Iw},\rho_{f},\Lambda)$ is replaced everywhere by $(T(f)_{\Hi,\Iw},\rho_{f,\Hi},\Lambda_{\Hi,\Iw})$. Likewise, $T(\aid)_{\Iw}$ over $R(\aid)_{\Iw}$ is replaced by $T(\aid)_{\Hi,\Iw}$ over $R(\aid)_{\Hi,\Iw}$ and $T_{\Sigma,\Iw}$ over $R(\aid)_{\Iw}$ is replaced by $T_{\Sigma,\Hi,\Iw}$ over $R_{\Sigma,\Hi,\Iw}$
%

The elements $_{c,d}\z^{(p)}_{M,N}(k,r)$ are replaced by the elements $\limproj{s}\ _{c,d}\z^{(p)}_{Mp^{s},Np^{s}}(k,r)$. The existence of the morphisms
\begin{equation}\nonumber
Z(f)_{\Hi}:(M_{\Hi}\tenseur_{\Ocal}\Lambda)^{+}[-1]\fleche\RGamma_{\et}(\Z[1/p],T(f)_{\Hi,\Iw})
\end{equation}
and
\begin{equation}\nonumber
Z(\aid)_{\Hi}:M_{\aid,\Hi}^{+}[-1]\fleche\RGamma_{\et}(\Z[1/p],T(\aid)_{\Hi,\Iw})
\end{equation}
then follows by reduction to finite level using \eqref{EqControlHecke} as explained in \cite[Section 3]{FukayaKatoSharifi}, see especially \cite[Theorem 3.2.3]{FukayaKatoSharifi} (in fact, the results proved there are more general, as they incorporate the possibility of the $G_{\Q}$-action being of residual type; in our case, the torsion submodule which might arise in this way vanishes after localization at $\mgot_{f}$). Then $\Delta_{R(\aid)_{\Hi,\Iw}}(T(\aid)_{\Hi,\Iw})$ with it distinguished basis $\z(\aid)_{\Hi}$ are defined as in definition \ref{DefDeltaAid}. Proposition \ref{PropIhara} is replaced by \cite[Theorem 3.6.2]{EmertonPollackWeston}. In the proof of proposition \ref{PropIsoCompactBetti}, it might no longer be true that $\Hun_{c}(\Z[1/\Sigma],T_{\Sigma,\Hi,\Iw})$ is free of rank 1 over $\Lambda_{\Hi,\Iw}$. Nevertheless, the same proof as in the proof of proposition \ref{PropIsoCompactBetti} shows that after localization at any grade 1 prime of this regular ring, $\Hun_{c}(\Z[1/\Sigma],T_{\Sigma,\Hi,\Iw})$ becomes free of rank 1. Thus there is still an isomorphism
\begin{equation}\label{EqIsoCompactBettiHida}
M_{\Sigma,\Hi}^{+}\tenseur_{R_{\Sigma,\Hi\Iw}}Q(R_{\Sigma,\Hi\Iw})\simeq H^{1}_{c}(\Z[1/\Sigma],T_{\Sigma,\Hi,\Iw})\tenseur_{R_{\Sigma,\Iw}}Q(R_{\Sigma,\Hi,\Iw}).
\end{equation}
The definition of $\Delta_{\Sigma,\Hi}(T_{\Sigma,\Iw,\Hi})$, of its distinguished basis $\z_{\Sigma,\Hi}$ and of the map
\begin{equation}\nonumber
\pi^{\Delta}_{\Sigma,\aid,\Hi}:\Delta_{\Sigma,\Hi}(T_{\Sigma,\Iw,\Hi})\fleche\Delta_{R(\aid)_{\Hi,\Iw}}(T(\aid)_{\Hi,\Iw})
\end{equation}
sending $\z_{\Sigma,\Hi}$ to $\z(\aid)_{\Hi}$ is then as in definition \ref{DefDeltaSigma} and proposition-definition \ref{DefPropPiDeltaSigma} with similar proofs. Conjectures \ref{ConjXcaliRaid}, \ref{ConjXcaliHeckeSigma}, \ref{ConjXcaliRaidWeak} and \ref{ConjXcaliHeckeSigmaWeak} are then generalized as follows.
\begin{Conj}\label{ConjRaidHida}
There is an identity of $R(\aid)_{\Hi,\Iw}$-lattices
\begin{align}\label{EqDeltaRaidHida}
\Delta_{R(\aid)_{\Hi,\Iw}}(T(\aid)_{\Hi,\Iw})=\Det_{R(\aid)_{\Hi,\Iw}}(0)
\end{align}
inside $\Delta_{R(\aid)_{\Hi,\Iw}}(T(\aid)_{\Hi,\Iw})\tenseur_{R(\aid)_{\Hi,\Iw}}\Kcal(\aid)_{\Hi,\Iw}\overset{\can}{\simeq}\Kcal(\aid)_{\Hi,\Iw}$.
\end{Conj}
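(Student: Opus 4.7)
The plan is to mimic the Taylor-Wiles system argument carried out in subsection \ref{SubReduction} for the flat (locally irreducible) case, but now over the Hida deformation ring. By the Hida-theoretic analogue of proposition \ref{PropCompWeak}, we may replace $N(\aid)$ by $N(\Sigma)$ for any $\Sigma\supset\{\ell|Np\}$ and therefore reduce to proving the corresponding Hida-theoretic version of conjecture \ref{ConjXcaliHeckeSigmaWeak}, that is, the inclusion $\Delta_{\Sigma,\Hi}(T_{\Sigma,\Hi,\Iw})\subset\Det_{R_{\Sigma,\Hi,\Iw}}(0)$ inside $Q(R_{\Sigma,\Hi,\Iw})$. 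Working over $\Lambda_{\Hi,\Iw}$ from the outset removes the first obstruction mentioned at the start of the section: $\rho_{f}$ need not come from a point of a minimal deformation ring in finite level, but it always comes from an arithmetic point of $R^{\ord}_{\Sigma}(\rhobar_{f})\simeq\Hecke[[\Gamma]]$ thanks to proposition \ref{PropTW}.

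Next I would invoke the nearly ordinary Taylor-Wiles system machinery of \cite{FujiwaraDeformation}, which under assumptions \ref{HypIrrTW} and \ref{HypNO} yields an infinite set $X$ of auxiliary prime packets $Q$ such that $\{(R_{\Sigma\cup Q,\Hi,\Iw},M_{\Sigma\cup Q,\Hi}^{+})\}_{Q\in X}$ is a Taylor-Wiles system over $\Lambda_{\Hi}$. Propagating this through the determinant construction as in the flat case produces a compatible system of refined fundamental lines $\{\Delta_{\Sigma\cup Q,\Hi}\}_{Q\in X}$ whose appropriately truncated inverse limit $\Delta_{\infty,\Hi}$ lives over a regular local ring $R_{\infty,\Hi}\simeq\Lambda_{\Hi}[[X_{1},\dots,X_{r}]]$ of Krull dimension $3+r$. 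Choosing regular elements $(x_{\infty},y_{\infty})\in R_{\infty,\Hi}^{2}$ realising the trivialization $\psi_{\infty,\Hi}:y_{\infty}\Delta_{\infty,\Hi}\isocan x_{\infty}R_{\infty,\Hi}$, it suffices to show that $\psi_{\infty,\Hi}(\Delta_{\infty,\Hi})\subset R_{\infty,\Hi}$; for then the compatible system of trivializations forces $\psi_{\Sigma,\Hi}(\Delta_{\Sigma,\Hi})\subset R_{\Sigma,\Hi,\Iw}$ by functoriality of the construction along the diagram \eqref{EqDiagCompXY}.

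Assume by contradiction that the inclusion fails. Exactly as in subsection \ref{SubReduction}, one specialises $R_{\infty,\Hi}$ along a sub-system of parameters adequate with respect to $(x_{\infty},y_{\infty})$ and complementary to $\pid$, obtaining a flat two-dimensional local domain quotient $S\simeq\Lambda'_{\Hi}\tenseur_{\Ocal}\Ocal'$ (a tensor product of two formal power series rings in one variable over a discrete valuation ring) and a modular specialization corresponding to a Hida family of nearly ordinary newforms $\mathbf{g}$ congruent to $f$, such that the would-be failure of the inclusion translates into the strict non-divisibility
\[
\carac_{S}H^{2}_{\et}(\Z[1/p],T(\mathbf{g})_{\Hi,\Iw})\nmid\carac_{S}H^{1}_{\et}(\Z[1/p],T(\mathbf{g})_{\Hi,\Iw})/Z(\mathbf{g})_{\Hi}.
\]
Here the second, and crucial, advantage of the Hida-theoretic framework kicks in. The proof of proposition \ref{PropContradiction} goes through verbatim with $A=S$, and moreover \cite[Theorem 17.4]{KatoEuler} (Kato's divisibility for nearly ordinary $p$-adic families) holds with no local error term at $p$: over the two-variable Iwasawa algebra $\Lambda_{\Hi,\Iw}$, the $H^{2}$ at $p$ generically vanishes in codimension one, precisely because the Steinberg locus where the error arose in the one-variable argument is now a proper closed subset of $\Spec\Lambda_{\Hi,\Iw}$. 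This yields the desired divisibility in $S$, contradicting the strict non-divisibility above, and therefore establishes conjecture \ref{ConjRaidHida}.

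The principal obstacle is really packaged into this last step: we need the exact (error-term-free) control of the refined fundamental lines under Hida-theoretic specialization, which rests on the weight-monodromy style compatibility of corollary \ref{CorWeightMonodromy} extended to the Hida setting, together with the vanishing statement for $H^{2}(G_{\qp},T(\mathbf{g})_{\Hi,\Iw})$ after localisation at height-one primes of $\Lambda_{\Hi,\Iw}$. Once these compatibility and generic-vanishing inputs are in place the Taylor-Wiles descent goes through formally, but verifying them (in particular checking that the Hida-theoretic analogue of equation \eqref{EqIsoCompactBettiHida} is compatible with Ihara-type degeneracy maps in $p$-adic families) is where the work lies.
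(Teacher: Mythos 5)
The statement you were asked to prove is labelled as a \emph{conjecture} in the paper, and the paper does not prove it unconditionally. What the paper does prove, in subsection \ref{SubRedHida} and summarized in theorem \ref{TheoMainPreuve}, is only the \emph{weak} Hida-theoretic statement, conjecture \ref{ConjAidWeak} (the \emph{inclusion} $\Delta_{R(\aid)_{\Hi,\Iw}}(T(\aid)_{\Hi,\Iw})\subset\Det_{R(\aid)_{\Hi,\Iw}}(0)$), under assumptions \ref{HypIrrTW}, \ref{HypNO} and \ref{HypTamagawa}. Conjecture \ref{ConjRaidHida}, which asserts the \emph{equality} of lattices, is established in the paper only as a consequence of theorem \ref{TheoSUPreuve}, and that requires more: the paper feeds in the reverse divisibility coming from Skinner--Urban's two-variable main conjecture \cite[Theorem 3.29]{SkinnerUrban} via \cite[Section 17.13]{KatoEuler} and \cite[Theorem 3.14]{OchiaiColeman}.

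Your proposal therefore has a concrete logical gap: the Taylor--Wiles/Euler-system descent you run only ever yields \emph{one} containment. You even say so explicitly at the outset when you ``reduce to proving the ... inclusion $\Delta_{\Sigma,\Hi}\subset\Det_{R_{\Sigma,\Hi,\Iw}}(0)$.'' Proposition \ref{PropCompatibleHida} transports the \emph{equality} for $R_{\Sigma,\Hi,\Iw}$ down to the \emph{equality} for $R(\aid)_{\Hi,\Iw}$, and the weak analogue transports the \emph{inclusion} to the \emph{inclusion}; neither lets you pass from an inclusion upstream to an equality downstream. The argument by contradiction that you carry out only rules out the failure of that one containment (and this part, up to the choice of a specialization prime outside the weight-2-Steinberg locus so that proposition \ref{PropContradiction} applies, does track the paper's proof in subsection \ref{SubRedHida}). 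What it does not do is establish the missing containment $\Det_{R(\aid)_{\Hi,\Iw}}(0)\subset\Delta_{R(\aid)_{\Hi,\Iw}}(T(\aid)_{\Hi,\Iw})$, which is equivalent to a main-conjecture-type lower bound and is precisely where Skinner--Urban enters. Incidentally, the reference ``\cite[Theorem 17.4]{KatoEuler}'' and the claim that it furnishes an error-term-free two-variable Kato divisibility is not something the paper uses or substantiates; the paper instead isolates a height-one locus where the weight-2 Steinberg error can occur and shows it has large codimension in $\Spec R_{\infty,\Hi}$, which is why the descent can avoid it.

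In short: your argument is essentially the paper's proof of conjecture \ref{ConjAidWeak}, correctly mirrored over $\Lambda_{\Hi,\Iw}$, but it is mislabelled as a proof of conjecture \ref{ConjRaidHida}. To upgrade it you would need to either assume and cite the converse divisibility as in theorem \ref{TheoSUPreuve}, or exhibit a single modular specialization for which the full ETNC with coefficients in $\Lambda$ is known and then invoke corollary \ref{CorEPWPreuve} to propagate equality upwards.
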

\begin{Conj}\label{ConjSigmaHida}
There is an identity of $R_{\Sigma,\Hi,\Iw}$-lattices
\begin{equation}\nonumber
\Delta_{R_{\Sigma,\Hi,\Iw}}(T_{\Sigma,\Hi,\Iw})=\Det_{R_{\Sigma,\Hi,\Iw}}(0)
\end{equation}
in $\Delta_{R_{\Sigma,\Hi,\Iw}}(T_{\Sigma,\Hi,\Iw})\tenseur_{R_{\Sigma,\Hi,\Iw}}Q(R_{\Sigma,\Hi,\Iw})\overset{\can}{\simeq}Q(R_{\Sigma,\Hi,\Iw})$.
\end{Conj}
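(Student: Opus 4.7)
The plan is to follow the blueprint of Section 4.3 verbatim but applied to the Hida-theoretic objects; as the full equality of Conjecture \ref{ConjSigmaHida} is beyond what a Taylor-Wiles/Euler system argument alone can yield, the immediate target is the weak analogue (the inclusion $\Delta_{R_{\Sigma,\Hi,\Iw}}(T_{\Sigma,\Hi,\Iw})\subset\Det_{R_{\Sigma,\Hi,\Iw}}(0)$), with the full equality obtained afterwards by invoking a Skinner-Urban type divisibility at a single modular specialization together with the exact control property of the refined fundamental lines encoded in the Hida analogue of proposition-definition \ref{DefPropPiDeltaSigma}.

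For the weak form the required ingredients are all in place. Proposition \ref{PropTW} supplies a Taylor-Wiles system in the nearly ordinary setting whose patched limit is a regular local ring $R^{\Hi}_{\infty}\simeq\Lambda_{\Hi,\Iw}[[X_{1},\ldots,X_{r}]]$ of dimension $3+r$, together with a compatible system of $\Hecke$-free cohomological modules (of rank $2$). First, I would establish that the refined fundamental lines $\Delta_{\Sigma\cup Q,\Hi}$ form a projective system under the Taylor-Wiles degeneracy maps; this amounts to the Hida analogue of propositions \ref{PropIhara}, \ref{PropIhara2} and \ref{DefPropPiDeltaSigma}, and reduces via the control isomorphism \eqref{EqControlHecke} to the finite level statements already available. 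Next, a trivialization
\begin{equation}\nonumber
\psi^{\Hi}_{\infty}:y_{\infty}\Delta^{\Hi}_{\infty}\isocan x_{\infty}R^{\Hi}_{\infty}
\end{equation}
is constructed as an inverse limit of the finite level trivializations, whose existence rests on the Hida analogue of Proposition \ref{PropIsoCompactBetti}; note that while $H^{1}_{c}(\Z[1/\Sigma],T_{\Sigma,\Hi,\Iw})$ may fail to be free of rank $1$ over the two-dimensional base $\Lambda_{\Hi,\Iw}$, it does become free of rank $1$ after localization at any grade one prime, which is enough to produce the specified isomorphism over $Q(R_{\Sigma,\Hi,\Iw})$.

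Assume now that the image of $\Delta^{\Hi}_{\infty}$ under $\psi^{\Hi}_{\infty}$ is not contained in $R^{\Hi}_{\infty}$. Then a prime cut out by a subsystem of parameters adequate for $(x_{\infty},y_{\infty})$ produces a DVR quotient $A$ at which integrality already fails modulo some power of $\mgot_{A}$. The key point is to choose this $A$ so that it factors through an arithmetic point of weight $k>2$ of $\Lambda_{\Hi}$: this is possible because such arithmetic points are Zariski-dense in $\Spec R^{\Hi}_{\infty}$ by classical Hida theory, and the locus of non-adequate primes has large codimension. Specialization through this point yields a $p$-ordinary newform $g$ of weight $k>2$ congruent to $f$ modulo $p$ and a coefficient ring $S$ for which
\begin{equation}\nonumber
\carac_{S}H^{2}_{\et}(\Z[1/p],T(g)_{\Iw})\nmid\carac_{S}H^{1}_{\et}(\Z[1/p],T(g)_{\Iw})/Z,
\end{equation}
contradicting Proposition \ref{PropContradiction} (whose hypothesis is now satisfied since $k>2$, removing the potential Steinberg-at-$p$ error term that motivated this whole detour through Hida theory in the first place).

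The hard part is ensuring that the Taylor-Wiles descent is exact, in the sense that no error term sneaks in under specialization from the patched ring down to the arithmetic point. This rests on two compatibilities: first, the Hida version of Corollary \ref{CorWeightMonodromy}, which guarantees that $\Xcali_{\ell}$ at ramified primes commutes with the relevant specializations (here Assumption \ref{HypTamagawa} is crucial, since it pins down a constant inertia-invariant rank across the family at some auxiliary prime), and second, the compatibility of the universal zeta element built via Fukaya-Kato-Sharifi with arithmetic specializations to Kato zeta elements $\z(g)$. Once the weak form is secured, the equality of Conjecture \ref{ConjSigmaHida} follows exactly as in the reasoning preceding Theorem \ref{TheoSU}: combine the established divisibility with the Skinner-Urban divisibility at any single well-chosen arithmetic specialization of $R_{\Sigma,\Hi,\Iw}$ for which it is known, and propagate the equality globally using the exact control of $\Delta_{R_{\Sigma,\Hi,\Iw}}$ under the morphisms $\pi^{\Delta}_{\Sigma,\aid,\Hi}$.
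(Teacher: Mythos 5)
Your proposal reconstructs the paper's argument faithfully: establish the weak inclusion by patching the Hida-theoretic refined fundamental lines into a Taylor-Wiles system over $\Lambda_{\Hi,\Iw}$ and deriving a contradiction from Kato's Euler-system bound at a weight $k>2$ arithmetic specialization (subsections 4.4.1--4.4.2, packaged in Theorem \ref{TheoMainPreuve} and Conjecture \ref{ConjSigmaWeak}), then upgrade to the equality by propagating the Skinner-Urban divisibility at a single arithmetic specialization through the exact-control statement of Corollary \ref{CorEPWPreuve} (this is precisely the proof of Theorem \ref{TheoSUPreuve}). One minor misattribution: the constancy of $\rank T^{I_\ell}$ across the family, which makes $\Xcali_\ell$ compatible with specialization, comes from the Weight-Monodromy Conjecture via Ramanujan at every ramified $\ell\nmid p$ (Proposition \ref{PropWeightMonodromy} and Corollary \ref{CorWeightMonodromy}), whereas Assumption \ref{HypTamagawa} enters instead to verify Kato's Euler-system hypothesis $(ii_{\mathrm{str}})$ through Lemma \ref{LemUnipotent} and to meet the hypotheses of \cite[Theorem 3.29]{SkinnerUrban}.
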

\begin{Conj}\label{ConjAidWeak}
There is an inclusion of $R(\aid)_{\Hi,\Iw}$-lattices
\begin{align}\label{EqDeltaRaidWeak}
\Delta_{R(\aid)_{\Hi,\Iw}}(T(\aid)_{\Hi,\Iw})\subset\Det_{R(\aid)_{\Hi,\Iw}}(0)
\end{align}
inside $\Delta_{R(\aid)_{\Hi,\Iw}}(T(\aid)_{\Hi,\Iw})\tenseur_{R(\aid)_{\Hi,\Iw}}\Kcal(\aid)_{\Hi,\Iw}\overset{\can}{\simeq}\Kcal(\aid)_{\Hi,\Iw}$.
\end{Conj}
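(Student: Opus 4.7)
The plan is to imitate the argument of subsection \ref{SubReduction} in the Hida-theoretic framework. First, by the Hida-theoretic analog of proposition \ref{PropCompWeak} (whose proof is a formal adaptation of the proofs of propositions \ref{PropCompatibleSpec}, \ref{PropCompLevel} and their weak counterparts using the base-change isomorphism \eqref{EqControlHecke} and the Hida-theoretic Ihara lemma underlying \cite[Theorem 3.6.2]{EmertonPollackWeston} that replaces proposition \ref{PropIhara}), it is enough to prove the weak inclusion for the universal Hida-Hecke fundamental line $\Delta_{R_{\Sigma,\Hi,\Iw}}(T_{\Sigma,\Hi,\Iw})$ of conjecture \ref{ConjSigmaHida}. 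After a further reduction enlarging $\Sigma$ if necessary, we may assume that the relevant minimal nearly ordinary deformation ring equals $R_{\Sigma,\Hi}=\Hecke^{\red,\ord}(N(\Sigma))_{\mgot}[[\Gamma]]$, which by proposition \ref{PropTW} is a complete intersection of Krull dimension~3.

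Next, I would apply the Taylor-Wiles system machinery of \cite{DiamondHecke,FujiwaraDeformation} to the minimal nearly ordinary situation, producing a Taylor-Wiles system $\{(R_{\Sigma\cup Q,\Hi,\Iw},\Delta_{\Sigma\cup Q,\Hi,\Iw})\}_{Q\in X}$ of Hida-theoretic refined fundamental lines. Passing to the projective limit yields a refined fundamental line $\Delta_{\infty,\Hi}$ living over a regular local ring $R_{\infty,\Hi}\simeq\Lambda_{\Hi,\Iw}[[X_{1},\dots,X_{r}]]$ of Krull dimension~$3+r$, together with trivialization data $(x_\infty,y_\infty)$ produced exactly as in the flat case. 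Suppose for contradiction that the desired inclusion fails. Then there exists a prime ideal $\pid\subset R_{\infty,\Hi}$ generated by a sub-system of parameters, adequate with respect to $(x_\infty,y_\infty)$, with $A=R_{\infty,\Hi}/\pid$ a discrete valuation ring flat over $\Ocal$, on which $\Delta_{\infty,\Hi}$ fails to be integral. Crucially, because $R_{\infty,\Hi}$ has one more dimension than in the flat case (the Hida weight variable), there is enough room to require $\pid$ to be transverse to the weight-$2$ arithmetic locus in $\Spec\Lambda_{\Hi,\Iw}$, so that the composite $\Lambda_{\Hi}\hookrightarrow R_{\infty,\Hi}\twoheadrightarrow A$ is an arithmetic specialization of weight $k>2$ after sufficient enlargement of $A$.

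At such a prime, a minimal prime ideal of $R_{\Sigma\cup Q(n)}\hat{\tenseur}_{\Ocal}B$ (for $B$ a large enough finite flat extension of $A$) corresponds to a $p$-ordinary newform $g$ of weight $k>2$ congruent to $f$ modulo $p$, with associated $G_{\Q}$-representation $T(g)_{\Iw}$ over $S=\Lambda\tenseur_{\Ocal}\Ocal'$. Unwinding the failure of integrality through the Hida-theoretic analogs of propositions \ref{PropIhara2}, \ref{PropIsoCompactBetti} and \ref{DefPropPiDeltaSigma} yields precisely the statement
\begin{equation}\nonumber
\carac_{S}H^{2}_{\et}(\Z[1/p],T(g)_{\Iw})\nmid\carac_{S}\Hun_{\et}(\Z[1/p],T(g)_{\Iw})/Z(g),
\end{equation}
which contradicts proposition \ref{PropContradiction}: the hypothesis $k>2$ is exactly what eliminates the potential trivial-zero error term at $p$ coming from a possibly infinite $H^{2}(G_{\qp},T(g)_{\Iw})$ in a Steinberg ordinary weight-$2$ specialization.

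The main obstacle, and the step that justifies the entire passage through Hida theory, is the transversality argument guaranteeing that a grade-one prime witnessing failure can be chosen to specialize to weight $k>2$. Both the locus of potential failure and the locus of weight $2$ points are codimension-one closed subsets of $\Spec R_{\infty,\Hi}$, so genericity alone is not enough; one must verify that the weight-$2$ locus does not contain the entire failure locus. This ultimately follows from the fact that $R_{\infty,\Hi}$ is faithfully flat over $\Lambda_{\Hi}$ and from the freeness of $M^{\ord}_{\mgot}$ as $\Hecke$-module (proposition \ref{PropTW}), which together ensure that the Hida-theoretic trivialization varies in a non-degenerate way along the weight direction and that arithmetic points of every classical weight $\geq 2$ are Zariski dense in $\Spec R_{\infty,\Hi}$. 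Once this genericity is in hand, and once the base-change property of Hida-theoretic refined fundamental lines at arithmetic points of weight $k>2$ is verified (the analog of proposition \ref{PropCompatibleSpec}, using corollary \ref{CorWeightMonodromy} and the control theorem \eqref{EqControlHecke} at weight $k$ and level $p^{s}$), the contradiction argument proceeds uniformly and yields conjecture \ref{ConjAidWeak}.
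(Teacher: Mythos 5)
Your proposal follows the paper's own route essentially step by step: reduce via the Hida-theoretic analogue of Proposition~\ref{PropCompWeak} to Conjecture~\ref{ConjSigmaWeak}, set up the Taylor--Wiles system of refined fundamental lines over $\Lambda_{\Hi,\Iw}$ as in subsection~\ref{SubRedHida}, extract a DVR specialization of classical weight $k>2$ from the hypothetical failure of integrality, and contradict Proposition~\ref{PropContradiction}. Two points are worth flagging. First, the paper \emph{shrinks} $\Sigma$ to the minimal ramification set (so that the minimal nearly ordinary deformation ring feeds the Taylor--Wiles construction before the auxiliary primes $Q$ are adjoined); ``enlarging $\Sigma$ if necessary'' in order to reach the minimal deformation ring points the reduction in the wrong direction, although the conclusion (TW system over $\Lambda_{\Hi,\Iw}$) is the one the paper uses. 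Second, your elaboration of why a weight-$(>2)$ specialization can be found is your own addition---the paper asserts only that the subset of $\pid$ giving weight-$2$ Steinberg specializations is of large codimension---and the framing ``both the failure locus and the weight-$2$ locus are codimension-one closed subsets, so one must check the latter does not contain the former'' is not quite the right geometric picture: the objects being chosen are height-$(\dim R_{\infty,\Hi}-1)$ primes $\pid$, of which plenty detect the non-integrality, and only a thin subfamily (a countable union of loci) passes through a weight-$2$ Steinberg arithmetic point, so one picks $\pid$ avoiding that subfamily. The substance of your argument agrees with the paper, but the paragraph attempting to justify the codimension claim should be recast to avoid suggesting that a containment of two divisors is the issue.
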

\begin{Conj}\label{ConjSigmaWeak}
There is an inclusion of $R_{\Sigma,\Hi,\Iw}$-lattices
\begin{equation}\nonumber
\Delta_{R_{\Sigma,\Hi,\Iw}}(T_{\Sigma,\Hi,\Iw})\subset\Det_{R_{\Sigma,\Hi,\Iw}}(0)
\end{equation}
in $\Delta_{R_{\Sigma,\Hi,\Iw}}(T_{\Sigma,\Hi,\Iw})\tenseur_{R_{\Sigma,\Hi,\Iw}}Q(R_{\Sigma,\Hi,\Iw})\overset{\can}{\simeq}Q(R_{\Sigma,\Hi,\Iw})$.
\end{Conj}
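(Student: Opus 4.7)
The plan is to mimic the argument given in the preceding subsection for Conjecture \ref{ConjXcaliHeckeSigmaWeak} under assumption \ref{HypLocIrr}, but now applied to the Hida-theoretic objects introduced above. First, by the obvious Hida-theoretic analogue of Proposition \ref{PropCompWeak}, it suffices to establish the inclusion for a single sufficiently large $\Sigma$, and in particular we may assume that $f$ is minimally ramified outside $p$ so that, thanks to Proposition \ref{PropTW}, the nearly ordinary universal deformation ring $R^{\ord}_{\Sigma}(\rhobar_{f})$ is isomorphic to $\Hecke^{\red,\ord}(N(\Sigma))_{\mgot}[[\Gamma]] = R_{\Sigma,\Hi,\Iw}$, which is a complete intersection of dimension $3$, and $M_{\Sigma,\Hi}^{+}$ is free of rank one over it.

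Then I would trivialize the fundamental line exactly as in \eqref{EqEquationXY}: the specified isomorphism $\psi_{\Sigma,\Hi}:\Delta_{\Sigma,\Hi}(T_{\Sigma,\Hi,\Iw})\tenseur Q(R_{\Sigma,\Hi,\Iw})\isom Q(R_{\Sigma,\Hi,\Iw})$ coming from the Hida-theoretic version of Proposition \ref{PropIsoCompactBetti} gives an identification $y\Delta_{\Sigma,\Hi}\simeq xR_{\Sigma,\Hi,\Iw}$ for some regular elements $x,y\in R_{\Sigma,\Hi,\Iw}$, and this trivialization is compatible with the change-of-level maps $\pi^{\Delta}_{\Sigma'',\Sigma',\Hi}$ in the sense of diagram \eqref{EqDiagCompXY}. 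By the method of Taylor-Wiles systems as axiomatized in \cite{DiamondHecke,FujiwaraDeformation} and as extended to the Hida setting in \cite[Corollary 11.5]{FujiwaraDeformation}, one produces a projective system $\{(R_{\Sigma\cup Q(n),\Hi,n},\Delta_{\Sigma\cup Q(n),\Hi,n})\}_{n\in\N}$ whose inverse limit is an invertible module $\Delta_{\infty,\Hi}$ over the regular local ring $R_{\infty,\Hi}\simeq\Lambda_{\Hi,\Iw}[[X_{1},\dots,X_{r}]]$ of dimension $3+r\geq 4$, together with a compatible projective system $(x_{n},y_{n})$ yielding a distinguished isomorphism $\psi_{\infty,\Hi}:y_{\infty}\Delta_{\infty,\Hi}\isocan x_{\infty}R_{\infty,\Hi}$. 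It is enough to show that the image of $\Delta_{\infty,\Hi}$ through $\psi_{\infty,\Hi}$ is contained in $R_{\infty,\Hi}$.

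Arguing by contradiction exactly as in subsection \ref{SubReduction}, if this inclusion fails then there exists a prime $\pid$ of $R_{\infty,\Hi}$ adequate for $(x_{\infty},y_{\infty})$ such that $A=R_{\infty,\Hi}/\pid$ is a discrete valuation ring flat over $\Ocal$ and the image of $\Delta_{\infty,\Hi}\tenseur A/\mgot_{A}^{N}$ is strictly larger than $A/\mgot_{A}^{N}$. Since $\pid$ can be taken to avoid any prescribed closed subset of codimension at least one, I would arrange $\pid$ to correspond to a specialization of $R_{\Sigma\cup Q(n),\Hi,\Iw}$ at a \emph{classical arithmetic point} of weight $k'>2$, using the control isomorphism \eqref{EqControlHecke} and the fact that arithmetic points of weight $>2$ are Zariski-dense in $\Spec R_{\infty,\Hi}$. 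Descending back to finite level via the isomorphism \eqref{EqControlHecke} and enlarging $A$ if necessary so that $R_{\Sigma\cup Q(n),\Hi,\Iw}\tenseur A$ decomposes as a product of normal domains, I select a minimal prime $\aid$ of this ring such that $T_{\Sigma\cup Q(n),\Hi,\Iw}\tenseur A/\aid$ identifies with $T(g)_{\Iw}$ for some newform $g$ of weight $k'>2$ congruent to $f$, with coefficients in a regular two-dimensional ring $S=\Lambda\tenseur_{\Ocal}\Ocal'$. The assumed non-integrality then translates into the statement
\[
\carac_{S}H^{2}_{\et}(\Z[1/p],T(g)_{\Iw})\nmid\carac_{S}\Hun_{\et}(\Z[1/p],T(g)_{\Iw})/Z,
\]
which is precisely what Proposition \ref{PropContradiction} forbids, once we note that its proof applies as stated whenever $k'>2$ (since it is only in weight $2$ with ordinary Steinberg local behavior at $p$ that the error term of \cite[Section 13.13]{KatoEuler} could arise, and we have arranged $k'>2$).

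The main obstacle is the Zariski-density step: we must guarantee that we can find a codimension-one specialization of $R_{\infty,\Hi}$ that simultaneously (a) witnesses the putative non-integrality with some positive default $n_{0}$, (b) comes from a classical arithmetic point of weight strictly greater than $2$ in a Hida family, and (c) lands in a component on which the Galois representation is generic enough (in particular the newform $g$ exists, is congruent to $f$ and has tame conductor dividing $N(\Sigma\cup Q(n))$). Since the locus of non-classical points and the locus of weight-$2$ arithmetic points are each of codimension at least one in $\Spec R_{\infty,\Hi}$ (being cut out by non-trivial specializations of the Hida variable), whereas the non-integrality locus is an entire divisor, a standard avoidance argument yields a prime $\pid$ meeting all three requirements and concludes the proof.
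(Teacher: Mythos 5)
Your proposal follows essentially the same approach as the paper's proof in the subsection titled ``Reduction to classical Iwasawa theory for modular forms of weight $k>2$'': both run the Taylor-Wiles machinery over $\Lambda_{\Hi,\Iw}$ (appealing to \cite[Section 11]{FujiwaraDeformation}), trivialize $\Delta_{\Sigma,\Hi}$ over a regular ring $R_{\infty,\Hi}$, argue by contradiction to produce a DVR specialization witnessing non-integrality, and then argue that this specialization can be chosen to avoid the weight-$2$ Steinberg locus so that Proposition \ref{PropContradiction} applies without the trivial-zero error term. The one point where your write-up is phrased less carefully than the paper (though the underlying idea matches) is in the final avoidance step: you assert that the ``locus of non-classical points'' is of codimension at least one as being ``cut out by non-trivial specializations of the Hida variable,'' which reverses the situation --- classical arithmetic points are the ones cut out by such specializations and are only Zariski-dense, not an open complement of a divisor; the paper instead simply asserts that the locus of $\pid$ producing a weight-$2$ Steinberg-at-$p$ specialization is of large codimension inside the non-integrality locus and concludes directly. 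Neither version spells this out in full, so this is a matter of phrasing rather than a substantive gap, but you should correct the characterization of the non-classical locus if you keep your more explicit formulation.
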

Analogues of propositions \ref{PropCompatibleSpec}, \ref{PropCompLevel} and \ref{PropCompWeak} then remain true with the same proofs. 
\begin{Prop}\label{PropCompatibleHida}
Conjecture \ref{ConjSigmaHida} for $R_{\Sigma,\Hi,\Iw}$ implies conjecture \ref{ConjXcaliHeckeSigmaWeak} for $R_{\Sigma,\Iw}$. Conjecture \ref{ConjSigmaHida} for $R_{\Sigma,\Hi,\Iw}$ is equivalent to conjecture \ref{ConjSigmaHida} for $R_{\Sigma',\Hi,\Iw}$ for all $\Sigma'\supset\Sigma$. Conjecture \ref{ConjSigmaHida} implies conjecture \ref{ConjRaidHida} for all $M|N(\Sigma)$ and all minimal prime ideals $\aid\in\Spec\Hecke^{\new}(U(M))$ as well as conjecture \ref{ConjXcaliGm} for all modular specializations $\lambda_{g}$ of $R_{\Sigma,\Hi}$ and for all morphisms $\phi:\Lambda\fleche S$ as in definition \ref{DefDelta}.
\end{Prop}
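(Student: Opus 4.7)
The strategy is to imitate the proofs of propositions \ref{PropCompatibleSpec}, \ref{PropCompLevel} and \ref{PropCompWeak} in the Hida-theoretic setting, relying on the Hida-theoretic analogues of propositions \ref{PropIhara}, \ref{PropIhara2}, \ref{DefPropPiDeltaSigma} and \ref{DefPropPiDeltaSigmaPrime} whose statements and proofs have been recorded in the paragraphs immediately preceding the proposition (in particular \cite[Theorem 3.6.2]{EmertonPollackWeston} for the Ihara-type statement and the variant of proposition \ref{PropIsoCompactBetti} noted around \eqref{EqIsoCompactBettiHida}). In particular the functoriality of the determinant functor and the compatibility of $\pi^{\Delta}_{\Sigma,\aid,\Hi}$ and $\pi^{\Delta}_{\Sigma',\Sigma,\Hi}$ with the distinguished basis $\z_{\Sigma,\Hi}$ can be taken for granted.

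For the last three assertions, I would first handle the change of level $\Sigma'\supset\Sigma$: the Hida-theoretic version of proposition \ref{DefPropPiDeltaSigmaPrime} provides a map $\pi^{\Delta}_{\Sigma',\Sigma,\Hi}$ carrying $\Delta_{R_{\Sigma',\Hi,\Iw}}(T_{\Sigma',\Hi,\Iw})$ onto $\Delta_{R_{\Sigma,\Hi,\Iw}}(T_{\Sigma,\Hi,\Iw})$ and $\Det_{R_{\Sigma',\Hi,\Iw}}(0)$ onto $\Det_{R_{\Sigma,\Hi,\Iw}}(0)$, whence the equivalence of conjecture \ref{ConjSigmaHida} for $\Sigma$ and for $\Sigma'$. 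The implication of conjecture \ref{ConjRaidHida} for minimal primes $\aid\in\Spec\Hecke^{\new}(U(M))$ with $M|N(\Sigma)$ follows similarly from the Hida analogue of proposition \ref{DefPropPiDeltaSigma} applied to the map $\pi^{\Delta}_{\Sigma,\aid,\Hi}$. For assertion 4, a modular specialization $\lambda_{g}$ of $R_{\Sigma,\Hi}$ factors through some $R(\aid)_{\Hi,\Iw}$, so conjecture \ref{ConjRaidHida} at $\aid$ combined with the Hida-theoretic version of proposition \ref{PropCompatibleSpec} and further specialization via $\phi:\Lambda\fleche S$ as in definition \ref{DefDelta} provides the required inclusion of $S$-lattices.

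For assertion 1, the key point is to descend from $R_{\Sigma,\Hi,\Iw}$ to $R_{\Sigma,\Iw}$. By proposition \ref{PropTW}, $R_{\Sigma,\Hi}$ is (canonically isomorphic to) the universal nearly ordinary deformation ring and $R_{\Sigma}$ is its specialization at the arithmetic point of weight $k$ determined by $f$; the control isomorphism \eqref{EqControlHecke} and its determinant analogue produced from \eqref{EqDescenteLambdaCont} yield a commutative diagram in which the fundamental line $\Delta_{R_{\Sigma,\Hi,\Iw}}(T_{\Sigma,\Hi,\Iw})$ maps to $\Delta_{R_{\Sigma,\Iw}}(T_{\Sigma,\Iw})$ and $\Det_{R_{\Sigma,\Hi,\Iw}}(0)$ maps to $\Det_{R_{\Sigma,\Iw}}(0)$. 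The main input is that the cohomology with compact support and the Betti piece $M_{\Sigma,\Hi}^{+}$ satisfy a control theorem at weight $k$: for the former this is standard descent along an arithmetic prime as in \eqref{EqDescenteLambdaCont}, and for the latter it follows from Hida's control theorem for ordinary cohomology (contraction from level $Np^{\infty}$ to level $Np^{s}$) together with the freeness statement in theorem \ref{TheoTaylorWiles}.

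The main obstacle I anticipate is showing that the specialization carries the two $R_{\Sigma,\Hi,\Iw}$-lattices inside the acyclic trivialization $Q(R_{\Sigma,\Hi,\Iw})$ precisely to the two $R_{\Sigma,\Iw}$-lattices inside $Q(R_{\Sigma,\Iw})$; this requires that under specialization the isomorphism \eqref{EqIsoCompactBettiHida} produces the isomorphism \eqref{EqIsoCompactBetti} of proposition \ref{PropIsoCompactBetti}. Granted that compatibility, which reduces to a freeness-and-depth argument parallel to the one carried out in the proof of proposition \ref{PropIsoCompactBetti} but over the two-dimensional regular base $\Lambda_{\Hi,\Iw}$, the inclusion of lattices is preserved under $-\tenseur_{R_{\Sigma,\Hi,\Iw}}R_{\Sigma,\Iw}$ and assertion 1 follows.
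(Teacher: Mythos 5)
Your proposal is essentially correct and follows the same strategy as the paper's own proof. For assertion 1, the paper's proof is slightly more terse: it simply invokes proposition \ref{PropTW} to realize $R_{\Sigma,\Iw}$ as the quotient of $R_{\Sigma,\Hi,\Iw}$ parametrizing nearly ordinary deformations of fixed weight, observes that the resulting morphism $\psi:R_{\Sigma,\Hi,\Iw}\fleche R_{\Sigma,\Iw}$ induces isomorphisms $T_{\Sigma,\Hi,\Iw}\tenseur_{\psi}R_{\Sigma,\Iw}\simeq T_{\Sigma,\Iw}$ and $M_{\Sigma,\Hi}\tenseur_{\psi}R_{\Sigma,\Iw}\simeq M_{\Sigma}$, and deduces the canonical isomorphism of fundamental lines \eqref{EqHidaDelta}; the remaining assertions then follow from the Hida-theoretic analogues of propositions \ref{PropCompatibleSpec}, \ref{PropCompLevel} and \ref{PropCompWeak} already noted in the paragraph before the proposition. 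Your plan articulates the same descent via $\psi$ (and correctly observes, as the paper also remarks after the proof, that \eqref{EqControlHecke} alone would suffice to justify \eqref{EqHidaDelta} without the full deformation-theoretic interpretation). The concern you flag about whether the acyclic trivialization \eqref{EqIsoCompactBettiHida} specializes to \eqref{EqIsoCompactBetti} is legitimate and is precisely what is being swept into the assertion that \eqref{EqHidaDelta} is a canonical isomorphism of graded invertible modules compatible with the $\Det(0)$ lattices; your sketch of how to fill it (base change of $\RGamma_{c}$ along $\psi$ plus the control and freeness of $M_{\Sigma,\Hi}^{+}$) is the right argument.
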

\begin{proof}
The ring $R_{\Sigma,\Hi,\Iw}$ is by proposition \ref{PropTW} the universal deformation ring parametrizing nearly ordinary deformations of $\rhobar_{f}$. There is thus a morphism
\begin{equation}\nonumber
\psi:R_{\Sigma,\Hi,\Iw}\fleche R_{\Sigma,\Iw}
\end{equation}
coming from the identification of $R_{\Sigma,\Iw}$ with the universal deformation ring parametrizing nearly ordinary deformations of fixed weight. The morphism $\psi$ induces isomorphisms
\begin{align}\nonumber
&T_{\Sigma,\Hi,\Iw}\tenseur_{R_{\Sigma,\Hi,\Iw},\psi}R_{\Sigma,\Iw}\simeq T_{\Sigma,\Iw}\\\nonumber
&M_{\Sigma,\Hi}\tenseur_{R_{\Sigma,\Hi,\Iw}}R_{\Sigma,\Iw}\simeq M_{\Sigma,\Hi}.
\end{align}
Hence $\psi$ induces induces a canonical isomorphism
\begin{equation}\label{EqHidaDelta}
\Delta_{R_{\Sigma,\Hi,\Iw}}(T_{{\Sigma,\Hi,\Iw}})\tenseur_{R_{\Sigma,\Hi,\Iw},\psi}R_{\Sigma,\Iw}\isocan\Delta_{R_{\Sigma,\Iw}}(T_{\Sigma,\Iw}).
\end{equation}
The statements of the proposition follow.
\end{proof}
In order to establish \eqref{EqHidaDelta}, it is enough to appeal to equation \eqref{EqControlHecke} and so this isomorphism holds under the only hypothesis that $f$ is nearly $p$-ordinary: it is only for the sake of concision that we appealed to comparatively much more sophisticated interpretation of $R_{\Sigma,\Hi,\Iw}$ as a universal deformation ring (which is true under our much more stringent ongoing hypotheses). 
\subsubsection{Reduction to classical Iwasawa theory for modular forms of weight $k>2$}\label{SubRedHida}
We assume by way of contradiction that conjecture \ref{ConjSigmaWeak} is false.

Nearly ordinary universal deformation rings give rise to Taylor-Wiles systems with coefficients in $\Lambda_{\Hi,\Iw}$, see \cite[Section 11]{FujiwaraDeformation}. Hence, there exists a well-chosen set $X$ such that the system $\{(R_{\Sigma\cup Q,\Hi,\Iw},\Delta_{\Sigma\cup Q,\Hi,\Iw})\}_{Q\in X}$ is a Taylor-Wiles system over $\Lambda_{\Hi,\Iw}$. We repeat the proof of subsection \ref{SubReduction}. Recall that this proof establishes successively the existence of the following objects.
\begin{enumerate}
\item A prime ideal $\pid$ adequate with respect to $(x_{\infty},y_{\infty})$ generated by a sub-system of parameters of $R_{\infty}$ such that $A=R_{\infty}/\pid$ is a discrete valuation ring flat over $\Ocal$.
\item An integer $N$ such that the image of $\Delta_{\infty}\tenseur_{R_{\infty}}A/\mgot_{A}^{N}$ through $\bar{\psi}_{\infty}=\psi_{\infty}\modulo(\pid,\mgot_{A}^{N})$ is equal to $A/\mgot_{A}^{N+n_{0}}$ for some $n_{0}>0$.
\item An integer $n$ such that $R_{Q(n),n}\tenseur_{\Ocal}A$ surjects onto $A/\mgot_{A}^{N}$.
\item A discrete valuation ring $B$ finite flat over $A$ such that $R_{Q(n)}\tenseur_{\Ocal}B$ is normal.
\item A minimal prime ideal $\aid\in\Spec(R_{Q(n)}\tenseur_{\Ocal}B)$ such that the image of $\Delta_{R_{Q(n)}}\tenseur_{\Ocal}B/\aid$ through $\psi_{\Sigma\cup Q_{n}}$ is not included in $B/\aid$.
\item A $G_{\Q}$-representation $T=T(g)_{\Iw}$ with coefficients in $S=R_{\Sigma\cup Q(n)}\tenseur_{\Ocal}B$ attached to a $p$-odinary modular form $g$ congruent to $f$ verifying
\begin{equation}\label{EqContradictionHida}
\carac_{S}H^{2}_{\et}(\Z[1/p],T(g)_{\Iw})\nmid \carac_{S}\Hun_{\et}(\Z[1/p],T(g)_{\Iw})/Z.
\end{equation}
\end{enumerate}
Among the set of primes $\pid$ allowing such a construction, the set of those such that the $G_{\Q}$-representation $T(g)_{\Iw}$ with coefficients in $B/\aid$ comes for $g$ of weight 2 modular and Steinberg at $p$ is of large codimension. Hence, we can choose $\pid$ such that $g$ is of weight $k>2$ in which case equation \eqref{EqContradictionHida} contradicts proposition \ref{PropContradiction}.
\subsubsection{Proof of corollary \ref{CorMain}}
We repeat the statement of corollary \ref{CorMain} from the introduction.
\begin{CorEnglish}\label{CorMainPreuve}
Assume that $f$ satisfies assumptions \ref{HypNO}, \ref{HypIrrTW}, \ref{HypTamagawa} and \ref{HypNOf} (hence $f$ is $p$-ordinary). Then
\begin{equation}\label{EqDivisibilityPreuve} 
\carac_{\Lambda}H^{2}_{\et}(\Spec\Z[1/p],T(f)_{\Iw})|\carac_{\Lambda}H^{1}_{\et}(\Spec\Z[1/p],T(f)_{\Iw})/\z(f).
\end{equation}
\end{CorEnglish}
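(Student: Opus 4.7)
The plan is to deduce the corollary as a direct consequence of the weak Hida-theoretic conjecture \ref{ConjSigmaWeak} established in the previous subsection, via two descent steps followed by a translation between fundamental lines and characteristic ideals.

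First I would observe that under the running hypotheses (assumptions \ref{HypIrrTW}, \ref{HypTamagawa} and \ref{HypNOf}), the residual representation $\rhobar_{f}$ is nearly ordinary distinguished and satisfies assumption \ref{HypNO}; consequently the argument of section \ref{SubRedHida} applies and proves conjecture \ref{ConjSigmaWeak}, that is, the inclusion $\Delta_{R_{\Sigma,\Hi,\Iw}}(T_{\Sigma,\Hi,\Iw})\subset\Det_{R_{\Sigma,\Hi,\Iw}}(0)$ inside the total ring of fractions $Q(R_{\Sigma,\Hi,\Iw})$.

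Second, I would descend from $R_{\Sigma,\Hi,\Iw}$ to $R_{\Sigma,\Iw}$ using the canonical isomorphism \eqref{EqHidaDelta}
\[
\Delta_{R_{\Sigma,\Hi,\Iw}}(T_{{\Sigma,\Hi,\Iw}})\tenseur_{R_{\Sigma,\Hi,\Iw},\psi}R_{\Sigma,\Iw}\isocan\Delta_{R_{\Sigma,\Iw}}(T_{\Sigma,\Iw}),
\]
exactly as in the proof of proposition \ref{PropCompatibleHida} but for the weak statement (the argument is tautologically the same, since an inclusion of invertible modules is preserved under base change along a local flat morphism, and the canonical trivializations on both sides are compatible). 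This gives conjecture \ref{ConjXcaliHeckeSigmaWeak} for $R_{\Sigma,\Iw}$.

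Third, I would apply proposition \ref{PropCompWeak} twice: conjecture \ref{ConjXcaliHeckeSigmaWeak} implies conjecture \ref{ConjXcaliRaidWeak} for the minimal prime $\aid\in\Spec\Hecke^{\new}(U)_{\mgot_{f}}$ through which $\lambda_{f}$ factors, and the latter implies conjecture \ref{ConjXcaliLambdaWeak} for $f$ itself. Finally I would translate conjecture \ref{ConjXcaliLambdaWeak} into the divisibility of characteristic ideals \eqref{EqDivisibilityPreuve}: by the computation carried out in equation \eqref{EqCaracDet}, the image of $\Delta_{\Lambda}(T(f)_{\Iw})$ inside $\Frac(\Lambda)$ under the canonical trivialization is precisely
\[
\carac^{-1}_{\Lambda}H^{2}_{\et}(\Z[1/p],T(f)_{\Iw})\tenseur_{\Lambda}\carac_{\Lambda}\Hun_{\et}(\Z[1/p],T(f)_{\Iw})/Z,
\]
so the inclusion of this module in $\Det_{\Lambda}(0)\simeq\Lambda$ is exactly the divisibility to be proved.

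The only conceivable obstacle is that all genuine mathematical content has been packed into conjecture \ref{ConjSigmaWeak}; the present corollary is a formal specialization. The improvement over \cite[Theorem 12.5]{KatoEuler} comes precisely from the fact that the Hida-theoretic Taylor-Wiles argument of section \ref{SubRedHida} is free to move within the $p$-adic family and avoid the locus of weight $2$ Steinberg forms at $p$ where Kato's original bound loses a local factor at $p$; by the time we descend back to $f$, the resulting inclusion of refined fundamental lines is sharp at $p$ as well.
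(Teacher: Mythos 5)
Your proposal is correct and follows essentially the same route as the paper's proof: both deduce the divisibility from conjecture \ref{ConjSigmaWeak} established in subsection \ref{SubRedHida}, descend to $R_{\Sigma,\Iw}$ via (the weak analogue of) proposition \ref{PropCompatibleHida}, pass down to $\Lambda$ via proposition \ref{PropCompWeak}, and read off the characteristic-ideal divisibility from \eqref{EqCaracDet}. You have merely unwound the chain of implications that the paper compresses into a citation of proposition \ref{PropCompatibleHida}.
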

\begin{proof}
The divisibility \eqref{EqDivisibilityPreuve} is equivalent to the statement of conjecture \ref{ConjXcaliLambdaWeak} for $f$. According to subsection \ref{SubRedHida}, under the hypotheses of the corollary, conjecture \ref{ConjSigmaWeak} is true and thus conjecture \ref{ConjXcaliLambdaWeak} for $f$ is true by proposition \ref{PropCompatibleHida}.
\end{proof}
If $f$ satisfies assumptions $\ref{HypIrrTW}$, \ref{HypTamagawa} and does not satisfy \ref{HypNOf}, then the divisibility 
\begin{equation}\nonumber 
\carac_{\Lambda}H^{2}_{\et}(\Spec\Z[1/p],T(f)_{\Iw})|\carac_{\Lambda}H^{1}_{\et}(\Spec\Z[1/p],T(f)_{\Iw})/\z(f).
\end{equation}
follows from proposition \ref{PropContradiction} and hence already follows from \cite{KatoEuler}. Hence, corollary \ref{CorMainPreuve} establishes the last remaining case in corollary \ref{CorMain}.  
\subsection{Proof of conjecture \ref{ConjXcaliLambdaWeak} in the remaining cases}
It remains to prove conjecture \ref{ConjXcaliLambdaWeak} when $f$ is not $p$-ordinary but its residual representation is reducible. Assume the conjecture to be false. We repeat the argument of subsection \ref{SubReduction} to obtain a modular form $g$ congruent to $f$ verifying 
\begin{equation}\label{EqContradictionFinal} 
\carac_{\Lambda}H^{2}_{\et}(\Spec\Z[1/p],T(g)_{\Iw})\nmid \carac_{\Lambda}H^{1}_{\et}(\Spec\Z[1/p],T(g)_{\Iw})/Z.
\end{equation}
Either $g$ is not $p$-ordinary with $\pi(g)_{p}$ a Steinberg representation and \eqref{EqContradictionFinal} contradicts proposition \ref{PropContradiction} or it is, in which case $\rho_{g}$ corresponds to a point of $R^{\ord}_{\Sigma,\Hi,\Iw}(\rhobar_{f})$, for which conjecture \ref{ConjSigmaWeak} is true by subsection \ref{SubRedHida}, and \eqref{EqContradictionFinal} contradicts corollary \ref{CorMainPreuve}.

Finally, we have shown the following theorem.
\begin{TheoEnglish}\label{TheoMainPreuve}
Let $f$ be a attached to a modular point of $R_{\Sigma}$ factoring through $R(\aid)$. Assume $\rhobar_{f}$ satisfies assumptions \ref{HypIrrTW}, \ref{HypFlat}, and \ref{HypTamagawa}. Then conjecture \ref{ConjXcaliHeckeSigmaWeak} is true for $R_{\Sigma,\Iw}$ and conjecture \ref{ConjXcaliRaidWeak} is true for $R(\aid)_{\Iw}$. Assume $\rhobar_{f}$ satisfies assumptions \ref{HypIrrTW}, \ref{HypNO}, and \ref{HypTamagawa}. Then conjecture \ref{ConjXcaliHeckeSigmaWeak} is true for $R_{\Sigma,\Iw}$, conjecture \ref{ConjSigmaWeak} is true for $R_{\Sigma,\Hi,\Iw}$, conjecture \ref{ConjXcaliRaidWeak} is true for $R(\aid)_{\Iw}$ and conjecture \ref{ConjAidWeak} is true for $R(\aid)_{\Hi,\Iw}$.
\end{TheoEnglish}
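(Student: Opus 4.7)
The plan is to combine two complementary Taylor–Wiles descent arguments, one for each of the local hypotheses at $p$. Under assumption \ref{HypFlat}, I will run the descent directly with coefficients in $\Lambda$; under assumption \ref{HypNO}, I will first lift the entire construction to Hida families over $\Lambda_{\Hi,\Iw}$ and descend there, since the naive approach over $\Lambda$ is obstructed by a possible trivial-zero error term at $p$ in Proposition \ref{PropContradiction}.

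For the flat case, the plan is: reduce to minimal level by Proposition \ref{PropCompWeak} so that $\rho_f$ sits over the minimal deformation ring $R^{\fl}_{\Sigma,\Id}(\rhobar_f)$; assemble the Taylor–Wiles system $\{(R_{\Sigma\cup Q,\Iw},\Delta_{\Sigma\cup Q,\Iw})\}_{Q\in X}$; pass to the inverse limit to obtain $\Delta_\infty$ over the regular ring $R_\infty\simeq\Lambda[[X_1,\dots,X_r]]$; fix elements $(x,y)$ trivializing $\Delta_\Sigma$ as in \eqref{EqEquationXY} and propagate them to $(x_\infty,y_\infty)$ using the compatibility of diagram \eqref{EqDiagCompXY}. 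If the image of $\Delta_\infty$ under $\psi_\infty$ were not contained in $R_\infty$, localization at an adequate prime of $R_\infty$ followed by extension of scalars to a normal quotient $S=\Lambda\tenseur_\Ocal\Ocal'$ would produce a newform $g\equiv f\pmod p$ with $\carac_S H^2_\et(\Z[1/p],T(g)_{\Iw})\nmid\carac_S H^1_\et(\Z[1/p],T(g)_{\Iw})/Z$, contradicting Proposition \ref{PropContradiction}. This yields Conjecture \ref{ConjXcaliHeckeSigmaWeak}, whence Conjecture \ref{ConjXcaliRaidWeak} by Proposition \ref{PropCompWeak}.

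For the nearly ordinary case, the plan is to run the same descent, but after replacing every object by its Hida-theoretic analogue, using the Taylor–Wiles system over $\Lambda_{\Hi,\Iw}$ available thanks to Proposition \ref{PropTW} and \cite[Section 11]{FujiwaraDeformation}. The limit ring now has Krull dimension $3+r$ instead of $2+r$, and this extra dimension is exactly what is needed: the locus of specializations landing on a $p$-ordinary weight-$2$ form Steinberg at $p$ (the only locus where Proposition \ref{PropContradiction} admits an error term) has positive codimension, so an adequate prime producing a contradiction can be chosen to land on a form of weight $k>2$, where Proposition \ref{PropContradiction} is an unconditional divisibility. This establishes Conjecture \ref{ConjSigmaWeak}, and Conjecture \ref{ConjXcaliHeckeSigmaWeak} follows by Proposition \ref{PropCompatibleHida} via the control isomorphism \eqref{EqHidaDelta}. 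Corollary \ref{CorMainPreuve} then drops out by specialization, and the last remaining case of Conjecture \ref{ConjXcaliLambdaWeak} (non-$p$-ordinary $f$ with reducible $\rhobar_f$) is handled by re-running the flat-case descent and disposing of the resulting congruent $g$ either by Proposition \ref{PropContradiction} directly or, when $g$ is itself $p$-ordinary, by invoking the now-established Corollary \ref{CorMainPreuve}.

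The main obstacle, and what makes the whole argument delicate, is preserving \emph{exact} equalities of invertible modules all along the descent rather than mere pseudo-isomorphisms: any stray Tamagawa-type factor at an auxiliary prime $\ell\in\Sigma$ would be amplified through the inverse limit and would prevent the final specialization from contradicting Proposition \ref{PropContradiction}. This exactness is precisely what the refined fundamental lines $\Xcali_\ell(T)$ of Definition \ref{DefXcaliv} are designed to provide, and it is the combination of the Weight–Monodromy compatibility (Proposition \ref{PropWeightMonodromy} and Corollary \ref{CorWeightMonodromy}) with the level-lowering identity of Proposition \ref{PropIhara} and its $\Sigma$-variant Proposition \ref{PropIharaSigma} that guarantees the Taylor–Wiles system of refined fundamental lines is free of any spurious error terms; assumption \ref{HypTamagawa} enters through Lemma \ref{LemUnipotent} in order to ensure that Proposition \ref{PropContradiction} can be applied at the end of the descent.
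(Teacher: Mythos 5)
Your proposal is correct and follows the paper's own proof strategy closely: the direct Taylor--Wiles descent over $\Lambda$ contradicting Proposition~\ref{PropContradiction}, the Hida-theoretic descent over $\Lambda_{\Hi,\Iw}$ exploiting the extra dimension to choose an adequate prime landing on weight $k>2$, and the residual case of \ref{ConjXcaliLambdaWeak} dispatched via Proposition~\ref{PropContradiction} or the newly established Corollary~\ref{CorMainPreuve}, exactly as in sections 4.3--4.5 of the paper. One small imprecision worth flagging: the paper runs the $\Lambda$-descent only under the \emph{stronger} hypothesis~\ref{HypLocIrr} (residual local irreducibility at $p$), not under~\ref{HypFlat} alone, because local irreducibility is what forces every congruent $g$ to be non-$p$-ordinary and hence kills the error term in Proposition~\ref{PropContradiction}; when $\rhobar_f$ is nearly ordinary finite (i.e.\ satisfies both~\ref{HypFlat} and~\ref{HypNO}), the paper routes the first clause of the theorem through the Hida descent of assumption~\ref{HypNO} rather than the flat $\Lambda$-descent, which your nearly ordinary branch does cover, so your overall logic remains complete.
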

\subsection{Proof of corollary \ref{CorEPW} and theorem \ref{TheoSU}}
\subsubsection{Proof of corollary \ref{CorEPW}}
We repeat the statement of corollary \ref{CorEPW}
\begin{CorEnglish}\label{CorEPWPreuve}
Assume that $\rhobar_{f}$ satisfies the assumptions \ref{HypNO} or \ref{HypFlat}, \ref{HypIrrTW} and \ref{HypTamagawa}. The three following assertions are equivalent.
\begin{enumerate}
\item\label{AssertionFinal} Conjecture \ref{ConjXcaliHeckeSigma} is true for $R_{\Sigma,\Iw}$.
\item\label{AssertionAllMu} For all modular specializations $\lambda$ of $R_{\Sigma}$, conjecture \ref{ConjXcaliLambda} is true for $f_{\lambda}$.
\item\label{AssertionSingleMu} There exists a modular specialization $\lambda$ of $R_{\Sigma}$ such that conjecture \ref{ConjXcaliLambda} is true for $f_{\lambda}$.
\end{enumerate}
If moreover $\rhobar_{f}|_{G_{\qp}}$ is reducible, then $R_{\Sigma}$ may be replaced by $R_{\Sigma,\Hi}$ in assertions \ref{AssertionAllMu} and \ref{AssertionSingleMu} and $R_{\Sigma,\Iw}$ may be replaced by $R_{\Sigma,\Hi,\Iw}$ in assertion \ref{AssertionFinal}.
\end{CorEnglish}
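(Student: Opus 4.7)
The plan is to prove $(1) \Rightarrow (2) \Rightarrow (3) \Rightarrow (1)$, with the content concentrated in the last implication, and to handle the Hida strengthening by repeating the same argument in the Hida-theoretic setting.

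The implication $(2) \Rightarrow (3)$ is tautological, and $(1) \Rightarrow (2)$ is the combination of proposition-definition \ref{DefPropPiDeltaSigma} and proposition \ref{PropCompatibleSpec}, which together show that conjecture \ref{ConjXcaliHeckeSigma} specializes to conjecture \ref{ConjXcaliLambda} at every modular point of $R_\Sigma$.

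For $(3) \Rightarrow (1)$, the crucial input is theorem \ref{TheoMainPreuve}, which establishes unconditionally the inclusion $\Delta_{R_{\Sigma,\Iw}}(T_{\Sigma,\Iw}) \subset \Det_{R_{\Sigma,\Iw}}(0)$ of $R_{\Sigma,\Iw}$-lattices inside the canonical trivialization $\Delta_{R_{\Sigma,\Iw}}(T_{\Sigma,\Iw}) \tenseur Q(R_{\Sigma,\Iw}) \simeq Q(R_{\Sigma,\Iw})$. Because $R_{\Sigma,\Iw}$ is a complete local ring and $\Delta_{R_{\Sigma,\Iw}}(T_{\Sigma,\Iw})$ is invertible of grade zero, this inclusion amounts to the existence of a single element $\alpha \in R_{\Sigma,\Iw}$ such that $\Delta_{R_{\Sigma,\Iw}}(T_{\Sigma,\Iw}) = \alpha \cdot \Det_{R_{\Sigma,\Iw}}(0)$, and conjecture \ref{ConjXcaliHeckeSigma} is then equivalent to the assertion $\alpha \in R_{\Sigma,\Iw}^\times$. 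Let $\lambda : R_{\Sigma,\Iw} \to \Lambda$ denote the $\Lambda$-adic extension of the modular specialization supplied by hypothesis (3), corresponding to the eigencuspform $f_\lambda$. The compatibility of the refined fundamental lines with specialization, encoded in proposition-definition \ref{DefPropPiDeltaSigma} combined with proposition \ref{PropCompatibleSpec}, ensures that $\Delta_\Lambda(T(f_\lambda)_{\Iw}) = \lambda(\alpha) \cdot \Det_\Lambda(0)$ inside $\Frac(\Lambda)$. Conjecture \ref{ConjXcaliLambda} for $f_\lambda$ therefore amounts to $\lambda(\alpha) \in \Lambda^\times$. Since $\lambda$ is a local morphism of complete local rings, it sends $\mgot_{R_{\Sigma,\Iw}}$ into $\mgot_\Lambda$; hence $\alpha$ cannot belong to $\mgot_{R_{\Sigma,\Iw}}$ (else $\lambda(\alpha) \in \mgot_\Lambda$, contradicting its invertibility), so $\alpha$ is a unit and conclusion (1) follows.

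The additional statement under the assumption that $\rhobar_f|_{G_{\qp}}$ is reducible is proved by the same argument with $R_{\Sigma,\Iw}$ replaced by $R_{\Sigma,\Hi,\Iw}$ throughout, substituting conjectures \ref{ConjSigmaHida} and \ref{ConjSigmaWeak} (also provided by theorem \ref{TheoMainPreuve}) for their fixed-weight analogues, and invoking proposition \ref{PropCompatibleHida} at the end to transfer the Hida-theoretic conclusion back to the setting of $R_{\Sigma,\Iw}$. Since modular specializations of $R_{\Sigma,\Hi}$ now parametrize newforms of arbitrary weight congruent to $f$, the weight restriction on $g$ in assertions \ref{AssertionAllMu} and \ref{AssertionSingleMu} disappears. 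The only deep ingredient in the entire argument is the weak inclusion furnished by theorem \ref{TheoMainPreuve}; once it is granted, the passage from a single modular specialization to the universal statement is the elementary remark that a local morphism of complete local rings reflects units, so no serious obstacle is anticipated.
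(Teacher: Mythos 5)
Your proposal is correct and follows essentially the same route as the paper's proof. Both arguments reduce the corollary to proving that a single modular specialization satisfying the ETNC with coefficients in $\Lambda$ forces the universal statement, and both hinge on the same two ingredients: the unconditional inclusion from theorem \ref{TheoMainPreuve}, and the elementary fact that a local morphism of complete local rings reflects units. The only cosmetic difference is that you make the generator $\alpha$ of the image lattice explicit and argue on it directly, whereas the paper encapsulates the same reasoning in a commutative diagram of local morphisms between fundamental lines and their trivializations; the two presentations encode the same compatibility of trivializations under $\pi^{\Delta}_{\Sigma,\aid}$ and modular specialization.
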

\begin{proof}
According to proposition \ref{PropCompLevel}, the assertions are in decreasing order of logical strength so it is enough to prove that assertion \ref{AssertionSingleMu} implies assertion \ref{AssertionFinal}. Let $f$ be the eigencuspform attached to the modular specialization $\lambda$ for which conjecture \ref{ConjXcaliLambda} is true. Theorem \ref{TheoMainPreuve} states that the specified isomorphism 
\begin{equation}\nonumber
\Delta_{R_{\Sigma,\Iw}}(T_{\Sigma,\Iw})\tenseur Q(R_{\Sigma,\Iw})\isocan Q(R_{\Sigma,\Iw})
\end{equation}
sends $\Delta_{R_{\Sigma,\Iw}}(T_{\Sigma,\Iw})$ into $R_{\Sigma,\Iw}$. By proposition \ref{PropCompWeak}, it follows that conjecture \ref{ConjXcaliLambdaWeak} for $f$ is true and thus that the specified isomorphism 
\begin{equation}\nonumber
\Delta_{\Lambda}(T(f)_{\Iw})\tenseur\Frac(\Lambda)\isocan\Frac(\Lambda)
\end{equation}
sends $\Delta_{\Lambda}(T(f)_{\Iw})$ into $\Lambda$. Hence, there is a commutative diagram of local morphisms:
\begin{equation}\nonumber
\xymatrix{
\Delta_{R_{\Sigma,\Iw}}(T_{\Sigma,\Iw})\ar[r]\ar[d]&R_{\Sigma,\Iw}\ar[d]\\
\Delta_{\Lambda}(T(f)_{\Iw})\ar[r]&\Lambda
}
\end{equation}
Conjecture \ref{ConjXcaliLambda} for $f$, which is true by assertion \ref{AssertionSingleMu}, states that the image of the lowermost horizontal arrow is $\Lambda$. This implies that the image of the uppermost horizontal arrow is $R_{\Sigma,\Iw}$.

If moreover $\rhobar_{f}|G_{\qp}$ is reducible, then the same proof replacing everywhere $R_{\Sigma,\Iw}$ by $R_{\Sigma,\Hi,\Iw}$ proves the ultimate claim.
\end{proof}
\subsubsection{Proof of theorem \ref{TheoSU}}
We repeat the statement of theorem \ref{TheoSU}.
\begin{TheoEnglish}\label{TheoSUPreuve}
Let $p$ be an odd prime and $N$ such that $p\nmid N$. Let $f\in S_{k}(\Gamma_{1}(p^{r})\cap\Gamma_{0}(N))$ be an eigencuspform. Assume that $\rhobar_{f}$ satisfies assumptions \ref{HypIrrTW}, \ref{HypNO} and \ref{HypTamagawa}. Then conjecture \ref{ConjXcaliHeckeSigma} is true for $R_{\Sigma,\Iw}$ and conjecture \ref{ConjSigmaHida} is true for $R_{\Sigma,\Hi,\Iw}$.
\end{TheoEnglish}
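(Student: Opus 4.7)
The plan is to combine Theorem \ref{TheoMainPreuve} (the weak form of the ETNC with Hecke coefficients, already established under our hypotheses), Corollary \ref{CorEPWPreuve} (the upgrade from weak to strong whenever a single modular specialization satisfies the ETNC with coefficients in $\Lambda$), and the main divisibility result of \cite{SkinnerUrban}.

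First I would note that assumptions \ref{HypIrrTW}, \ref{HypNO}, and \ref{HypTamagawa} are precisely those of Theorem \ref{TheoMainPreuve}, which therefore applies and establishes the weak inclusion \ref{ConjXcaliHeckeSigmaWeak} for $R_{\Sigma,\Iw}$ as well as \ref{ConjSigmaWeak} for $R_{\Sigma,\Hi,\Iw}$. To promote these one-sided inclusions to the full equalities asserted by Conjectures \ref{ConjXcaliHeckeSigma} and \ref{ConjSigmaHida}, the natural lever is Corollary \ref{CorEPWPreuve}: it suffices to exhibit \emph{one} modular specialization $\lambda$ of $R_{\Sigma}$ for which Conjecture \ref{ConjXcaliLambda} holds for $f_{\lambda}$. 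Since $\rhobar_{f}|_{G_{\qp}}$ is reducible by \ref{HypNO}, the last sentence of that corollary delivers the Hida-theoretic statement for $R_{\Sigma,\Hi,\Iw}$ at the same time.

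For the single specialization I would take $\lambda_{f}$ itself. The hypotheses on $f$—that $p\nmid N$, that $f\in S_{k}(\Gamma_{1}(p^{r})\cap\Gamma_{0}(N))$, that $\rhobar_{f}$ satisfies \ref{HypIrrTW} and is nearly ordinary distinguished, and that there is a prime $\ell\|N(\rhobar_{f})$ of unipotent monodromy—are tailored to match the input required by \cite[Theorem 3.29]{SkinnerUrban}. That theorem provides the divisibility
\begin{equation*}
\carac_{\Lambda}\Hun_{\et}(\Spec\Z[1/p],T(f)_{\Iw})/\z(f)\mid\carac_{\Lambda}H^{2}_{\et}(\Spec\Z[1/p],T(f)_{\Iw}),
\end{equation*}
while the reverse divisibility is exactly Corollary \ref{CorMainPreuve} proved earlier in this manuscript. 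The two together give the equality of characteristic ideals, which by the computation recalled in \eqref{EqCaracDet} is equivalent to Conjecture \ref{ConjXcaliLambda} for $f$. Feeding this single-specialization statement into Corollary \ref{CorEPWPreuve} then concludes the proof of both assertions.

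The main substantive obstacle is not this final combinatorial assembly but rather the availability of Corollary \ref{CorEPWPreuve} itself, which rests on the entire infrastructure of refined fundamental lines, their Taylor-Wiles compatibility, and the descent argument developed in Section \ref{SectionETNC} onward. The external technical obstacle, invisible in the plan but crucial under the hood, is that assumption \ref{HypTamagawa} must simultaneously control the cyclotomic $\mu$-invariant (through Kato's half) and the anticyclotomic $\mu$-invariant (through Vatsal's results, as invoked in the Skinner-Urban half); this dual role, discussed in the introduction, is precisely what allows a single clean hypothesis to make both divisibilities available.
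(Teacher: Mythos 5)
Your overall assembly is the right one and matches the paper's architecture: Theorem \ref{TheoMainPreuve} gives the weak inclusion, Corollary \ref{CorEPWPreuve} (in its Hida form, available since $\rhobar_{f}|_{G_{\qp}}$ is reducible by \ref{HypNO}) upgrades to the equality as soon as one modular specialization satisfies Conjecture \ref{ConjXcaliLambda}, and the Skinner--Urban divisibility combined with Corollary \ref{CorMainPreuve} furnishes that one specialization. Your remark about the dual role of \ref{HypTamagawa} on the cyclotomic and anticyclotomic sides is also accurate.

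There are, however, two genuine gaps that the paper's own proof is careful to avoid. First, \cite[Theorem 3.29]{SkinnerUrban} does not directly yield the divisibility
\begin{equation*}
\carac_{\Lambda}\Hun_{\et}(\Spec\Z[1/p],T(f)_{\Iw})/\z(f)\mid\carac_{\Lambda}H^{2}_{\et}(\Spec\Z[1/p],T(f)_{\Iw})
\end{equation*}
that you attribute to it; it proves the Greenberg--Mazur main conjecture in its Selmer-group/$p$-adic $L$-function formulation. Passing from that statement to the \'etale-cohomology/zeta-element formulation is a substantive translation, for which the paper cites the short exact sequence of \cite[Section 17.13]{KatoEuler} together with Ochiai's Coleman map result \cite[Theorem 3.14]{OchiaiColeman}. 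Your argument quietly presupposes this translation as if it were tautological.

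Second, and this is the reason the paper deliberately chooses a weight $k>2$ arithmetic specialization $g$ of $R_{\Sigma,\Hi,\Iw}$ rather than taking $\lambda_{f}$ itself: the translation above is clean only when $\pi(g)_{p}$ is not a weight-$2$ ordinary Steinberg representation. In the excluded case there is a trivial zero; the dual exponential/Coleman map degenerates, $H^{2}(G_{\qp},T(g)_{\Iw})$ fails to vanish, and the comparison between the two formulations of the main conjecture picks up an error term. Since the theorem's hypotheses allow $f$ to be exactly such a form (weight $2$, level $p^{r}N$ with $r\geq1$, Steinberg at $p$), applying Skinner--Urban directly to $\lambda_{f}$ can fail. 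Choosing a higher-weight member of the Hida family, which is always available by Hida's control theorem \eqref{EqControlHecke}, sidesteps this obstruction entirely and is what makes the descent argument close without loss. (A related subtlety: if $f$ were not itself $p$-ordinary, $\lambda_{f}$ would not even be a specialization of $R_{\Sigma,\Hi,\Iw}$; choosing an arithmetic specialization of the Hida family avoids having to address this as well.)

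Finally, a smaller point you omit: the first, fourth and last hypotheses of \cite[Theorem 3.29]{SkinnerUrban} are not assumed here, and the paper explains that they can be dropped because Proposition \ref{PropContradiction} has been reproved in this manuscript without them, using the refinements of \cite{OchiaiColeman,OchiaiEuler,OchiaiMainConjecture} not available to Skinner--Urban. This verification is part of making the citation honest, and should be included in a complete write-up.
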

\begin{proof}
In view of theorem \ref{TheoMainPreuve} and corollaries \ref{CorMainPreuve} and \ref{CorEPWPreuve}, it is enough to prove that there exists a modular specialization of $R_{\Sigma,\Hi,\Iw}$ of weight $k>2$ such that
\begin{equation}\nonumber
\carac_{\Lambda}H^{1}_{\et}(\Z[1/p],T(g)\tenseur\Lambda)/\z(g)|\carac_{\Lambda}H^{2}_{\et}(\Z[1/p],T(g)\tenseur\Lambda).
\end{equation}
By \cite[Section 17.13]{KatoEuler} (see especially the short exact sequence at the end of that section) and \cite[Theorem 3.14]{OchiaiColeman}, this is equivalent to the main conjecture in Iwasawa theory of modular forms of R.Greenberg and B.Mazur; see for instance \cite[Conjecture 7.4]{OchiaiMainConjecture} for a precise statement. Hence, it is true by \cite[Theorem 3.29]{SkinnerUrban} once we check that the hypotheses of this theorem are verified. Under the hypotheses of \ref{TheoSUPreuve}, the hypotheses \textbf{(dist)} and \textbf{(irr)} of \cite[Theorem 3.29]{SkinnerUrban} are true respectively by our assumption \ref{HypNO} and assumption \ref{HypIrrTW}. The third hypothesis of \cite[Theorem 3.29]{SkinnerUrban} is our assumption \ref{HypTamagawa}. The first, fourth and last hypotheses of \cite[Theorem 3.29]{SkinnerUrban} are imposed there in order to establish proposition \ref{PropContradiction}, but we have checked that this proposition remained true without any supplementary assumptions (the main reason for this difference is that \cite{SkinnerUrban} do not use the improvements of \cite{KatoEuler} contained in \cite{OchiaiColeman,OchiaiEuler,OchiaiMainConjecture}). 
\end{proof}
\paragraph{Acknowledgments}The author thanks L.Clozel, F.Jouve and J.Riou for helpful discussions and comments about this work. He is particularly grateful to T.Fukaya for providing him a copy of \cite{FukayaKatoSharifi}. 
\bibliographystyle{amsalpha}
\def\Dbar{\leavevmode\lower.6ex\hbox to 0pt{\hskip-.23ex \accent"16\hss}D}
  \def\cfac#1{\ifmmode\setbox7\hbox{$\accent"5E#1$}\else
  \setbox7\hbox{\accent"5E#1}\penalty 10000\relax\fi\raise 1\ht7
  \hbox{\lower1.15ex\hbox to 1\wd7{\hss\accent"13\hss}}\penalty 10000
  \hskip-1\wd7\penalty 10000\box7}
  \def\cftil#1{\ifmmode\setbox7\hbox{$\accent"5E#1$}\else
  \setbox7\hbox{\accent"5E#1}\penalty 10000\relax\fi\raise 1\ht7
  \hbox{\lower1.15ex\hbox to 1\wd7{\hss\accent"7E\hss}}\penalty 10000
  \hskip-1\wd7\penalty 10000\box7} \def\Dbar{\leavevmode\lower.6ex\hbox to
  0pt{\hskip-.23ex \accent"16\hss}D}
  \def\cfac#1{\ifmmode\setbox7\hbox{$\accent"5E#1$}\else
  \setbox7\hbox{\accent"5E#1}\penalty 10000\relax\fi\raise 1\ht7
  \hbox{\lower1.15ex\hbox to 1\wd7{\hss\accent"13\hss}}\penalty 10000
  \hskip-1\wd7\penalty 10000\box7}
  \def\cftil#1{\ifmmode\setbox7\hbox{$\accent"5E#1$}\else
  \setbox7\hbox{\accent"5E#1}\penalty 10000\relax\fi\raise 1\ht7
  \hbox{\lower1.15ex\hbox to 1\wd7{\hss\accent"7E\hss}}\penalty 10000
  \hskip-1\wd7\penalty 10000\box7} \def\Dbar{\leavevmode\lower.6ex\hbox to
  0pt{\hskip-.23ex \accent"16\hss}D}
  \def\cfac#1{\ifmmode\setbox7\hbox{$\accent"5E#1$}\else
  \setbox7\hbox{\accent"5E#1}\penalty 10000\relax\fi\raise 1\ht7
  \hbox{\lower1.15ex\hbox to 1\wd7{\hss\accent"13\hss}}\penalty 10000
  \hskip-1\wd7\penalty 10000\box7}
  \def\cftil#1{\ifmmode\setbox7\hbox{$\accent"5E#1$}\else
  \setbox7\hbox{\accent"5E#1}\penalty 10000\relax\fi\raise 1\ht7
  \hbox{\lower1.15ex\hbox to 1\wd7{\hss\accent"7E\hss}}\penalty 10000
  \hskip-1\wd7\penalty 10000\box7}
\providecommand{\bysame}{\leavevmode\hbox to3em{\hrulefill}\thinspace}
\providecommand{\MR}{\relax\ifhmode\unskip\space\fi MR }
\providecommand{\MRhref}[2]{%
  \href{http://www.ams.org/mathscinet-getitem?mr=#1}{#2}
}
\providecommand{\href}[2]{#2}

\end{document}